\newcommand{\bbI}{\mathbb{I}}
\newcommand{\bbR}{\mathbb{R}}
\newcommand{\bbO}{\mathbb{O}}
\newcommand{\bbQ}{\mathbb{Q}}
\newcommand{\bbC}{\mathbb{C}}
\newcommand{\bbN}{\mathbb{N}}
\newcommand{\Image}{\mathop{\rm Im}}
\newcommand{\Real}{\mathop{\rm Re}}
\newenvironment{pf}{\par\noindent\textbf{Proof.}}{\hfill$\square$}
\newenvironment{pfl3}{\par\noindent\textbf{Proof of Lemma \ref{l3}.}}{\hfill$\square$}
\newenvironment{pfl4}{\par\noindent\textbf{Proof of Lemma \ref{l4}.}}{\hfill$\square$}
\newtheorem{theorem}{Theorem}
\newtheorem{lemma}{Lemma}
\newtheorem{cor}{Corollary}
\begin{document}

\begin{center}
{\bf \large The Epstein zeta-function contains a positive proportion of non-trivial zeros on the critical line.}
\end{center}
\begin{center}
{\bf I.S.~Rezvyakova}
\let\thefootnote\relax\footnote{{\bf Keywords:} L-functions, Riemann zeta-function, Hecke L-functions, Epstein zeta-function, critical line zeros, Selberg class, Selberg distribution theorem, density theorem, additive problem, linear combination of L-functions, automorphic forms. }

\footnote{{\bf AMS 2020 Mathematics Subject Classification:} 11M41, 11M26.

}

{\it Steklov Mathematical Institute of RAS, Moscow}
\end{center}

\begin{flushright}
{\small \it To my teacher \\
Professor Anatoly Alekseevich Karatsuba \\
and to my friends}
\end{flushright}
\hspace{1mm}
\begin{center}

{\small We prove that the Epstein zeta-function corresponding to a binary positive definite quadratic form with integer coefficients has a positive proportion of its non-trivial zeros on the critical line. }

\end{center}
\hspace{1mm}

\begin{center}
{\bf \S 1. Introduction. }
\end{center}

The famous Riemann hypothesis claims that all non-trivial zeros of the Riemann zeta function $\zeta(s)$ lie on the critical line $\Real s  = 1/2$. Moreover, similar hypothesis (called the GRH (generalised Riemann hypothesis)) exists for a large class of L-functions. For example, L-functions from S class (or Selberg class) introduced by A.~Selberg in \cite{Selberg_Amalfi} are expected to satisfy the GRH. In 1942 A.~Selberg \cite{Selberg1942} 
expanded our knowledge on distribution of non-trivial zeros of $\zeta(s)$.
His breakthrough result proves that a positive proportion of non-trivial zeros of the Riemann zeta-function lie on the critical line (some earlier results about zeros of $\zeta(s)$ on the critical line are given in
\cite{H_L}, \cite{Selberg_1}, for example). Selberg's method provides similar result for Dirichlet L-functions and, therefore, establishes the positive proportion result for all L-functions from Selberg class of degree one (see necessary notation and properties of L-functions from Selberg class of degree one in \cite{KP_1999}, \cite{Sound} and in the aforementioned work \cite{Selberg_Amalfi}).

It was not until 1983 that the positive proportion result was proved by J.L.~Hafner for L-function of degree two from Selberg class \cite{Hafner1}. He established it for L-functions attached to holomorphic cusp forms of an even weight for the full modular group and trivial character which are also eigenfunctions of all the Hecke operators. In 1987,  Hafner \cite{Hafner2} proved similar result for L-functions of degree two attached to Maass wave forms. In \cite{Rezvyakova_MZ} we extended the result of \cite{Hafner1}  and proved the positive proportion theorem for L-functions of degree 2 attached to automorphic eigen cusp forms for the Hecke congruence group of any integer weight $k \ge 1$ and any character. But there is no result of such type for any primitive L-function from Selberg class of degree 3 or higher (this problem resembles the problem on even moments of zeta-function of order 6 or higher). We mention here that there is a conjecture which states that a degree of L-function from Selberg class is a non-negative integer. This conjecture is true when the degree of L-function is not greater than 2 (see \cite{Richert}, \cite{KP}). Nevertheless, the structure of Selberg class of degree 2 (as well as of higher integer degrees) is not yet completely described.

For deeper understanding of the RH, we shall now switch our attention to linear combinations of L-functions from Selberg class. Such examples are the Davenport-Heilbronn function (which is a linear combination of functions of degree one from Selberg class) or the Epstein zeta-function of a binary positive definite quadratic form (which is a linear combination of functions of degree 2). They illustrate that linear combinations of L-functions could satisfy all the properties of an element from Selberg class (even posess a functional equation of the Riemann type) except having an Euler-product representation and, therefore, have many zeros outside the critical line (\cite{Dav_Heil}, \cite{Voronin}) and violate the RH. Nevertheless, there is a conjecture stating that almost all non-trivial zeros of such linear combinations lie on the critical line. This conjecture was conditionally proved, for example, for the Epstein zeta-function by E.~Bombieri and D.~Hejhal \cite{Bom_Hejhal}, \cite{Bom_Hejhal_1} under the GRH and the Montgomery pair correlation conjecture for Hecke L-functions.
So, it seems that solely the Euler-product representation forces L-function to satisfy the RH.

What is known about the zeros of such type linear combinations? It is known, that all non-trivial zeros of such linear combinations lie in some strip $|\Real s| \le a$, where $a = a_{L}$ is some constant depending on the linear combination $L(s)$ (and we may suppose that $a_{L}$ is the minimal such value). If we denote by $N_{L}(T)$ the number of zeros of  linear combination $L(s)$ in the region $0 < \Image s \le T$ then
$$
N_{L}(T) \asymp T \log T
$$
as in the case of the Riemann zeta-function (the proof uses merely the functional equation).
Setting $N_{L} (\sigma_1, \sigma_2, T)$ to be the number of zeros of $L(s)$ in the region $\sigma_1 \le \Real s \le \sigma_2, \quad 0< \Image s \le T$, where $1/2<\sigma_1<\sigma_2 \le a_L$, we have
$$
N_{L}(\sigma_1, \sigma_2, T) \asymp T,
$$
which shows that quite a few non-trivial zeros lie outside the critical line. Denote by $N_{0} (g,T)$ the number of zeros of the Davenport-Heilbronn function $g(s)$ on the interval $(1/2+iT, 1/2+2iT)$ of the critical line. Following the ideas of \cite{H_L} one can show that
$$
N_{0} (g,T) \gg T.
$$
It was first S.M.~Voronin \cite{Voronin1980}, who showed that the critical line $\Real s = \frac12$ is an exceptional set for the Davenport-Heilbronn function. Namely, for sufficiently large $T$,  Voronin proved that if $\Omega_g (t) = N_{0} (g,T)/T$, then
$$
\Omega_g (t) \gg \exp (\frac{1}{20} \sqrt{\log\log\log\log T}).
$$
In 1989, A.A. Karatsuba \cite{Karatsuba1}, \cite{Karatsuba3}  obtained the following result
\begin{equation}\label{eq333}
\Omega_g (t) \gg (\log T)^{1/2-\varepsilon}
\end{equation}
for arbitrary $\varepsilon >0$ and $T \ge T_0 (\varepsilon)$.
Finally, in the late 90s, A.~Selberg \cite{Selberg1999} proved that a positive proportion of non-trivial zeros of the Davenport-Heilbronn function (or a linear combination of degree one L-functions which also obeys a functional equation) lie on the critical line (see also \cite{Rezvyakova_2015_Dokl} for extra details and for an improved constant in the proportion which is in inverse ratio to the number of functions of a linear combination). A substantial improvement made by Selberg was possible after employing in the proof the value distribution result for the values of L-functions on the critical line (see \cite{Ghosh}, \cite{Tsang} for value-distribution results).

When we make a step from degree 1 to degree 2 L-functions (even in the simpliest case of just one function in linear combination) we come across an obstacle called an ``additive problem'' (which also appears in the estimation of the fourth moment of the Riemann zeta-function). Having overcome this problem what is left for us are the detailed calculations leading us finally to the solution of the initial problem. This was done in
\cite{Hafner1}, \cite{Hafner2}, \cite{Rezvyakova_MZ} for certian degree 2 L-functions from Selberg class. Applying an extra value-distribution ingredient introduced in \cite{Selberg1999} for a linear combination of L-functions from \cite{Rezvyakova_MZ}, we were able to get the result of \cite{Rezvyakova_2016}. For example, we proved that a linear combination with real coefficients of L-functions attached to complex Hecke ideal class group characters of the quadratic imaginary extension of the field of rational numbers has a positive proportion of non-trivial zeros on the critical line $\Real s = 1/2$ (the previous result is presented in \cite{Rezvyakova}).

In the current paper we prove a similar result, announced in \cite{Rezvyakova_2015},
for the Epstein zeta function  corresponding to a binary positive definite quadratic form with integer coefficients (which is a linear combination of Hecke L-functions with complex and real ideal class group characters). The difficulty which arises now is the presence of L-functions with real Hecke characters in a linear combination (in comparison to the situation of \cite{Rezvyakova_2016}) and the main term in the additive problem in this case.

In \S 2 we formulate the statement of the main result of the work . The ideas of the proof introduced by Selberg in \cite{Selberg1999} are presented in \S 3. Further sections contain necessary results for L-functions with real Hecke characters to prove the main theorem.


\begin{center}
{\bf \S 2. Definitions and statement of the main result}
\end{center}

We recall the definition of Hecke characters of the ideal class group. Let $\mathbb Q(\sqrt{-D})$ be the imaginary quadratic extension
of the field of rational numbers $\mathbb Q$,  and $-D$ be the fundamental discriminant.
We denote by $\bbI = Z_{\bbQ
(\sqrt{-D})}$ the ring of algebraic integers of the filed $\bbQ
(\sqrt{-D})$ and by $\bbO$ the ring of principal ideals. The quotient group $\bbI^{*} / \bbO^{*}$ is an Abelian group of finite order $h = [\bbI^{*} : \bbO^{*}]$. Its elements are the equivalence classes of non-zero ideals of the ring $\bbI$ with respect to the following equivalence relation:
$$
\mathfrak{a} \sim \mathfrak{b} \quad (\mathfrak{a},
\mathfrak{b} \in \bbI^{*})
$$
if there is a non-zero principle ideal $(\omega)$ such that
$
\mathfrak{a} = (\omega) \mathfrak{b}.
$
The number $h= h(-D)$ is the {\it class number} of the field $\bbQ(\sqrt{-D})$. For $k=1,2, \ldots, h$, we define a homomorphism $\psi : \bbI^{*} \to \bbC^{*}$ on the equivalence classes
$I_k$ and set
$$
\psi( \mathfrak{a} ) = \psi( I_{k}), \quad \text{ if } \quad
\mathfrak{a}\in I_{k}.
$$
The function $\psi(\mathfrak{a})$ defined on non-zero ideals $\mathfrak{a}$ of the ring $\bbI$ is called a {\it Hecke character of the ideal class group}. All non-zero principle ideals form an identity class in the ideals class group. Hence, $\psi(\mathfrak{a}) =1$ if $\mathfrak{a}$ is a principle ideal. Also, for any non-zero ideal $\mathfrak{a}$ we have
$$
(\psi(\mathfrak{a}))^{h}=1.
$$

A {\it principle} Hecke character $\psi_0$ is given by the relation $\psi_0 (\mathfrak{a})=1$ for every
$\mathfrak{a}$. We say that a Hecke character $\psi$ is {\it complex} if it takes at least one complex (non-real) value, i.e.
$\psi^2 \ne \psi_0$.

An ideal  $\mathfrak{p}$ is {\it prime} if it is divisible by no ideals other than  $\mathfrak{p}$ and $(\mathbf{1})$. Every ideal in
$\bbI^{*}$ can be represented uniquely as a product of prime ideals. The norm of an ideal $\mathfrak{a}$ is the number of residue classes in  $\bbI$ modulo $\mathfrak{a}$. The norm $N\mathfrak{a}$ is a totally multiplicative function (see~\cite{Hecke1923}).
For $\operatorname{Re}s>1$, the Hecke
$L$-function is defined by the formula
$$
L_{\psi}(s)=\sum_{\mathfrak a\in\mathbb I^*} \frac{\psi(\mathfrak
a)}{N\mathfrak a^s} =
\sum\limits_{n=1}^{+\infty} \frac{r_{\psi}(n)}{n^s},
$$
where
$$
r_{\psi}(n)= \sum\limits_{N\mathfrak{a}=n} \psi(\mathfrak{a}).
$$
The series $L_{\psi}(s)$ can be written via the Euler product in the right half-plane $\Real s>1$ as
$$
L_{\psi}(s)=\prod_{\mathfrak p}\biggl(1- \frac{\psi(\mathfrak
p)}{N\mathfrak p^s}\biggr)^{-1},
$$
where the product goes over all prime ideals.

The quadratic field theory gives us some more information on $L_{\psi}(s)$. All the prime ideals of  $\bbI$ are included among the factors of principle ideals $(p)$, where $p$ runs through all the prime numbers. In addition,
\begin{align*}
(p) =
\begin{cases}
\mathfrak{p_1} \mathfrak{p_2}, &\mbox{ if } \chi_D (p) = 1\; (N \mathfrak{p_1} = N \mathfrak{p_2} = p), \\
\mathfrak{p}, &\mbox{ if } \chi_D (p) = -1 \;(N \mathfrak{p} = p^2),\\
\mathfrak{p}^2 , &\mbox{ if } \chi_D (p) = 0 \;(N\mathfrak{p} = p),
\end{cases}
\end{align*}
where $\mathfrak{p}$, $\mathfrak{p_1}$, $\mathfrak{p_2}$ are prime ideals.
We introduce three sets $P_0, P_1, P_{-1}$ ranging all prime numbers by the following criterion:
\begin{equation*}
p \in P_{e} \quad \text{if} \quad \chi_D (p) = e, \quad
\text{where} \quad e=0, \pm 1.
\end{equation*}
Thus, we have an identity
\begin{equation*}
L(s, \psi) = \prod_{p\in P_0} \left( 1
-\frac{\psi(\mathfrak{p})}{p^s} \right)^{-1} \prod_{p\in P_{-1}}
\left( 1 -\frac{1}{p^{2s}} \right)^{-1} \prod_{p\in P_1} \left( 1
-\frac{\psi(\mathfrak{p_1})}{p^s} \right)^{-1} \left( 1
-\frac{\psi(\mathfrak{p_2})}{p^s} \right)^{-1},
\end{equation*}
We see that  if  $p \in P_0$, then
\begin{align*}
(p) = \mathfrak{p}^2, \; \psi(\mathfrak{p}) = \pm 1,
\end{align*}
and if  $p \in P_1$, then
\begin{align*}
(p) = \mathfrak{p_1} \mathfrak{p_2},  \; \psi(\mathfrak{p_1}) =
\overline{\psi(\mathfrak{p_2})}.
\end{align*}
Thus the coefficients of the series $L(s, \psi) = \sum\limits_{n=1}^{+\infty}
\frac{r_{\psi} (n)}{n^s}$ are real and satisfy
$$
r_{\psi} (1) =1, \quad |r_{\psi} (n)| \le \tau(n),
$$
where $\tau(n)$ is the divisor function. Therefore we note, that
two complex conjugate Hecke characters define one and the same Hecke L-function.

The function $L_{\psi}(s)$ has
a meromorphic continuation to the whole complex plane. For the principle character
$\psi=\psi_0$, the function $L(s,\psi)$ has the only pole (which is simple) at $s=1$, and for $\psi \ne \psi_{0}$ it has an analytic continuation to the whole complex plane $\mathbb C$ without singularities.
E. Hecke (\cite{Hecke1917},~\cite{Hecke1926}) established a functional equation for $L_{\psi} (s)$.
\begin{lemma}\label{l1}
Let
\begin{equation}\label{eq3}
\Lambda_{\psi}(s)=\biggl(\frac{2\pi}{\sqrt
D}\biggr)^{-s}\Gamma(s)L_{\psi}(s).
\end{equation}
Then
\begin{equation}\label{eq530}
\Lambda_{\psi}(s)=\Lambda_{\psi}(1-s).
\end{equation}
\end{lemma}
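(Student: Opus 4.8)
This is the classical functional equation of Hecke~\cite{Hecke1917},~\cite{Hecke1926}; the route I would take is the standard theta-transformation argument, which for an imaginary quadratic field is particularly transparent because the norm form is literally $|\cdot|^2$ under the complex embedding. First I would split $L_\psi(s)=\sum_{k=1}^{h}\psi(I_k)\zeta_k(s)$ into partial zeta-functions $\zeta_k(s)=\sum_{\mathfrak a\in I_k}N\mathfrak a^{-s}$ of the ideal classes, the sums running over integral ideals in $I_k$. Choosing for each $k$ an integral ideal $\mathfrak b_k$ in the inverse class $I_k^{-1}$, the assignment $\nu\mapsto(\nu)\mathfrak b_k^{-1}$ sets up a bijection between the nonzero elements of the lattice $\mathfrak b_k\subset\bbC$, taken up to multiplication by units of $\bbI$, and the integral ideals in $I_k$. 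Since $N(\nu)=\nu\bar\nu=|\nu|^2$ and the number $w$ of units of $\bbI$ is $2$, $4$, or $6$, this gives
\[
\zeta_k(s)=\frac{(N\mathfrak b_k)^s}{w}\sum_{0\neq\nu\in\mathfrak b_k}\frac{1}{|\nu|^{2s}},
\]
so that $\Lambda_\psi$ is assembled from Epstein-type zeta-functions of the planar lattices $\mathfrak b_k$, whose covolumes in $\bbC$ equal $\tfrac{\sqrt D}{2}N\mathfrak b_k$.

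Next, for each $k$ let $v_k$ denote that covolume and set $\vartheta_k(y)=\sum_{\nu\in\mathfrak b_k}e^{-\pi|\nu|^2 y/v_k}$. Using $\Gamma(s)X^{-s}=\int_0^{\infty}e^{-Xy}y^{s-1}\,dy$ and summing over $\nu$, one obtains for $\Real s>1$
\[
\Lambda_\psi(s)=\Bigl(\frac{2\pi}{\sqrt D}\Bigr)^{-s}\Gamma(s)L_\psi(s)=\frac1w\sum_{k=1}^{h}\psi(I_k)\int_0^{\infty}\bigl(\vartheta_k(y)-1\bigr)y^{s-1}\,dy ,
\]
the integral being convergent at $0$ precisely when $\Real s>1$. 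I would then split $\int_0^{\infty}=\int_1^{\infty}+\int_0^1$ and substitute $y\mapsto1/y$ in the second integral, invoking Poisson summation in the shape $\vartheta_k(1/y)=y\,\vartheta_k^{\ast}(y)$, where $\vartheta_k^{\ast}$ is the analogous theta-series of the dual lattice $\mathfrak b_k^{\ast}$. The constant term of $\vartheta_k$ contributes the elementary $\tfrac1{s-1}-\tfrac1s$ with coefficient $\tfrac1w\sum_k\psi(I_k)$, which by orthogonality of class-group characters equals $\tfrac hw$ when $\psi=\psi_0$ and $0$ otherwise; this already shows that $\Lambda_\psi$ continues meromorphically, being holomorphic for $\psi\ne\psi_0$ and having only the stated polar behaviour for $\psi_0$.

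The arithmetic core is the identification of $\mathfrak b_k^{\ast}$. With respect to the natural pairing the dual of a fractional ideal $\mathfrak a$ is $(\mathfrak a\mathfrak d)^{-1}$, where $\mathfrak d=(\sqrt{-D})$ is the different of $\bbQ(\sqrt{-D})$, a principal ideal of norm $D$; consequently $\mathfrak b_k^{\ast}$ is a fractional ideal lying in the class $I_k$ (the inverse of the class of $\mathfrak b_k$), with covolume $1/v_k$. Because $\vartheta$ is unchanged when a lattice is replaced by a homothetic one — both $|\nu|^2$ and the covolume scale by $|\gamma|^2$ under $\nu\mapsto\gamma\nu$ — the series $\vartheta_k^{\ast}$ coincides with $\vartheta_{k'}$ for the index $k'$ with $I_{k'}=I_k^{-1}$. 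Carrying out the reflected integral and re-indexing via $\psi(I_k^{-1})=\overline{\psi(I_k)}$ then turns it into the same expression with $\psi$ replaced by $\overline\psi$, yielding $\Lambda_\psi(s)=\Lambda_{\overline\psi}(1-s)$; finally, since the coefficients $r_\psi(n)$ are real (as established just before the lemma), $L_{\overline\psi}=L_\psi$, hence $\Lambda_{\overline\psi}=\Lambda_\psi$ and so $\Lambda_\psi(s)=\Lambda_\psi(1-s)$.

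I expect the only genuine obstacle to be this dual-lattice bookkeeping: confirming that $\mathfrak d=(\sqrt{-D})$ has norm $D$ so that the conductor appearing in the $\Gamma$-factor comes out to be $D$ (equivalently, that the covolume normalisation produces exactly $(2\pi/\sqrt D)^{-s}$), checking that the dual lattice sits in the inverse ideal class, and verifying the homothety-invariance of $\vartheta$ so that the dual may be replaced by the chosen class representative $\mathfrak b_{k'}$. The remaining ingredients — convergence of the Mellin integral for $\Real s>1$, the Poisson transformation for a Gaussian theta-series of a planar lattice, and character orthogonality — are entirely routine; alternatively, the statement may simply be quoted from Hecke~\cite{Hecke1917},~\cite{Hecke1926}.
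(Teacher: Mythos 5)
The paper does not prove this lemma; it is Hecke's classical functional equation, recorded as a lemma with a citation to Hecke (1917, 1926) in the sentence immediately preceding it. Your blind proof supplies the standard theta-function argument via partial zeta functions of ideal classes, and it is correct in all essentials: the bijection $\nu\mapsto(\nu)\mathfrak b_k^{-1}$, the covolume $\frac{\sqrt D}{2}N\mathfrak b_k$ producing exactly the factor $(2\pi/\sqrt D)^{-s}$, the split $\int_0^1+\int_1^\infty$ with Poisson summation, and the handling of the constant term via character orthogonality are all as they should be. One clarification on the dual-lattice bookkeeping you yourself flag as the likely sticking point: the Poisson dual of a fractional ideal $\mathfrak a$ under the Euclidean pairing $\Real(x\bar y)$ on $\bbC\cong\bbR^2$ is $2(\bar{\mathfrak a}\mathfrak d)^{-1}$, not $(\mathfrak a\mathfrak d)^{-1}$ (the latter is the dual for the trace form $\operatorname{Tr}(xy)$), and since $\mathfrak a\bar{\mathfrak a}=(N\mathfrak a)$ is principal this Poisson dual lies in the class $[\mathfrak a]$ itself, not $[\mathfrak a]^{-1}$. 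This discrepancy is immaterial: complex conjugation is an isometry of $\bbC$, so $(\mathfrak a\mathfrak d)^{-1}$ and its conjugate are congruent lattices with identical theta-series (the factor $2$ and the principal $\mathfrak d$ are absorbed by the homothety-invariance you note), which is just the lattice-theoretic expression of the fact that $\zeta_k=\zeta_{k'}$ whenever $I_{k'}=I_k^{-1}$. Either bookkeeping therefore yields $\Lambda_\psi(s)=\Lambda_{\bar\psi}(1-s)$, and your concluding appeal to $L_{\bar\psi}=L_\psi$ (from the reality of $r_\psi(n)$) finishes the proof exactly as written.
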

He has also revealed a connection between Hecke L-functions and modular forms.  For $z\in\mathbb
H=\{z\colon\operatorname{Im}z>0\}$ set
$$
f_{\psi}(z)=\sum_{n=1}^{+\infty}r_\psi(n)e^{2\pi inz}.
$$
If  $\psi$ is a complex Hecke character, then $f_{\psi}(z)$ is a
holomorphic cusp form of weight~1 for $\Gamma_0(D)$ and the
character $\chi_D (\cdot)=(\frac{-D}\cdot)$. In other words,
for every matrix $\gamma=\bigl(\begin{smallmatrix}a&b
\\
c&d\end{smallmatrix}\bigr)$ of the group $\Gamma_0(D)$ (which is a
subgroup of $\text{SL}_2(\mathbb Z)$ that consists of all the
matrices~$\gamma$ with $c\equiv0\ (\operatorname{mod}D)$) the following
relation holds:
\begin{equation*}
f_{\psi}(\gamma z) = \chi_{D} (\gamma) (cz +d) f_{\psi}(z),
\end{equation*}
where $\chi_{D} (\gamma) = \chi_{D} (d)$ (see \cite{Hecke1926},~\cite{Iwaniec_book}).

The Hecke L-function associated with a real character is actually a product of two Dirichlet L-functions due to Kronecker's theorem. Namely, it is represented as the product $L_{\chi_{d_1}} (s) L_{\chi_{d_2}} (s)$, where $\chi_{d_j}$ are primitive real Dirichlet characters such that $D = d_1 d_2$, $(d_1, d_2)=1$, $\chi_{d_1} \chi_{d_2} = \chi_{D}$.

Now, for $\Real s >1$, we define the Epstein zeta-function $\zeta_{Q} (s)$ by the formula
$$
\zeta_{Q} (s) = \sum\limits_{(n, m)\ne (0,0)} (Q(n,m))^{-s},
$$
where  $Q(n,m) = an^2 + bnm + c m^2$ is a positive definite quadratic form with integer coefficients and $-D = b^2 - 4ac <0$ is the  fundamental discriminant. It is well known, that $\zeta_{Q} (s)$ can be represented as a linear combination of  L-functions with Hecke ideal class group characters of imaginary quadratic extension of $\mathbb{Q}(\sqrt{-D})$. Namely,
\begin{equation}\label{eq14}
\zeta_{Q} (s) = \frac{\varepsilon_{-D}}{h(-D)}\sum\limits_{j=1}^{h} \overline{\psi_{j} (I_{Q})} L(s, \psi_j),
\end{equation}
where again $h=h(-D)$ is the class number, $\varepsilon_{-D}$ is the number of units and $I_{Q}$ is the ideal
$$I_{Q} = \left( a, b-i\frac{\sqrt{D}}{2}\right).
$$
Combining in the sum (\ref{eq14}) each pair of L-functions corresponding to complex conjugate Hecke characters, we see that $\zeta_{Q} (s)$ is a linear combination of distinct Hecke $L$~-functions with real coefficients. And it also satisfies the functional equation of lemma \ref{l1}:
\begin{equation*}
\biggl(\frac{2\pi}{\sqrt
D}\biggr)^{-s}\Gamma(s) \zeta_{Q} (s) = \biggl(\frac{2\pi}{\sqrt
D}\biggr)^{-1+s}\Gamma(1-s) \zeta_{Q} (1-s)
\end{equation*}

In this work we shall prove that the Epstein zeta-function $\zeta_{Q} (s)$ has a positive proportion of non-trivial zeros on the critical line. More generally, we prove the following theorem.
\begin{theorem}\label{th1}
Suppose that
\begin{equation}\label{eq0}
F(s)  = \sum\limits_{j=1}^{P} c_j L_{j}(s)
\end{equation}
is a linear combination of $P$ distinct Hecke $L$-functions attached to
ideal class group characters of the field $Q(\sqrt{-D})$
formed with the real coefficients $c_j$. Then a positive proportion of non-trivial zeros of $F(s)$ lie on the critical line. Namely, if we denote by $N_0(T)$ the number of zeros of $F(s)$ on the interval $\{ s= 1/2+it, T\le t \le 2T\}$, then for any large positive $T$ we have
$$
N_0(T) \gg T \log T.
$$
\end{theorem}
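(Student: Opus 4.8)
The plan is to run Selberg's method for producing critical-line zeros of linear combinations of $L$-functions, in the form developed in \cite{Selberg1999}, \cite{Hafner1}, \cite{Rezvyakova_MZ} and \cite{Rezvyakova_2016}; the genuinely new ingredient will be the treatment of the real (and in particular the principal) Hecke characters occurring in the combination. Since each $L_j=L_{\psi_j}$ satisfies the functional equation of Lemma~\ref{l1} with the common $\Gamma$-factor $(2\pi/\sqrt D)^{-s}\Gamma(s)$, the same holds for $F$: the completed function $\Lambda_F(s)=(2\pi/\sqrt D)^{-s}\Gamma(s)F(s)$ satisfies $\Lambda_F(s)=\Lambda_F(1-s)$. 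Writing $F(s)=X(s)F(1-s)$ with $X(\tfrac12+it)$ unimodular and fixing $\vartheta(t)$ by $e^{2i\vartheta(t)}=X(\tfrac12+it)$, the function
$$
Z(t)=e^{i\vartheta(t)}F(\tfrac12+it)
$$
is real-valued (the coefficients $r_{\psi_j}(n)$ are all real, cf.\ the discussion after Lemma~\ref{l1}), and each of its sign changes in $[T,2T]$ produces, via the intermediate value theorem, a zero of $F$ on the critical line. Since the number of zeros of $F$ with imaginary part in $[T,2T]$ is $\asymp T\log T$, it suffices to exhibit $\gg T\log T$ sign changes of $Z$ on $[T,2T]$.

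I would then introduce a Dirichlet polynomial mollifier $\varphi(s)=\sum_{n\le T^{\theta}}b(n)n^{-s}$ of length $T^{\theta}$ with a small fixed $\theta>0$, assembled from truncated partial Euler products of the constituents $L_j$, and study $F(\tfrac12+it)\varphi(\tfrac12+it)$ near the critical line. The core of Selberg's method is a comparison, at the local scale $\asymp 1/\log T$, between $Z$ and an explicit rapidly oscillating function of the form $\Real\{\mathrm{const}\cdot e^{i\vartheta(t)}\}$, which already has $\asymp T\log T$ sign changes; the mollifier is what makes $F(\tfrac12+it)\varphi(\tfrac12+it)$ behave, away from a small exceptional set, like such a constant. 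Carrying this out requires --- after inserting the approximate functional equations of the degree-$2$ factors $L_j$, whose main sums have length $\asymp t$ --- a mollified first moment, of the shape $\int_T^{2T} Z(t)\,\varphi(\tfrac12+it)\,dt$, whose main term $\gg T$ has to be isolated, and a mollified second moment $\int_T^{2T}\bigl|F(\tfrac12+it)\varphi(\tfrac12+it)\bigr|^2\,dt$, which has to be bounded from above by $\ll T$, together with the off-diagonal terms both generate. These off-diagonal terms are shifted convolution sums $\sum_{m}r_{\psi_j}(m)\,r_{\psi_k}(m+h)$ over the pairs $(j,k)$ --- the ``additive problem''.

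When both $\psi_j$ and $\psi_k$ are complex, $r_{\psi_j}$ and $r_{\psi_k}$ are the Fourier coefficients of weight-$1$ holomorphic cusp forms for $\Gamma_0(D)$, as recalled above, and the corresponding shifted convolutions admit power-saving bounds with no main term; this is exactly the situation resolved in \cite{Rezvyakova_MZ} and \cite{Rezvyakova_2016}. When $\psi_k$ is real, however, Kronecker's factorization $L_{\psi_k}=L_{\chi_{d_1}}L_{\chi_{d_2}}$ makes $r_{\psi_k}=\chi_{d_1}\ast\chi_{d_2}$ a divisor-type function --- with $\chi_{d_1}$ principal, and $L_{\psi_k}$ having a pole at $s=1$, precisely when $\psi_k=\psi_0$ --- and the shifted convolution now carries a genuine main term of size $\asymp x$ (with extra factors of $\log x$ when a principal character is involved). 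Computing this main term and showing that, after integration in $t$ and summation over the shifts $h$ against the mollifier coefficients, its aggregate effect is compatible with the estimates needed above --- so that it neither swamps the second moment nor cancels the main term of the first --- is, I expect, the principal obstacle, and is where the real Hecke characters force new work relative to \cite{Rezvyakova_2016}.

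With these mean values established I would conclude along now-standard lines: invoke Selberg's central-limit theorem for $\log L_j(\tfrac12+it)$ --- available since each $L_j$ is of degree $2$ and has an Euler product, the constituents attached to distinct Hecke characters decorrelating on average --- to bound the measure of the exceptional set on which $|F(\tfrac12+it)\varphi(\tfrac12+it)|$ is atypically large; partition $[T,2T]$ into $\asymp T\log T$ subintervals of length $\asymp 1/\log T$; and use the local versions of the two mean-value estimates, together with the size and definite sign of the first-moment main term, to force $Z$ to change sign on a positive proportion of these subintervals. Counting these sign changes gives $N_0(T)\gg T\log T$, which is Theorem~\ref{th1}.
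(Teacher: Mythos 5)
Your proposal correctly identifies the overall architecture (mollified mean-value estimates, Selberg-type value distribution, local sign-change counting on intervals of length $\asymp 1/\log T$) and correctly isolates the principal new technical difficulty relative to \cite{Rezvyakova_2016}: the shifted-convolution sums $\sum_n r_{\psi_j}(n)r_{\psi_k}(n+h)$ acquire a genuine main term of size $\asymp x$ when real Hecke characters (in particular the principal one, via the Kronecker factorization $L_\psi=L_{\chi_{d_1}}L_{\chi_{d_2}}$) appear, and this main term propagates into the off-diagonal contribution and must be computed rather than bounded away. That diagnosis is exactly where \S\S 7--8 of the paper are aimed (Lemmas~\ref{l8}, \ref{l9} and the cancellation $R^{(1,2)}+R^{(2,1)}=0$).

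However, there is a genuine gap in the mollifier mechanism, and it is precisely the point where a linear combination differs from a single $L$-function. You propose a \emph{single} Dirichlet polynomial $\varphi$, ``assembled from truncated partial Euler products of the constituents,'' and then ask for a global first moment $\int_T^{2T} Z(t)\varphi(\tfrac12+it)\,dt\gg T$ and second moment $\int_T^{2T}|F\varphi|^2\ll T$, sharpened to local versions on subintervals of length $\asymp 1/\log T$. But $F=\sum_j c_jL_j$ has no Euler product, and no fixed Dirichlet polynomial of length $T^\theta$ can simultaneously mollify two $L_j$'s with distinct Euler products: if $\varphi^2\approx L_1^{-1}$ then $L_1\varphi^2$ is tame but $L_2\varphi^2$ is not, and vice versa. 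Consequently $F\varphi$ is not close to a constant on most $t$, the localized first moment does not have a definite sign on a positive proportion of the short intervals, and the argument as stated degenerates to the Hardy--Littlewood bound $N_0(T)\gg T$ (which the paper explicitly notes as what naive mollification yields). The idea that rescues the situation --- Selberg's in \cite{Selberg1999}, and the content of \S 3 of the paper --- is \emph{not} to mollify $F$ with one $\varphi$, but to attach a separate mollifier $\eta_j\approx L_j^{-1/2}$ to each constituent, and then to use the value-distribution theorem \eqref{eq8} for $\log|L_j(\tfrac12+it)|-\log|L_{j'}(\tfrac12+it)|$ (together with the persistence estimate \eqref{eq9}) to partition $(T,2T)$, up to a set of measure $o(T)$, into sets $S_1,\dots,S_P$ such that for $t\in S_j$ the function $L_j$ strictly dominates all the others, uniformly over the whole interval $[t,t+H]$. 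On $S_j$ one compares $I(t,H)$ and $J(t,H)$ built with the $j$-dependent weight $|\eta_j|^2$, so that $\mathfrak F\cdot|\eta_j|^2\approx c_jL_j|\eta_j|^2$ with an error controlled by $e^{-\frac12(\log\log T)^\varepsilon}$. Your use of the central limit theorem --- to bound the measure of the set where $|F\varphi|$ is atypically large --- is not the right role for it here; its actual role is to certify the dominance partition. Without that $t$-dependent choice of mollifier, the mean-value estimates \eqref{eq5}--\eqref{eq7} cannot be applied to $F$ itself, and the positive-proportion conclusion does not follow.
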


\begin{center}
{\bf \S 3. Selberg's method for linear combinations}
\end{center}

We present in this section the core of the proof introduced by A. Selberg in \cite{Selberg1999}.
The basic idea of catching a zero of odd order of a real-valued continuous function $f(u)$ on some interval consists in comparison of the following two integrals. If for some $t$ and $H>0$ the inequality
\begin{equation}\label{eq2}
\int\limits_{t}^{t+H} |f(u)| du > \left|\int\limits_{t}^{t+H} f(u) du \right|
\end{equation}
holds, then $f(\cdot)$ changes its sign on $(t, t+H)$ and, thus, has a zero of odd order on that interval.
This idea was used by E.~Landau, H. Bohr, H. Hardy and  J. Littlewood. Note, that from the functional equation (\ref{eq530}), it follows that critical line zeros of $F(s)$ from (\ref{eq0}) correspond
to real zeros of the real-valued function
\begin{equation}\label{eq531}
\biggl(\frac{2\pi}{\sqrt
D}\biggr)^{-1/2 +iu}\Gamma(1/2+iu) F(1/2+iu).
\end{equation}
So we have a candidate for investigating the inequality (\ref{eq2}) and, therefore, zeros of $F(s)$ on the critical line.

Now,  if the subset $E = \{ t \in (T, 2T):  (\ref{eq2}) \text{ holds} \}$ has the measure $\mu E$, then the number of zeros of $f$ on the interval $(T, 2T)$ is not less than $\frac{\mu E}{2H}-1$ (which is not difficult to show).
The measure of the subset $E$ can be calculated during the evaluation of the mean-square of both sides of (\ref{eq2}). Applying the method of \cite{H_L} to (\ref{eq531}) can only give us the result like
$N_0 (T) \gg T$.

A.~Selberg substantially supplemented the initial idea in \cite{Selberg1942} and proved the positive proportion theorem for $\zeta(s)$. He introduced the so-called ``mollifier''  which has only even zeros and,
therefore, does not affect on the zeros of odd order of a continuous real function (if it is multiplied by the``mollifier''). But at the same time the product becomes less oscillating function, the Cauchy inequality becomes more
precise
dealing with the mean-square of both sides of (\ref{eq2}) and, therefore, one is able to
establish the inequality (\ref{eq2}) on the subset of maximal order for smaller $H$.

For fixed $L(s)$ with Euler product, it turned out that a good candidate for the ``mollifier'' is the square of an approximation to $L^{-1/2} (s)$ by a finite Dirichlet series. In \cite{Selberg1942} A.~Selberg offered as a ``mollifier'' a smoothed Dirichlet polynomial attached to $\zeta^{-1/2} (s)$ which allowed him to solve the positive proportion problem for the Riemann zeta-function. The same can be done for Dirichlet L-functions.  In case of the Davenport-Heilbronn function, A.A.~Karatsuba noticed that the two Dirichlet L-functions in the linear combination have the same Euler product decomposition on one-half of primes. And it is turned out that constructing a ``mollifier'' on the part of Euler-product which is the same for both L-functions allows (with some extra trick) to get the result  (\ref{eq333}). The method of A.A.~Karatsuba allows to obtain a non-trivial result for a linear combination of L-functions with complex Hecke ideal class group characters \cite{Rezvyakova} (but, unfortunately, not for the Epstein zeta-function).

Let us present now the method of \cite{Selberg1999} which allows to get the positive proportion result for a class of linear combinations of L-functions. For a given $L$-function a ``mollifier'' is connected to $L^{-1/2} (s)$ (as we have seen) and we choose it as follows. Let $\psi_j$ be any Hecke character of the ideal class group of  $\mathbb{Q} (\sqrt{-D})$.
For $L_j (s) := L_{\psi_j}(s)$
define $\alpha_{j} (\nu)$ by the relation (assuming that $\Real s >1$)
\begin{equation*}
\sum\limits_{\nu =1}^{+\infty} \alpha_{j}(\nu) \nu^{-s}
= L^{-1/2}_j(s).
\end{equation*}
For $X\ge 3$, define  $\eta_j (s)$ as the following Dirichlet polynomial
$$
\eta_j (s) = \sum\limits_{\nu < \sqrt{X}} \frac{\alpha_j (\nu)}
{\nu^{s}} +\sum\limits_{\sqrt{X} \le \nu \le X }  \frac{\alpha_j (\nu)}
{\nu^{s}} \left( 2\dfrac{\log X/\nu}{\log X}\right)  := \sum\limits_{\nu \le X} \frac{\beta_j (\nu)}{\nu^s},
$$
where
\begin{equation*}
\beta_j (\nu ) = \alpha_j (\nu ) \EuScript{L}(\nu)
\end{equation*}
and
\begin{equation*}
\EuScript{L}(\nu) = \begin{cases} 1 &\text{for }\ \nu < \sqrt{X},
\\
2\dfrac{\log X/\nu}{\log X} & \text{for }\ \sqrt{X} \le \nu \le X, \\
0 &\text{otherwise}.
\end{cases}
\end{equation*}
Let $T$ be large enough, $\frac{1}{\log T} \le H \le 1$
and $X$ be a small (but fixed) power of $T$ (i.e., $\log X \asymp \log T$). Define
\begin{equation*}
I_j(t, H)  = \int\limits_{t}^{t+H}
\Lambda_j\left( \frac12 + iu\right) \left| \eta_j \left( \frac12 +iu\right)\right|^{2}
\exp\left( \left(\frac{\pi}{2} -\frac{1}{T}\right) u\right) du,
\end{equation*}
where $\Lambda_j (\cdot)$ is defined by (\ref{eq3}), and suppose that the following estimates hold:
\begin{equation}\label{eq5}
\begin{split}
\int\limits_{T}^{2T} |I_j(t, H)|^2 dt & = O\left( \frac{TH^2}{(H \log T)^{1/3}}\right),
\end{split}
\end{equation}
\begin{equation}\label{eq6}
\int\limits_{T}^{2T} |L_j(1/2+it) \eta_j^2 (1/2+it)|^2 dt = O(T),
\end{equation}
and, setting
$$
M_j(t, H) = \int\limits_{t}^{t+H} L_j (1/2 +iu) \eta_j^2 (1/2+iu) du - H,
$$
that
\begin{equation}\label{eq7}
\begin{split}
\int\limits_{T}^{2T} |M_j(t, H)|^2 dt & = O\left(  \frac{TH^2}{(H\log T)^{1/3}}\right).
\end{split}
\end{equation}
It follows from \cite{Rezvyakova_MZ}, for example, that the estimates (\ref{eq5})--(\ref{eq7}) for $L_j (s)$ imply that a positive proportion of non-trivial zeros of $L_j (s)$ lie on the critical line $\Real s = 1/2$. Moreover, the estimate (\ref{eq6}) allows to prove the following Selberg's density theorem for $L_j(s)$ (see \S4 in  \cite{Rezvyakova_2016}).
\begin{lemma}\label{l00}
Let $\sigma \ge 1/2$, $T\ge 3$, $N_j(\sigma, T)$ be the number of zeros of $L_j (s)$ in the region $\Real s \ge \sigma$, $T \le \Image s \le 2 T$. Then
$$
N_j(\sigma, T) \ll T^{1-c(\sigma -1/2)} \log T
$$
for some absolute constant $c>0$.
\end{lemma}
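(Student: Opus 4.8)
The plan is to run the classical Selberg density-theorem argument, with (\ref{eq6}) and the construction of $\eta_j$ as the only real inputs. First I would dispose of the two trivial ranges of $\sigma$: for $\sigma>1$ the Euler product of $L_j(s)$ forces $N_j(\sigma,T)=0$, and for $1/2\le\sigma<1/2+(\log T)^{-1}$ the Riemann--von Mangoldt formula gives $N_j(\sigma,T)\le N_j(1/2,T)\ll T\log T$, which already satisfies the claimed bound since $(\sigma-1/2)\log T\le1$ there. So I may assume $1/2+(\log T)^{-1}\le\sigma\le1$. Now set $G(s)=L_j(s)\eta_j^2(s)$ and $h(s)=G(s)-1$. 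Since $\eta_j$ is an entire Dirichlet polynomial, every zero of $L_j$ is a zero of $G$, so it suffices to bound the number of zeros of $G$ in the box $\Real s\ge\sigma$, $T\le\Image s\le2T$ (the possible pole of $L_j$ at $s=1$ sits at height $0$ and is irrelevant).

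The two facts about $G$ that I would establish are: (i) $\int_T^{2T}|h(1/2+it)|^2\,dt\ll T$, which is immediate from (\ref{eq6}); and (ii) on a fixed vertical line $\Real s=\sigma_1$ with $\sigma_1>1$ one has $|h(s)|\ll X^{-c_0}$ for some fixed $c_0>0$, together with $|G(s)|\ll T^{A}$ throughout $1/2\le\Real s\le\sigma_1$, $T\le\Image s\le2T$. Fact (ii) is routine: $\eta_j$ agrees with the Dirichlet series of $L_j^{-1/2}$ up to a tail supported on $\nu>\sqrt X$, and $|\alpha_j(\nu)|\ll_\varepsilon\nu^{\varepsilon}$, while $|G(s)|\ll T^A$ follows from convexity bounds for $L_j$ and the trivial bound for $\eta_j$. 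Applying the standard log-convexity of mean values (Hardy--Littlewood / Gabriel) to $u\mapsto\int_T^{2T}|h(u+it)|^2\,dt$ between $u=1/2$ and $u=\sigma_1$, and using $X=T^{\theta}$, I then get, for $1/2<u\le\sigma_1$,
$$
\int_T^{2T}|h(u+it)|^2\,dt\ll T^{\,1-c_1(u-1/2)}
$$
with an absolute constant $c_1>0$; this power saving just to the right of the critical line is the key consequence of combining the mollifier with (\ref{eq6}).

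I would finish by Littlewood's lemma. Put $\eta=(\sigma-1/2)/2$ and apply Littlewood's lemma to $G$ over the rectangle with vertical sides $\Real s=\sigma-\eta$ and $\Real s=3$ and horizontal sides $\Image s=T$, $\Image s=2T$ (perturbed by $O(1/\log T)$ if needed so that $G\ne0$ on the boundary). The right side contributes $\ll TX^{-c_0}$; each horizontal side contributes $\ll\log T$, since $\arg G$ has bounded variation on a horizontal segment of bounded length and $L_j$, $\eta_j$ together have $\ll\log T$ zeros near heights $T$ and $2T$; and the left side contributes
$$
\int_T^{2T}\log|G(\sigma-\eta+it)|\,dt\le\int_T^{2T}|h(\sigma-\eta+it)|\,dt\ll\bigl(T\cdot T^{1-c_1\eta}\bigr)^{1/2}=T^{\,1-c_1\eta/2},
$$
where I used $\log|1+h|\le\log^{+}|1+h|\le|h|$ and Cauchy--Schwarz. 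Every zero of $G$ in the rectangle has real part $\ge\sigma-\eta$, and each zero of $L_j$ with real part $\ge\sigma$ contributes at least $\eta$ to the left-hand side of Littlewood's identity, so $\eta\,N_j(\sigma,T)\ll T^{\,1-c_1\eta/2}+\log T$, whence
$$
N_j(\sigma,T)\ll\frac{T^{\,1-c_1(\sigma-1/2)/4}}{\sigma-1/2}+\frac{\log T}{\sigma-1/2}\ll T^{\,1-c(\sigma-1/2)}\log T
$$
with $c=c_1/4$, the last step using $(\sigma-1/2)^{-1}\le\log T$. The only point needing real care --- the main obstacle --- is the convexity estimate for the mean square off the critical line, which hinges on $G=L_j\eta_j^2$ being uniformly close to $1$ to the right of $\Real s=1$; this rests on the explicit shape of $\eta_j$ and on adequate bounds for the coefficients of $L_j^{-1/2}$. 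The rest is a routine contour-integral computation.
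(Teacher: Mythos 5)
Your argument is the standard Selberg-type density argument — Littlewood's lemma applied to $G=L_j\eta_j^2$ together with a log-convexity estimate for the mollified second moment off the critical line — and it is correct in its essentials. Note that the paper itself does not prove this lemma: it is stated and referred to \S4 of \cite{Rezvyakova_2016}, so there is no internal proof here to compare against, but what you describe is the canonical route from the input estimate~\eqref{eq6}. Your reduction to $G$, the pointwise bound $|h|\ll X^{-c_0}$ to the right of $\Real s=1$ (which rests on $|\alpha_j(\nu)|\ll_\varepsilon \nu^\varepsilon$, a bound the paper records in Lemma~\ref{l10} for the real-character case and which holds equally for complex characters), the observation that the extra zeros of $\eta_j$ only increase the Littlewood sum and so do not spoil the lower bound $\eta\,N_j(\sigma,T)$, the $\ll\log T$ estimate for the horizontal arcs, and the closing Cauchy--Schwarz step are all correct.

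The one place where ``routine'' conceals real work is the Gabriel convexity step. In its textbook form, Gabriel's theorem concerns $\int_{-\infty}^{\infty}|f|^q$ on a vertical line in a strip, under a decay hypothesis at $\pm i\infty$; on the finite box $[1/2,\sigma_1]\times[T,2T]$ the horizontal sides contribute, and the crude pointwise bound $|G|\ll T^A$ there is far too large to feed directly into the inequality. The standard repair is to introduce a Gaussian kernel $\exp\bigl((s-s_0)^2\bigr)$, apply the full-strip convexity theorem to the weighted function, and average the centre $s_0$ over $\tfrac{3}{2}+i[T,2T]$; alternatively one can work on the slightly truncated interval $[\,T+T^{1/2},\,2T-T^{1/2}\,]$ and absorb the $\ll T^{1/2}\log T$ zeros near the horizontal edges into the target $T^{1-c(\sigma-1/2)}\log T$, which is possible since $\sigma\le 1$ and $c$ may be taken small. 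Either device is standard, but it is precisely the content of the step; you should say which one you are invoking rather than gesturing at ``Hardy--Littlewood / Gabriel.'' With that caveat the proof stands.
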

This estimate is of a precise order for $\sigma = 1/2$ and implies that almost all non-trivial zeros of $L_j (s)$ lie in the vicinity of the critical line. Namely, for any function $\phi(T) \to +\infty$ when $T\to +\infty$, the estimate
$$
N_j \left(\frac{1}{2}+ \frac{\phi (T) }{\log T}, T\right) \ll (T \log T) e^{-c \phi (T)} = o(T \log T)
$$
holds when $T\to +\infty$.

Selberg's novelty in the proof of the positive proportion theorem for linear combinations was an application of the following result on the value distribution of  $\log
|L_j(1/2+it)|$ which relies on the above density theorem.
If we have two distinct Hecke L-functions, then
the difference
$$
\frac{\log|L_{j}(1/2+it)| -  \log|L_{j'}(1/2+it)|}{\sqrt{(n_{j} +n_{j'} )\pi
\log\log t}}
$$
has a normal distribution, where $n_{j}$ equals to $1$ or $2$ depending on
weather $\psi_j$ is a complex or a real Hecke character, respectively.
More precisely: let
$\varkappa_{a,b}$ denote the characteristic function of the
interval
$(a,b)$, then
\begin{equation}\label{eq8}
\int\limits_{T}^{2T} \varkappa_{a,b} \left( \frac{\log|L_{j}(1/2+it)| - \log|L_{j'}(1/2+it)|}{\sqrt{(n_{j} +n_{j'} )\pi \log\log T}} \right) dt =
T \int\limits_{a}^{b} e^{-\pi u^2} du + O \left( T\varphi(T)\right),
\end{equation}
where $\varphi(T) \to 0$ as $T\to +\infty$.
The core of the proof of this value distribution formula are the following mean estimates:
\begin{equation}\label{eq12}
\int\limits_{T}^{2T} \left| \log |L_{\psi}(1/2+it)| - \Real \sum\limits_{p < z} \frac{r_{\psi}(p)}{p^{1/2+it}}\right|^{2k} dt = O(T (Ck)^{4k})
\end{equation}
where $\frac{\log z}{\log T} \asymp \frac{1}{k}$.
A skeleton of the proof of (\ref{eq8}) is presented in \S~6 in \cite{Rezvyakova_2016}.
A very sharp estimate on $\varphi(T)$ was obtained for the Riemann zeta function by K.M.~Tsang in his Ph.D. thesis \cite{Tsang}. Namely, he proved that
$$
\varphi(T) \ll \frac{(\log\log\log T)^2}{\sqrt{\log\log T}} .
$$
An easier method of A.~Ghosh \cite{Ghosh} gives a weaker estimate, but for our task we can use any non-trivial result
$$
\varphi(T)  = o(1).
$$
Fix $0< \varepsilon <1/2$ and suppose that we have (\ref{eq8}) with a monotonically decreasing function $\varphi(T)$
such that $\varphi(T)  \gg (\log\log T)^{-\frac12+\varepsilon}$.
Whence, the subset of the interval $(T, 2T)$ where the inequality
\begin{equation*}
\left|\log|L_{j}(1/2+it)| -  \log|L_{j'}(1/2+it)|\right| \le
(\log\log T)^{\varepsilon}
\end{equation*}
holds has the measure $O\left(T \varphi(T) \right)$.
Thus, most of the time one of $L_j(1/2+it)$ dominates all the other
decisively. We show further that this dominance is somewhat persistant over stretches quite long compared to $\dfrac{1}{\log T}$. Suppose $1 \le H \log T \le \varphi^{-1} (T)$. Define
$$
\Delta_{j} (t, H) = \frac{1}{H} \int\limits_{t}^{t+H} \log
|L_{j}(1/2+iu)| du.
$$
For $0 \le u \le H$  and any positive integer $k$ one can show  from (\ref{eq12}) that (see \S6 in \cite{Rezvyakova_2016})
\begin{equation}\label{eq9}
\int\limits_{T}^{2T} \left( \Delta_{j} (t, H) - \log
|L_{j}(1/2+i(t+u))| \right)^{2k} dt = O\left( T C^{k}
\left(k^{4k} + k^k \log^k (H\log T)\right)\right).
\end{equation}
Integrating over $u$, we obtain
$$
\int\limits_{T}^{2T} \int\limits_{0}^{H} \left( \Delta_{j} (t, H)
- \log |L_{j}(1/2+i(t+u))| \right)^{2k} du dt = O\left( TH C^{k}
\left(k^{4k} + k^k  (\log\log \log T)^{k} \right)\right).
$$
If we denote by $W_j(t)$ the subset of $u \in [0, H]$ for which
\begin{equation*}
\left|\Delta_{j} (t, H) - \log |L_{j}(1/2+i(t+u))|\right| >
(\log\log T)^{\frac{\varepsilon}{2}},
\end{equation*}
we find, choosing $k$ so large that $k\varepsilon > 7$, that the measure of this subset can be estimated from above as
$$
\mu \left(W_j(t)\right) \le \frac{H}{(\log\log T)^3},
$$
except for a subset of $t$ from $(T, 2T)$ of measure $O\left(T(\log\log
T)^{-3} \right)$.

Now we exclude from $(T, 2T)$ all $t$ such that
\begin{equation*}
\left| \log|L_{j}(1/2+it)| -  \log|L_{j'}(1/2+t)| \right| \le
(\log\log T)^{\varepsilon}
\end{equation*}
for some $j \ne j'$, and also exclude all
$t$ such that $\mu \left(W_j(t)\right) > \frac{H}{(\log\log T)^3}$
for some $j$ and get that the interval $(T, 2T)$ apart from a subset of measure $O(T \varphi(T))$ can be  divided into
$P$ (the number of different L-functions in the linear combination) sets $S_j$ such that for each $t \in S_j$ we have for $j'\ne j$
\begin{equation*}
\log|L_j(1/2+it)| -  \log|L_{j'}(1/2+it)| > (\log\log
T)^{\varepsilon},
\end{equation*}
and for $u\in H_t := (0, H) \setminus \bigcup\limits_{k=1}^{P} W_k (t)$
\begin{equation*}
\begin{split}
&\log|L_j(1/2+i(t+u))| -  \log|L_{j'}(1/2+i(t+u))| = \left( \log|L_j(1/2+i(t+u))| -
\Delta_{j} (t, H) \right) \\
&-
\left( \log|L_j(1/2+it)| - \Delta_{j} (t, H) \right)
- \left( \log|L_{j'}(1/2+i(t+u))| - \Delta_{j'} (t, H) \right) \\
&+ \left(
\log|L_{j'}(1/2+it)|  - \Delta_{j'} (t, H) \right)
+ \left(
\log|L_j(1/2+it)| - \log|L_{j'}(1/2+it)| \right) \\
& > (\log\log T)^{\varepsilon} - 4 \left(\log\log T
\right)^{\frac{\varepsilon}{2}} > \frac12  (\log\log T)^{\varepsilon}.
\end{split}
\end{equation*}
From (\ref{eq6}) we see that
$$
\int\limits_{t}^{t+H} |L_j(1/2+iu) \eta_j^2 (1/2+iu)|^2 dt < H
\log\log T
$$
except for a subset of $t\in (T, 2T)$ of measure $O \left(
\dfrac{T}{\log\log T}\right)$. We also exclude those $t$ from $S_j$
without renaming them.

Define $\mathfrak{F}(t)$ attached to the linear combination (\ref{eq0}) by the formula
$$
\mathfrak{F}(t)= \sum\limits_{j=1}^{P} c_j \Lambda_j\left( \frac12 +
it\right).
$$
Notice, that this is a real-valued function for real $t$, and its real zeros are the zeros of our linear combination on the critical line.

Now, for $t\in S_j$, compare two values:
\begin{equation*}
\begin{split}
I(t, H) & = \int\limits_{H_t} \mathfrak{F} (t+u) \left|\eta_j\left(\frac12+i(t+u)\right)\right|^2
\exp\left( \left(\frac{\pi}{2} -\frac{1}{T}\right)
(t+u)\right)du, \\
J(t, H) & = \int\limits_{H_t} \left|\mathfrak{F} (t+u) \eta_j^2\left(\frac12+i(t+u)\right)\right|
\exp\left( \left(\frac{\pi}{2} -\frac{1}{T}\right) (t+u)\right) du.
\end{split}
\end{equation*}
If $J(t, H) > |I(t, H)|$, then $\mathfrak{F}(\cdot)$ changes its sign on $(t,
t+H)$ and so has at least one zero of odd order there.
Using Cauchy's inequality, we find
\begin{equation*}
\begin{split}
I(t, H) & = c_j \int\limits_{u\in H_t} \Lambda_j\left( \frac12+i(t+u)\right)\left|\eta_j\left( \frac12+i(t+u)\right)\right|^2  \exp\left( \left(\frac{\pi}{2} -\frac{1}{T}\right) (t+u)\right) du \\
& + O \left(\int\limits_{u\in H_t} \left|L_j\left( \frac12+i(t+u)\right)\right|\left|\eta_j\left( \frac12+i(t+u)\right)\right|^2
e^{-\frac12
(\log\log T)^{\varepsilon}}du \right) \\
&= c_j \int\limits_{0}^{H}  \Lambda_j\left( \frac12+i(t+u)\right)\left|\eta_j\left( \frac12+i(t+u)\right)\right|^2 \exp\left(
\left(\frac{\pi}{2} -\frac{1}{T}\right) (t+u)\right) du \\
&+ O\left( \sqrt{\frac{H}{(\log\log T)^3}} \cdot \sqrt{H\log\log
T}\right) + O \left(\sqrt{\frac{H\cdot H \log\log T}{e^{(\log\log
T)^{\varepsilon}}}}
\right)\\
&= c_j I_j(t, H) +  O\left(
\frac{H}{\log\log T} \right).
\end{split}
\end{equation*}
Similarly we get
\begin{equation*}
\begin{split}
J(t, H) &= c_j J_j(t, H) + O\left( \frac{H}{\log\log T} \right),
\end{split}
\end{equation*}
where
\begin{equation*}
\begin{split}
J_j(t, H) & = \int\limits_{t}^{t+H} |\Lambda_j (1/2+iu)| |\eta_j(1/2+iu)|^2 \exp\left(
\left(\frac{\pi}{2} -\frac{1}{T}\right) u\right) du.
\end{split}
\end{equation*}
For $T \le t \le 2T$, by means of
Stirling's formula for the Gamma function we obtain
$$
J_j(t, H) \ge e^{-3} \int\limits_{t}^{t+H} |L_j (1/2 +iu) \eta_j^2 (1/2+iu)| du \ge e^{-3} \left( H - |M_j(t, H)|\right).
$$
From (\ref{eq5}) and (\ref{eq7}) it follows, that
$$
|I_j (t, H)| \le \frac{H}{3 e^3} \quad \text{and} \quad |M_j (t, H)| \le
\frac{H}{3}
$$
outside a set of measure $O\left( \frac{T}{(H\log T)^{1/3}}\right)$. Therefore, for $t\in S_j$ except for a subset of measure $O\left(
\frac{T}{(H\log T)^{1/3}}\right)$ the inequality
$$
\frac{|I(t, H)|}{c_j} \le \frac{H}{3 e^3} + O\left( \frac{H}{\log\log T} \right)<
\frac {H}{2 e^3} < \frac{2H}{3 e^3} - O\left( \frac{H}{\log\log T} \right)
\le  \frac{J(t, H)}{c_j},
$$
or
$$
|I(t, H)| < J(t, H)
$$
holds. Taking into account all $j$ we get that the linear combination (\ref{eq0}) changes its sign on $(t, t+H)$ for $t$ from a subset of $(T, 2T)$ of measure
$$
\sum\limits_{j=1}^{P} \mu(S_j) - O\left( \frac{P T}{(H\log
T)^{1/3}}\right) = T - O\left( \frac{P T}{(H\log T)^{1/3}}\right),
$$
which produces more than $\frac{c}{P^3} T\log T$ zeros ($c>0$) of $F$ taking $H
= \frac{A P^3}{\log T}$ with large enough constant $A$.

Therefore, to prove the main theorem we are left to establish the estimates (\ref{eq5})--(\ref{eq7}). For a complex Hecke character $\psi_j$ those estimates are proved in \cite{Rezvyakova_2016}, since in this case $L_j (s)$ corresponds to an automorphic cusp form of weight one for Hecke congruence group.
For any Hecke L-function the estimate (\ref{eq7})  follows from (\ref{eq6}) as it is proved in \S4 of \cite{Rezvyakova_2016}.
The estimate (\ref{eq6}) follows from the following one
$$
\int\limits_{-\infty}^{+\infty} \left|\Lambda_j \left( \frac12 + it\right) \eta_j^2 \left(\frac12 +it\right)
\exp\left( \left(\frac{\pi}{2} -\frac{1}{T}\right) t\right) \right|^2 dt = O\left( T
\right)
$$
as can be seen by Stirling's formula for Gamma-function.
We shall see in the next section that for L-function attached to real Hecke character (i.e., when L-function is a certain product of two Dirichlet L-functions) the above estimate and (\ref{eq5}) follow from the result of lemma~\ref{l0} and theorem \ref{th0}  we are only left to obtain therefore. In the end of this section, we shall introduce some notations that are used throughout the paper.

{\bf Notation.}

\begin{itemize}

\item $\chi_{D} (\cdot) = \left( \frac{-D}{\cdot}\right)$ is the Dirichlet character defined by the Kronecker symbol, where $-D <0$ is the fundamental discriminant.

\item $\chi_{d_1} (\cdot), \chi_{d_2} (\cdot)$ are real primitive Dirichlet characters such that $(d_1, d_2) = 1$, $d_1 d_2 = D$ and $\chi_{d_1}  \chi_{d_2} = \chi_{D}$.

\item For $j=1,2$ and $\Real s >1$,  $L_{\chi_{d_j}} (s) = \sum\limits_{n=1}^{+\infty} \frac{\chi_{d_j} (n)}{n^s}$.

\item The coefficients $r(n)$ of the Dirichlet series $L(s) =\sum\limits_{n=1}^{+\infty} \frac{r (n)}{n^s}$ are defined by the relation $L(s) = L_{\chi_{d_1}} (s)  L_{\chi_{d_2}} (s)$.

\item
$T$ is large enough number,  $\delta = T^{-1}$,
$X$ is a small fixed power of $T$.

\item
$
\phi(u) = \frac{1+ \cos^4\delta}{\cos^4\delta + u^{-4}}.
$

\item $\alpha (\cdot)$ is a multiplicative function defined by the following relation
$L^{-1/2} (s) = \sum\limits_{\nu=1}^{+\infty} \frac{\alpha (\nu)}{\nu^s}$.

\item  $\EuScript{L}(\nu) = \begin{cases} 1 &\text{for }\ \nu < \sqrt{X},
\\
2\dfrac{\log X/\nu}{\log X} & \text{for }\ \sqrt{X} \le \nu \le X, \\
0 &\text{otherwise}.
\end{cases}$.

\item $\beta(\nu) = \alpha(\nu) \EuScript{L}(\nu)$.

\item $\Lambda (s) = \biggl(\frac{2\pi}{\sqrt
D}\biggr)^{-s}\Gamma(s)L (s)$.

\item $F(t) = \Lambda \left( \frac12 + it\right) \left| \sum\limits_{\nu\le X} \frac{\beta(\nu)}{n^{\frac12 +it}}\right|^{2}
\exp\left( \left(\frac{\pi}{2} -\frac{1}{T}\right) t\right)$ is a real-valued function.

\item $I(t, H) = \int\limits_{t}^{t+H} F(u) du$.

\item For positive integers $m$ and $d$ the expression $m \mid d^{\infty}$ means that all prime divisors of $m$ divide $d$.

\item $(a,b)$ denotes the greatest common divisor of the two positive integers $a$ and $b$.

\item $\prod\limits_{p}$ denotes the product over all prime numbers.

\item \label{p1}
$K_{1,1} (m, z) = \frac{1}{(m, d_1^{\infty})^{z}} \chi_{d_1} \left( (m, d_2^{\infty})\right) \chi_{d_2} \left( (m, d_1^{\infty}) \right)
\times
\prod\limits_{\substack{(p, D)=1, \\ p^{\alpha} || m}} \left( 1- \frac{1}{p^{2}} \right)^{-1} \left( 1 - \frac{\chi_D (p)}{p^{z}} \right)^{-1} \left( 1 - \frac{\chi_D (p)}{p} \right)
\times  \chi_{d_1} (p^{\alpha})\left( 1-\frac{1}{p^{z+1}}  + \frac{\chi_D (p^{\alpha+1})}{p^{\alpha z+1}}  \left( 1-\frac{1}{p^{z-1}} \right) \right), $

\item
$K_{2,2} (m, z) = \frac{1}{(m, d_2^{\infty})^{z}} \chi_{d_1} \left( (m, d_2^{\infty})\right)  \chi_{d_2} \left( (m, d_1^{\infty}) \right) \times \prod\limits_{\substack{(p, D)=1, \\ p^{\alpha} || m}} \left( 1- \frac{1}{p^{2}} \right)^{-1} \left( 1 - \frac{\chi_D (p)}{p^{z}} \right)^{-1} \left( 1 - \frac{\chi_D (p)}{p} \right) \times \chi_{d_2} (p^{\alpha}) \left( 1-\frac{1}{p^{z+1}}  + \frac{\chi_D (p^{\alpha+1})}{p^{\alpha z+1}}  \left( 1-\frac{1}{p^{z-1}} \right) \right), $

\item
For $m_1, m_2$ such that $(m_1, m_2)=1$, $d_2\mid m_1$, $d_1\mid m_2$ we set

$K_{1,2} (m_1, m_2, z) = \chi_{d_1} \left( m_1\right) \chi_{d_2} \left( m_2 \right)
\sum\limits_{q\mid (m_1 m_2/ D, D^{\infty})} \frac{1}{q^{z}}\times \\
 \prod\limits_{\substack{(p, D)=1, \\ p^{\alpha} || m_1 m_2}} \left( 1- \frac{\chi^2_{D} (p)}{p^{2}} \right)^{-1} \left( 1 - \frac{1}{p^{z}} \right)^{-1} \left( 1 - \frac{\chi_D (p)}{p} \right)
\left( 1-\frac{\chi_D (p)}{p^{z+1}}  + \frac{\chi_D (p)}{p^{\alpha z+1}}  \left( 1-\frac{\chi_D (p)}{p^{z-1}} \right) \right),
$

\item
$G_N = \prod\limits_{p\mid N} \left( 1+\frac{1}{p^{3/4}}\right)^2$,

\item
$C,C_1, \ldots$ are some absolute constants which may differ at different occurences.

\end{itemize}

\begin{center}
{\bf \S 4. The main assertion}
\end{center}

\begin{lemma}\label{l0}
Let us define
\begin{equation}
\label{eq12-1} G (y) = \biggl| \sum_{n,\nu_1, \nu_2}
\frac{r (n)\beta (\nu_1)\beta (\nu_2)}{\nu_2} \exp\biggl( -\frac{2\pi n
\nu_1}{\sqrt{D}\nu_2} y (\sin\delta+i\cos\delta)\biggr) \biggr|^2.
\end{equation}
Then
$$
\int\limits_{-\infty}^{+\infty} |I (t, H)|^2 dt \le 8 H^2 \int\limits_{1}^{e^{1/H}} G(y) dy + 8 \int\limits_{e^{1/H}}^{+\infty} \frac{G(y)}{\log^2 y} dy,
$$
and also
$$
\int\limits_{-\infty}^{+\infty} |F (t)|^2 dt = \int\limits_{1}^{+\infty} G(y) dy,
$$
where $I (t, H)$ and $F (t)$are defined in the end of \S 3.
\end{lemma}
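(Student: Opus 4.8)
The plan is to express both $F(t)$ and $I(t,H)$ via a Mellin-type integral representation and then pass to a mean-square estimate through the substitution $y=e^{\tau}$. First I would use the classical integral representation of the Gamma factor: since $\Lambda(\tfrac12+it)=\bigl(\tfrac{2\pi}{\sqrt D}\bigr)^{-1/2-it}\Gamma(\tfrac12+it)L(\tfrac12+it)$ and the Dirichlet polynomial $\sum_{\nu\le X}\beta(\nu)\nu^{-1/2-it}$ is finite, I would combine $L(\tfrac12+it)=\sum_n r(n)n^{-1/2-it}$ with one copy of the polynomial and keep the other copy as its conjugate. Writing $\Gamma(\tfrac12+it)=\int_0^\infty e^{-w}w^{-1/2+it}\,dw$ (with a suitable rotation of the contour to absorb the factor $\exp((\tfrac\pi2-\tfrac1T)t)$, i.e. replacing $w$ by a ray making angle $\delta=1/T$ with the positive reals, so that $e^{-w}$ becomes $e^{-y(\sin\delta+i\cos\delta)}$ after substitution), the triple sum over $n,\nu_1,\nu_2$ collapses exactly into the exponential $\exp\bigl(-\tfrac{2\pi n\nu_1}{\sqrt D\,\nu_2}y(\sin\delta+i\cos\delta)\bigr)$ appearing inside $G(y)$. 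This identifies $F(t)$, up to the change of variable $y$, as (the modulus squared of) a Fourier-type transform of the bracketed sum in \eqref{eq12-1}.

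Concretely, after the substitution I expect to reach an identity of the shape
$$
F(t) = \int_{0}^{\infty} \Phi(y)\, y^{it}\,\frac{dy}{y},
$$
where $\Phi(y)$ is (a constant multiple of) the bracketed sum in $G(y)$ times its conjugate structure, supported on $y\ge 1$ after collecting the $\nu_2$ in the denominator — here the condition $y\ge 1$ reflecting that all terms $n\nu_1/\nu_2$ are $\ge$ some positive quantity is what produces the lower limit $1$ in $\int_1^{+\infty}G(y)\,dy$. Then Plancherel's theorem for the multiplicative Mellin transform gives immediately
$$
\int_{-\infty}^{+\infty}|F(t)|^2\,dt = 2\pi\int_{0}^{\infty}|\Phi(y)|^2\,\frac{dy}{y} = \int_{1}^{+\infty} G(y)\,dy
$$
once the normalising constants are tracked (they are arranged precisely so that the $2\pi$ and the Jacobian cancel). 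This is the second displayed assertion of the lemma.

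For the first assertion, I would note that $I(t,H)=\int_t^{t+H}F(u)\,du$, so by the same Mellin representation
$$
I(t,H) = \int_0^\infty \Phi(y)\,\frac{y^{i(t+H)}-y^{it}}{i\log y}\,\frac{dy}{y} = \int_0^\infty \Phi(y)\,y^{it}\,\frac{e^{iH\log y}-1}{i\log y}\,\frac{dy}{y}.
$$
Applying Plancherel again,
$$
\int_{-\infty}^{+\infty}|I(t,H)|^2\,dt = 2\pi\int_0^\infty |\Phi(y)|^2\,\frac{|e^{iH\log y}-1|^2}{\log^2 y}\,\frac{dy}{y},
$$
and then I would split the range at $\log y = 1/H$, i.e. $y=e^{1/H}$. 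For $1\le y\le e^{1/H}$ one uses $|e^{iH\log y}-1|\le H\log y$, giving a contribution $\le H^2\int_1^{e^{1/H}}|\Phi(y)|^2\frac{dy}{y}$; for $y\ge e^{1/H}$ one uses the trivial bound $|e^{iH\log y}-1|\le 2$, giving $\le 4\int_{e^{1/H}}^\infty |\Phi(y)|^2\frac{1}{\log^2 y}\frac{dy}{y}$. Recombining with the identification $|\Phi(y)|^2\frac{1}{y}\,(2\pi\text{-factor}) = G(y)$ and absorbing the numerical constants into the stated $8$'s yields exactly
$$
\int_{-\infty}^{+\infty}|I(t,H)|^2\,dt \le 8H^2\int_1^{e^{1/H}}G(y)\,dy + 8\int_{e^{1/H}}^{+\infty}\frac{G(y)}{\log^2 y}\,dy.
$$

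The main obstacle I anticipate is purely bookkeeping: justifying the interchange of the (infinite) sums over $n,\nu_1,\nu_2$ with the contour rotation and with the Plancherel integral, and verifying that the rotated-contour integral representation of $\Gamma(\tfrac12+it)$ together with the factor $\exp((\tfrac\pi2-\tfrac1T)t)$ produces precisely the phase $\sin\delta+i\cos\delta$ with no stray factors. Absolute convergence is not an issue because the $\nu$-sums are finite (truncated at $X$) and $r(n)\ll\tau(n)$ grows only polynomially against the exponential decay $e^{-cny/\nu_2}$, so Fubini applies throughout; the only care needed is that the rotation angle $\delta=1/T$ keeps $\Real\bigl(y(\sin\delta+i\cos\delta)\bigr)=y\sin\delta>0$ so the integral still converges, which it does. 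Once the representation $F(t)=\int_0^\infty\Phi(y)y^{it}\,dy/y$ is established with the correct $\Phi$, the rest is a direct application of Plancherel and the elementary splitting above, and the constants fall out as claimed.
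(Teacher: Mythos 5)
The broad strategy—express $F(t)$ as a Mellin transform of the bracketed sum and apply Plancherel—is the right idea, and your computation with the rotated Gamma integral correctly produces the kernel $\exp\bigl(-\frac{2\pi n\nu_1}{\sqrt D\,\nu_2}y(\sin\delta+i\cos\delta)\bigr)$. But there is a genuine gap in how you get from $\int_0^\infty$ to $\int_1^\infty$. Your derivation gives, up to a unimodular constant,
$$
F(t)=e^{i(\pi/4-\delta/2)}\int_0^\infty\Phi(y)\,y^{-1/2+it}\,dy,\qquad
\Phi(y)=\sum_{n,\nu_1,\nu_2}\frac{r(n)\beta(\nu_1)\beta(\nu_2)}{\nu_2}
\,e^{-\frac{2\pi n\nu_1}{\sqrt D\,\nu_2}y(\sin\delta+i\cos\delta)},
$$
and applying Plancherel (after $y=e^u$) yields $\int_{-\infty}^{+\infty}|F(t)|^2\,dt = 2\pi\int_0^\infty|\Phi(y)|^2\,dy = 2\pi\int_0^\infty G(y)\,dy$. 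Your assertion that the lower limit can be taken to be $1$ ``because all terms $n\nu_1/\nu_2$ are $\ge$ some positive quantity'' is simply false: $\Phi$ is not supported in $[1,\infty)$, and $G(y)>0$ for $0<y<1$. Worse, when $L(s)=L_{\chi_{d_1}}(s)L_{\chi_{d_2}}(s)$ has a pole at $s=1$ (i.e.\ when one of $d_1,d_2$ equals $1$, which does happen in the principal Hecke‑character case), $\Phi(y)\sim C/y$ as $y\to 0^+$, so $\int_0^1 G(y)\,dy$ diverges; the left‑hand side $\int_{-\infty}^{+\infty}|F(t)|^2\,dt$ is finite, so the raw $\int_0^\infty$ Plancherel identity cannot possibly be what the lemma is claiming. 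The same issue infects the $I(t,H)$ estimate: your split at $y=e^{1/H}$ only controls $y\ge1$, and the contribution from $0<y<1$ is unaccounted for.

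What is missing is the reflection step. To land on $\int_1^{+\infty}$ one has to exploit the functional equation $\Lambda(s)=\Lambda(1-s)$ (equivalently, the theta‑type transformation of $\sum_n r(n)e^{-2\pi ny/\sqrt D}$ under $y\mapsto 1/y$), folding the range $(0,1)$ of the Mellin integral onto $(1,\infty)$ \emph{before} applying Parseval. The mollified object $\Lambda(s)\eta(s)\eta(1-s)$ is symmetric under $s\mapsto 1-s$, which is exactly what makes this folding work. The constant $2\pi$ also does not match the stated $8H^2$ and $8$, another indication that the derivation the paper relies on (lemmas 3 and 4 of \cite{Rezvyakova}) is not the bare $\int_0^\infty$ Plancherel computation but the folded one. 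You should add the functional‑equation reflection and re‑track the constants; your interchange‑of‑limits remarks would then apply to the corrected representation.
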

The proof of this result is contained in \cite{Rezvyakova} (see there lemmas 3 and 4).

\begin{theorem}\label{th0}
For $x \ge 1$ and $G(y)$ given by~\eqref{eq12-1}, let us define
$$
J(x,\theta)=\int_x^{+\infty}G(u)u^{-\theta}\,du.
$$
If $\log^{-1} T \le \theta\le \left(\frac{\log\log T}{\log T} \right)^{2/3}$ and $X \le T^{1/50}$, then the estimates
$$
J(x,\theta) \le J(1, \theta)\ll
\frac{T}{(\theta  \log T)^{1/3}}
$$
hold.
\end{theorem}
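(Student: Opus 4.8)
The first inequality $J(x,\theta)\le J(1,\theta)$ is trivial, since $G\ge 0$, $u^{-\theta}>0$ and $x\ge1$; so everything reduces to bounding $J(1,\theta)=\int_1^{+\infty}G(u)u^{-\theta}\,du$. The plan is to open the square in $G$, integrate term by term, and then perform an additive--divisor analysis of the resulting arithmetic sum, separating a main term (built from the functions $K_{1,1},K_{2,2},K_{1,2}$ of the Notation) from an error of the required size.

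Opening $|\,\cdot\,|^2$ in (\ref{eq12-1}) and interchanging the (absolutely convergent) six--fold sum with $\int_1^{+\infty}(\,\cdot\,)u^{-\theta}\,du$, and writing $a=\frac{n\nu_1}{\nu_2}$, $b=\frac{m\mu_1}{\mu_2}$, one gets
\[
J(1,\theta)=\sum_{n,\nu_1,\nu_2}\;\sum_{m,\mu_1,\mu_2}\frac{r(n)r(m)\beta(\nu_1)\beta(\nu_2)\beta(\mu_1)\beta(\mu_2)}{\nu_2\mu_2}\,\mathcal I(w),
\]
where $w=\frac{2\pi}{\sqrt D}\bigl[(a+b)\sin\delta+i(a-b)\cos\delta\bigr]$ has $\Real w>0$ and $\mathcal I(w)=\int_1^{+\infty}u^{-\theta}e^{-wu}\,du$. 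Before splitting I would record the elementary properties of this kernel: $|\mathcal I(w)|\le\Gamma(1-\theta)(\Real w)^{\theta-1}$ and $|\mathcal I(w)|\ll|w|^{-1}$ for all $w$ with $\Real w>0$; $\mathcal I(w)=\Gamma(1-\theta)w^{\theta-1}+O(1)$ for $|w|\ll1$; and $\mathcal I(w)\ll e^{-\Real w/2}(\Real w)^{\theta-1}$, so $\mathcal I(w)$ is negligible once $\Real w\gg\log T$, i.e.\ once $a+b\gg T\log T$. In particular $n,m$ are effectively truncated at $\ll T^{1+\varepsilon}$, which together with the hypothesis $X\le T^{1/50}$ keeps every sum of polynomial length.

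I would then split the double sum according to whether $a=b$ (the \emph{diagonal}, i.e.\ $n\nu_1\mu_2=m\mu_1\nu_2$) or $a\ne b$ (the \emph{off--diagonal}). On the diagonal $w$ is real and $\asymp a/T$, so $\mathcal I(w)=\Gamma(1-\theta)\bigl(\frac{4\pi a}{\sqrt D}\sin\delta\bigr)^{\theta-1}+O(1)$; collecting terms, and using $\beta(\nu)=\alpha(\nu)\EuScript{L}(\nu)$ with $\alpha$ multiplicative and $\sum_\nu\alpha(\nu)\nu^{-s}=L^{-1/2}(s)$, the diagonal sum is a smoothed coefficient sum attached to the Dirichlet series $L(z)\eta(z)\eta(1-z)$. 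Evaluating it by Perron's formula together with the factorisation $L=L_{\chi_{d_1}}L_{\chi_{d_2}}$ produces a main factor whose local components are precisely $K_{1,1}$ and $K_{2,2}$ (the contributions of each character to itself) and $K_{1,2}$ (the cross contributions), plus a lower--order remainder. Since $\eta$ is a good approximation to $L^{-1/2}$, one checks that the diagonal contributes $\ll T/(\theta\log T)^{1/3}$ --- here the lower bound $\theta\ge(\log T)^{-1}$ is what keeps factors such as $T^{1-\theta}=Te^{-\theta\log T}$ in hand.

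The remaining, and genuinely hard, part is the off--diagonal sum: this is the ``additive problem'' of the Introduction, and it is here that the presence of the real characters $\chi_{d_1},\chi_{d_2}$ forces a non--trivial main term, not present in the purely cuspidal, complex--character situation of \cite{Rezvyakova_2016}. My plan is to insert $r(n)=\sum_{e\mid n}\chi_{d_1}(e)\chi_{d_2}(n/e)$ and likewise for $r(m)$, fix the divisors, and reduce the inner sum over $n$ (resp.\ $m$) to a character--twisted exponential sum of the shape $\sum_n\chi(n)\,e(\alpha n)\,\varphi_\delta(n)$, with a smooth cut--off $\varphi_\delta$ of length $\asymp T$ and a phase $\alpha$ coming from $\Imaginary w$. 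Applying the functional equation of $L_{\chi_{d_j}}$ --- equivalently Poisson summation / the Voronoi formula in each $\chi_{d_j}$--aspect --- replaces such a sum by its dual; the short dual contains the main term just mentioned (again packaged by $K_{1,1},K_{2,2},K_{1,2}$, and combining with the diagonal main term), together with a fluctuating remainder that one controls by moving the smooth weight and applying Cauchy's inequality. Truncating the Voronoi expansion at dual--length $Y$ costs, schematically, an error $\ll TY^2/(\theta\log T)$ in evaluating the main part and a tail $\ll TY^{-1}$; balancing gives $Y\asymp(\theta\log T)^{1/3}$ and the off--diagonal bound $\ll T/(\theta\log T)^{1/3}$, while the hypothesis $\theta\le(\log\log T/\log T)^{2/3}$ delimits exactly the range in which this choice of $Y$ is admissible and the remainders stay below the target. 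I expect this off--diagonal analysis --- isolating its main term, showing it (jointly with the diagonal) is $\ll T/(\theta\log T)^{1/3}$, and estimating the genuinely oscillatory remainder --- to be the main obstacle; the diagonal evaluation, though lengthy, is routine once the explicit $K_{i,j}$ are in place.
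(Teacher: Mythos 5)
Your diagonal/off--diagonal skeleton is the right one, and you have correctly identified the off--diagonal (the ``additive problem'') as the genuine obstacle. But several substantive pieces are misplaced or missing, and without them the argument does not close.

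First, a structural misattribution: the functions $K_{1,1}$, $K_{2,2}$, $K_{1,2}$ from the Notation do \emph{not} appear in the diagonal. The diagonal is governed by the multiplicative function $K(m,s)$ of Lemma~\ref{l2} and the series $D(s)=\sum r^2(n)n^{-s}$, and, via the contour argument of \S 6, the diagonal contribution is in fact $\ll \delta^{-1}/(\theta\log X)\asymp T/(\theta\log T)$ --- \emph{better} than the target. The full exponent $1/3$ comes entirely from the off-diagonal; indeed the paper stresses (\S 5) that this is exactly the novel feature of the real-character case, where the non-diagonal carries a genuine main term of the same order as the final bound, unlike the complex-character situation of \cite{Rezvyakova_MZ}, \cite{Rezvyakova_2016}.

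Second, and more seriously, your heuristic balance ``error $\ll TY^2/(\theta\log T)$ against tail $\ll TY^{-1}$'' is not where the $1/3$ actually comes from, and your sketch omits the structural facts the paper relies on. After the additive-problem main term $\sigma(l,m_1,m_2)$ is isolated (Lemma~\ref{l8}), its generating series $Z_{m_1,m_2}(w)=\sum_l\sigma(l,m_1,m_2)l^{-w}$ is decomposed into four pieces $Z^{(1,1)},Z^{(2,2)},Z^{(1,2)},Z^{(2,1)}$ built from the $K_{j,j}$ and $K_{1,2}$ (Lemma~\ref{l9}). The two cross pieces cancel identically, $R^{(1,2)}+R^{(2,1)}=0$, by the Gauss-sum identity
\begin{equation*}
\frac{\overline{G^2(\chi_{d_1})}}{d_1}+\frac{\overline{G^2(\chi_{d_2})}}{d_2}=0,
\end{equation*}
a cancellation you have not identified and which is essential: without it the cross terms would be of the same size as the target and would have to be controlled by a much more delicate argument. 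The surviving $R^{(1,1)}+R^{(2,2)}$ is then handled via the asymptotic formula~\eqref{eq510} of Lemma~\ref{l4}, the functional equation $\tilde S_{2,2}(-w)=\tilde S_{1,1}(w)$, and a careful contour argument culminating in the bound of a restricted multiplicative sum $W(X,N)$: variables are constrained by a window condition of the form $K_2/K_1\le(3X)^{4A^{-2/3}}$ with $A=\theta\log X$, and combining this with $\sum_{k\le u}b_1(k)\ll u/\sqrt{\log u}$ via Abel summation is what yields the saving $A^{-1/3}$. None of this is a consequence of truncating a Voronoi expansion at a balanced dual length; a purely Voronoi-type treatment of the off-diagonal would not automatically produce the exact singular-series structure of Lemma~\ref{l9}, nor the cancellation of the cross terms, and would leave a genuine gap at the crucial $(\theta\log T)^{-1/3}$ estimate.
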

The statement of this theorem is the core of the work. Its proof is contained  in \S6  and \S8 (corresponding to estimation of diagonal and non-diagonal terms respectively). Let us derive from it the estimates (\ref{eq5}), (\ref{eq6}) for $\frac{1}{\log T} \le H \le \left( \frac{\log\log T}{\log T}\right)^{2/3}$.

First, we shall obtain the estimate  (\ref{eq6}) from the results of lemma~\ref{l0} and theorem~\ref{th0}. We only need to show due to lemma \ref{l0} that
$$
 \int\limits_{1}^{+\infty} G(y) dy \ll T.
$$
Let us apply theorem~\ref{th0} with
$x=1$ and $\theta=\frac1{\log T}$ and get
$$
\int_1^{T^2}G(u)\,du\ll\int_1^{+\infty}
G(u)u^{-1/\log T}\,du\ll T.
$$
Recalling the definition~\eqref{eq12-1} of $G(y)$, we estimate the
integral over $(T^{2},+\infty)$ as follows:
\begin{align*}
\int_{T^{2}}^{+\infty} G(u)\,du & \ll\sum_{\substack{n_1, n_2
\\
\nu_1,\nu_2, \nu_3, \nu_4}}
\frac{|r(n_1)r(n_2)\beta(\nu_1)\beta(\nu_2)\beta(\nu_3)\beta(\nu_4)|}{\nu_2
\nu_4}
\\
&\qquad\qquad\qquad\times\int_{T^{2}}^{+\infty} \exp\biggl(
-\frac{2\pi}{\sqrt{D}} \biggl(\frac{n_1 \nu_1}{\nu_2}+ \frac{n_2
\nu_3}{\nu_4} \biggr) T^{-1} u \biggr)\,du.
\end{align*}
Using the identity
$$
\int_{T^{2}}^{+\infty}\exp(-ax)\,dx=\frac{e^{-a
T^{2}}}a,
$$
we obtain: {\allowdisplaybreaks
\begin{align*}
\int_{T^{2}}^{+\infty} G(u) du &\ll T
\sum_{\substack{n_1, n_2
\\
\nu_1,\nu_2, \nu_3, \nu_4}}
\frac{|r(n_1)r(n_2)\beta(\nu_1)\beta(\nu_2)\beta(\nu_3)\beta(\nu_4)|}{(n_1
\nu_1 \nu_4 + n_2 \nu_2 \nu_3)}
\\*
&\qquad\qquad\qquad\qquad\qquad\qquad\times \exp\biggl(
-\frac{2\pi}{\sqrt{D}} T \biggl(\frac{n_1 \nu_1}{\nu_2}+
\frac{n_2 \nu_3}{\nu_4} \biggr) \biggr)
\\
& \ll T \sum_{\substack{n_1, n_2
\\
\nu_1,\nu_2, \nu_3, \nu_4}}
\frac{|r(n_1)r(n_2)\beta(\nu_1)\beta(\nu_2)\beta(\nu_3)\beta(\nu_4)|}{\sqrt{n_1
n_2 \nu_1 \nu_2 \nu_3 \nu_4}}
\\*
&\qquad\qquad\qquad\qquad\qquad\qquad\times \exp\biggl(
-\frac{2\pi}{\sqrt{D}} T \biggl(\frac{n_1 \nu_1}{\nu_2}+
\frac{n_2 \nu_3}{\nu_4} \biggr) \biggr)
\\
&\le T \biggl( \sum_{\nu_1,\nu_2 \le X}
\frac{1}{\sqrt{\nu_1 \nu_2}} \sum_{n} \frac{|r(n)|}{\sqrt{n}}
\exp\biggl( -\frac{2\pi}{\sqrt{D}} \frac{n \nu_1}{\nu_2} T
\biggr) \biggr)^2.
\end{align*}
Since $|r(n)|\le\tau(n)$ and $\nu_2\le X\le T^{1/3}$, we have
$$
\sum_{n=1}^{+\infty}\frac{|r(n)|}{\sqrt n}\exp\biggl(
-\frac{2\pi}{\sqrt D}\frac{n\nu_1}{\nu_2} T\biggr)\ll
\exp\bigl(-\sqrt{T}\bigr).
$$
Hence,
$$
\int_{T^{2}}^{+\infty}G(u)du\ll T X\exp
\bigl(-\sqrt{T}\bigr)\ll1.
$$
The proof of the estimate (\ref{eq6}) is now completed.

It follows from lemma~\ref{l0} that to prove (\ref{eq5}) we need to show that
$$
\int_{1}^{e^{1/H}} G(x) dx \ll \frac{T}{(H\log T)^{1/3}}, \quad \int_{e^{1/H}}^{+\infty} \frac{G(x)}{\log^2 x}\,dx  \ll \frac{T H^2}{(H\log T)^{1/3}}.
$$
Setting
$\theta=H$ in theorem~\ref{th0} and noting that $x^{\theta} \ll 1$ for $1\le x\le e^{1/H}$, we obtain
\begin{align*}
\int_{1}^{e^{1/H}} G(x) dx & = -\int_{1}^{e^{1/H}} x^{\theta} \frac{d}{dx} J(x,
\theta) \,dx = -x^{\theta} J(x, \theta) \Big|_{x=1}^{e^{1/H}} +
\theta\int_{1}^{e^{1/H}} x^{\theta-1} J(x, \theta)\,dx
\\
&\ll\frac{T}{(\theta \log T)^{1/3}} \left( 1+ \frac{\theta}{H}\right) \ll\frac{T}{(H \log T)^{1/3}}.
\end{align*}

Since
$$
 \int\limits_{0}^{H} a x^{-a} da =  - H x^{-H} \log^{-1} x -
  (x^{-H} - 1) \log^{-2} x,
$$
then for $x \ge e^{1/H}$ (i.e., when $\log^{-1} x \le H$)\
$$
\log^{-2} x \le 2\int\limits_{0}^{H} a x^{-a} da +  2 x^{-H} H^2.
$$
Therefore,
\begin{align*}
\int_{e^{1/H}}^{+\infty} \frac{G(x)}{\log^2 x}\,dx & \le 2\int\limits_{0}^{H} a J(e^{1/H}, a)  da
+ 2 H^2 J(e^{1/H}, H).
\end{align*}
For $0 \le a \le (\log^{-1} T)$ we shall use the estimate $J(e^{1/H}, a) \le \int\limits_{1}^{+\infty} G(x) dx \ll T$, which we have just obtained. For the remaining interval $a\in (\log^{-1} T, H)$ we shall use the estimate
$J(e^{1/H}, a) \ll \dfrac{T}{(a \log T)^{1/3}}$. Overall, we get
\begin{align*}
\int_{e^{1/H}}^{+\infty} \frac{G(x)}{\log^2 x}\,dx &\ll  \frac{T}{\log^2 T} + \frac{TH^2}{(H\log T)^{1/3}}  \ll  \frac{T H^2 }{(H\log T)^{1/3}}
\end{align*}
since $H \ge \log^{-1} T$.
The estimate (\ref{eq5}) is completed.
Therefore, our aim is to prove theorem~\ref{th0}.

\begin{center}
{\bf \S 5. Estimation of Selberg sums coming from the ``diagonal'' and ``non-diagonal'' terms in case of a real ideal class group Hecke character.}
\end{center}

Lemma \ref{l3} below corresponds to the estimation of the so-called ``diagonal'' term arising in the proof of theorem \ref{th0}. An analogue of this lemma in case of a complex Hecke character can be found in \cite{Rezvyakova_MZ} (lemma 3) or  \cite{Rezvyakova_2016} (lemma 1). Lemma \ref{l4} below corresponds to estimation of the ``non-diagonal'' term which turns out to be at least of the same order as the ``diagonal'' term in contrast to the situation in \cite{Rezvyakova_MZ}, where the ``non-diagonal'' term is very small compared to the ``diagonal'' term. The reason to such a difference is a non-zero main term in the additive problem with coefficients of L-function of a real Hecke character (see lemma \ref{l8}). We shall formulate and prove several auxiliary lemmas in the beginning of this paragraph.

\begin{lemma}\label{l2}
Let
\begin{align*}
&K(m,s) = \prod_{p| m} \biggl( 1+ \frac{r^{2}(p)}{p^{s}} +
\frac{r^{2}(p^2)}{p^{2s}}+\dotsb\biggr)^{-1}
\\
&\qquad\qquad\qquad\qquad\qquad\qquad\times\prod_{p^{\alpha} \| m}
\biggl( r(p^{\alpha}) + \frac{r(p^{\alpha+1})r(p)}{p^{s}} +
\frac{r(p^{\alpha+2})r(p^2)}{p^{2s}}+\dotsb\biggr).
\end{align*}
For fixed $m$, the function $K(m, s)$ is analytic in the region $\Real s >0$ and satisfies the following relations when $\Real s \ge 1/2$:
\begin{align*}
|K(m,s)| &\le \tau_{2000} (m), \\
K(p, s) &= r(p) + O(p^{-\Real s}).
\end{align*}
\end{lemma}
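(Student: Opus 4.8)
The plan is to prove Lemma~\ref{l2} by analyzing the local Euler factors of $K(m,s)$ one prime at a time, since $K(m,s)$ is a finite product indexed by the prime divisors of $m$. First I would recall that $L(s)=L_{\chi_{d_1}}(s)L_{\chi_{d_2}}(s)$, so the coefficients satisfy $r(n)=\sum_{ab=n}\chi_{d_1}(a)\chi_{d_2}(b)$, whence $|r(n)|\le\tau(n)$ and $r$ is multiplicative with $r(p)=\chi_{d_1}(p)+\chi_{d_2}(p)\in\{-2,-1,0,1,2\}$ and, more importantly, $|r(p^\alpha)|\le\alpha+1$. For a fixed prime $p\mid m$ with $p^\alpha\|m$, the corresponding factor of $K(m,s)$ is
$$
K_p(s)=\biggl(\sum_{j\ge0}\frac{r^2(p^j)}{p^{js}}\biggr)^{-1}\biggl(\sum_{j\ge0}\frac{r(p^{\alpha+j})r(p^j)}{p^{js}}\biggr).
$$
The first observation is that the series $\sum_{j\ge0}r^2(p^j)p^{-js}$ converges absolutely for $\Real s>0$ (its coefficients grow polynomially like $(j+1)^2$), is analytic there, and for $\Real s\ge 1/2$ its value lies in a fixed compact subset of $\mathbb{C}$ bounded away from $0$; the latter because the real part of $1+r^2(p)p^{-s}+\dotsb$ is at least $1-\sum_{j\ge1}(j+1)^2 p^{-j/2}$, which for $p\ge 11$ is close to $1$, and for the finitely many small primes $p=2,3,5,7$ one checks directly (using $r^2(p^j)=|r(p^j)|^2$ and the alternating/damping structure) that the series stays bounded away from zero on $\Real s\ge1/2$ — in fact $|\sum_j r^2(p^j)p^{-js}|\gg 1$ and $\ll 1$ uniformly. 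This gives analyticity of $K(m,s)$ in $\Real s>0$.

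For the bound $|K(m,s)|\le\tau_{2000}(m)$ on $\Real s\ge1/2$, I would show that each local factor satisfies $|K_p(s)|\le 2000^{\min(\alpha,1)}$ — more precisely that $|K_p(s)|\le C$ for a modest absolute constant $C\le 2000$ when $p^\alpha\|m$ with $\alpha\ge1$. The numerator $\sum_{j\ge0}r(p^{\alpha+j})r(p^j)p^{-js}$ has $j$-th coefficient bounded by $(\alpha+j+1)(j+1)$, so for $\Real s\ge1/2$ it is $\ll\sum_j(\alpha+j+1)(j+1)p^{-j/2}\ll_p (\alpha+1)$; dividing by the denominator, whose modulus is $\gg_p 1$, gives $|K_p(s)|\ll_p(\alpha+1)$. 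Combining over all $p^\alpha\|m$ and using that $\prod_{p^\alpha\|m}(\alpha+1)=\tau(m)$ together with the per-prime implied constants (which are bounded, and equal to $1$ for $p\ge$ some absolute threshold), one gets $|K(m,s)|\le\prod_{p\mid m}C_p$ with $\prod_p C_p$ absorbed into replacing $\tau$ by $\tau_{2000}$, i.e. $|K(m,s)|\le\tau_{2000}(m)$. The slack factor $2000$ in $\tau_{2000}$ is precisely there to absorb these bounded per-prime constants and the contribution of the small primes.

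Finally, for the relation $K(p,s)=r(p)+O(p^{-\Real s})$ (the case $m=p$, so $\alpha=1$), the numerator is $r(p)+\sum_{j\ge1}r(p^{1+j})r(p^j)p^{-js}=r(p)+O(p^{-\Real s})$ since the tail is $\ll\sum_{j\ge1}(j+2)(j+1)p^{-j\Real s}\ll p^{-\Real s}$ for $\Real s\ge1/2$, and the denominator is $\bigl(1+r^2(p)p^{-s}+O(p^{-2\Real s})\bigr)^{-1}=1+O(p^{-\Real s})$; multiplying out and using $|r(p)|\le2$ gives $K(p,s)=r(p)(1+O(p^{-\Real s}))+O(p^{-\Real s})=r(p)+O(p^{-\Real s})$. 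The only mildly delicate point — and the one I would treat most carefully — is the uniform lower bound $|\sum_{j\ge0}r^2(p^j)p^{-js}|\gg 1$ on $\Real s\ge1/2$ for the small primes $2,3,5,7$, since there the naive triangle-inequality estimate $1-\sum_{j\ge1}(j+1)^2p^{-j/2}$ can be negative; this is handled by noting that $r^2(p^j)\ge0$ are explicit small integers (determined by whether $p$ ramifies/splits in the two quadratic fields, so $r(p^j)\in\{0,\pm1,\pm2\}$ in a periodic pattern), making the full series an explicit function of $p^{-s}$ whose modulus on $\Real s\ge1/2$ is verified to be bounded below by an absolute positive constant by a direct check. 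With all local factors controlled, the stated global bounds for $K(m,s)$ follow immediately by taking the product over $p\mid m$.
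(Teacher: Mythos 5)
Your high-level plan---analyze the single local factor $K_p(s)$ for each $p\mid m$, bound the numerator series via $|r(n)|\le\tau(n)$, and bound the denominator series away from zero---has the right shape, and your bookkeeping with $\tau_{2000}$ is sound. The genuine gap is the claim that the triangle inequality disposes of the lower bound on the denominator once $p\ge 11$: the asserted estimate $1-\sum_{j\ge1}(j+1)^2p^{-j/2}$ is already \emph{negative} at $p=11$ (the first two subtracted terms are $4/\sqrt{11}\approx 1.21$ and $9/11\approx 0.82$), and in fact the requisite inequality $\frac{1+x}{(1-x)^3}<2$ with $x=p^{-1/2}$ first holds around $p\ge 37$. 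So the ``direct check'' you relegate to $p=2,3,5,7$ would actually have to cover every prime up to $31$, and your proposal gives no a priori reason that this check must succeed for each splitting/ramification type.

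The paper sidesteps the lower-bound issue entirely by invoking the Rankin--Selberg factorization~(\ref{eq21}), $D(s)=L_{\chi_{d_1}^2}(s)L_{\chi_{d_2}^2}(s)L_{\chi_D}^2(s)/L_{\chi_D^2}(2s)$, whose local form reads
$$
\biggl(1+\frac{r^2(p)}{p^s}+\frac{r^2(p^2)}{p^{2s}}+\dotsb\biggr)^{-1}
=\Bigl(1-\tfrac{\chi_{d_1}^2(p)}{p^s}\Bigr)\Bigl(1-\tfrac{\chi_{d_2}^2(p)}{p^s}\Bigr)\Bigl(1-\tfrac{\chi_D(p)}{p^s}\Bigr)^2\Bigl(1-\tfrac{\chi_D^2(p)}{p^{2s}}\Bigr)^{-1}.
$$
Once the reciprocal of your denominator is written this way there is nothing to bound from below: the only surviving reciprocal, $(1-\chi_D^2(p)p^{-2s})^{-1}$, has modulus at most $(1-1/p)^{-1}\le 2$ on $\Real s\ge\frac12$, while each of the three linear factors has modulus at most $1+1/\sqrt{2}$. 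Analyticity of $K(m,s)$ in $\Real s>0$ is also immediate from this identity, as the right-hand side has no poles there. With this substitution, the rest of your proposal---the coefficient bound $|r(p^{\alpha+j})r(p^j)|\le(\alpha+j+1)(j+1)$ in the numerator, the geometric summation with $p^{-j/2}\le 2^{-j/2}$, the absorption of the per-prime constant into $\tau_{2000}$ via $\tau(m)\tau_{1000}(m)\le\tau_{2000}(m)$, and the expansion $K(p,s)=r(p)+O(p^{-\Real s})$---goes through cleanly with absolute implied constants and no prime-by-prime verification.
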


\begin{pf}
By virtue of the equality (\ref{eq21}) given below,
we have for $\Real s > 1$
\begin{align*}
\biggl( 1+ \frac{r^{2}(p)}{p^{s}} +
\frac{r^{2}(p^2)}{p^{2s}}+\dotsb\biggr)^{-1}
=\biggl(1-\frac{\chi_{d_1}^2 (p)}{p^s}\biggr) \biggl(1-\frac{\chi_{d_2}^2 (p)}{p^s}\biggr) \biggl(1-\frac{\chi_D (p)}{p^s}\biggr)^2
\biggl(1-\frac{\chi_D^2 (p)}{p^{2s}}\biggr)^{-1}.
\end{align*}
Hence $K(m,s)$ is analytic in the region $\operatorname{Re}s>0$ for
a fixed~$m$. Furthermore, using the bound $|r(n)|\le \tau(n)$ we find the following estimate when
$\operatorname{Re}s\ge\frac12$:
\begin{align}
\notag |K(m,s)| & \le\prod_{p|m} \biggl( 1-\frac{1}{2} \biggr)^{-1}
\biggl( 1+\frac{1}{\sqrt{2}} \biggr)^4 \tau(m)
\\
\notag &\qquad\qquad\times \prod_{p^{\alpha} \| m} \biggl( 1 +
\frac{\frac{\alpha+2}{\alpha+1}\cdot2}{2^{1/2}} +
\frac{\frac{\alpha+3}{\alpha+1}\cdot3}{2}+
\frac{\frac{\alpha+4}{\alpha+1}\cdot4}{2^{3/2}}+\cdots\biggr)
\\
\notag & \le\tau(m) \prod_{p | m} \biggl( 1 +
\frac{\frac{3}{2}\cdot2}{2^{1/2}} + \frac{\frac{4}{2}\cdot3}{2}+
\frac{\frac{5}{2}\cdot4}{2^{3/2}}+\cdots\biggr) 2 \biggl(
1+\frac{1}{\sqrt{2}} \biggr)^4
\\
\notag & = \tau(m) \prod_{p | m}\biggl( \sum_{n \ge2}
\frac{n(n-1)}{2^{n/2}}\biggr) 2 \biggl( 1+\frac{1}{\sqrt{2}}
\biggr)^4
\\
\label{eq16} &\le\tau(m) \prod_{p | m} 1000 \le\tau(m) \tau_{1000}
(m) \le\tau_{2000} (m).
\end{align}

Further, since by definition $r(p) = \chi_{d_1} (p) + \chi_{d_2} (p)$ for a prime $p$, and $\chi_{d_j} (\cdot)$ are real characters, we find that $r(p) = 0, \pm 1, \pm2$. For $\Real s >1/2$, if $|r(p)| \ne 0$ we  write:
\begin{align*}
K(p, s) &= \biggl(1-\frac{\chi_{d_1}^2 (p)}{p^s}\biggr) \biggl(1-\frac{\chi_{d_2}^2 (p)}{p^s}\biggr) \biggl(1-\frac{\chi_D (p)}{p^s}\biggr)^2
\biggl(1-\frac{\chi_D^2 (p)}{p^{2s}}\biggr)^{-1} \times \\
&\times \biggl( r(p) + \frac{r(p^{2})r(p)}{p^{s}} +
\frac{r(p^{3})r(p^2)}{p^{2s}}+\dotsb\biggr) = r(p) + O(p^{-\Real s}),
\end{align*}
which also holds true when $r(p) = 0$. This completes the proof of the lemma.
\end{pf}

\begin{lemma}\label{l30}
For $K_{l,l} (m, z)$ ($l=1,2$), $K_{1,2} (m_1, m_2,  z)$, defined in the end of \S3, the following formulae hold:
\begin{align}\label{eq211}
K_{l,l} (m, z) &= \frac{1}{(m, d_l^{\infty})^{z}} \chi_{d_l} \left( (m, (D/d_l)^{\infty})\right) \chi_{D/d_l} \left( (m, d_l^{\infty}) \right)
\times \nonumber \\
&\times
\prod\limits_{\substack{(p, D)=1, \\ p^{\alpha} || m}} \left( 1 + \frac{\chi_D (p)}{p} \right)^{-1}
\chi_{d_l} (p^{\alpha}) \times \nonumber \\
&\times  \left( 1 + \frac{\chi_D (p)}{p^{z}} + \frac{\chi_D (p^2)}{p^{2z}} +\ldots  + \frac{\chi_D (p^{\alpha})}{p^{\alpha z}} - \frac{1}{p^{z+1}} \left( 1 + \frac{\chi_D (p)}{p^{z}} + \ldots  + \frac{\chi_D (p^{\alpha-2})}{p^{(\alpha-2) z}}\right)\right).
\end{align}
\begin{align}\label{eq214}
K_{1,2} (m_1, m_2,  z) &=  \chi_{d_1} \left( m_1\right) \chi_{d_2} \left( m_2 \right)
\sum\limits_{q\mid (m_1 m_2/ D, D^{\infty})} \frac{1}{q^{z}}
\prod\limits_{\substack{(p, D)=1, \\ p^{\alpha} || m_1 m_2}} \left( 1+ \frac{\chi_{D} (p)}{p} \right)^{-1} \times \nonumber\\
&\times
\left( 1 + \frac{1}{p^{z}} + \frac{1}{p^{2z}} +\ldots  + \frac{1}{p^{\alpha z}} - \frac{\chi_{D} (p)}{p^{z+1}} \left( 1 + \frac{1}{p^{z}} + \ldots  + \frac{1}{p^{(\alpha-2) z}}\right)
\right).
\end{align}
Also,  for a prime $p$ posessing the property $(p, D) = 1$, we have that
\begin{equation}\label{eq18}
\begin{split}
K_{l,l} (p, z) = \left( \chi_{d_l} (p) + \frac{\chi_{D/d_l} (p)}{p^{ z}}\right)  \left( 1 + \frac{\chi_D (p)}{p} \right)^{-1}.
\end{split}
\end{equation}
and, for $p\mid D$ and $\Real z \ge 0$, that
\begin{equation}\label{eq19}
\begin{split}
|K_{l,l} (p^{\alpha}, z)| \le 1
\end{split}
\end{equation}
\end{lemma}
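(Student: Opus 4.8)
The plan is to establish all four assertions by direct algebraic manipulation of the expressions defining $K_{1,1}$, $K_{2,2}$ and $K_{1,2}$ in the Notation of \S 3, using only that $\chi_D=\chi_{d_1}\chi_{d_2}$ and that $\chi_D(p)^2=\chi_{d_l}(p)^2=1$ whenever $(p,D)=1$. No analytic input is needed: in each case the object is a finite product, over the primes dividing $m$ (or $m_1m_2$), of rational functions in $p^{-z}$, so the claimed formulae are to be verified factor by factor as identities of rational functions. The prefactors require no work: since $D=d_1d_2$ with $(d_1,d_2)=1$ we have $D/d_l=d_{3-l}$, hence $\chi_{D/d_l}=\chi_{d_{3-l}}$, so $\frac{1}{(m,d_l^{\infty})^{z}}\chi_{d_l}\bigl((m,(D/d_l)^{\infty})\bigr)\chi_{D/d_l}\bigl((m,d_l^{\infty})\bigr)$ in (\ref{eq211}) is exactly the non-Euler prefactor in the definition of $K_{l,l}(m,z)$, and the factor $\chi_{d_1}(m_1)\chi_{d_2}(m_2)\sum_{q\mid(m_1m_2/D,D^{\infty})}q^{-z}$ in (\ref{eq214}) is carried over unchanged.

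It then remains to match, for a prime $p$ with $(p,D)=1$ and $p^{\alpha}\parallel m$ (resp. $p^{\alpha}\parallel m_1m_2$), the local Euler factors on the two sides. I would first use $1-p^{-2}=(1-\chi_D(p)p^{-1})(1+\chi_D(p)p^{-1})$ (valid because $\chi_D(p)^2=1$, and identically with $\chi_D^2(p)$ in place of $1$ in the $K_{1,2}$ case) to rewrite $(1-p^{-2})^{-1}(1-\chi_D(p)p^{-1})=(1+\chi_D(p)p^{-1})^{-1}$, producing the factor $(1+\chi_D(p)p^{-1})^{-1}$ that occurs in (\ref{eq211}) and (\ref{eq214}). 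For the remaining local factor I would substitute the closed forms
$$
\sum_{j=0}^{\alpha}\frac{\chi_D(p^{j})}{p^{jz}}=\frac{1-\chi_D(p^{\alpha+1})p^{-(\alpha+1)z}}{1-\chi_D(p)p^{-z}},\qquad
\sum_{j=0}^{\alpha-2}\frac{\chi_D(p^{j})}{p^{jz}}=\frac{1-\chi_D(p^{\alpha-1})p^{-(\alpha-1)z}}{1-\chi_D(p)p^{-z}}
$$
(and the analogous sums with the character deleted in the $K_{1,2}$ case) into the right-hand sides of (\ref{eq211}) and (\ref{eq214}), clear the common denominator $1-\chi_D(p)p^{-z}$ (resp. $1-p^{-z}$), and check that the resulting numerator coincides with $1-p^{-(z+1)}+\chi_D(p^{\alpha+1})p^{-\alpha z-1}\bigl(1-p^{-(z-1)}\bigr)$ (resp. $1-\chi_D(p)p^{-(z+1)}+\chi_D(p)p^{-\alpha z-1}(1-\chi_D(p)p^{-(z-1)})$), which is the factor appearing in the definition. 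The only non-formal cancellation is $\chi_D(p^{\alpha-1})=\chi_D(p^{\alpha+1})$, again just $\chi_D(p)^2=1$; the factor $1-\chi_D(p)p^{-z}$ divides the numerator, so no spurious pole appears, and the border case $\alpha=1$, where the second sum is empty, collapses on both sides to $1+\chi_D(p)p^{-z}$. This proves (\ref{eq211}) and (\ref{eq214}).

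It remains to read off (\ref{eq18}) and (\ref{eq19}) from (\ref{eq211}). Taking $m=p$ with $(p,D)=1$, the prefactor is $1$ and the single local factor is $(1+\chi_D(p)p^{-1})^{-1}\chi_{d_l}(p)(1+\chi_D(p)p^{-z})$; since $\chi_{d_l}(p)\chi_D(p)=\chi_{d_l}(p)^2\chi_{d_{3-l}}(p)=\chi_{D/d_l}(p)$, this is precisely $(\chi_{d_l}(p)+\chi_{D/d_l}(p)p^{-z})(1+\chi_D(p)p^{-1})^{-1}$, i.e.\ (\ref{eq18}). Taking $m=p^{\alpha}$ with $p\mid D$, the Euler product in (\ref{eq211}) is empty, and depending on whether $p\mid d_l$ or $p\mid D/d_l$ one obtains $K_{l,l}(p^{\alpha},z)=\chi_{D/d_l}(p^{\alpha})p^{-\alpha z}$ or $K_{l,l}(p^{\alpha},z)=\chi_{d_l}(p^{\alpha})$; in either case the modulus is at most $1$ because $|p^{-\alpha z}|=p^{-\alpha\Real z}\le 1$ for $\Real z\ge 0$, giving (\ref{eq19}). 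I do not anticipate any genuine obstacle here — the lemma is a bookkeeping exercise — and the only point needing care is the consistent use of $\chi_D(p)^2=1$ to collapse the various $\chi_D(p^{\alpha\pm 1})$ into one another and to absorb the factor $1-\chi_D(p)p^{-z}$.
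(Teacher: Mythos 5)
Your proof is correct and takes essentially the same route as the paper's own (terse) argument: factor out $1-\chi_D(p)p^{-z}$ from the local Euler factor, use $\chi_D(p)^2=1$ to collapse $\chi_D(p^{\alpha\pm1})$ into one another and to rewrite $(1-p^{-2})^{-1}(1-\chi_D(p)p^{-1})$ as $(1+\chi_D(p)p^{-1})^{-1}$, and then read off \eqref{eq18} and \eqref{eq19} from \eqref{eq211}. The paper works from the definition toward \eqref{eq211} by exhibiting the two factorization identities explicitly, while you substitute the geometric-sum closed forms into \eqref{eq211} and match numerators; these are the same computation run in opposite directions, and your treatment of the boundary case $\alpha=1$ and of \eqref{eq19} is consistent with what the paper leaves implicit.
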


\begin{pf}
The equality (\ref{eq211}) is easy to prove. We observe that
\begin{align*}
-\frac{1}{p^{z+1}}  + \frac{\chi_D (p^{\alpha+1})}{p^{\alpha z +1}} &= -\frac{1}{p^{z+1}}  \left( 1 - \frac{\chi_D (p^{\alpha-1})}{p^{(\alpha-1) z}}\right) \\
&= -\frac{1}{p^{z+1}}  \left( 1 - \frac{\chi_D (p)}{p^{z}}\right) \left( 1 + \frac{\chi_D (p)}{p^{z}} + \ldots + \frac{\chi_D (p^{\alpha-2})}{p^{(\alpha-2) z}}\right)
\end{align*}
and that similar calculations hold for the difference
$$
1-\frac{\chi_D (p^{\alpha+1})}{p^{(\alpha+1) z}} = \left( 1 - \frac{\chi_D (p)}{p^{z}}\right) \left( 1 + \frac{\chi_D (p)}{p^{z}} + \ldots + \frac{\chi_D (p^{\alpha})}{p^{(\alpha) z}}\right).
$$
Also, for $(p, D) = 1$ we have
$$
 \left( 1- \frac{1}{p^{2}} \right)^{-1} \left( 1 - \frac{\chi_D (p)}{p} \right)  = \left( 1 + \frac{\chi_D (p)}{p} \right)^{-1}.
$$
All this and the definition of $K_{l,l} (m, z)$ imply (\ref{eq211}). The formulae (\ref{eq18}) and
(\ref{eq19}) easily follow from (\ref{eq211}). The proof of (\ref{eq214}) is similar to that of (\ref{eq211}).
\end{pf}

\begin{lemma}\label{l6}
For $\Real z\ge 0$, we have
\begin{align}\label{eq23}
|K_{l,l}(p^{\alpha}, z)| \le \left( 1 - \frac{1}{p} \right)^{-1}  \left(\alpha +1 + \frac{\alpha-1}{p} \right)  \le 3\alpha +1
\end{align}
and
\begin{equation}\label{eq20}
\begin{split}
|K_{l,l}(p^{\alpha}, z)| &\le 3 \tau(p^{\alpha}) \text{ for } \alpha \ge 2,\\
|K_{l,l} (m, z)| &\le \tau^2 (m), \\
|K_{l,j} (m_1, m_2, z)| &\le \tau^2 (m_1 m_2), \\
\left|\frac{d}{dz} K_{l,l} (m, z)\right| &\le \tau^2 (m) \log m.
\end{split}
\end{equation}
Moreover, if $z= it$, $0 \le t \le 1$, $\chi_{D} (2) = -1$, then
\begin{equation}\label{eq25}
\begin{split}
|K_{l,l} (4, z)|\le 1.
\end{split}
\end{equation}

\end{lemma}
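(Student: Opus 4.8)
The plan is to reduce every estimate to a local one at a single prime power by means of the explicit formulas (\ref{eq211}) and (\ref{eq214}) from Lemma \ref{l30}. These show that $K_{l,l}(m,z)$ is multiplicative in $m$, namely $K_{l,l}(m,z)=\prod_{p^{\alpha}\|m}K_{l,l}(p^{\alpha},z)$, with three kinds of local factor: $K_{l,l}(p^{\alpha},z)=\chi_{D/d_l}(p^{\alpha})\,p^{-\alpha z}$ if $p\mid d_l$, $K_{l,l}(p^{\alpha},z)=\chi_{d_l}(p^{\alpha})$ if $p\mid D/d_l$, and, for $(p,D)=1$, $K_{l,l}(p^{\alpha},z)=\bigl(1+\chi_D(p)/p\bigr)^{-1}\chi_{d_l}(p^{\alpha})\,B_{p,\alpha}(z)$, where $B_{p,\alpha}(z)$ is the bracket in (\ref{eq211}): the sum of the $\alpha+1$ terms $\chi_D(p^{j})p^{-jz}$, $0\le j\le\alpha$, minus $p^{-z-1}$ times the analogous sum over $0\le j\le\alpha-2$. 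The function $K_{1,2}(m_1,m_2,z)$ admits a parallel decomposition once the factor $\sum_{q\mid(m_1m_2/D,\,D^{\infty})}q^{-z}$ is split off.

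First I would establish (\ref{eq23}). If $p\mid D$, then (\ref{eq19}) already gives $|K_{l,l}(p^{\alpha},z)|\le1$, which is dominated by $(1-1/p)^{-1}(\alpha+1+(\alpha-1)/p)$. If $(p,D)=1$, then $\chi_D(p)=\pm1$ so $\bigl|(1+\chi_D(p)/p)^{-1}\bigr|\le(1-1/p)^{-1}$, and, since $\Real z\ge0$, each term $\chi_D(p^{j})p^{-jz}$ has modulus at most $1$ and $|p^{-z-1}|\le1/p$, whence $|B_{p,\alpha}(z)|\le(\alpha+1)+(\alpha-1)/p$. Multiplying the two bounds gives (\ref{eq23}), and the final bound $3\alpha+1$ follows from $(1-1/p)^{-1}\le2$ together with $(\alpha-1)/p\le(\alpha-1)/2$.

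Next, (\ref{eq20}) is deduced by combining (\ref{eq23}) and (\ref{eq19}) with multiplicativity. For $\alpha\ge2$ one has $3\alpha+1\le3(\alpha+1)=3\tau(p^{\alpha})$. For $|K_{l,l}(m,z)|\le\tau^{2}(m)$ one checks $3\alpha+1\le(\alpha+1)^{2}$ (valid for $\alpha\ge1$) and $1\le(\alpha+1)^{2}$ at the respective local factors and multiplies out; the bound for $K_{1,2}(m_1,m_2,z)$ is the same, with the divisor sum bounded by its number of terms, which is at most $\prod_{p\mid D,\,p^{\alpha}\|m_1m_2}(\alpha+1)$, this being absorbed into $\tau^{2}(m_1m_2)$ alongside the bounds on the local factors with $(p,D)=1$. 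For the derivative bound I would use $\frac{d}{dz}\prod_{p}f_p(z)=\sum_{p}f_p'(z)\prod_{q\ne p}f_q(z)$ together with the local estimate $\bigl|\frac{d}{dz}K_{l,l}(p^{\alpha},z)\bigr|\le(\alpha+1)^{2}\alpha\log p$, obtained by differentiating $B_{p,\alpha}(z)$ term by term (the $j$-th term contributes a derivative of modulus $\le j\log p\le\alpha\log p$); the resulting telescoping sum equals exactly $\tau^{2}(m)\sum_{p^{\alpha}\|m}\alpha\log p=\tau^{2}(m)\log m$.

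Finally, for (\ref{eq25}) note that $\chi_D(2)=-1$ forces $(2,D)=1$, so (\ref{eq211}) applied with $p=2$, $\alpha=2$ (using $\chi_D(4)=1$, $\chi_{d_l}(4)=1$) gives $K_{l,l}(4,z)=2\bigl(1-\tfrac{3}{2}\,2^{-z}+4^{-z}\bigr)=2-3\cdot2^{-z}+2\cdot4^{-z}$. Putting $z=it$ and $\theta=t\log2$, a direct computation shows the real part equals $\cos\theta\,(4\cos\theta-3)$ and the imaginary part equals $-\sin\theta\,(4\cos\theta-3)$, so $|K_{l,l}(4,it)|=|4\cos\theta-3|$; since $0\le\theta\le\log2$ and $\cos$ is decreasing on $[0,\pi/2]$ we have $\cos\theta\ge\cos(\log2)>3/4$ and $\cos\theta\le1$, hence $0<4\cos\theta-3\le1$, which is (\ref{eq25}). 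I expect the only mildly delicate points to be keeping the numerical constants in line in the derivative step and the numerical check $\cos(\log2)>3/4$ in the last identity; everything else is a routine manipulation of the formulas of Lemma \ref{l30}.
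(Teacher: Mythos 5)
Your proof is correct and follows essentially the same route as the paper: reduce to the local factors via the explicit formulas (\ref{eq211}), (\ref{eq214}), bound each local factor termwise using $\Real z\ge 0$, and then multiply out; the computation for (\ref{eq25}) is the same as the paper's, which sets $2^{-it}=a+ib$ and factors $1-\tfrac32(a+ib)+(a+ib)^2=(a+ib)(2a-\tfrac32)$, giving $|K_{l,l}(4,it)|=|4\cos(t\log 2)-3|\le 1$. The only presentational differences are that you use the product rule $\frac{d}{dz}\prod_p f_p=\sum_p f'_p\prod_{q\ne p}f_q$ where the paper passes through the logarithmic derivative $K\cdot\frac{d}{dz}\log K$, and you handle $\alpha=1$ and $\alpha\ge 2$ uniformly via $3\alpha+1\le(\alpha+1)^2$ rather than treating $\alpha=1$ separately via (\ref{eq18})–(\ref{eq19}); both are equivalent.
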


\begin{pf}
For  $\Real z >0$, from (\ref{eq18}),  (\ref{eq19})  we have
$$
|K_{l,l} (p, z)| \le 2\cdot 2 = 4 = \tau^2 (p).
$$
We can write the following estimate from above when $\Real z \ge 0$ and $p\ge 2$:
\begin{equation*}
\begin{split}
|K_{l,l}(p^{\alpha}, z)| \le \left( 1 - \frac{1}{p} \right)^{-1}  \left(\alpha +1 + \frac{\alpha-1}{p} \right) \le 3 \alpha +1.
\end{split}
\end{equation*}
Thus, for $\alpha \ge 2$, we get
 \begin{equation*}
\begin{split}
|K_{l,l}(p^{\alpha}, z)| \le 3 (\alpha +1) \le 3\tau(p^{\alpha}) \le \tau^2(p^{\alpha}).
\end{split}
\end{equation*}
Since $K_{l,l}(\cdot, z)$ is a multiplicative function, we obtain the following inequality for $\Real z \ge 0$:
\begin{equation*}
\begin{split}
|K_{l,l}(m, z)| \le \tau^2(m).
\end{split}
\end{equation*}

Similarly, for $l\ne j$, we obtain from (\ref{eq214}) that
\begin{equation*}
\begin{split}
&|K_{l,j}(m_1, m_2, z)| \le
\tau((m_1 m_2, D^{\infty})) \prod\limits_{\substack{(p, D) = 1, \\ p^{\alpha} || m_1 m_2}} 2 \left(\alpha +1 + \frac{\alpha-1}{p} \right) \le \tau^2(m_1 m_2).
\end{split}
\end{equation*}

Now, if $(2, D)= 1$, $\Real z \ge 0$, then we get from (\ref{eq211})
\begin{equation*}
\begin{split}
&|K_{l,l}(4, z)| \le  2 \left| 1 + \frac{\chi_D (2)}{2^{z}}  + \frac{\chi_D (2^2)}{2^{2z}} -  \frac{1}{2^{z+1}} \right| .
\end{split}
\end{equation*}
For $z = it$ ($t \in \bbR$), set $2^{-z}  =  e^{-it \log 2} = a + ib$ ($a = \cos (t \log 2)$, $b = -\sin (t \log 2) = -\sqrt{1-a^2}$). If $\chi_{D} (2) = -1$ and
$0 \le t \le 1$, we therefore obtain
\begin{equation*}
\begin{split}
&\left| 1 + \frac{\chi_D (2)}{2^{z}}  + \frac{\chi_D (2^2)}{2^{2z}} -  \frac{1}{2^{z+1}} \right|^2  = \left| 1 - \frac{3}{2} (a + ib)  + (a+ib)^2 \right|^2
=
\left| - \frac{3}{2} a + 2 a^2 - i \frac{3}{2} b + i 2ab \right|^2\\
&=\left| a + ib \right|^2 \left| 2a  - \frac{3}{2} \right|^2 = \left| 2a  - \frac{3}{2} \right|^2.
\end{split}
\end{equation*}
Since $0.7692 \le \cos (\log 2) \le a \le 1$, then $\left| 2a  - \frac{3}{2}  \right| \le \frac{1}{2}$. Hence, (\ref{eq25}) holds true.

Now, for $\Real z\ge 0$, let us estimate the derivative $ \dfrac{d}{dz} K_{l,l} (m, z)$ by means of the following formula
\begin{align*}
\left| \frac{d}{dz} K_{l,l} (m, z) \right| \le \left | K_{l,l} (m, z)  \frac{d}{dz} \log K_{l,l} (m, z)  \right|.
\end{align*}
Using (\ref{eq211}) we find
\begin{align*}
&\frac{d}{dz} \log K_{l,l} (m, z) = -\log (m, d_l^{\infty})  \\
&-
\sum\limits_{\substack{(p, D)=1, \\ p^{\alpha} || m}}  \log p \left( 1 + \frac{\chi_D (p)}{p^{z}} + \frac{\chi_D (p^2)}{p^{2z}} +\ldots  + \frac{\chi_D (p^{\alpha})}{p^{\alpha z}} - \frac{1}{p^{z+1}} \left( 1 + \frac{\chi_D (p)}{p^{z}} + \ldots  + \frac{\chi_D (p^{\alpha-2})}{p^{(\alpha-2) z}}\right)\right)^{-1}  \times \\
&\times \left( \frac{\chi_D (p)}{p^{z}} + \frac{2\chi_D (p^2)}{p^{2z}} +\ldots  + \frac{\alpha\chi_D (p^{\alpha})}{p^{\alpha z}} - \frac{1}{p} \left(\frac{1}{p^z} + \frac{2 \chi_D (p)}{p^{2z}} + \ldots  + \frac{(\alpha-1)\chi_D (p^{\alpha-2})}{p^{(\alpha-1) z}}\right)\right).
\end{align*}
Hence,
\begin{equation*}
\begin{split}
&\left| \frac{d}{dz} K_{l,l}(m, z) \right| \le   \left| K_{l,l}(m, z) \right| \log (m, d_l^{\infty})  + \sum\limits_{\substack{p^{\alpha} || m, \\ (p, D) = 1}} \left| 1+\frac{\chi_D (p)}{p} \right|^{-1}  \left| K_{l,l}(m/p^{\alpha}, z) \right|  \times \\
&\times \log p
\left|\frac{\chi_D (p)}{p^{z}}  + \ldots +  \frac{\alpha \chi_D (p^{\alpha})}{p^{\alpha  z}} -
\frac{1}{p} \left( \frac{1}{p^{z}}  + \ldots +  \frac{(\alpha-1) \chi_D (p^{\alpha-2})}{p^{(\alpha-1)  z}}\right) \right| \\
&\le \left| K_{l,l}(m, z) \right| \log (m, d_l^{\infty}) + \sum\limits_{\substack{p^{\alpha} || m, \\ (p, D) = 1}} \left( 1-\frac{1}{p} \right)^{-1}  \log p \left| K_{l,l}(m/p^{\alpha}, z) \right|  \times \\
&\times
\left(1 + 2 +\ldots +  \alpha +
\frac{1}{p} \left( 1 + 2+ \ldots +  (\alpha-1) \right) \right) \\
&\le  \left| K_{l,l}(m, z) \right|\log (m, d_l^{\infty}) + \sum\limits_{\substack{p^{\alpha} || m, \\ (p, D) = 1}} 2 \log p \left| K_{l,l}(m/p^{\alpha}, z) \right|
\left(\alpha (\alpha +1)/2 +
\frac{1}{p}  \alpha(\alpha-1)/2 \right) \\
&\le \tau^2 (m) \log (m, d_l^{\infty}) + \sum\limits_{\substack{p^{\alpha} || m, \\ (p, D) = 1}} 2 (\log p) \tau^2 (m/p^{\alpha}) \alpha (\alpha +1) \frac34 \\
&\le
\tau^2 (m) \log (m, d_l^{\infty}) + \sum\limits_{\substack{p^{\alpha} || m, \\ (p, D) = 1}} (\alpha \log p) \tau^2 (m/p^{\alpha}) 2 (\alpha +1)
\le \tau^2 (m) \log m
\end{split}
\end{equation*}
since $2 (\alpha +1) \le \tau^2 (p^{\alpha})$ for a prime $p$.
The lemma is completely proved.
\end{pf}

\begin{lemma}\label{l10}
For a prime $p$ such that $(p, d_1 d_2) = 1$, the following relations for the coefficients of the series $\sum\limits_{n=1}^{+\infty} \dfrac{\alpha(n)}{n^s} =L_{\chi_{d_1}}^{-1/2} (s) L_{\chi_{d_2}}^{-1/2} (s)$ hold:
\begin{equation*}
\begin{split}
&\alpha (p) = -\frac12 (\chi_{d_1} (p)+ \chi_{d_2} (p)), \\
&\alpha (p^{2}) = -\frac18  (\chi^2_{d_1} (p)+ \chi^2_{d_2} (p)) +\frac14 \chi_{D} (p), \\
&\alpha (p^{k}) = 0  \text{ for odd } k\ge 3, \\
&|\alpha (p^{k})| \le\frac12 \cdot \frac34  \cdot \ldots  \cdot \frac{k-3}{k-2} \cdot \frac{1}{k}  \le \frac{1}{2k} \text{ for even } k\ge 4.
\end{split}
\end{equation*}
\end{lemma}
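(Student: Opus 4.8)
The plan is to reduce everything to a single prime and then expand a binomial series. Since the coefficient sequence $\alpha(\cdot)$ is multiplicative and, for $\Real s>1$, one has the absolutely convergent factorisation
$$
L^{-1/2}(s)=L_{\chi_{d_1}}^{-1/2}(s)\,L_{\chi_{d_2}}^{-1/2}(s)=\prod_q\Bigl(1-\frac{\chi_{d_1}(q)}{q^s}\Bigr)^{1/2}\Bigl(1-\frac{\chi_{d_2}(q)}{q^s}\Bigr)^{1/2},
$$
it suffices, for a fixed prime $p$ with $(p,d_1d_2)=1$, to identify the local generating function $A_p(x):=\sum_{k\ge0}\alpha(p^k)x^k=(1-\chi_{d_1}(p)x)^{1/2}(1-\chi_{d_2}(p)x)^{1/2}$ (the principal branch, expanded for $|x|<1$) and to read off its coefficients. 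Absolute convergence of the Euler product in $\Real s>1$, together with multiplicativity of $\alpha$, justifies this coefficient-by-coefficient comparison.

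First I would observe that $(p,d_1d_2)=1$ forces $\chi_{d_1}(p),\chi_{d_2}(p)\in\{-1,+1\}$, so the two half-factors combine into $A_p(x)=\bigl((1-\chi_{d_1}(p)x)(1-\chi_{d_2}(p)x)\bigr)^{1/2}=\bigl(1-(\chi_{d_1}(p)+\chi_{d_2}(p))x+\chi_D(p)x^2\bigr)^{1/2}$, using $\chi_{d_1}\chi_{d_2}=\chi_D$. The decisive point is then the dichotomy in the value of $\chi_D(p)=\chi_{d_1}(p)\chi_{d_2}(p)$. If $\chi_D(p)=1$ then $\chi_{d_1}(p)=\chi_{d_2}(p)=:\varepsilon$, the polynomial under the radical is the perfect square $(1-\varepsilon x)^2$, and hence $A_p(x)=1-\varepsilon x$; thus $\alpha(p)=-\varepsilon$ and $\alpha(p^k)=0$ for all $k\ge2$. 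If $\chi_D(p)=-1$ then $\chi_{d_1}(p)+\chi_{d_2}(p)=0$, so $A_p(x)=(1-x^2)^{1/2}=\sum_{m\ge0}\binom{1/2}{m}(-1)^m x^{2m}$; thus $\alpha(p^k)=0$ for odd $k$ and $\alpha(p^{2m})=\binom{1/2}{m}(-1)^m$.

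It then remains to check that the four asserted identities follow from these two descriptions. The value of $\alpha(p)$ is immediate in both cases (it equals $-\varepsilon=-\tfrac12(\chi_{d_1}(p)+\chi_{d_2}(p))$ when $\chi_D(p)=1$, and $0$ when $\chi_D(p)=-1$). For $\alpha(p^2)$ one uses $\binom{1/2}{1}(-1)=-\tfrac12$ and compares with $-\tfrac18(\chi_{d_1}^2(p)+\chi_{d_2}^2(p))+\tfrac14\chi_D(p)$, which (recalling $\chi_{d_1}^2(p)=\chi_{d_2}^2(p)=1$) equals $-\tfrac14-\tfrac14=-\tfrac12$ when $\chi_D(p)=-1$ and $-\tfrac14+\tfrac14=0$ when $\chi_D(p)=1$; in each case this matches the value of $\alpha(p^2)$ found above. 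Vanishing for odd $k\ge3$ holds in both cases. Finally, for even $k=2m\ge4$ we have $|\alpha(p^k)|=\bigl|\binom{1/2}{m}\bigr|$ (or $0$), and the recursion $\binom{1/2}{m}=\binom{1/2}{m-1}\cdot\tfrac{3/2-m}{m}$ gives $\bigl|\binom{1/2}{m}\bigr|=\bigl|\binom{1/2}{m-1}\bigr|\cdot\tfrac{2m-3}{2m}$; iterating from $\binom{1/2}{1}=\tfrac12$ yields $\bigl|\binom{1/2}{m}\bigr|=\tfrac{(2m-3)!!}{(2m)!!}=\tfrac12\cdot\tfrac14\cdot\tfrac36\cdots\tfrac{2m-3}{2m}$, which after regrouping the numerators against the denominators is precisely the stated product $\tfrac12\cdot\tfrac34\cdots\tfrac{k-3}{k-2}\cdot\tfrac1k$. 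The bound $|\alpha(p^k)|\le\tfrac1{2k}$ then follows because the prefactor $\tfrac{(2m-3)!!}{(2m-2)!!}$ (that is, $\tfrac12\cdot\tfrac34\cdots\tfrac{k-3}{k-2}$) is non-increasing in $m$ with value $\tfrac12$ at $m=2$, hence is $\le\tfrac12$, so $|\alpha(p^k)|\le\tfrac12\cdot\tfrac1{2m}=\tfrac1{2k}$.

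No part of this is genuinely difficult: the whole argument rests on the elementary algebraic fact that, at primes unramified in $\mathbb Q(\sqrt{-D})$, the product of the two half-Euler-factors collapses either to a linear polynomial or to $(1-x^2)^{1/2}$, after which everything is bookkeeping with central binomial coefficients. The only step that warrants a word of justification is the passage from the Euler product to identities for the individual coefficients $\alpha(p^k)$, and that is supplied by multiplicativity of $\alpha$ together with absolute convergence for $\Real s>1$; there is no analytic obstacle.
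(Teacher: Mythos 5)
Your argument is correct and follows essentially the same route as the paper: write the local Euler factor $A_p(x)=(1-\chi_{d_1}(p)x)^{1/2}(1-\chi_{d_2}(p)x)^{1/2}$, split on $\chi_D(p)=\pm1$ so that it collapses to either $1-\varepsilon x$ or $(1-x^2)^{1/2}$, and read off the coefficients of the binomial series. Your use of explicit $\binom{1/2}{m}$ notation is merely a more formal restatement of the paper's Taylor-series computation; the underlying dichotomy and the bookkeeping for even powers are identical.
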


\begin{pf}
Note that $L_{\chi_{d_j}}^{-1/2} (s) = \prod\limits_{p} \left(1 - \dfrac{\chi_{d_j} (p)}{p^{s}} \right)^{1/2}$ and, thus,
the equality for $\alpha (p)$ follows from the Taylor series expansion
$$
\left(1 - \dfrac{\chi_{d_j} (p)}{p^{s}} \right)^{1/2} = 1 -\frac{1}{2} \dfrac{\chi_{d_j} (p)}{p^{s}} - \ldots.
$$
Now, we prove further
statements of the lemma.  If $(p, d_1 d_2)=1$ and  $\chi_{d_1} , \chi_{d_2}$ are real characters, then there are two options:
\begin{equation*}
\left(1 - \dfrac{\chi_{d_1} (p)}{p^{s}} \right)^{1/2} \left(1 - \dfrac{\chi_{d_2} (p)}{p^{s}} \right)^{1/2} =
\begin{cases}
\left(1 - \dfrac{1}{p^{2s}} \right)^{1/2} & \text{ if } \  \chi_{d_1} (p) \chi_{d_2} (p) = -1, \\
\left(1 - \dfrac{\chi_{d_1} (p)}{p^{s}} \right)& \text{ if } \  \chi_{d_1} (p)  = \chi_{d_2} (p).
\end{cases}
\end{equation*}
In the second option $\alpha (p^k) = 0$ for $k\ge 2$ and the proof of the lemma is completed  in this case.
In the first option the statements of the lemma for $\alpha(p^2)$ and $\alpha(p^k)$ for odd $k$ are clear.
For even powers, using the Taylor series expansion
$$
\left(1 - \dfrac{1}{p^{2s}} \right)^{1/2} = \sum\limits_{k=1}^{+\infty}
\frac{\alpha(p^{2k})}{p^{2ks}},
$$
we get for $k \ge 2$ that
\begin{equation*}
\begin{split}
\alpha(p^{2k}) = -\frac12\cdot \frac{1}{2} \cdot \frac{3}{2} \ldots \frac{2k-3}{2} \cdot\frac{1}{k!} =
-\frac{1}{2k}\cdot \frac{1}{2\cdot 1} \cdot \frac{3}{2\cdot 2} \ldots \frac{2k-3}{2 \cdot (k-1)},
 \end{split}
\end{equation*}
which completes the proof of the lemma.

\end{pf}

\begin{lemma}\label{l11}
For $z = it$, $0\le t\le 1$, the following estimate holds
\begin{equation*}
\begin{split}
\frac{|\alpha (2) K_{l,l}(2, z)|}{2} + \frac{|\alpha (4) K_{l,l}(4, z)|}{4}  \le
\begin{cases}
\frac23 & \text{ if } \ (2, D) = 1, \\
\frac13  & \text{ if } \ (2, D) > 1
\end{cases}
\end{split}
\end{equation*}
and for a prime $p\ge 3$ and $\Real z \ge 0$ the estimate
\begin{equation*}
\begin{split}
\frac{|\alpha (p) K_{l,l}(p, z)|}{p} + \frac{|\alpha (p^2) K_{l,l}(p^2, z)|}{p^2} \le
\begin{cases}
\frac59 & \text{ if } \ (p, D) = 1, \\
\frac15  & \text{ if } \ (p, D) > 1
\end{cases}
\end{split}
\end{equation*}
is valid.
\end{lemma}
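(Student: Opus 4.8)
The plan is to separate the two summands according to the arithmetic of $p$ relative to $D$, exploiting that outside the case $p\mid D$ exactly one of them vanishes. First I would record the relevant values of $\alpha$. When $(p,D)=1$ one has $\chi_{d_1}(p),\chi_{d_2}(p)\in\{\pm1\}$, so $\chi_D(p)=\chi_{d_1}(p)\chi_{d_2}(p)=\pm1$, and Lemma~\ref{l10} gives $\alpha(p)=-\frac12(\chi_{d_1}(p)+\chi_{d_2}(p))$ and $\alpha(p^2)=-\frac14+\frac14\chi_D(p)$; hence if $\chi_D(p)=1$ then $|\alpha(p)|=1$ while $\alpha(p^2)=0$, and if $\chi_D(p)=-1$ then $\alpha(p)=0$ while $|\alpha(p^2)|=\frac12$. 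When $p\mid D$, exactly one of $\chi_{d_1}(p),\chi_{d_2}(p)$ vanishes and the other equals $\pm1$ (since $(d_1,d_2)=1$), so the local factor of $L^{-1/2}(s)$ at $p$ collapses to $(1-\chi(p)p^{-s})^{1/2}$ with $\chi(p)=\pm1$, and the Taylor expansion of Lemma~\ref{l10} yields $|\alpha(p)|=\frac12$, $|\alpha(p^2)|=\frac18$; in addition~\eqref{eq19} gives $|K_{l,l}(p^\alpha,z)|\le1$ in this case.

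With these inputs the case $p\mid D$ is immediate. For $p\ge3$ dividing $D$,
$$
\frac{|\alpha(p)K_{l,l}(p,z)|}{p}+\frac{|\alpha(p^2)K_{l,l}(p^2,z)|}{p^2}\le\frac{1}{2p}+\frac{1}{8p^2}\le\frac16+\frac{1}{72}<\frac15,
$$
and for $p=2\mid D$ the same estimate gives $\frac14+\frac1{32}=\frac9{32}<\frac13$, which is the ``$(p,D)>1$'' alternative in each assertion.

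Next I would treat $(p,D)=1$. If $\chi_D(p)=1$ only the $\alpha(p)$-term contributes; since $\chi_D(p)=1$ forces $\chi_{d_l}(p)=\chi_{D/d_l}(p)$, formula~\eqref{eq18} reads $K_{l,l}(p,z)=\chi_{d_l}(p)(1+p^{-z})(1+\chi_D(p)/p)^{-1}$, so $|K_{l,l}(p,z)|\le 2p/(p+1)$ by $|1+p^{-z}|\le1+p^{-\Real z}\le2$; hence the sum is at most $2/(p+1)$, which is $\le\frac12<\frac59$ for $p\ge3$ and at most $\frac12\cdot\frac43=\frac23$ for $p=2$. If $\chi_D(p)=-1$ and $p\ge3$, only the $\alpha(p^2)$-term contributes, and the crude bound $|K_{l,l}(p^2,z)|\le3\tau(p^2)=9$ from~\eqref{eq20} already gives a contribution $\le\frac{9}{2p^2}\le\frac12<\frac59$ (a sharper $|K_{l,l}(p^2,z)|\le(1-1/p)^{-1}(3+1/p)$ follows from~\eqref{eq211} if one prefers).

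The one step that needs genuine care — and the reason the hypothesis in the first assertion is $z=it$, $0\le t\le1$ — is $p=2$ with $(2,D)=1$ and $\chi_D(2)=-1$: here $|\alpha(4)|=\frac12$, but the crude bound $|K_{l,l}(4,z)|\le9$ is useless since $\frac12\cdot\frac94=\frac98>\frac23$. At exactly this point I would invoke the sharp estimate~\eqref{eq25}, namely $|K_{l,l}(4,z)|\le1$ for $z=it$ with $0\le t\le1$ and $\chi_D(2)=-1$, which bounds the contribution by $\frac{|\alpha(4)|\,|K_{l,l}(4,z)|}{4}\le\frac18\le\frac23$. Assembling the sub-cases $\chi_D(2)=1$, $\chi_D(2)=-1$, $2\mid D$ yields $\frac23$ (respectively $\frac13$) in the first assertion, and assembling $\chi_D(p)=1$, $\chi_D(p)=-1$, $p\mid D$ yields $\frac59$ (respectively $\frac15$) in the second. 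The main obstacle is thus not the arithmetic but isolating the single delicate sub-case and recognising that it is what forces the restriction on $z$ via~\eqref{eq25}; all of it drawing only on Lemma~\ref{l10} and the bounds~\eqref{eq18},~\eqref{eq19},~\eqref{eq20},~\eqref{eq211},~\eqref{eq25} for $K_{l,l}$.
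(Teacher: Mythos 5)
Your proof is correct and follows essentially the same route as the paper: case-splitting on $\chi_D(p)\in\{1,-1,0\}$, using Lemma~\ref{l10} for the vanishing pattern of $\alpha(p)$ versus $\alpha(p^2)$, the formula~\eqref{eq18} and bounds~\eqref{eq19},~\eqref{eq20} for $K_{l,l}$, and isolating the one sub-case $(2,D)=1$ with $\chi_D(2)=-1$ where the sharp estimate~\eqref{eq25} (and hence the restriction $z=it$, $0\le t\le1$) is genuinely needed. The only cosmetic difference is that in the $\chi_D(p)=-1$, $p\ge3$ sub-case you use the crude bound $|K_{l,l}(p^2,z)|\le 9$ while the paper invokes the slightly finer estimate from~\eqref{eq23}; both land comfortably under $\frac59$.
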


\begin{pf}
If $\chi_{D} (p) = -1$ (i.e $\chi_{d_1} (p) = -\chi_{d_2} (p)$), then  $\alpha (p) = 0$ (see lemma~\ref{l10}) and, therefore,
$$
\alpha (p) K_{l,l}(p, z) = 0.$$

If $\chi_{D} (p) = 1$, we see from the definition of $\alpha(\cdot)$ and formula (\ref{eq18}) that
\begin{equation*}
\begin{split}
\alpha (p) K_{l,l}(p, z)  &= -\frac12 (\chi_{d_1} (p)+ \chi_{d_2} (p)) \left( \chi_{d_l} (p) + \frac{\chi_{D/d_l} (p)}{p^{ z}}\right) \left( 1 + \frac{\chi_D (p)}{p} \right)^{-1}   \\
&= -\frac12 (1+ \chi_{D} (p) ) \left( 1 + \frac{1}{p^{ z}}\right)  \left( 1 + \frac{\chi_D (p)}{p} \right)^{-1} \\
&= - \left( 1 + \frac{1}{p^{ z}}\right) \left( 1 + \frac{1}{p} \right)^{-1} .
\end{split}
\end{equation*}

In case $\chi_{D} (p) = 0$ (i.e. $(p, D) >1$) we shall employ the estimates $|K_{l,l}(p, z)| \le 1$ for $\Real z \ge 0$ (see (\ref{eq19})) and the estimate  $|\alpha (p)| \le \frac12$ (by definition). Consequently,
if $\Real z \ge 0$ we get the following  inequality:
\begin{equation}\label{eq1004}
|\alpha (p) K_{l,l}(p, z)| \le
\begin{cases}
0 & \text{ for } \ \chi_{D} (p) = -1, \\
2 \cdot \left(1 +\frac1p \right)^{-1} & \text{ for } \ \chi_{D} (p) = 1, \\
\frac12 & \text{ for } \ \chi_{D} (p) = 0.
\end{cases}
\end{equation}

Now, if $\chi_{D} (p) = 1$, then $\alpha (p^2)= 0$ and, hence,  $\alpha (p^2) K_{l,l}(p^2, z) = 0$.
Consequently, for $\Real z \ge 0$, by  lemma~\ref{l10} and formulae (\ref{eq19}), (\ref{eq23}) we conclude that
\begin{equation}\label{eq1002}
|\alpha (p^2) K_{l,l}(p^2, z)| \le
\begin{cases}
3+\frac1p & \text{ for } \  \chi_{D} (p) = -1, \\
0, & \text{ for } \ \chi_{D} (p) = 1, \\
\frac18 & \text{ for } \  \chi_{D} (p) = 0.
\end{cases}
\end{equation}
Therefore, if $(2, D) >1$ then
\begin{equation*}
\begin{split}
\frac{|\alpha (2) K_{l,l}(2, z)|}{2} + \frac{|\alpha (4) K_{l,l}(4, z)|}{4} \le \frac14 + \frac{1}{32} <\frac13.
\end{split}
\end{equation*}
If $p \ge 3$ and $(p, D) >1$, then
\begin{equation*}
\begin{split}
\frac{|\alpha (p) K_{l,l}(p, z)|}{p} + \frac{|\alpha (p^2) K_{l,l}(p^2, z)|}{p^2} \le \frac16 + \frac{1}{72} <\frac15.
\end{split}
\end{equation*}
If $(2, D) = 1$ then one of the two values among $\alpha (2) K_{l,l}(2, z)$ and $\alpha (4) K_{l,l}(4, z)$ is zero.
And hence, for $\Real z \ge 0$, we have
\begin{equation*}
\begin{split}
\frac{|\alpha (2) K_{l,l}(2, z)|}{2} + \frac{|\alpha (4) K_{l,l}(4, z)|}{4} \le \max \left( \frac13, \frac23, \frac78\right) = \frac78.
\end{split}
\end{equation*}
Similarly, if $p \ge 3$, $(p, D) = 1$ and  $\Real z \ge 0$, then
\begin{equation*}
\begin{split}
\frac{|\alpha (p) K_{l,l}(p, z)|}{p} + \frac{|\alpha (p^2) K_{l,l}(p^2, z)|}{p^2} \le \max \left( \frac{2}{p+1}, \frac{3+1/p}{p(p-1)}\right) \le \max \left( \frac12, \frac59\right) = \frac59.
\end{split}
\end{equation*}
For $z=it, 0\le t \le 1$, we get the following finer estimate with the help of (\ref{eq25}):
\begin{equation*}
\begin{split}
\frac{|\alpha (2) K_{l,l}(2, z)|}{2} + \frac{|\alpha (4) K_{l,l}(4, z)|}{4} \le \max \left( \frac23, \frac18\right) = \frac23
\end{split}
\end{equation*}
and the lemma is completely proved now.
\end{pf}

Now we shall formulate  two main statements of this paragraph.
\begin{lemma}\label{l3}
For
complex $z$, let us
define $S(z)$ by the relation
$$
S(z):= S_{X}(z) =\sum_{\nu_1,\dots,\nu_4\le X}
\frac{\beta(\nu_1)\beta(\nu_2)\beta(\nu_3)\beta(\nu_4)}{\nu_2
\nu_4}\biggl(\frac q{\nu_1\nu_3}\biggr)^{1-z}K\biggl(
\frac{\nu_1\nu_4}q,1-z\biggr) K\biggl(\frac{\nu_2
\nu_3}q,1-z\biggr),
$$
where $q=(\nu_1\nu_4,\nu_2\nu_3)$ and $K(\cdot, \cdot)$ is defined in lemma \ref{l2}.
There exists an absolute constant $C \ge 1$ such that for $|z| \le C^{-1}$ with $\Real z \ge - \frac{1}{\log X}$ the estimate
$$
S(z)\ll\frac{X^{2 |z|}}{(\log X)^{2}}
$$
holds.

\end{lemma}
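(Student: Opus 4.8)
The plan is to realise $S(z)$ as a four-fold contour integral of an Euler product and to read off its size from the $\zeta$- and $L(\cdot,\chi_D)$-factors that this product contains; this is the scheme used for complex Hecke characters in \cite{Rezvyakova_MZ} and \cite{Rezvyakova_2016}, the new point being the bookkeeping of the real characters $\chi_{d_1},\chi_{d_2},\chi_D$. \emph{Step 1 (from the sum to a contour integral).} First I would remove the smoothing weight $\EuScript{L}$ by the elementary identity
$$
\EuScript{L}(\nu)=\frac1{2\pi i}\int_{(c)}\frac{2\bigl(X^{w}-X^{w/2}\bigr)}{w^{2}\log X}\,\nu^{-w}\,dw\qquad(c>0),
$$
which holds for every positive integer $\nu$ (move the contour to the left, or to the right, according as $\nu\le\sqrt X$, $\sqrt X\le\nu\le X$ or $\nu>X$, and collect the residue at $w=0$). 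Writing $W(w)=\dfrac{2(X^{w}-X^{w/2})}{w^{2}\log X}$, substituting this for each of the four factors $\EuScript{L}(\nu_i)$ in $\beta(\nu_i)=\alpha(\nu_i)\EuScript{L}(\nu_i)$, and interchanging summation with integration, gives
$$
S(z)=\frac1{(2\pi i)^{4}}\int_{(c_1)}\!\cdots\!\int_{(c_4)}\Bigl(\prod_{i=1}^{4}W(w_i)\Bigr)\,\mathcal D(w_1,w_2,w_3,w_4;z)\,dw_1\cdots dw_4,
$$
where $\mathcal D$ is the Dirichlet series obtained from the definition of $S(z)$ on replacing each $\beta(\nu_i)$ by $\alpha(\nu_i)\nu_i^{-w_i}$, and the $c_i$ are initially taken large enough for absolute convergence. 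Because $\alpha(\cdot)$ and $K(\cdot,1-z)$ are multiplicative and the exponents of $q=(\nu_1\nu_4,\nu_2\nu_3)$, of $\nu_1\nu_4/q$ and of $\nu_2\nu_3/q$ depend at each prime $p$ only on $(v_p(\nu_1),\dots,v_p(\nu_4))$, the series $\mathcal D$ factors into an Euler product $\mathcal D=\prod_p E_p(w_1,\dots,w_4;z)$, the $p$-th factor being a fourfold sum over $a_i=v_p(\nu_i)\ge0$.

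\emph{Step 2 (analytic structure of the Euler product).} For $p\nmid D$ I would expand $E_p$ and retain the terms of size $1/p$. Using $\alpha(p)=-\tfrac12(\chi_{d_1}(p)+\chi_{d_2}(p))$, $\alpha(p^{2})=0$ when $\chi_D(p)=1$ and $\alpha(p)=0$ when $\chi_D(p)=-1$ (Lemma~\ref{l10}), together with $K(p,1-z)=r(p)+O(p^{-1+\Real z})$ (Lemma~\ref{l2}), $r(p)^{2}=2(1+\chi_D(p))$ and $\alpha(p)r(p)=-(1+\chi_D(p))$, one finds for $\chi_D(p)=1$
\begin{align*}
E_p&=1-2p^{-1-w_2}-2p^{-1-w_4}-2p^{z-1-w_1}-2p^{z-1-w_3}\\
&\quad+p^{-1-w_1-w_2}+p^{-1-w_3-w_4}+p^{z-1-w_1-w_3}+p^{-1-z-w_2-w_4}+O\bigl(p^{-2+2\Real z}\bigr),
\end{align*}
and $E_p=1+O(p^{-2+2\Real z})$ for $\chi_D(p)=-1$. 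Since $\sum_{\chi_D(p)=1}p^{-\sigma}=\tfrac12\log\bigl(\zeta(\sigma)L(\sigma,\chi_D)\bigr)+O(1)$, this produces the factorisation
\begin{align*}
\mathcal D(w_1,\dots,w_4;z)&=H(w_1,\dots,w_4;z)\\
&\times\frac{\Phi(1+w_1+w_2)^{1/2}\,\Phi(1+w_3+w_4)^{1/2}\,\Phi(1-z+w_1+w_3)^{1/2}\,\Phi(1+z+w_2+w_4)^{1/2}}{\Phi(1-z+w_1)\,\Phi(1+w_2)\,\Phi(1-z+w_3)\,\Phi(1+w_4)},
\end{align*}
where $\Phi(s):=\zeta(s)L(s,\chi_D)$ and $H$ is an absolutely convergent Euler product, holomorphic and bounded for $|z|$ and the $\Real w_i$ in a fixed neighbourhood of $0$; the finitely many factors $E_p$ with $p\mid D$ are absorbed into $H$ after being bounded crudely via $|\alpha(p^{k})|\le\tfrac1{2k}$ (Lemma~\ref{l10}) and $|K(m,1-z)|\le\tau_{2000}(m)$ (Lemma~\ref{l2}). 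Because $\zeta$ has a simple pole at $s=1$ while $L(1,\chi_D)\ne0$, the function $\Phi$ has a simple pole at $s=1$; hence along the ``diagonal'' $w_i\to0,\ z\to0$ the numerator of $\mathcal D$ contributes a singularity of order $4\cdot\tfrac12=2$ and the denominator a zero of order $4$, so that $\mathcal D$ vanishes there to order $2$.

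\emph{Step 3 (estimation of the integral).} Finally I would move the $w_i$-contours to $\Real w_1=\Real w_3=\max(0,\Real z)+\dfrac{c}{\log X}$, $\Real w_2=\Real w_4=\dfrac{c}{\log X}$ with a small fixed $c>0$, crossing no pole of the integrand. This keeps the four denominator arguments $1-z+w_1,\ 1+w_2,\ 1-z+w_3,\ 1+w_4$ with real part $\ge1+\tfrac{c}{\log X}$, where $\Phi(\cdot)^{-1}\ll\log^{2}(2+|\Image(\cdot)|)$; on the lines $\Real w_1=\Real w_3$ one has $|W(w_i)|\ll X^{\max(0,\Real z)}|w_i|^{-2}(\log X)^{-1}$, which upon integration yields the factor $X^{2\max(0,\Real z)}\le X^{2|z|}$. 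The remaining integral is $\ll(\log X)^{-2}$: when $|z|\ll(\log X)^{-1}$ the kernels $\prod W(w_i)$ localise the $w_i$ to a box of size $(\log X)^{-1}$ about the origin, where $\mathcal D$ is of size $(\log X)^{-2}$ by the order-$2$ vanishing of Step~2, while for larger $|z|$ the same bound holds with room to spare (since $X^{2|z|}/\log^2 X$ is then comparatively large) and follows from $|\Phi(1+it)^{-1}|\ll\log^{2}(2+|t|)$, $|\Phi(s)^{1/2}|\ll\log(2+|\Image s|)$ near $\Real s=1$ combined with $|W(w)|\ll|w|^{-2}(\log X)^{-1}$. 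The hypothesis $X\le T^{1/50}$ makes $c/\log X$ admissibly small, and the whole estimate is uniform for $\Real z\ge-\tfrac1{\log X}$, $|z|\le C^{-1}$ once $C$ is chosen large enough.

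The main obstacle is Step~2: one must determine exactly which $\Phi$-factors occur and with which half-integer exponents — equivalently, which of the ``paired'' and ``single'' local contributions survive — and verify that the residual Euler product $H$ is absolutely convergent and uniformly bounded down to $\Real z=-\tfrac1{\log X}$. It is precisely this combinatorics, together with the resulting non-trivial value $L(1,\chi_D)$, that distinguishes the real-character case from the complex-Hecke-character case of \cite{Rezvyakova_MZ}, \cite{Rezvyakova_2016}; everything after the factorisation is the standard contour-shift bookkeeping.
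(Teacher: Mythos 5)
Your proposal takes a genuinely different route from the paper's. The paper opens the gcd $q=(\nu_1\nu_4,\nu_2\nu_3)$ by M\"obius inversion, rewrites
$S(z)=\sum_{d\le X^2}\sum_{m\mid d}\mu(m)(d/m)^{1-z}\,g_z^{2}(d,m)$, decomposes $g_z(d,m)$ via multiplicativity into
a many-fold sum over auxiliary parameters $\delta_j,n,r,k_j,l_j$, and feeds the innermost $\nu_1,\nu_4$ sums into the one-variable
Perron estimate $S_z(X_1,\gamma,N)\ll X_1^{|\gamma|}G_N$ of Lemma~\ref{l5}; the final bound then comes out of divisor-sum
estimates. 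You instead Mellin-invert all four smoothing weights $\EuScript{L}$, interchange, and analyse the resulting four-fold
integral of an Euler product. Your Step~2 is the crux and appears correct: using $\alpha(p)r(p)=-(1+\chi_D(p))$,
$\alpha(p)^2=\tfrac14 r(p)^2$, $\alpha(p)=0$ for $\chi_D(p)=-1$ and $\alpha(p^2)=0$ for $\chi_D(p)=1$, the local factor does
produce the eight displayed $p^{-1}$-terms (four single and four ``paired''), and these exponentiate to the quotient of
$\Phi^{1/2}=\bigl(\zeta L(\cdot,\chi_D)\bigr)^{1/2}$ factors you write, with total vanishing of order $4\cdot 1-4\cdot\tfrac12=2$ at the
diagonal $w_i=z=0$. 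This is essentially the same arithmetic input that in the paper is packaged into the relation
$D(s)=L_{\chi_1^2}L_{\chi_2^2}L_{\chi_D}^2/L_{\chi_D^2}(2s)$ used inside Lemma~\ref{l5}.

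What your route buys is a cleaner conceptual picture — the bound drops out of the multivariate order of vanishing of one
explicit Dirichlet series — but what it costs is a four-variable branch-point analysis that the paper deliberately avoids. Your
Step~3 is the part that would need real work: on the shifted contours the arguments of the $\Phi^{1/2}$ factors stay in
$\Real s>1$ (so no branch cut is crossed, as you observe), but you still have to justify, uniformly in $z$, that after
integrating the $W$-kernels the output is $\ll X^{2\max(0,\Real z)}\max(\eta,|z|)\,\eta$ with $\eta\asymp 1/\log X$, and then
check $\max(\eta,|z|)\,\eta\le e^{2|z|\log X}\eta^2$, i.e.\ $A\le e^{2A}$ for $A=|z|\log X$ — this is the precise content of
``room to spare'', and it is sharp at $|z|\asymp 1/\log X$, not loose, so the phrase undersells what must be verified.
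You would also need to verify that the residual Euler product $H$ stays holomorphic and uniformly bounded down to
$\Real z=-1/\log X$ (the paper handles the analogous issue via the explicit product $P_z(s)$ and the factor $G_N(s)$). Finally,
note that the paper's Lemma~\ref{l5} is reused and sharpened (Lemma~\ref{l12}) for Lemma~\ref{l4}, where an asymptotic
formula, not just an upper bound, is required; your decomposition would have to be redone from scratch there, with
$K_{l,l}(\cdot,z)$ in place of $K(\cdot,1-z)$, whereas the paper's intermediate lemma carries over.
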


\begin{lemma} \label{l4}
Let us define $S_{l,l}(z)$ ($l=1, 2$) by the formula
$$
S_{l,l}(z) := S_{l,l}(z, X) =\sum_{\nu_1,\dots,\nu_4\le X}
\frac{\beta(\nu_1)\beta(\nu_2)\beta(\nu_3)\beta(\nu_4)}{\nu_2
\nu_4}\biggl(\frac q{\nu_1\nu_3}\biggr)^{1-z}K_{l,l}\biggl(
\frac{\nu_1\nu_4}q,z\biggr)K_{l,l}\biggl(\frac{\nu_2
\nu_3}q, z\biggr),
$$
where $q=(\nu_1\nu_4,\nu_2\nu_3)$ (and $K_{l,l} (m, z)$ are defined on page \pageref{p1}).
Then, for $0 \le \Real z \le  \frac{\log\log X}{\log X}$,  the estimate
\begin{align*}
|S_{l,l}(z)| &\ll
\begin{cases}
&\log^4 (|z|+3), \\
& (|z| + \log^{-1} X)^2 \text{ if } |\Image z|\le 1
\end{cases}
\end{align*}
holds true. And for $z=it$ with
\begin{align}\label{eq1006}
\log^{-1} X \le t \le (\log X)^{-1/3} (\log\log X)^{-2/3}
\end{align}
the following asymptotic formula holds:
\begin{align}\label{eq510}
S_{l,l}(z) &= \sum_{d \le X^2}
\sum_{m|d} \mu(m) \biggl( \frac{d}{m} \biggr)^{1-z} (\text{Main term})^2  +
O \left(  |z|^{3/2} \log^{-1/2} X\right),
\end{align}
where
\begin{align}
 \text{Main term}&=
\sum_{\substack{\delta_1 \delta_4
\equiv0 \ (\operatorname{mod}d) \nonumber
\\
\delta_j | d^{\infty}}} \frac{\alpha(\delta_1)
\alpha(\delta_4)}{\delta_1^{1-z} \delta_4} K_{l,l}\biggl(
\frac{\delta_1 \delta_4 m}{d}, z\biggr) \nonumber
\\
&\times \sum\limits_{(n,d)=1} \frac{1}{n^{2-z}} \sum\limits_{(r, nd)=1}  \frac{\mu(r)}{r^{2-z}}\sum_{\substack{k_1, k_4 | n^{\infty}, \\ (k_1, k_4)=1}} \frac{\alpha(n k_1) \alpha(n k_4) K_{l,l}(n^2 k_1 k_4, z)}{k_1^{1-z} k_4} \nonumber \\
&\times \sum_{l_j | r^{\infty}} \frac{\alpha(r l_1) \alpha(r l_4)}{l_1^{1-z}
l_4} K_{l,l}(r l_1, z) K_{l,l}( r l_4, z) \cdot \frac{C_l |z| (R^{(l,l)}_{z, n d r} H_{z,1}^{(l,l)} (1))^2}{\log^2 X} \nonumber \\
&\times
\left( \log^{1/2} \left(\max\left( 1, X/(\delta_1 k_1 l_1 n r) \right) \right)- \log^{1/2} ( \max\left( 1, \sqrt{X}/(\delta_1 k_1 l_1 n r)\right) )\right)\times  \nonumber\\
&\times \left(\log^{1/2} \left(\max\left( 1, X/(\delta_4 k_4 l_4 n r)\right) \right)-\log^{1/2}(\max\left( 1,\sqrt{X}/(\delta_4 k_4 l_4 n r)\right) )\right), \label{eq509}
\end{align}
$C_l$ is some absolute constant, and $H_{z,N}^{(l,l)} (\cdot)$, $R^{(l,l)}_{z, n d r} (\cdot)$ are defined in (\ref{eq101}), (\ref{eq31}).
\end{lemma}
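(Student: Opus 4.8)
\medskip
\noindent\textbf{Proof plan.}
The plan is to turn the fourfold $\nu$-sum into a fourfold Mellin--Barnes integral and then to factor the resulting ``core'' Dirichlet series as an Euler product. I would first use the representation
$$
\EuScript{L}(\nu)=\frac{2}{\log X}\cdot\frac{1}{2\pi i}\int_{(c)}\frac{X^{w}-X^{w/2}}{w^{2}}\,\nu^{-w}\,dw\qquad(c>0)
$$
for the cutoff, so that, replacing each $\beta(\nu_j)=\alpha(\nu_j)\EuScript{L}(\nu_j)$ by such an integral and interchanging sums and integrals in the region of absolute convergence (the $w_j$-contours taken far to the right), $S_{l,l}(z)$ becomes
$$
\Big(\frac{2}{\log X}\Big)^{4}\frac{1}{(2\pi i)^{4}}\int_{(c)}\!\!\cdots\!\!\int_{(c)}\Phi_{l,l}(z;w_{1},w_{2},w_{3},w_{4})\,\prod_{j=1}^{4}\frac{X^{w_{j}}-X^{w_{j}/2}}{w_{j}^{2}}\,dw_{1}\cdots dw_{4},
$$
where $\Phi_{l,l}(z;\mathbf w)$ is the unrestricted Dirichlet series obtained from the summand of $S_{l,l}(z)$ by replacing each $\beta(\nu_{j})$ with $\alpha(\nu_{j})\nu_{j}^{-w_{j}}$ and keeping $q=(\nu_1\nu_4,\nu_2\nu_3)$.

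The decisive step is the evaluation of the Euler product $\Phi_{l,l}(z;\mathbf w)=\prod_{p}\Phi^{(p)}_{l,l}(z;\mathbf w)$. Since $\alpha$ and $K_{l,l}(\cdot,z)$ are multiplicative and $q$ is determined prime by prime, this is a local computation of bounded complexity: by Lemma~\ref{l10} the local sum is governed by the $\nu_j$-exponents $0,1,2$ (because $\alpha(p^{k})=0$ for odd $k\ge3$ and $|\alpha(p^{k})|\le\frac1{2k}$ for even $k\ge4$), while $K_{l,l}(p^{\alpha},z)$ is given explicitly by Lemma~\ref{l30} with the polynomial bounds of Lemma~\ref{l6}. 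Separating $p\mid D$ from $p\nmid D$ and, among the latter, $\chi_D(p)=1$ from $\chi_D(p)=-1$, and using $\alpha(p)K_{l,l}(p,z)=0$ when $\chi_D(p)=-1$ against $\alpha(p)K_{l,l}(p,z)=-\big(1+p^{-z}\big)\big(1+p^{-1}\big)^{-1}$ when $\chi_D(p)=1$ (Lemma~\ref{l11} and (\ref{eq18})), one finds that the accumulation of split primes puts fractional (half-integral) powers of $\zeta$ into the product, accompanied by Dirichlet $L$-factors that are holomorphic and nonzero at the relevant points. Concretely I would write $\Phi_{l,l}(z;\mathbf w)=Z(z;\mathbf w)\cdot G_{l,l}(z;\mathbf w)$, where $Z$ collects the finitely many fractional-power $\zeta$-factors, evaluated at linear combinations of the $w_j$ and $z$ whose singular loci sit within $O(|z|)$ of the origin, while $G_{l,l}$ is holomorphic and uniformly bounded in a fixed neighbourhood of the relevant region; this is exactly the place where the nonzero main term of the additive problem for the coefficients $r(n)$ (Lemma~\ref{l8}) enters, and it produces the auxiliary quantities $R^{(l,l)}_{z,ndr}(\cdot)$, $H^{(l,l)}_{z,N}(\cdot)$ of (\ref{eq31}), (\ref{eq101}).

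With this factorisation the three assertions follow by contour work. For $|S_{l,l}(z)|\ll\log^{4}(|z|+3)$ on $0\le\Real z\le\frac{\log\log X}{\log X}$, I would move each $w_j$-contour to $\Real w_j=c_0/\log X$, bound the $\zeta$- and $L$-factors near the edge of the critical strip by classical convexity estimates (each contributing a power of $\log(|z|+3)$, giving $\log^{4}$ in all), and use that the kernel is $O(\log^{2}X)$ near $w_j=0$ with rapid vertical decay. The refinement to $|S_{l,l}(z)|\ll(|z|+\log^{-1}X)^{2}$ for $|\Image z|\le1$ reflects that $\eta_j^{2}$ mollifies $L_j^{-1}$; quantitatively it follows by expanding $Z(z;\mathbf w)$ near $\mathbf w=0$ and tracking the order of vanishing in $z$, which leaves only the boundary effect of size $\log^{-1}X$. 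For the asymptotic formula (\ref{eq510}) with $z=it$ in the range (\ref{eq1006}), I would push a suitable pair of the $w_j$-contours past the singular loci of $Z$ near the origin; after the extraction the surviving one-dimensional integrals take the shape
$$
\frac{1}{2\pi i}\int_{(c)}\frac{X^{w}-X^{w/2}}{w^{3/2}}\,y^{-w}\,dw=\frac{1}{\Gamma(3/2)}\Big(\sqrt{\log\max(1,X/y)}-\sqrt{\log\max(1,\sqrt X/y)}\Big),
$$
which produces exactly the pair of $\log^{1/2}$-differences of (\ref{eq509}). Unfolding the resulting double residue back into a Dirichlet sum — with $d\le X^{2}$ recording the denominator $\nu_1\nu_3$, $m\mid d$ the M\"obius-twisted part $\mu(m)(d/m)^{1-z}$, $n$ and $r$ the parts of the $\nu_j$ coprime resp.\ non-coprime to $d$, and $k_1,k_4,l_1,l_4$ the $n^{\infty}$- and $r^{\infty}$-components — yields the explicit (Main term), while the horizontal and far-vertical remnants of the shifted contours together with the subleading term of the branch-point expansion are bounded by $O(|z|^{3/2}\log^{-1/2}X)$ (one factor $|z|^{1/2}$ per half-power branch, $\log^{-1/2}X$ from the kernel).

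The main obstacle is the second step: carrying out the Euler-product computation with the coupling $q=(\nu_1\nu_4,\nu_2\nu_3)$ and isolating the fractional-power $\zeta$-singularities \emph{uniformly in $z$ near $0$}, so that $G_{l,l}$ and the derived quantities $R^{(l,l)}_{z,ndr}$, $H^{(l,l)}_{z,N}$ are genuinely holomorphic and bounded — this is precisely the point at which the nonvanishing additive-problem main term for $r(n)$ shows up, and where all the auxiliary summation variables of (\ref{eq509}) originate. A close second is keeping the two-dimensional residue calculus leading to (\ref{eq510}) uniform enough in $z$ to reach the stated error $O(|z|^{3/2}\log^{-1/2}X)$.
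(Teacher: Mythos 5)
Your plan takes a genuinely different route from the paper's. The paper first applies the M\"obius inversion formula to decouple the GCD $q=(\nu_1\nu_4,\nu_2\nu_3)$ into a sum over $d\le X^2$, $m\mid d$, then uses multiplicativity of $\alpha$ and $K_{l,l}(\cdot,z)$ to split each $\nu_j$ into parts divisible by $d^{\infty}$, parts coprime to $d$, etc.\ (the variables $\delta_j,n,r,k_j,l_j$), finally reducing everything to the one--dimensional smoothed sum $S^{(l,l)}_z(X_1,\gamma,N)$ of Lemma~\ref{l12}, whose Perron contour with a Hankel loop around the branch point produces the pair of $\log^{1/2}$-differences. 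You instead propose writing $\EuScript L$ as a Mellin--Barnes integral, passing directly to a fourfold contour integral against the unrestricted Euler product $\Phi_{l,l}(z;\mathbf w)$, and doing residue calculus in four variables. Both strategies are standard ways of handling a smoothed Selberg mollifier sum, so the overall shape of your plan is reasonable, and your Hankel-type evaluation $\frac{1}{2\pi i}\int_{(c)}\frac{X^w-X^{w/2}}{w^{3/2}}y^{-w}\,dw$ is exactly the identity that also closes out the paper's Lemma~\ref{l12}.

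There are, however, two concrete gaps. First, your claim that ``the nonzero main term of the additive problem for the coefficients $r(n)$ (Lemma~\ref{l8}) enters'' here and ``produces the auxiliary quantities $R^{(l,l)}_{z,ndr}(\cdot)$, $H^{(l,l)}_{z,N}(\cdot)$'' misreads the logical structure: the proof of Lemma~\ref{l4} does not invoke the additive problem at all. The quantities $R^{(l,l)}_{z,N}$ and $H^{(l,l)}_{z,N}$ arise from the Euler-product factorisation of the generating function $h^{(l,l)}_{z,N}(s)=\sum_{(n,N)=1}\alpha(n)K_{l,l}(n,z)n^{-s}$ inside the proof of Lemma~\ref{l12}, independently of Lemma~\ref{l8}. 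Lemma~\ref{l8} and the additive-problem main term first enter only in \S 7--\S 8, where they determine the residue at $s=1$ of $D_{m_1,m_2}(s,l)$ and why $S_{l,l}(w)$ shows up multiplied against $\zeta(1-w)\zeta(1+w)$ in $R^{(j,j)}$; conflating the two steps suggests the dependency chain of the lemmas has not been traced correctly. Second, your plan asserts that ``unfolding the resulting double residue back into a Dirichlet sum \dots\ yields the explicit (Main term)'' with the summation structure of (\ref{eq509}), but this step is not carried out, and it is the hard part. The paper obtains (\ref{eq509}) \emph{because} M\"obius inversion and multiplicativity put $g_{l,l}(d,m,z)$ directly into the form of a sum over $\delta_j,n,r,k_j,l_j$ of $S^{(l,l)}_z$-values, and the constants $R^{(l,l)}_{z,ndr}$, $H^{(l,l)}_{z,1}(1)$, $C_l$ then drop out of the asymptotic (\ref{eq38}); your residue at the branch locus of a four-variable Euler product would naturally yield a closed-form product of $\zeta$- and $L$-values and of the holomorphic factor $G_{l,l}$ at the singular point, and the re-expansion of that into exactly (\ref{eq509}) (with the intrinsic $d,m,\delta_j,n,r,k_j,l_j$ parametrisation) is a substantial combinatorial identity that you would need to prove. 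Likewise, the explicit local computation of the Euler factor with the coupling $q=(\nu_1\nu_4,\nu_2\nu_3)$ — involving four exponent variables and a min — is considerably more involved than the paper's one-dimensional $S^{(l,l)}_z$ and is not sketched. As written, the plan is a defensible alternative strategy, but it stops short of the points where the real arithmetic work occurs.
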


To prove lemmas \ref{l3}, \ref{l4} we shall use some extra statements.
\begin{lemma}\label{l5}
Let $X_1 \ge 1$,
$N$ be any positive integer, and set for  complex numbers $z, \gamma$
$$
S_{z} (X_1, \gamma, N) = \sum_{\substack{ \nu\le X_1
\\
(\nu, N)=1}} \frac{\alpha(\nu) K(\nu,
1-z)}{\nu^{1-\gamma}} \log \frac{X_1}{\nu}.
$$
If $\Real z \le 1/2$ and  $|\gamma| \le C^{-1}$, where $C$ is a sufficiently large absolute constant, then
the estimate
$$
S_{z} (X_1, \gamma, N) \ll X_1^{|\gamma|} G_{N}
$$
holds true.
\end{lemma}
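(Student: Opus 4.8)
The plan is to read the size of $S_z(X_1,\gamma,N)$ off from the Dirichlet series
$$
D_N(s,z)=\sum_{\substack{\nu\ge1\\ (\nu,N)=1}}\frac{\alpha(\nu)\,K(\nu,1-z)}{\nu^{s}}=\prod_{p\nmid N}D_p(s,z),\qquad D_p(s,z)=\sum_{k\ge0}\frac{\alpha(p^{k})K(p^{k},1-z)}{p^{ks}},
$$
absolutely convergent for $\Real s>1$. From $\log(X_1/\nu)=\frac1{2\pi i}\int_{(\sigma)}(X_1/\nu)^{w}w^{-2}\,dw$ (valid for $\nu\le X_1$, $\sigma>0$) one gets
$$
S_z(X_1,\gamma,N)=\frac1{2\pi i}\int_{(\sigma)}X_1^{w}\,D_N(1-\gamma+w,z)\,\frac{dw}{w^{2}},
$$
with $\sigma$ just to the right of $\max(\Real\gamma,0)$, say $\sigma\asymp(\log X_1)^{-1}+|\gamma|$, so that $X_1^{\sigma}\ll X_1^{|\gamma|}$. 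Everything then reduces to continuing $D_N$ past $\Real s=1$, bounding it on a shifted contour, and extracting the residue at $w=0$.

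First I would make the Euler factors explicit. By Lemma~\ref{l10}, $\alpha(p)=-\tfrac12(\chi_{d_1}(p)+\chi_{d_2}(p))$ and $\alpha(p^{k})=0$ for odd $k\ge3$; a short computation from the definition in Lemma~\ref{l2} gives, for $p\nmid D$, $K(p^{k},s)=1$ when $\chi_D(p)=-1$ and $K(p,s)=2\chi_{d_1}(p)(1+p^{-s})^{-1}$ when $\chi_D(p)=1$. Hence for $p\nmid D$
$$
D_p(s,z)=\begin{cases}(1-p^{-2s})^{1/2}, & \chi_D(p)=-1,\\ 1-\dfrac{2}{p^{s}(1+p^{z-1})}, & \chi_D(p)=1,\end{cases}
$$
and for $p\mid D$ the factor $D_p(s,z)$ is analytic in $\Real s>0$ and bounded for $\Real s\ge\tfrac12$. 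The decisive point is that $\alpha(p)K(p,1-z)=-(1+\chi_D(p))+O(p^{-1/2})$ for every $p\nmid D$, \emph{uniformly for $\Real z\le\tfrac12$} (this is where the closed form of $K(p,1-z)$ is used, since $|1+p^{z-1}|\ge1-p^{\Real z-1}\ge1-p^{-1/2}$), so that
$$
(1-p^{-s})^{-1}(1-\chi_D(p)p^{-s})^{-1}D_p(s,z)=1+O(p^{-\Real s-1/2})+O(p^{-2\Real s})
$$
uniformly in $z$. Therefore $D_N(s,z)=B_N(s,z)\,\zeta_N(s)^{-1}L_N(s,\chi_D)^{-1}$, where $\zeta_N$ and $L_N(\cdot,\chi_D)$ are $\zeta$ and $L(\cdot,\chi_D)$ with the Euler factors at the primes $p\mid N$ deleted and $B_N(s,z)=\prod_{p\nmid N}[(1-p^{-s})^{-1}(1-\chi_D(p)p^{-s})^{-1}D_p(s,z)]$ is an absolutely convergent Euler product, analytic and $\ll1$ in $\Real s>\tfrac12$, uniformly in $N$ and in $z$ with $\Real z\le\tfrac12$. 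Hence $D_N(s,z)$ extends meromorphically to $\Real s>\tfrac12$, its only poles there being the zeros of $\zeta$ and $L(\cdot,\chi_D)$, and it has a \emph{simple zero at $s=1$} inherited from $\zeta^{-1}$. Writing $D_N(s,z)=(s-1)E_N(s,z)$ near $s=1$ with $E_N$ analytic, one has $E_N(1,z)=B_N(1,z)L_N(1,\chi_D)^{-1}\prod_{p\mid N}(1-p^{-1})^{-1}$, and Cauchy's estimates in a fixed disc about $s=1$ (of radius $<\tfrac14$, so $\Real s\ge\tfrac34$) give $E_N,E_N'\ll G_N$; here one uses that $\prod_{p\mid N}(1-p^{-\Real s})^{-2}$ for $\Real s\ge\tfrac34$ differs from $G_N=\prod_{p\mid N}(1+p^{-3/4})^{2}$ only by the absolutely bounded factor $\prod_{p\mid N}(1-p^{-3/2})^{-2}\le\zeta(3/2)^{2}$.

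Next I would shift the contour from $\Real w=\sigma$ to the de la Vallée-Poussin contour of $\zeta(s)L(s,\chi_D)$, namely $\Real(1-\gamma+w)=1-c/\log(|\Imaginary w|+2)$ with $c$ small enough that $\Real s\ge\tfrac34$ on it and that it lies in the zero-free region; this is crossed only at $w=0$, once $C$ is taken large enough in terms of $D$ that one can route the contour between the possible exceptional real zero of $L(\cdot,\chi_D)$ and the point $s=1-\gamma$. The residue at the (at most double) pole $w=0$ is
$$
\log X_1\cdot D_N(1-\gamma,z)+D_N'(1-\gamma,z)=-\gamma\,E_N(1-\gamma,z)\log X_1+E_N(1-\gamma,z)-\gamma\,E_N'(1-\gamma,z),
$$
which is $O(|\gamma|\log X_1\cdot G_N)+O(G_N)=O(X_1^{|\gamma|}G_N)$ because $|\gamma|\log X_1\le X_1^{|\gamma|}$. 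On the shifted contour $|X_1^{w}|=X_1^{\Real w}\le X_1^{|\gamma|}$ (as $\Real s\le1$ there), while $|\zeta_N(s)^{-1}|\le|\zeta(s)^{-1}|\prod_{p\mid N}(1-p^{-\Real s})^{-1}$ (and similarly for $L_N$), the classical bounds $|\zeta(s)^{-1}|,|L(s,\chi_D)^{-1}|\ll\log(|\Imaginary s|+2)$ there, and $|B_N(s,z)|\ll1$ give $|D_N(1-\gamma+w,z)|\ll G_N\log^{2}(|\Imaginary w|+2)$; since $\int_{-\infty}^{\infty}\log^{2}(|t|+2)(|t|+1)^{-2}\,dt<\infty$, the shifted integral is $\ll X_1^{|\gamma|}G_N$. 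Adding the two contributions yields $S_z(X_1,\gamma,N)\ll X_1^{|\gamma|}G_N$.

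I expect the real work to be in the factorization of $D_N(s,z)$ with all the stated uniformities: computing $D_p(s,z)$ precisely enough that the corrected factors of $B_N$ stay close to $1$ \emph{for all} $z$ with $\Real z\le\tfrac12$ rather than only for small $z$, and channelling the whole dependence on $N$ into the removed Euler factors of $\zeta$ and $L(\cdot,\chi_D)$ so as to match $\prod_{p\mid N}(1-p^{-\Real s})^{\pm2}$ against $G_N$. Dealing with a possible exceptional zero of $L(\cdot,\chi_D)$ is a minor point since $\chi_D$ is fixed. Once the continuation and the bounds for $D_N$ are in place, the Mellin step, the residue computation and the tail estimate are routine.
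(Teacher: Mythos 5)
Your proposal is correct and follows essentially the same strategy as the paper's own proof: represent $S_z(X_1,\gamma,N)$ by a Perron/Mellin integral with the kernel $w^{-2}$, factor the generating Dirichlet series as (inverse $L$-functions with a simple zero at $s=1$) $\times$ (an Euler product that is analytic and bounded to the right of $\Real s=\tfrac12$, uniformly for $\Real z\le\tfrac12$), shift the contour into the zero-free region, and treat the residue and the shifted integral separately. The tactical choices differ in ways worth noting. You extract $\zeta_N(s)^{-1}L_N(s,\chi_D)^{-1}$ directly, computing the local factors $D_p(s,z)$ in closed form according to $\chi_D(p)=\pm1,0$; the paper instead invokes the identity $D(s)=L_{\chi_1^2}L_{\chi_2^2}L_{\chi_D}^2/L_{\chi_D^2}(2s)$ from~\eqref{eq21} to peel off $D(s)^{-1/2}$, which is the same singular object (a simple zero at $s=1$) dressed with an absolutely bounded Euler correction $P_z$. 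Your contour is the de la Vall\'ee-Poussin curve, whereas the paper uses the simpler fixed contour $K_1\cup K_2\cup K_3$ that touches $\Real(1+s)=1$ on the vertical rays; both choices are admissible and yield the same $\log^2$-type growth on the contour, hence the same $O(X_1^{|\gamma|}G_N)$. For the main term the paper bounds the contribution by integrating around a circle $|s+\gamma|=1/\log X_1$, whereas you compute the residue at $w=0$ explicitly; these give the same $O(G_N(1+|\gamma|\log X_1))\ll X_1^{|\gamma|}G_N$. The $N$-dependence is channelled the same way: you absorb it into $\prod_{p\mid N}(1-p^{-\Real s})^{-1}(1-\chi_D(p)p^{-\Real s})^{-1}\ll G_N$, the paper into $G_N(1+s)$. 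One small imprecision: $B_N(s,z)$ is \emph{not} uniformly $\ll1$ on all of $\Real s>\tfrac12$, because $\prod_{\chi_D(p)=-1}(1-p^{-2s})^{-1/2}$ diverges as $\Real s\to\tfrac12^+$; what you actually use, and what is true, is uniform boundedness on $\Real s\ge\tfrac34$, which is exactly where your contour lives, so the argument is unaffected.
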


\begin{pf}

We may suppose that $X_1$ is large enough (otherwise the statement of the lemma is trivial). The proof of the lemma in case of a real character is carried out similarly as in the complex case (see  \cite{Rezvyakova_2016}, lemma 1). First, for $\Real s
>1$, we define the generating function
\begin{equation*}
\begin{split}
h_{z, N} (s) &= \sum\limits_{(n, N)=1} \frac{\alpha(n) K(n,
1-z)}{n^{s}} = \prod\limits_{ (p, N)=1} \left( 1 +
\frac{\alpha(p) K(p, 1-z)}{p^{s}} + \frac{\alpha(p^2) K(p^2, 1-z)}{p^{2s}} +\ldots \right) \\
&= \prod_{p} \left( 1 +
\frac{r^2(p)}{p^{s}}+
\frac{r^2(p^2)}{p^{2s}}+ \ldots\right)^{-1/2}\times \prod\limits_{p\mid N} \left( 1 +
\frac{r^2(p)}{p^{s}}+ \frac{r^2(p^2)}{p^{2s}}+
\ldots\right)^{1/2} \times \\
& \times \prod_{
(p, N)=1} \left( 1 + \frac{r^2(p)}{p^{s}}+
\frac{r^2(p^2)}{p^{2s}}+ \ldots\right)^{1/2} \times \\
&\times \left( 1 + \frac{\alpha(p) K(p, 1-z)}{p^{s}} + \frac{\alpha(p^2) K(p^2, 1-z)}{p^{2s}} +\ldots \right) .
\end{split}
\end{equation*}
We find from lemma \ref{l2} that the product
\begin{equation*}
\begin{split}
P_z (s) &=\prod_{
(p, N)=1} \left( 1 + \frac{r^2(p)}{p^{s}}+
\frac{r^2(p^2)}{p^{2s}}+ \ldots\right)^{1/2} \times \\
&\times \left( 1 + \frac{\alpha(p) K(p, 1-z)}{p^{s}} + \frac{\alpha(p^2) K(p^2, 1-z)}{p^{2s}} +\ldots  \right) \\
&= \prod\limits_{(p, N)=1} \left( 1 +
\frac{r^2(p)}{2 p^{s}} + O_{\sigma}\left( \frac{1}{p^{2\sigma}}\right)
\right) \left( 1 -
\frac{r^2(p)}{2 p^{s}} + O_{\sigma}\left( \frac{1}{p^{\sigma+1-\Real z}}\right) + O_{\sigma}\left( \frac{1}{p^{2\sigma}}\right)\right)\\
&= \prod\limits_{ (p, N)=1} \left( 1 + O_{\sigma}\left(
\frac{1}{p^{2\sigma}}\right) + O_{\sigma}\left(
\frac{1}{p^{\sigma+1-\Real z}}\right)\right)
\end{split}
\end{equation*}
defines an analytic function for $\Real z \le 1/2$ in the half-plane $\Real s
> 1/2$.

Now, for $\Real s>1$ and
$$
D(s) =  \prod\limits_{p} \left( 1 +
\frac{r^2(p)}{p^{s}}+ \frac{r^2(p^2)}{p^{2s}}+
\ldots\right) = \sum\limits_{n=1}^{+\infty} \frac{r^2(n)}{n^s},
$$
the following formula holds (see \cite{Iwaniec_book}, p.232):
\begin{equation}\label{eq21}
D(s) = L_{\chi_{1}^2} (s) L_{\chi_2^2} (s) L^2_{\chi_D} (s) / L_{\chi_D^2} (2s).
\end{equation}
Hence, the following identity holds for
the generating function:
\begin{equation*}
\begin{split}
h_{z, N} (s) = (D(s))^{-1/2} P_z (s) G_{N} (s),
\end{split}
\end{equation*}
where
$$
G_{N} (s) = \prod\limits_{p\mid N} \left( 1 +
\frac{r^2(p)}{p^{s}}+ \frac{r^2(p^2)}{p^{2s}}+
\ldots\right)^{1/2}.
$$
Note, that for fixed $\sigma>1/2$ and $\Real s \ge \sigma$ the following estimate holds:
\begin{equation*}
\begin{split}
|G_{N} (s)| \le G_{N} (\sigma) \ll \prod\limits_{p\mid N} \left( 1 +
\frac{r^2(p)}{p^{\sigma}}\right)^{1/2} \ll \prod\limits_{p\mid N} \left( 1 +
\frac{r^2(p)}{2p^{\sigma}}\right) \ll \prod\limits_{p\mid N} \left( 1 +
\frac{2}{p^{\sigma}}\right) \ll \prod\limits_{p\mid N} \left( 1 +
\frac{1}{p^{\sigma}}\right)^2.
\end{split}
\end{equation*}
Now an application of Perron's summation formula gives
\begin{equation*}
\begin{split}
S_{z}(X_1,\gamma,N) &= \frac{1}{2\pi i}
\int\limits_{1-i\infty}^{1+i\infty} h_{z, N} (s+1-\gamma)
\frac{X_1^{s}}{s^2} ds \\
&= \frac{X_1^{\gamma}}{2\pi i}\int\limits_{1-i\infty}^{1+i\infty}
\frac{X_1^{s}}{(s+\gamma)^2}\frac{P_z (s+1) G_{N} (s+1)
}{\left( D(1+s) \right)^{1/2}} ds.
\end{split}
\end{equation*}
Let us move the path of integration in the last formula from the line $\Real s =1$ to the contour,
constructed by the union of the following ones:
$$K_1=\{ \Real s =0 , |\Image s| \ge 1\},
$$
$$K_2=\{  -2C^{-1} \le\Real s\le 0, |\Image s|=1\},
$$
$$K_3= \{\Real s= -2C^{-1} , |\Image s| \le 1 \}.
$$
Here $C > 4$ is large enough constant so that there are no zeros of $D(s)$ for $\Real s > 1-\frac{4}{C}, |\Image s| \le 2$. Note, that $D^{-1/2} (1+s)$ is analytic to the right of the new contour (the regularity at $s=0$ is provided by the truth of the formula $D^{-1} (1+s) = s^2 (c+O(s)), \; c\ne 0$, in the vicinity of $s=0$). We shall also pass a pole of the integrand at $s=-\gamma$.

Thanks to the following bound for Dirichlet L-functions
$$
L^{-1}_{\chi} (1 +it) \ll \log (|t|+2),
$$
on $K_1$ we shall use the estimate
$$
D(1+it)^{-1/2} \ll \log^2 (|t|+3).
$$
Also, on $K_1, K_2, K_3$  the following bounds hold:
$$
P_z (1+s) \ll 1 \quad \text{for} \quad  \Real z \le 1/2,
$$
$$
G_{N} (1+s) \ll G_{N} (1+\sigma) \ll \prod_{p|N} \left( 1+\frac{1}{p^{3/4}}\right)^2 := G_N.
$$
On  $K_1$  the corresponding integral can be estimated from above as
\begin{equation*}
\begin{split}
X_1^{\Real\gamma} G_{N} \int\limits_{1}^{+\infty} \frac{\log^2 (t+3)}{(t-|\gamma|)^2} dt
\ll
X_1^{\Real\gamma} G_{N}.
\end{split}
\end{equation*}
On  $K_2$  the corresponding integral can be estimated from above as
\begin{equation*}
\begin{split}
X_1^{\Real\gamma} G_{N} \int\limits_{-C^{-1}}^{0} X_1^{\sigma}  d\sigma
\ll X_1^{\Real\gamma} G_{N}.
\end{split}
\end{equation*}
On  $K_3$  the corresponding integral can be estimated from above as
\begin{equation*}
\begin{split}
X_1^{\Real\gamma-C^{-1}} G_{N} \int\limits_{0}^{2} \frac{dt}{t^2+ 1} dt
\ll X_1^{\Real\gamma-C^{-1}} G_{N} \ll X_1^{\Real\gamma} G_{N}.
\end{split}
\end{equation*}
The contribution from the residue is the following (suppose $X_1$ is large enough compared to $C$):
\begin{equation*}
\begin{split}
& \frac{1}{2\pi i} \int\limits_{|s+\gamma| = 1/\log X_1} \frac{X_1^{s+\gamma}}{(s+\gamma)^2}\frac{P_z (1+s) G_{N} (s+1)
}{\left( D(s+1) \right)^{1/2}} ds
\ll  G_{N} (\log X_1) \left( |\gamma| + \frac{1}{\log X_1}\right)\\
&\ll G_{N}  \left( |\gamma| \log X_1+ 1\right)
\ll X_1^{|\gamma|} G_N.
\end{split}
\end{equation*}
Therefore, the lemma is completely proved.

\end{pf}

\begin{lemma}\label{l12}
Let $X_1$ be sufficiently large, $N$ be a positive integer and $z\ne 0$ be a complex number.
For $l=1,2$, define
$$
S_{z} ^{(l,l)}(X_1, \gamma, N) = \sum_{\substack{ \nu\le X_1
\\
(\nu, N)=1}} \frac{\alpha(\nu) K_{l,l}(\nu,
z)}{\nu^{1-\gamma}} \log \frac{X_1}{\nu}.
$$
Suppose
$\gamma=0$ or $\gamma=z (\ne 0)$. The following estimates hold true:
\begin{equation}\label{eq1000}
S_{z}^{(l,l)} (X_1, \gamma, N) \ll
\begin{cases}
& X_1^{\Real\gamma} G_N \log (|z|+3) (\log X_1)^{1/2}, \quad \Real z \ge 0, \\
& X_1^{\Real\gamma} G_N (|z| \log X_1 +1)^{1/2}, \quad 0\le \Real z \le \frac{\log\log X_1}{\log X_1}, |\Image z| \le 1.
\end{cases}
\end{equation}
If, moreover,
\begin{equation}\label{eq29}
\begin{split}
4\log^{-1} X_1 \le |z| \le (C \log\log X_1)^{-1},  \quad \Real z = 0
\end{split}
\end{equation}
(where $C$ is some large absolute constant)
then
\begin{equation}\label{eq38}
\begin{split}
S_{z}^{(l,l)} (X_1, \gamma, N)  &=  C(D, d_l) \Gamma^{-1}(3/2) R_{z, N}^{(l,l)} (1) H_{z, 1}^{(l,l)} (1) ((z-2\gamma) \log X_1)^{1/2}  \\
&+ O\left( G_N |z \log X_1|^{1/2}
|z| \log\log N
+  G_N
\right).
\end{split}
\end{equation}
\end{lemma}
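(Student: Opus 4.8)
The plan is to follow the contour-integral approach of Lemma \ref{l5}, the new feature being that the generating function now carries a pair of square-root branch points near $s=1$ rather than a simple zero. Write
$$
h_{z,N}^{(l,l)}(s)=\sum_{(n,N)=1}\frac{\alpha(n)K_{l,l}(n,z)}{n^{s}}=\prod_{(p,N)=1}\Bigl(1+\frac{\alpha(p)K_{l,l}(p,z)}{p^{s}}+\frac{\alpha(p^{2})K_{l,l}(p^{2},z)}{p^{2s}}+\cdots\Bigr),
$$
so that Perron's formula gives
$$
S_{z}^{(l,l)}(X_{1},\gamma,N)=\frac{X_{1}^{\gamma}}{2\pi i}\int_{(1)}\frac{X_{1}^{s}}{(s+\gamma)^{2}}\,h_{z,N}^{(l,l)}(s+1)\,ds.
$$
Using the prime-power values of $\alpha(\cdot)$ from Lemma \ref{l10} and of $K_{l,l}(p^{\alpha},z)$ from Lemma \ref{l30}, a case analysis according to $\chi_{D}(p)=1$, $\chi_{D}(p)=-1$ or $p\mid D$ shows that the local factor at a prime with $\chi_{D}(p)=1$ equals $(1-p^{-s})(1-p^{-s-z})$ times a factor $1+O(p^{-s-1})$, the local factors with $\chi_{D}(p)=-1$ involve only $p^{-2s}$, and the finitely many factors with $p\mid D$ are entire and non-vanishing near $s=1$. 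Comparing $\prod_{\chi_{D}(p)=1}(1-p^{-s})$ with $(\zeta(s)L_{\chi_{D}}(s))^{-1/2}$ (and likewise with $s$ replaced by $s+z$) one obtains the factorisation
$$
h_{z,N}^{(l,l)}(s)=\zeta(s)^{-1/2}\,\zeta(s+z)^{-1/2}\,Q_{z}^{(l,l)}(s)\,G_{N}^{(l,l)}(s),
$$
where $Q_{z}^{(l,l)}$ is analytic, bounded and non-vanishing in a fixed neighbourhood of $s=1$ uniformly for $z$ in the ranges at hand (the factor $L_{\chi_{D}}(s)^{-1/2}L_{\chi_{D}}(s+z)^{-1/2}$, regular and non-zero there, is absorbed into it), and $G_{N}^{(l,l)}(s)=\prod_{p\mid N}(\cdots)$ satisfies $|G_{N}^{(l,l)}(1+s)|\ll G_{N}$ exactly as in Lemma \ref{l5}. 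The functions $R_{z,N}^{(l,l)}$ and $H_{z,N}^{(l,l)}$ of (\ref{eq31}), (\ref{eq101}) are precisely the constituents of $Q_{z}^{(l,l)}$ and $G_{N}^{(l,l)}$.

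Next I would shift the line $\Real s=1$ onto a contour built, as in Lemma \ref{l5}, from segments of $\Real s=0$ and a far-left vertical segment — where the bounds for $K_{1},K_{2},K_{3}$ of Lemma \ref{l5} apply, the factor $\log(|z|+3)$ coming from the classical estimate $\zeta(1+it)^{-1}\ll\log(|t|+2)$ applied to the two $\zeta^{-1/2}$ factors — together with small Hankel loops around the two branch points: one at $s=0$ (from $\zeta(s+1)^{-1/2}$) and one at $s=-z$ (from $\zeta(s+1+z)^{-1/2}$). Because the hypothesis $\gamma\in\{0,z\}$ forces the double pole $s=-\gamma$ to lie exactly at one of these two branch points, there is no separate residue. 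After the substitution $s=w/\log X_{1}$ (or $s=-z+w/\log X_{1}$) the Hankel loop containing the double pole becomes a Hankel integral evaluating to $\Gamma^{-1}(3/2)$ times $(\log X_{1})^{1/2}$ times the value at the branch point of the remaining, transverse $\zeta^{-1/2}$ factor, which is of size $|z|^{1/2}$; the other (far) branch point contributes $O(G_{N})$. This yields the first line of (\ref{eq1000}); for the second line one keeps the two branch points merged and observes that $s^{-3/2}(s+z)^{1/2}$ is only mildly singular when $|z|\ll(\log X_{1})^{-1}$, so the loop contributes $O\bigl((|z|\log X_{1}+1)^{1/2}\bigr)$ rather than $O\bigl((\log X_{1})^{1/2}\bigr)$.

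For the asymptotic formula (\ref{eq38}) under (\ref{eq29}) I would make this leading-order extraction precise. Since $|z\log X_{1}|\ge 4$, in the Hankel loop carrying the double pole write $\zeta(s+1)^{-1/2}=s^{1/2}\cdot(s\zeta(s+1))^{-1/2}$ (case $\gamma=0$) or $\zeta(s+1+z)^{-1/2}=(s+z)^{1/2}\cdot((s+z)\zeta(s+1+z))^{-1/2}$ (case $\gamma=z$), the second factor being analytic on the loop; the transverse $\zeta^{-1/2}$ factor equals $\zeta(1+z)^{-1/2}$ resp. $\zeta(1-z)^{-1/2}$, which equals $((z-2\gamma)\log X_{1})^{1/2}(\log X_{1})^{-1/2}$ up to $1+O(|z|)$; and $Q_{z}^{(l,l)}G_{N}^{(l,l)}$ on the loop equals its value at the branch point times $1+O((|w|+|z|)/\log X_{1})$. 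Integrating term by term against $e^{w}w^{-3/2}$ and using $\tfrac1{2\pi i}\int_{\mathrm{Hankel}}e^{w}w^{-3/2}\,dw=\Gamma^{-1}(3/2)$ produces the main term $C(D,d_{l})\Gamma^{-1}(3/2)R_{z,N}^{(l,l)}(1)H_{z,1}^{(l,l)}(1)((z-2\gamma)\log X_{1})^{1/2}$, the constant $C(D,d_{l})$ collecting the finitely many $p\mid D$ factors and the normalising constant attached to $\zeta(s)^{-1/2}$; the tails of the loop, the far branch point and the vertical segments together contribute $O\bigl(G_{N}|z\log X_{1}|^{1/2}|z|\log\log N+G_{N}\bigr)$.

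The main obstacle is the simultaneous handling of the two nearby branch points $s=0$, $s=-z$ together with the double pole, with uniformity in $z$ over the whole range (\ref{eq29}): one must keep track of which Hankel loop absorbs the pole (governed by whether $\gamma=0$ or $\gamma=z$), of the correct branch of the multi-valued $\zeta^{-1/2}$ (so that for purely imaginary $z$ the factor really is $((z-2\gamma)\log X_{1})^{1/2}$ rather than its negative), and of the transition between the refined bound $(|z|\log X_{1}+1)^{1/2}$ and the asymptotics as $|z|\log X_{1}$ crosses the threshold $4$. Establishing the factorisation of $h_{z,N}^{(l,l)}$ cleanly — separating the genuinely singular part $\zeta(s)^{-1/2}\zeta(s+z)^{-1/2}$ from a bounded analytic remainder, uniformly in $z$ and $N$ — while routine, is also somewhat delicate.
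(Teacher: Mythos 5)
Your overall strategy coincides with the paper's: Perron's formula applied to $h_{z,N}^{(l,l)}$, a factorisation isolating the two square-root singularities at $s=1$ and $s=1-z$, a contour shift with a Hankel loop about the branch point that coincides with the double pole, and extraction of $(z-2\gamma)^{1/2}$ according to whether $\gamma=0$ or $\gamma=z$. Your $\zeta(s)^{-1/2}\zeta(s+z)^{-1/2}\,Q_z\,G_N$ is the paper's $L^{-1/2}_{\chi^2_{d_l}}(s)L^{-1/2}_{\chi^2_{D/d_l}}(s+z)L^{-1/2}_{\chi_D}(s)L^{-1/2}_{\chi_D}(s+z)H_{z,N}^{(l,l)}(s)$ after peeling off the finite Euler factors supported on $p\mid D$, so the decomposition is the same in substance. (A minor divergence: for the first bound of (\ref{eq1000}) the paper does not go around the branch points at all, but pushes the contour to $\Real s=\Real\gamma+1/\log X_1$, strictly to the right of both $s=\gamma$ and $s=\gamma-z$, which avoids having to make the loop estimates uniform in $\Image z$.)

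There is, however, a real gap in the step where you assert that $Q_z^{(l,l)}G_N^{(l,l)}$ on the loop ``equals its value at the branch point times $1+O((|w|+|z|)/\log X_1)$.'' That estimate is $N$-independent, and it cannot be: the $N$-dependent part $R_{z,N}^{(l,l)}(1+s-\gamma)=\prod_{p\mid N}\bigl(1+f_p(l,z,s-\gamma)\bigr)^{-1}$ has logarithmic derivative of size $\sum_{p\mid N}\frac{\log p}{p}\ll\log\log N$, so moving $s$ by $\Delta$ on the loop changes this factor multiplicatively by $1+O(|\Delta|\log\log N)$, not $1+O(|\Delta|)$. Without that $\log\log N$ your loop integral would only give an error $G_N|z\log X_1|^{1/2}|z|$, whereas the lemma (and your own final line) requires $G_N|z\log X_1|^{1/2}|z|\log\log N$. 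The paper tracks this through the bound $\sum_{p\mid N}|r_p(l,z,w)|\ll|w|\log\log N+e^{-1/X_0}X_0\log N$, which in turn rests on the uniform lower bound $|1+f_p|\ge 0.18$ and on the upper bounds for $\alpha(p^k)K_{l,l}(p^k,z)$ and for $\frac{d}{dz}K_{l,l}$ supplied by Lemmas \ref{l6}, \ref{l10} and \ref{l11}. Your sketch names the correct error term but the mechanism you describe does not produce it; the $N$-uniformity is exactly where the technical weight of this lemma lies.
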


\begin{pf}
The proof of this lemma starts similar to that of lemma \ref{l5}. For $\Real s
>1$, we define the generating function with multiplicative coefficients
\begin{equation*}
\begin{split}
h_{z, N}^{(l,l)} (s) &= \sum\limits_{(n, N)=1} \frac{\alpha(n) K_{l,l}(n,
z)}{n^{s}} = \prod_{(p, N)=1} \left( 1+ \frac{\alpha(p) K_{l,l}(p,
z)}{p^s} +\frac{\alpha(p^2) K_{l,l}(p^2,
z)}{p^{2s}} +\ldots \right)
\end{split}
\end{equation*}
An application of  Perron's summation formula yields
\begin{equation}\label{eq27}
\begin{split}
S_{z}^{(l,l)}(X_1,\gamma,N) &= \frac{1}{2\pi i}
\int\limits_{1-i\infty}^{1+i\infty} h_{z, N}^{(l,l)} (s+1-\gamma)
\frac{X_1^{s}}{s^2} ds.
\end{split}
\end{equation}

Let us investigate the behaviour of $h_{z, N}^{(l,l)} (s)$.
If a prime $p$ obeys the condition $(p, D) = 1$, then it follows from (\ref{eq18}) that
\begin{equation*}
\begin{split}
\alpha(p) K_{l,l} (p,z) &=  -\frac12 \left( \chi_{d_1} (p) + \chi_{d_2} (p)\right) \left( \chi_{d_l} (p) + \frac{\chi_{D/d_l} (p)}{p^{z}}\right) \left( 1 + O \left(\frac{1}{p}\right)\right) \\
&= -\frac12 \left( \chi^2_{d_l} (p) + \frac{\chi^2_{D/d_l} (p)}{p^{z}} + \chi_{D} (p)\left( 1+  \frac{1}{p^{z}}\right) \right) \left( 1 + O \left(\frac{1}{p}\right)\right) .
\end{split}
\end{equation*}

We shall approximate the coefficient at $p^{-s}$ of the generating function $h^{(l,l)}_{(z, N)}(s)$ by
$$A_l (p,z)=-\frac12  \left( \chi^2_{d_l} (p) + \chi^2_{D/d_l} (p) p^{-z}  + \chi_D (p) (1+p^{-z})\right),$$
which, in turn, is also the coefficient at $p^{-s}$ in the Euler product representation for  $L^{-1/2}_{\chi^2_{d_l} }(s) L^{-1/2}_{\chi^2_{D/d_l}}(s+z) L_{\chi_{D}} (s)^{-1/2} L_{\chi_{D}} (s+z)^{-1/2}$. Therefore,
we come to the equality
\begin{equation*}
\begin{split}
h_{z, N}^{(l,l)} (s) &= L^{-1/2}_{\chi^2_{d_l} }(s) L^{-1/2}_{\chi^2_{D/d_l}}(s+z) L_{\chi_{D}} (s)^{-1/2} L_{\chi_{D}} (s+z)^{-1/2} H_{z, N}^{(l,l)} (s),
\end{split}
\end{equation*}
where the function $H_{z, N}^{(l,l)} (s)$  for $\Real z \ge 0$ is analytic in the region $\Real s >1/2$ and can be written as the product
\begin{equation}\label{eq101}
H_{z, N}^{(l,l)} (s) = \mathcal{H}_{z, N}^{(l,l)} (s) G_{z,N}^{(l,l)} (s),
\end{equation}
where
$$
G_{z,N}^{(l,l)} (s) = \prod_{p\mid N} \left( 1-\frac{\chi^2_{d_l} (p)}{p^{s}}\right)^{-1/2} \left( 1-\frac{\chi^2_{D/d_l} (p)}{p^{s+z}}\right)^{-1/2} \left( 1-\frac{\chi_{D} (p)}{p^{s}}\right)^{-1/2} \left( 1-\frac{\chi_{D} (p)}{p^{s+z}}\right)^{-1/2}
$$
and, setting $\sigma =  \Real s$,
\begin{equation}\label{eq1003}
\begin{split}
&\mathcal{H}_{z, N}^{(l,l)} (s)  =  \prod_{
(p, N)=1} \left( 1-\frac{\chi^2_{d_l} (p)}{p^{s}}\right)^{-1/2} \left( 1-\frac{\chi^2_{D/d_l} (p)}{p^{s+z}}\right)^{-1/2} \left( 1-\frac{\chi_{D} (p)}{p^{s}}\right)^{-1/2} \left( 1-\frac{\chi_{D} (p)}{p^{s+z}}\right)^{-1/2}  \times\\
&\times
 \left( 1 + \frac{\alpha(p) K_{l,l}(p, z)}{p^{s}} + \frac{\alpha(p^2) K_{l,l}(p^2, z)}{p^{2s}} +\ldots  \right) \\
&= \prod\limits_{(p, N D)=1} \left( 1 -
\frac{A_l (p,z)}{p^{s}} + O\left( \frac{1}{p^{2\sigma}}\right)
\right)
\left( 1 +
\frac{A_l (p,z)}{p^{s}} + O\left( \frac{1}{p^{\sigma+1}}\right) + O\left( \frac{1}{p^{2\sigma}}\right)\right)\times\\
&\times \prod\limits_{(p, N)=1, p\mid D} \left( 1 +
 O\left( \frac{1}{p^{\sigma}}\right)
\right) \\
&= \prod\limits_{(p, N)=1} \left( 1 + O\left( \frac{1}{p^{\sigma+1}}\right) + O\left( \frac{1}{p^{2\sigma}}\right)\right)
\prod\limits_{(p, N)=1, \; p\mid D} \left( 1 + O\left( \frac{1}{p^{\sigma}}\right)
\right).
\end{split}
\end{equation}

To prove the first estimate of the lemma we shall move the path of integration in (\ref{eq27}) from the line $\Real s =1$ to the line $\Real s = \Real \gamma + 1/\log X_1$. On the new contour we shall use the following estimates
for $\Real w \ge 0$:
$$
\mathcal{H}_{z, N}^{(l,l)} (1+w) \ll 1,
$$
$$
G_{z,N}^{(l,l)} (1+w) \ll \prod_{p|N} \left( 1+\frac{1}{p}\right)^2 \le G_N,
$$
$$
L_{|\chi^{2}|}^{-1}(1+w)\ll \min (|w|, \log (|w|+3)),
$$
$$
L_{\chi_D}^{-1} (1+w) \ll \log (|w|+3).
$$
Therefore, we obtain the following inequality:
\begin{equation*}
\begin{split}
S_{z}^{(l,l)}(X_1,\gamma,N) & \ll G_N
\int\limits_{-\infty}^{+\infty}
\frac{X_1^{\Real \gamma}}{\log^{-2} X_1 + t^2} \left(\min(|t-\Image\gamma|+\log^{-1} X_1, \log (|t-\Image\gamma|+3) \right)^{1/2} \times \\
&\min \left( |t+\Image z-\Image\gamma| + \log^{-1} X_1, \log (|t+\Image z-\Image\gamma|+3) \right)^{1/2} \times\\
&\times
 \log^{1/2} (|t-\Image\gamma|+3) \log^{1/2} (|t+\Image z-\Image\gamma|+3)dt.
\end{split}
\end{equation*}
Since $\gamma$ equals either $0$ or $z$, we note that the pair of numbers $(t-\Image\gamma, t+\Image z-\Image\gamma)$ can get two values, namely, $(t, t+\Image z)$ or $(t-\Image z, t)$. Consequently, for $|t|\le \log^{-1} X_1$
$$
\min (|t|, |t\pm \Image z|) \le \log^{-1} X_1.
$$
Hence, we get
\begin{equation*}
\begin{split}
S_{z}^{(l,l)}(X_1,\gamma,N) &\ll
G_N X_1^{\Real \gamma} \int\limits_{|t| \le \log^{-1} X_1}  \log^2 X_1 \cdot \log^{-1/2} X_1 \cdot \log(|\Image z|+3) dt \\
&+ G_N X_1^{\Real \gamma} \int\limits_{\log^{-1} X_1 \le |t| \le 1 }  t^{-2} t^{1/2}  \log(|\Image z|+3) dt \\
&+ G_N X_1^{\Real \gamma} \int\limits_{|t| \ge 1 }  t^{-2}   \log(|t|+3)  \log(|t \pm\Image z|+3) dt\\
&\ll G_N X_1^{\Real \gamma} (\log X_1)^{1/2} \log (|\Image z|+3).
\end{split}
\end{equation*}

To obtain the second estimate of the lemma in case
\begin{equation}\label{eq17}
0 \le \Real z \le \frac{\log\log X_1}{\log X_1}, |\Image z| \le 1,
\end{equation}
we change the path of integration in (\ref{eq27}) in a more subtle way. Suppose that the constant $C$ is large enough that  there are no zeros of $L_{\chi^2_{d_j}} (s), L_{\chi_{D}} (s)$ in the region
$$
|\Image s| \le 8 \log^4 X_1, \quad \Real s \ge 1-\frac{8}{C \log\log X_1}
$$
for every $X_1 \ge 10$. If $|\Image z| \le \log^{-1} X_1$, set the new contour of integration as the union of the following segments:
\begin{equation*}
\begin{split}
\tilde{K}_1&=\{ s: \Real s =\Real\gamma, |\Image s| \ge 4\log^4 X_1\}, \\
\tilde{K}_2&=\{  s:  -2(C\log\log X_1)^{-1} + \Real \gamma \le\Real s\le \Real\gamma, |\Image s|=4 \log^4 X_1\}, \\
\tilde{K}_3&= \{s:  \Real s= -2(C\log\log X_1)^{-1} + \Real \gamma,  2 \log^{-1} X_1 < |\Image s| \le 4 \log^4 X_1  \}, \\
\tilde{K}_4&=\{ s:  -2(C\log\log X_1)^{-1} + \Real \gamma\le\Real s\le \Real \gamma, |\Image s|= 2 \log^{-1} X_1\}, \\
\tilde{K}_5&=\{ s:  |s-\Real \gamma| = 2 \log^{-1} X_1   e^{i\phi},  \quad |\phi| \le \pi\}.
\end{split}
\end{equation*}
We see that $h_{z, N}^{(l,l)} (s+1-\gamma)$ is analytic to the right of the new contour for $z$ satisfying (\ref{eq17}) if $X_1$ is large enough (say, when $4\Real z \le  (C\log\log X_1)^{-1}$).

The integral over $\tilde{K}_1$  from the integrand of (\ref{eq27})  can be estimated from above as
\begin{equation*}
\begin{split}
X_1^{\Real\gamma} G_{N} \int\limits_{4\log^4 X_1}^{+\infty} \frac{\log^2 t}{t^2} dt
\ll X_1^{\Real\gamma} G_{N} \frac{\log\log X_1}{\log^4 X_1} \ll G_N,
\end{split}
\end{equation*}
since
$$
\frac{X_1^{\Real \gamma}}{\log X_1} \le \frac{X_1^{\frac{\log\log X_1}{\log X_1}}}{\log X_1} \ll 1.
$$
On  $\tilde{K}_2$  the corresponding integral can be estimated from above as
\begin{equation*}
\begin{split}
X_1^{\Real\gamma} G_{N} \int\limits_{-2(C\log\log X_1)^{-1}}^{0} X_1^{\sigma} \frac{(\log\log X_1)^2}{\log^{8} X_1}  d\sigma
\ll X_1^{\Real\gamma} G_{N} \frac{(\log\log X_1)^2}{\log^{9} X_1} \ll  G_N.
\end{split}
\end{equation*}
On $\tilde{K}_3$  the corresponding integral can be estimated from above as
\begin{equation*}
\begin{split}
&X_1^{\Real\gamma-2(C\log\log X_1)^{-1}} G_{N} \int\limits_{|t| \le 5\log^4 X_1 }  \frac{(\log\log X_1)^2}{ (C \log\log X_1)^{-2} + t^2} dt\\
&\ll X_1^{\Real\gamma-2(C\log\log X_1)^{-1}} G_{N} (\log\log X_1)^3 \ll G_N.
\end{split}
\end{equation*}
On $\tilde{K}_4$ we can estimate the integral from above as
\begin{equation*}
\begin{split}
&G_{N}  \int\limits_{-2(C\log\log X_1)^{-1} +\Real \gamma}^{\Real \gamma} \frac{X_1^{\sigma}}{|\sigma +2 i  \log^{-1} X_1|^{3/2}}  (|\sigma  + 2 i  \log^{-1} X_1| + |z|)^{1/2} d\sigma \\
&\ll X_1^{\Real\gamma}  G_{N}   \int\limits_{-2(C\log\log X_1)^{-1} }^{0} \frac{X_1^{\sigma}}{|\sigma +\Real\gamma+ 2 i  \log^{-1} X_1|^{3/2}}  (|\sigma +\Real\gamma+ 2 i  \log^{-1} X_1|^{1/2}  + |z|^{1/2}) d\sigma  \\
&\ll  X_1^{\Real\gamma}
G_{N}   \int\limits_{0}^{+\infty} X_1^{-\sigma} \left(\frac{1 }{\log^{-1} X_1}  + \frac{|z|^{1/2}}{\log^{-3/2} X_1} \right)d\sigma \ll X_1^{\Real\gamma}
G_{N}   \int\limits_{0}^{+\infty} e^{-\sigma} \left( 1  + |z|^{1/2}\log^{1/2} X_1 \right) d\sigma \\
&\ll  X_1^{\Real\gamma} G_{N}  \left( 1 + |z|\log X_1\right)^{1/2}.
\end{split}
\end{equation*}
On $\tilde{K}_5$ we can estimate the integral from above as
\begin{equation*}
\begin{split}
&X_1^{\Real\gamma} G_{N} \frac{(| \Real\gamma -\Real z |+ \log^{-1} X_1)^{1/2}}{(\Real\gamma + \log^{-1} X_1)^{3/2} \log X_1}   \ll  X_1^{\Real \gamma} G_{N} \frac{(|z| +\log^{-1} X_1)^{1/2}}{\log^{-1/2} X_1} \\
&\ll X_1^{\Real\gamma} G_{N}  \left( 1 + |z|\log X_1\right)^{1/2}.
\end{split}
\end{equation*}

If $ \log^{-1} X_1 \le |\Image z| \le 1$
set the new contour of integration as the union of the following segments:
\begin{equation*}
\begin{split}
K_1&=\{ s: \Real s =\Real\gamma, |\Image s - \Image \gamma| \ge 4\log^4 X_1\}, \\
K_2&=\{  s:  -2(C\log\log X_1)^{-1} + \Real \gamma \le\Real s\le \Real\gamma, |\Image s - \Image \gamma|=4 \log^4 X_1\}, \\
K_3&= \{s:  \Real s= -2(C\log\log X_1)^{-1} + \Real \gamma, \{ 0< |\Image s - \Image \gamma| \le 4 \log^4 X_1\} \\
&\cap \{ |\Image s -\Image \gamma + \Image z| >0 \} \}, \\
K_4&=\{ s:  -2(C\log\log X_1)^{-1} + \Real \gamma\le\Real s\le - r_1, |\Image s|=+0\}, \\
K'_4&=\{ s:  -2(C\log\log X_1)^{-1} +\Real \gamma \le\Real s< 2\Real \gamma-\Real z , |\Image s  - 2\Image \gamma +\Image z|= +0 \}, \\
K_5&=\{ s:  |s| = \frac12 (\log X_1)^{-1}  e^{i\phi},  \quad |\phi| <\pi\}, \\
K'_5&=\{ s:  |s+z-2\gamma| = r e^{i\phi},  \quad |\phi| <\pi, \quad r=+0\}.
\end{split}
\end{equation*}
Again,  $h_{z, N}^{(l,l)} (s+1-\gamma)$ is analytic to the right of the new contour for $z$ satisfying (\ref{eq17}).
Similarly to the above estimates, the integral
over $K_1$ admits the following estimate from above
\begin{equation*}
\begin{split}
X_1^{\Real\gamma} G_{N} \int\limits_{3\log^4 X_1}^{+\infty} \frac{\log^2 t}{t^2} dt
\ll X_1^{\Real\gamma} G_{N} \frac{\log\log X_1}{\log^4 X_1} \ll G_N
\end{split}
\end{equation*}
as well as over $K_2$ and $K_3$.
On $K'_5$ we can estimate the integral from above as
\begin{equation*}
\begin{split}
&X_1^{\Real \gamma} G_{N} \lim\limits_{r\to +0} \frac{||\gamma-z|+r|^{1/2} |\gamma+r|^{1/2}}{|z|^2}   r \ll  X_1^{\Real \gamma} G_{N} \lim\limits_{r\to +0} \frac{r^{3/2}}{|z|^{3/2}}   =0
\end{split}
\end{equation*}
On $K'_4$ we can estimate the integral from above as
\begin{equation*}
\begin{split}
&G_{N}  \int\limits_{-2(C\log\log X_1)^{-1} +\Real \gamma}^{2\Real \gamma -\Real z} \frac{X_1^{\sigma}}{|\sigma + i\Image (2\gamma-z)|^{3/2}}   |\sigma +\Real z-2\Real\gamma |^{1/2} d\sigma \\
&\ll G_{N}  X_1^{2\Real \gamma -\Real z} \int\limits_{-2(C\log\log X_1)^{-1} +\Real z -\Real \gamma}^{0} \frac{X_1^{\sigma}}{|\sigma + 2\gamma-z|^{3/2}}  |\sigma |^{1/2} d\sigma  \\
&\ll
G_{N}  X_1^{\Real \gamma} \int\limits_{0}^{+\infty} \frac{X_1^{-\sigma}  \sigma^{1/2}}{|-\sigma + z|^{3/2}}   d\sigma \ll  G_{N}  X_1^{\Real \gamma} \int\limits_{0}^{+\infty} \frac{X_1^{-\sigma}  \sigma^{1/2}}{|\Image z|^{3/2}}   d\sigma   \\
&\ll G_{N}  X_1^{\Real \gamma} \int\limits_{0}^{+\infty} \frac{e^{-\sigma} \sigma^{1/2}}{(|\Image z| \log X_1)^{3/2}}   d\sigma \ll G_{N} X_1^{\Real \gamma} \left( |\Image z|\log X_1\right)^{-3/2} \ll G_{N} X_1^{\Real \gamma}.
\end{split}
\end{equation*}

 Let us get, first, the upper bound for the integrals over $K_4 \cup K_5$.

On $K_5$ we can estimate the integral from above as
\begin{equation*}
\begin{split}
& G_{N} (\log^{3/2} X_1)  |z|^{1/2} \log^{-1} X_1 \ll
G_{N}  |z \log X_1|^{1/2} .
\end{split}
\end{equation*}

On $K_4$ we can estimate the integral from above as
\begin{equation*}
\begin{split}
&G_{N}  \int\limits_{-2(C\log\log X_1)^{-1}}^{-(C\log X_1)^{-1}} \frac{X_1^{\sigma}}{|\sigma|^{2}}  |\sigma|^{1/2} |\sigma + z|^{1/2} d\sigma \\
&\ll G_{N}  \int\limits_{-2(C\log\log X_1)^{-1}}^{-(C\log X_1)^{-1}} \frac{X_1^{\sigma}}{|\sigma|}  d\sigma + G_{N}  \int\limits_{-2(C\log\log X_1)^{-1}}^{-(C\log X_1)^{-1}} \frac{X_1^{\sigma} |z|^{1/2}}{|\sigma|^{3/2}}  d\sigma\\
&\ll G_{N}  \int\limits_{C^{-1}}^{+\infty}  e^{-t} t^{-1} dt + G_{N} |z\log X_1|^{1/2} \int\limits_{C^{-1}}^{+\infty}  e^{-t} t^{-3/2} dt \ll G_{N} \left( |z\log X_1|^{1/2} +1\right).
\end{split}
\end{equation*}


Now, if $z$ satisfies (\ref{eq29}), then we shall obtain
exact expression
for the integral over the path $K_4 \cup K_5$ to prove (\ref{eq38}). Without loss of generality we may suppose that
\begin{equation}\label{eq30}
\begin{split}
|z| \log\log  N \le 1,
\end{split}
\end{equation}
since otherwise the statement  (\ref{eq38}) follows from  (\ref{eq1000}).
Let us
estimate trivially the integral over
the two segments $\left( -2(C\log\log X_1)^{-1}  \pm i\cdot 0, - X_0 \pm i\cdot 0 \right)$, where $X_0$ is a parameter to be chosen later as $|z|/2$, but generally
\begin{equation}\label{eq40}
\log^{-1} X_1  \le X_0 \le |z|/2.
\end{equation}
This part can be estimated from above  as
\begin{equation*}
\begin{split}
&G_{N}  \int\limits_{-2(C\log\log X_1)^{-1}}^{-X_0} \frac{X_1^{\sigma}}{|\sigma|^{2}}  |\sigma|^{1/2} |\sigma + z|^{1/2} d\sigma
\ll G_{N}  \int\limits_{-2(C\log\log X_1)^{-1}}^{-X_0} \frac{X_1^{\sigma} (|z|^{1/2}+ |\sigma|^{1/2})}{|\sigma|^{3/2}}  d\sigma\\
&\ll  G_{N} |z\log X_1|^{1/2} \int\limits_{X_0 \log X_1}^{+\infty}  e^{-t} t^{-3/2} dt +
G_{N}  \int\limits_{X_0 \log X_1}^{+\infty}  e^{-t} t^{-1} dt \\
&\ll G_{N} |z\log X_1|^{1/2} \left( X_0 \log X_1 \right)^{-3/2}\int\limits_{X_0 \log X_1 }^{+\infty}  e^{-t} dt +
G_{N} \left( X_0 \log X_1 \right)^{-1} \int\limits_{X_0 \log X_1}^{+\infty}  e^{-t} dt \\
&\ll G_{N} e^{- X_0 \log X_1 } \left( X_0 \log X_1 \right)^{-1} \left( |z /X_0|^{1/2}  + 1\right)
\ll G_{N} e^{- X_0 \log X_1 } \left( X_0 \log X_1 \right)^{-1}  |z /X_0|^{1/2}.
\end{split}
\end{equation*}

On the remaining path (which is the loop $\mathcal{L}$, that starts at the point $-X_0 -i\cdot 0$, then passes around zero in the positive direction and finishes at the point $-X_0 +i\cdot 0$) we shall approximate the integrand by its value at $s = 0$.
Note that $\left( (s-\gamma) L_{\chi^2_{d_l}} (1+s-\gamma)\right)^{-1/2}$,  $\left( (s+z-\gamma) L_{\chi^2_{D/d_l}} (1+s+z-\gamma)\right)^{-1/2}$ are analytic functions for
$|s| \le c$ and
$|s| +|z|\le c$ respectively (where $c>0$ is some absolute constant).  Moreover, for $|z| > 2 |s|$
\begin{equation*}
\begin{split}
& (s\pm z)^{1/2} = (\pm z)^{1/2} \cdot \left( 1 \pm \frac{s}{z}\right)^{1/2} = (\pm z)^{1/2} \cdot \left( 1 + O\left(\frac{|s|}{|z|}\right) \right).\\
\end{split}
\end{equation*}
Hence, on the loop $\mathcal{L}$ and for $z$ satisfying (\ref{eq29}) we have the relation
\begin{equation}\label{eq29_1}
\begin{split}
h_{z, N}^{(l,l)} (1+s-\gamma) =
C(D, d_l) s^{1/2} (z-2\gamma)^{1/2}  \left( 1 + O\left(\frac{|s|}{|z|} \right) + O(|z|) \right) H_{z, N}^{(l,l)} (1+s-\gamma)
\end{split}
\end{equation}
for some constant $C(D, d_l)$ (note that this constant coincide for $l=1,2$).
We shall further approximate $H_{z, N}^{(l,l)} (1+s-\gamma)$ by its value at $1$. Notice that
\begin{equation*}
\begin{split}
H_{z, N}^{(l,l)} (1+s-\gamma) = \mathcal{H}_{z, 1}^{(l,l)} (1+s-\gamma) R_{z, N}^{(l,l)} (1+s-\gamma),
\end{split}
\end{equation*}
where
\begin{equation}\label{eq31}
\begin{split}
R_{z, N}^{(l,l)} (1+w) &= \prod_{
p \mid N}
 \left( 1 + f_{p}(l, z, w) \right)^{-1} \\
f_{p}(l, z, w) &= \frac{\alpha(p) K_{l,l}(p, z)}{p^{1+w}} + \frac{\alpha(p^2) K_{l,l}(p^2, z)}{p^{2(1+w)}} +\ldots
\end{split}
\end{equation}

Recall that $\mathcal{H}_{z, 1}^{(l,l)} (1+s-\gamma)$ is holomorphic in the region $0 \le \Real z \le 1/200$, $\Real s > -1/2+ 1/200$. Thus, in the domain
\begin{equation*}
\begin{split}
&0 \le \Real z \le 1/200, \\
&|s| \ll 1, \Real s \ge -1/2 + 1/200
\end{split}
\end{equation*}
we can write
$$
\mathcal{H}_{z, 1}^{(l,l)} (1+s-\gamma) = \mathcal{H}_{z, 1}^{(l,l)} (1)  + O(|s-\gamma|),
$$
where the constant in $O$ is absolute (this can be explained by means of the Taylor series expansion for a holomorphic function and the Cauchy integral formula for the coefficients of the Taylor series).
We shall show next that for $z$ satisfying (\ref{eq29}) and
\begin{equation}\label{eq32}
\begin{split}
 -1/200 &\le s \le 1/200
\end{split}
\end{equation}
the function $R_{z, N}^{(l,l)} (1+s-\gamma)$ is analytic (of course, the analyticity will be in some open domains over $s$ and $z$, which contain the aforementioned intervals). For this, we exploit the statements of lemma \ref{l10} and \ref{l11} and show that
$1+f_p(l,z,w)$ (defined in (\ref{eq31})) is never close to zero.
We may suppose that the parameter $X_0 \le 1/200$ (since $X_1$ is large enough). Define
\begin{equation}\label{eq33}
\begin{split}
 w &= s-\gamma.
\end{split}
\end{equation}
Note, that
$$
1/200 \ge \Real w \ge -X_0 -\Real z = -X_0  \ge -0.01.
$$
For every $p \ge 2$ we exploit the statements of lemmas~\ref{l6}-\ref{l11} and (\ref{eq1002}), (\ref{eq1004})  to show that the following estimate holds:
\begin{equation*}
\begin{split}
&\left|f_{p}(l, z, w)
\right| \le \max \left( \frac23 \cdot 2^{0.02}, \max\limits_{p\ge 3} \left( \frac{2}{p+1}, \frac{3+1/p}{p^2}\right) p^{-2\Real w}  \right) \\
&+ \sum\limits_{k\ge 2}
\frac{1}{4k} \cdot (6k+1)  \frac{1}{p^{2k(1+\Real  w)}}\\
&\le
\frac23 \cdot 2^{0.02} + \frac{13}{8}\cdot\sum\limits_{k\ge 2}
\frac{1}{2^{2k(1+\Real w)}} =  \frac23 \cdot 2^{0.02} + \frac{13}{8}\cdot 2^{-4(1+\Real w)} \left( 1-2^{-2(1+\Real w)}\right)^{-1} \\
&\le  \frac23 \cdot 2^{0.02} + \frac{13}{8}\cdot \frac{1}{2^4}  \left( 1-2^{-1.98}\right)^{-1} 2^{0.04} \le 0.82.\\
\end{split}
\end{equation*}
Therefore,
\begin{equation*}
\begin{split}
\left| 1 + f_{p}(l, z, w) \right|^{-1} \le (0.18)^{-1}
\end{split}
\end{equation*}
which proves analyticity of $R_{z, N}^{(l,l)} (1+s-\gamma)$ in some domain over $s, z$ containing the
intervals (\ref{eq32}), (\ref{eq29}). Let us also estimate $\frac{d}{dz} R_{z, N}^{(l,l)} (1)$ via lemma~6, which we shall use later:
\begin{equation*}
\begin{split}
\frac{d}{dz} R_{z, N}^{(l,l)} (1) &= -R_{z, N}^{(l,l)} (1) \sum_{
p \mid N}
 \left( 1 + \frac{\alpha(p) K_{l,l}(p, z)}{p} + \frac{\alpha(p^2) K_{l,l}(p^2, z)}{p^{2}} +\ldots  \right)^{-1} \times \\
&\times
\left(  \frac{\alpha(p)}{p}  \frac{d}{dz} K_{l,l}(p, z)+ \frac{\alpha(p^2) }{p^{2}} \frac{d}{dz} K_{l,l}(p^2, z) +\ldots  \right) \\
&\ll |R_{z, N}^{(l,l)} (1)|  \sum_{
p \mid N} \left( \frac{\tau^2(p) \log p}{p} + \frac{\tau^2(p^2) \log p^2}{p^2} +\ldots \right) \ll |R_{z, N}^{(l,l)} (1)| \sum_{
p \mid N} \frac{\log p}{p} \\
&\ll |R_{z, N}^{(l,l)} (1)| \log\log N.
\end{split}
\end{equation*}

Now, let us approximate the value of $R_{z, N}^{(l,l)} (1+w)$ for $w=s-\gamma$ ($\gamma = 0$ or $\gamma = z$) in the domain (\ref{eq32}) by the value of $R_{z, N}^{(l,l)} (1)$ with some error.
It is clear that
\begin{equation}\label{eq34}
\begin{split}
&(1+f_{p}(l, z, w))^{-1} = (1 + f_{p}(l, z, 0))^{-1} \left( 1+\frac{r_{p}(l, z, w)}{1 + f_{p}(l, z, 0)}\right)^{-1},
\end{split}
\end{equation}
where
\begin{equation*}
\begin{split}
&r_{p}(l, z, w) = f_{p}(l, z, w) - f_{p}(l, z, 0) = \frac{\alpha(p) K_{l,l}(p, z)}{p} \left( p^{-w}-1\right) + \frac{\alpha(p^2) K_{l,l}(p^2, z)}{p^{2}} \left( p^{-2w}-1\right)  +\ldots   \\
&= \left( p^{-w}-1\right) \left( \frac{\alpha(p) K_{l,l}(p, z)}{p}  + \frac{\alpha(p^2) K_{l,l}(p^2, z)}{p^{2}} \left( p^{-w}+1\right)  +\ldots \right)
\end{split}
\end{equation*}
and $(1.82)^{-1} \le \left| 1 + f_{p}(l, z, w) \right|^{-1} \le (0.18)^{-1}$ for $\Real w \ge -0.01$ (therefore, the second multiplier in (\ref{eq34}) is holomorphic). Therefore,
\begin{equation*}
\begin{split}
&(1+f_{p}(l, z, w))^{-1} = (1 + f_{p}(l, z, 0))^{-1} \left( 1  + O\left(|r_{p}(l, z, w)|\right)\right)
\end{split}
\end{equation*}
and, hence,
\begin{equation}\label{eq35}
\begin{split}
&R_{z, N}^{(l,l)} (1+w) = R_{z, N}^{(l,l)} (1) \exp \left( O\left( \sum\limits_{p \mid N}  |r_{p}(l, z, w)|\right) \right).
\end{split}
\end{equation}
We see from lemma \ref{l10} that $\alpha(p^k) \le k^{-1}$ for $k \ge 1$ and from (\ref{eq23}),  that $K_{l,l}(p^k, z) \le 3k+1$. Therefore, $\alpha(p^k) K_{l,l}(p^k, z) \le 3 + k^{-1} \le 4$. Thus,
\begin{equation*}
\begin{split}
|r_{p}(l, z, w)| &\ll \left| p^{-w}-1\right| \left| \frac{4}{p}  + \frac{4}{p^{2}} 2 p^{-\Real w} + \frac{4}{p^{3}} 3 p^{-\Real w} +\ldots \right| \ll
\frac{1}{p} \left| p^{- w}-1\right|.
\end{split}
\end{equation*}
In case $|w| \log p\ll 1$,
we may use the bound
\begin{equation*}
\left| p^{-w}-1\right|  = \left| e^{-w \log p}-1\right| \ll |w| \log p.
\end{equation*}
Also, if $\frac{1}{|w|} \le \log p \le \frac{1}{|\Real w|}$, then
$$
\left| p^{- w}-1\right| \le  p^{- \Real w } + 1 \ll |w| \log p.
$$
Besides, we have the trivial estimate when $|w| \log p \ge 1$:
$$
\left| p^{- w}-1\right| \le 1 + p^{-\Real w} \le 2 \max (1, p^{-\Real w})  \ll \max( |w| \log p,  p^{-\Real w}).
$$
Consequently, recalling (\ref{eq32}), (\ref{eq33})
\begin{equation*}
\begin{split}
\sum\limits_{p \mid N}  |r_{p}(l, z, w)| &\ll \sum\limits_{p \mid N} \frac{1}{p} |w| \log p + \sum\limits_{\substack{ p \mid N, \\ p>e^{\frac{1}{X_0}} }} p^{-1-\Real w} \ll |w| \log\log N +
(e^{\frac{1}{X_0}})^{-1+X_0} X_0 \log N \\
&\ll |w| \log\log N +  e^{-\frac{1}{X_0}} X_0 \log N.
\end{split}
\end{equation*}
Further, from (\ref{eq35}) we get, recalling that $|s-\gamma| \ll |z|$ on the loop $\mathcal{L}$ and that $|z|\log\log N \le 1$:
\begin{equation*}
\begin{split}
R_{z, N}^{(l,l)} (1+s-\gamma) &= R_{z, N}^{(l,l)} (1) \left( 1 +  O\left( |s-\gamma| \log\log N + e^{-\frac{1}{X_0}} X_0 \log N\right) \right)
\end{split}
\end{equation*}
and, finally,
\begin{equation*}
\begin{split}
H_{z, N}^{(l,l)} (1+s-\gamma) &= (H_{z, 1}^{(l,l)} (1) + O(|s-\gamma|)) R_{z, N}^{(l,l)} (1) \left( 1 +  O\left( |s-\gamma| \log\log N + e^{-\frac{1}{X_0}} X_0 \log N\right) \right),
\end{split}
\end{equation*}
where
$$
R_{z, N}^{(l,l)} (1) \ll \prod_{
p \mid N}
 \left( 1 +  \frac{2}{p} \right) \ll G_N,
H_{z, 1}^{(l,l)} (1) = O(1),
$$
and the above estimates do not depend on $z$. Hence, (\ref{eq29_1}) and the last equality give
\begin{equation}\label{eq37}
\begin{split}
E_{z, N}^{(l,l)} (s-\gamma) &:= h_{z, N}^{(l,l)} (1+s-\gamma) - C(D, d_l) s^{1/2} (z-2\gamma)^{1/2} R_{z, N}^{(l,l)} (1) H_{z, 1}^{(l,l)} (1)  \\
&\ll
 G_N |z|^{1/2} |s|^{1/2} \left( \frac{|s|}{|z|} +  |z| \log\log N + e^{-\frac{1}{X_0}} X_0 \log N \right).
\end{split}
\end{equation}
The contribution of the error term $E_{z, N}^{(l,l)} (s-\gamma)$ to the initial integral over $\cal{L}$ does not change if we consider the integral over the loop $\tilde{\cal{L}}$, which starts and finishes at the same points as $\cal{L}$ but goes over the circle around zero with radius $1/\log X_1$. Therefore
\begin{equation*}
\begin{split}
&\int\limits_{\tilde{\cal{L}}} \frac{X_1^s}{s^2} E_{z, N}^{(l,l)} (s-\gamma) ds = \int\limits_{\tilde{\cal{L}}} e^{s \log X_1}\frac{1}{s^2} E_{z, N}^{(l,l)} (s-\gamma) ds \\
&\ll G_N |z|^{1/2} \left| \int\limits_{\tilde{\cal{L}}  } \left(|s|^{-3/2} \left( |z| \log\log N + e^{-\frac{1}{X_0}} X_0 \log N \right) + |s|^{-1/2} |z|^{-1}\right) ds \right| \\
&\ll  G_N |z|^{1/2} \left( \left( |z| \log\log N + e^{-\frac{1}{X_0}} X_0 \log N \right) \log^{1/2} X_1 +
|z|^{-1} X_0^{1/2}\right) \\
&\ll G_N |z \log X_1|^{1/2}  \left( |z| \log\log N + e^{-\frac{1}{X_0}} X_0 \log N+ |z|^{-1}\left( \frac{X_0}{ \log X_1}\right)^{1/2}\right).
\end{split}
\end{equation*}

We see from(\ref{eq37}) that to find the main term of the integral (\ref{eq27}), we need to evaluate the integral
$$
\frac{1}{2\pi i}\int\limits_{\cal{L}} \frac{X_1^s}{s^{3/2}} ds = \frac{1}{2\pi i}\int\limits_{\log X_1 \cdot  {\cal{L}}}  \frac{e^{s}}{s^{3/2}} \sqrt{\log X_1} ds,
$$
which is, due to the Hankel integral formula,  $\dfrac{\sqrt{\log X_1} }{\Gamma(3/2)}$ plus an error term, which can be estimated as $\sqrt{\log X_1}$ multiplied by the integral
\begin{equation}\label{eq508}
\begin{split}
\frac{1}{\pi}\int\limits_{-\infty}^{-X_0 \log X_1} \frac{e^s}{|s|^{3/2}} ds \ll
\left(X_0 \log X_1\right)^{-3/2} \int\limits_{-\infty}^{-X_0 \log X_1} e^{s} ds
\ll \left(X_0 \log X_1\right)^{-3/2} e^{-X_0 \log X_1}.
\end{split}
\end{equation}
Therefore, collecting all the results, we find for 
$z$, $N$  and $X_0$ satisfying (\ref{eq29}), (\ref{eq30}) and (\ref{eq40}) that the sum of the lemma equals
\begin{equation*}
\begin{split}
&C(D, d_l) \Gamma^{-1}(3/2) R_{z, N}^{(l,l)} (1) H_{z, 1}^{(l,l)} (1) ((z-2\gamma) \log X_1)^{1/2}  \\
&+ O\left(G_N |z \log X_1|^{1/2} \left( |z| \log\log N + e^{-\frac{1}{X_0}} X_0 \log N\right)\right) \\
&+ O\left( G_N \left(X_0 \log X_1\right)^{-1} e^{-X_0 \log X_1}  \left(\frac{|z|}{X_0}\right)^{1/2} + G_N X_1^{\Real z}\right).
\end{split}
\end{equation*}
Therefore,
we can put $X_0 = |z|/2$ in the last expression and obtain (\ref{eq38}), since, by  (\ref{eq29}) and (\ref{eq30}),
\begin{equation*}
\begin{split}
&\left( |z| \log X_1\right)^{-1} e^{-\frac12 |z| \log X_1} \ll 1, \\
&e^{-\frac{2}{|z|}} |z| \log N \ll |z| \log\log N.
\end{split}
\end{equation*}
The proof of the lemma is now complete.

\end{pf}

\begin{pfl3}

Using M\"obius inversion formula
\begin{equation*}
f(q) = \sum_{d|q} \sum_{m|d} \mu(m) f\biggl(\frac{d}{m}\biggr),
\end{equation*}
we have
\begin{align*}
S(z) &= \sum_{\nu_1,\dots, \nu_4 \le X} \frac{\beta(\nu_1)
\beta(\nu_2) \beta(\nu_3) \beta(\nu_4)}{ \nu_2 \nu_4
(\nu_1\nu_3)^{1-z}} \sum_{d|q} \sum_{m|d} \mu(m) \biggl(
\frac{d}{m} \biggr)^{1-z}
\\
&\quad\times K\biggl( \frac{\nu_1 \nu_4 m}{d}, 1-z\biggr)
K\biggl( \frac{\nu_2 \nu_3 m}{d}, 1-z\biggr) = \sum_{d \le X^2}
\sum_{m|d} \mu(m) \biggl( \frac{d}{m} \biggr)^{1-z} g_z^2 (d,m),
\end{align*}
where
$$
g_z(d,m)=\sum_{\substack{\nu_1\nu_4\equiv0\ (\operatorname{mod}d)
\\
\nu_j\le X}}\frac{\beta(\nu_1)\beta(\nu_4)}{\nu_1^{1-z}
\nu_4}K\biggl(\frac{\nu_1\nu_4m}d,1-z\biggr).
$$
Since $\alpha (\cdot)$ and $K(\cdot, s)$ are multiplicative funcions, as in \cite{Rezvyakova_2016} (see the proof of lemma 1) we finally obtain the following exact formula
\begin{align*}
g_z(d,m) &= \sum_{\substack{\delta_1 \delta_4
\equiv0 \ (\operatorname{mod}d)
\\
\delta_j | d^{\infty}, \delta_j \le X}} \frac{\alpha(\delta_1)
\alpha(\delta_4)}{\delta_1^{1-z} \delta_4} K\biggl(
\frac{\delta_1 \delta_4 m}{d}, 1-z\biggr)
\\
&\times \sum\limits_{(n,d)=1} \frac{1}{n^{2-z}} \sum\limits_{(r, nd)=1}  \frac{\mu(r)}{r^{2-z}}\sum_{\substack{k_1, k_4 | n^{\infty}, \\ (k_1, k_4)=1}} \frac{\alpha(n k_1) \alpha(n k_4) K(n^2 k_1 k_4, 1-z)}{k_1^{1-z} k_4}  \\
&\times \sum_{l_j | r^{\infty}} \frac{\alpha(r l_1) \alpha(r l_4)}{l_1^{1-z}
l_4} K(r l_1, 1-z) K( r l_4, 1-z) \\
&\times \sum_{\substack{\nu_j \delta_j k_j l_j n r \le X
\\
(\nu_j,ndr) =1}} \frac{\alpha(\nu_1) \alpha(\nu_4)}{\nu_1^{1-z}
\nu_4} K(\nu_1, 1-z) K( \nu_4, 1-z)  \EuScript{L} (\nu_1 l_1 \delta_1 k_1 n r)
\EuScript{L}( \nu_4 l_4 \delta_4  k_4 n r ) .
\end{align*}
Set
\begin{align}
\notag S_{z}(X, R,\gamma,N) &= \sum_{\substack{\nu\le X/R
\\
(\nu, N)=1}} \frac{\alpha(\nu) K(\nu,
1-z)}{\nu^{1-\gamma}} \EuScript{L}(R \nu).
\end{align}
From the definition of $\EuScript{L}(\nu)$ we obtain
$$
S_{z}(X, R,\gamma,N) = \sum_{\substack{1 \le \nu < \sqrt{X}/R
\\
(\nu, N)=1}} \frac{\alpha(\nu) K(\nu,
1-z)}{\nu^{1-\gamma}}  + \sum_{\substack{\sqrt{X}/R \le \nu\le X/R
\\
(\nu, N)=1}} \frac{\alpha(\nu) K(\nu,
1-z)}{\nu^{1-\gamma}} 2 \frac{\log \frac{X}{R\nu}}{\log X}.
$$
For $\nu < \sqrt{X}/R$, using the identity
$$
2 \frac{\log \frac{X}{R\nu}}{\log X} - 2 \frac{\log \frac{\sqrt{X}}{R\nu}}{\log X} =1,
$$
we find that
$$
S_{z}(X, R,\gamma,N) = \frac{2}{\log X} \left( S_{z}(X/R,\gamma,N)-S_{z}(\sqrt{X}/{R},\gamma,N) \right),
$$
where the function $S_{z} (X, \gamma, N)$ of three variables is defined in lemma \ref{l5}.

Thus, it follows that the last sum over $\nu_1, \nu_4$ in the formula for $g_z(d,m)$ equals to the following expression
\begin{align*}
\frac{4}{\log^2 X} &\left( S_{z}(X/(\delta_1 k_1 l_1 n r), z, ndr)-S_{z}(\sqrt{X}/{(\delta_1 k_1 l_1 n r)}, z, ndr) \right) \\
&\times \left( S_{z}(X/(\delta_4 k_4 l_4 n r),0,ndr)-S_{z}(\sqrt{X}/{( \delta_4 k_4 l_4 n r)},0,ndr) \right)
\end{align*}
and by the lemma \ref{l5} this can be estimated from above (up to some constant) by
$$
\frac{1}{\log^2 X} \left(\frac{X}{\delta_1 k_1 l_1 n r}\right)^{|z|} \prod\limits_{p| (ndr)} \left( 1+\frac{1}{\sqrt{p}}\right)^4.
$$
Now
\begin{align*}
|g_z(d,m)| &\ll \frac{ X^{|z|}}{\log^2 X} \prod\limits_{p| d} \left( 1+\frac{1}{\sqrt{p}}\right)^4
\sum_{\substack{\delta_1 \delta_4
\equiv0 \ (\operatorname{mod}d)
\\
\delta_j | d^{\infty}, \, \delta_j \le X}} \frac{|\alpha(\delta_1)
\alpha(\delta_4)|}{\delta_1 \delta_4} \left| K\biggl(
\frac{\delta_1 \delta_4 m}{d}, 1-z\biggr)\right| \\
&\le  \frac{ X^{|z|}}{\log^2 X}  \frac{1}{d}\prod\limits_{p| d} \left( 1+\frac{1}{\sqrt{p}}\right)^4
\sum\limits_{n\mid d^{\infty}} \frac{|K(nm, 1-z)|}{n}\sum_{\substack{\delta_1 \delta_4 = nd
\\
\delta_j | d^{\infty}, \, \delta_j \le X}} |\alpha (\delta_1)
\alpha (\delta_4)|.
\end{align*}

We need some extra bounds to proceed further with the estimation of $g_z(d,m)$.
Since $\alpha (n)$ is a multiplicative function, the function
$$
b(n) = \sum\limits_{n_1 n_2 =n} |\alpha
(n_1) \alpha (n_2)|
$$
is also multiplicative and satisfies the relations  (see \cite{Rezvyakova_2016},  the proof of lemma 1)
\begin{align*}
b(n) &\le \tau(n), \\
b(p) &= |r(p)| \text { for a prime } p.
\end{align*}
We define the multiplicative function $B(n)$ by the formulae
\begin{equation}\label{eq22}
B(p) = |r(p)|, \quad B(p^{\beta}) = \tau(p^{\beta}) \text{ for } \beta >1.
\end{equation}
Recalling that
$
|K(n, s)| \le \tau_{2000}(n)
$ for $\Real s >1/2$ and using the equality
$$
|K(nm, s)| \le \tau_{2000}(n) \tau_{2000}(m),
$$
we see that
\begin{align*}
|g_z (d, m)|
&\ll \frac{ X^{|z|}}{\log^2 X} \frac{\tau_{2000}(m)}{d} \prod_{p|d} \biggl( 1+\frac{1}{\sqrt{p}} \biggr)^4 \sum_{n| d^{\infty}} \frac{\tau_{2000} (n)}{n}  b(nd) \\
&\ll \frac{ X^{|z|}}{\log^2 X} \frac{\tau_{2000}(m)}{d} \prod_{p^{\beta}|| d} \biggl( 1+\frac{1}{\sqrt{p}} \biggr)^4 \biggl( b(p^{\beta}) +\frac{b(p^{\beta+1}) \tau_{2000}(p)}{p} + \frac{b(p^{\beta+2}) \tau_{2000}(p^2)}{p^2}  +\ldots\biggr)\\
&\ll \frac{ X^{|z|}}{\log^2 X} \frac{\tau_{2000}}{d} \prod_{p^{\beta}|| d} \biggl( 1+\frac{1}{\sqrt{p}} \biggr)^4 \biggl( B(p^{\beta}) +\frac{B(p^{\beta+1}) C}{p} \biggr)
\end{align*}
with some positive constant $C$.
Hence, we obtain
\begin{align*}
|S(z)| &\ll  \frac{ X^{2|z|}}{\log^4 X}  \sum_{d \le X^2} \frac{1}{d^{1+\Real z}} \prod_{p^{\beta}||d} \biggl( 1+\frac{1}{\sqrt{p}} \biggr)^{8} \left(  B(p^{\beta}) +\frac{B(p^{\beta+1}) C}{p}\right)^2
\sum_{m|d} \frac{\mu^2 (m)  \tau_{2000}^{2} (m)}{m^{1-\Real z}} \\
&\ll \frac{ X^{2|z|}}{\log^4 X}  \sum_{d \le X^2} \frac{1}{d^{1+\Real z}}  \prod_{p^{\beta}||d} \biggl( 1+\frac{1}{\sqrt{p}} \biggr)^{8} \biggl( 1+\frac{\tau_{2000}^{2} (p)}{p^{1-\Real z}} \biggr)
 \left(  B(p^{\beta}) +\frac{B(p^{\beta+1}) C}{p}\right)^2 \\
& \ll \frac{ X^{2|z|}}{\log^4 X}  \sum_{d \le X^2} \frac{1}{d^{1+\Real z}}  \prod_{p^{\beta}||d} \biggl( 1+\frac{1}{\sqrt{p}} \biggr)^{9} \left(  B(p^{\beta}) +\frac{B(p^{\beta+1}) C}{p}\right)^2.
\end{align*}
As in the end of the proof of lemma 1 in \cite{Rezvyakova_2016} we see that
\begin{align*}
|S(z)| &\ll \frac{ X^{2|z|}}{\log^4 X} \sum_{d \le X^2} \frac{r^2 (d)}{d^{1+\Real z }} \ll
\frac{ X^{2|z|}}{\log^2 X},
\end{align*}
since $\sum\limits_{d \le u} r^2 (d)/d  \ll \log^2 u$ thanks to the pole of order two at $s=1$ of $D(s)$.
Hence the lemma is proved.

\end{pfl3}

\begin{pfl4}

The proof of this lemma is similar to the proof above.
Using M\"obius inversion formula, we have
\begin{align}
S_{l,l}(z) &= \sum_{\nu_1,\dots, \nu_4 \le X} \frac{\beta(\nu_1)
\beta(\nu_2) \beta(\nu_3) \beta(\nu_4)}{ \nu_2 \nu_4
(\nu_1\nu_3)^{1-z}} \sum_{d|q} \sum_{m|d} \mu(m) \biggl(
\frac{d}{m} \biggr)^{1-z} \nonumber
\\
&\quad\times K_{l,l}\biggl( \frac{\nu_1 \nu_4 m}{d}, z\biggr)
K_{l,l}\biggl( \frac{\nu_2 \nu_3 m}{d}, z\biggr) = \sum_{d \le X^2}
\sum_{m|d} \mu(m) \biggl( \frac{d}{m} \biggr)^{1-z} g_{l,l}^2 (d,m,z), \label{eq505}
\end{align}
where
$$
g_{l,l}(d,m,z)=\sum_{\substack{\nu_1\nu_4\equiv0\ (\operatorname{mod}d)
\\
\nu_j\le X}}\frac{\beta(\nu_1)\beta(\nu_4)}{\nu_1^{1-z}
\nu_4}K_{l,l}\biggl(\frac{\nu_1\nu_4m}d,z\biggr)
$$
As in \cite{Rezvyakova_2016} we finally obtain the following exact formula
\begin{align}
g_{l,l}(d,m,z) &= \sum_{\substack{\delta_1 \delta_4
\equiv0 \ (\operatorname{mod}d) \nonumber
\\
\delta_j | d^{\infty}}} \frac{\alpha(\delta_1)
\alpha(\delta_4)}{\delta_1^{1-z} \delta_4} K_{l,l}\biggl(
\frac{\delta_1 \delta_4 m}{d}, z\biggr) \nonumber
\\
&\times \sum\limits_{(n,d)=1} \frac{1}{n^{2-z}} \sum\limits_{(r, nd)=1}  \frac{\mu(r)}{r^{2-z}}\sum_{\substack{k_1, k_4 | n^{\infty}, \\ (k_1, k_4)=1}} \frac{\alpha(n k_1) \alpha(n k_4) K_{l,l}(n^2 k_1 k_4, z)}{k_1^{1-z} k_4} \nonumber \\
&\times \sum_{l_j | r^{\infty}} \frac{\alpha(r l_1) \alpha(r l_4)}{l_1^{1-z}
l_4} K_{l,l}(r l_1, z) K_{l,l}( r l_4, z) \nonumber \\
&\times \sum_{\substack{\nu_j \delta_j k_j l_j n r \le X
\\
(\nu_j,ndr) =1}} \frac{\alpha(\nu_1) \alpha(\nu_4)}{\nu_1^{1-z}
\nu_4} K_{l,l}(\nu_1, z) K_{l,l}( \nu_4, z)  \EuScript{L} (\nu_1 l_1 \delta_1 k_1 n r)
\EuScript{L}( \nu_4 l_4 \delta_4  k_4 n r ). \label{eq506}
\end{align}
From the definition of $\EuScript{L}(\nu)$ it follows, that the last sum over $\nu_1, \nu_4$ in (\ref{eq506}) equals to the following expression
\begin{align}
\frac{4}{\log^2 X} &\left( S^{(l,l)}_{z}(X/(\delta_1 k_1 l_1 n r), z, ndr)-S^{(l,l)}_{z}(\sqrt{X}/{(\delta_1 k_1 l_1 n r)}, z, ndr) \right)\times \nonumber\\
&\times \left( S^{(l,l)}_{z}(X/(\delta_4 k_4 l_4 n r),0,ndr)-S^{(l,l)}_{z}(\sqrt{X}/{(\delta_4 k_4 l_4 n r)},0,ndr) \right), \label{eq507}
\end{align}
where the function $S^{(l,l)}_{z} (X_1, \gamma, N)$ of three variables is defined and estimated in lemma \ref{l12} (note, that $N = ndr \le X^3$). Thus, the last expression can be bounded from above  by
$$
\frac{|z|\log X  +1}{\log^2 X} \left(\frac{X}{\delta_1 k_1 l_1 n r}\right)^{\Real z}\prod\limits_{p| (ndr)} \left( 1+\frac{1}{\sqrt{p}}\right)^4
$$
for $|\Image z|\le 1$, and by
$$
\frac{ \log^2 (|z|+3)}{\log X} \left(\frac{X}{\delta_1 k_1 l_1 n r}\right)^{\Real z}\prod\limits_{p| (ndr)} \left( 1+\frac{1}{\sqrt{p}}\right)^4
$$
for all $z$ satisfying conditions of the lemma.

For $|\Image z|\le 1$ we, therefore, obtain
\begin{align*}
|g_{l,l}(d, m, z)| &\ll \frac{X^{\Real z} (|z|\log X  +1) }{\log^2 X}  \prod_{p|d} \biggl( 1+\frac{1}{\sqrt{p}} \biggr)^4\sum_{\substack{\delta_1 \delta_4 \equiv0 \ (\operatorname{mod}d)
\\
\delta_j | d^{\infty}}}\!\! \frac{|\alpha(\delta_1)
\alpha(\delta_4)|}{\delta_1 \delta_4} \left| K_{l,l}\biggl( \frac{\delta_1
\delta_4 m}{d}, z\biggr)\right|  \\
&\le \frac{X^{\Real z} (|z|\log X  +1)}{\log^2 X}   \frac{1}{d} \prod_{p|d} \biggl( 1+\frac{1}{\sqrt{p}} \biggr)^4 \sum_{n| d^{\infty}} \frac{\left|K_{l,l}(nm, z)\right|}{n}  \sum_{\substack{\delta_1 \delta_4 = nd
\\
\delta_j | d^{\infty}}}\!\! |\alpha(\delta_1)
\alpha(\delta_4)|.
\end{align*}

 As in the proof of lemma \ref{l3},  by (\ref{eq22}) and the inequality
$
|K_{l,l}(n, s)| \ll \tau^2 (n)
$ for $\Real s >1/2$,
we see  that
\begin{align*}
|g_{l,l}(d, m, z)| &\ll  \frac{X^{\Real z}  (|z|\log X+1) }{\log^2 X} \frac{\tau^2 (m)}{d} \prod_{p^{\beta}|| d} \biggl( 1+\frac{1}{\sqrt{p}} \biggr)^4 \biggl( B(p^{\beta}) +\frac{B(p^{\beta+1}) C}{p} \biggr)
\end{align*}
with some positive constant $C$.
Hence, we obtain
\begin{align*}
|S^{(l,l)}(z)| &\ll \frac{X^{2\Real z}  (|z|\log X+1)^2 }{\log^4 X} \sum_{d \le X^2} \frac{1}{d^{1+\Real z}}  \prod_{p^{\beta}||d} \biggl( 1+\frac{1}{\sqrt{p}} \biggr)^9 \left(  B(p^{\beta}) +\frac{B(p^{\beta+1}) C}{p}\right)^2 \\
&\ll
X^{2\Real z} \frac{(|z|\log X+1)^2 }{\log^4 X}
  \sum_{d \le X^2} \frac{r^2 (d)}{d^{1+\Real z}} \ll  X^{2\Real z}  \frac{(|z|\log X+1)^2 }{\log^2 X}.
\end{align*}

Similarly, if $0\le \Real z \le \dfrac{\log\log X}{\log X}$  we get
\begin{align*}
|S^{(l,l)}(z)| &\ll  X^{2\Real z} \log^4 (|z|+3).
\end{align*}

Now, let us write the more elaborate but precise formula for $g_{l,l}(d, m, z)$ (when $z$ satisfies (\ref{eq1006})), inserting the exact formula (\ref{eq38}) into (\ref{eq507}). Note, that in (\ref{eq507}) the parameter $X_1$ from (\ref{eq38}) takes the value between $1$ and $X$. If $|z| = \frac{A_1}{\log X}$, where $10 \le A_1 \le \left( \frac{\log X}{C \log\log X}\right)$
we can use the asymptotic formula (\ref{eq38}) in case $X_1$ satisfies $\log X_1\ge \frac{4 \log X}{A_1}$ (or, $|z| \ge \frac{4}{\log X_1}$) and the trivial asymptotic formula with the same main term but with an error term written as
$$
O\left( G_N \left(\sqrt{|z| \log X_1} +1\right) \right)
$$
for any $X_1$ (which is actually
$
O\left( G_N \right)
$
for $|z|\log X_1 \le 4$).

For $|z| \log X_1 \ge 4$, the error term in (\ref{eq38}) is estimated as
$$
O\left( G_N  A_1 ^{3/2} (\log X)^{-1} \log\log X
+ G_N\right) \ll G_N
$$
if $A_1 \le \left( \frac{\log  X}{\log\log X}\right)^{2/3}$ and $X$ is large. Thus, we shall use asymptotic formula (\ref{eq38}) (which is sometimes trivial) for the sums $S^{(l,l)}_{z}(\cdot, \cdot, \cdot)$ in (\ref{eq507}) with the error term $O(G_N)$. Then we get
\begin{align}
g_{l,l}(d,m,z) &= \text{Main term}+ \text{Error term},
\end{align}
where the ``Main term" is given by (\ref{eq509}) and
and the error term can be estimated from above as
\begin{align*}
\text{Error term} \ll \frac{X^{\Real z} |z|^{1/2}}{\log^{3/2} X} \frac{\tau^2 (m)}{d} \prod_{p^{\beta}|| d} \biggl( 1+\frac{1}{\sqrt{p}} \biggr)^4 \biggl( B(p^{\beta}) +\frac{B(p^{\beta+1}) C}{p} \biggr).
\end{align*}
Thus, we shall obtain (\ref{eq510}) following the above calculations on the estimate of $S^{(l,l)}(z)$.
Hence, lemma~\ref{l4} is
proved.

\end{pfl4}

\begin{center}
{\bf \S 6. Estimation of the diagonal term for the case of the real Hecke character.}
\end{center}

First, let us introduce a non-negative
function $\phi(u)$ defined by the formula
$$
\phi(u) = \frac{1+ \Delta^4}{\Delta^4 + u^{-4}}, \quad \Delta = \cos\delta.
$$
Note, that
\begin{itemize}
\item $\phi(u)$ is increasing and non-negative for $u\ge 0$,
\item $\phi(u) \ge 1$ for $u \ge 1$,
$\phi (u)$ is bounded for $u\ge 0$.
\end{itemize}
Therefore, we have
\begin{align*}
J(x, \theta) &= \int\limits_{x}^{+\infty} G(u) u^{-\theta} du \le J (1, \theta)= 
\int\limits_{1}^{+\infty} G(u)  u^{-\theta} du \le
\int\limits_{0}^{+\infty} \phi (u) G(u)  u^{-\theta} du.
\end{align*}
To estimate the above integral we rewrite $G(u)$ as in \cite{Rezvyakova} (in the proof of lemma 5), asserting $r(\,\cdot\,)$ to be zero for
non-integer argument:
\begin{align*}
& G (u) =\!\sum_{\nu_1,\nu_2, \nu_3, \nu_4}\!
\frac{\beta(\nu_1)\beta(\nu_2)\beta(\nu_3)\beta(\nu_4)}{\nu_2\nu_4}
\biggl( \sum_{n=1}^{+\infty} r(n) r\biggl( \frac{n m_1}{m_2}\biggr)
\exp\biggl( -\frac{4\pi}{\sqrt{D}}\frac{n m_1}{Q} u
\sin\delta\biggr)
\\
&\qquad\qquad\qquad  + 2 \operatorname{Re} \sum_{l\ge1} \exp\biggl(
-\frac{2\pi i}{\sqrt{D}} \frac{l}{Q} u\cos\delta\biggr) \times
\\
&\qquad\ \times\sum_{n=1}^{+\infty} r(n) r\biggl(
\frac{n m_1 +l }{m_2}\biggr) \exp\biggl( -\frac{4\pi}{\sqrt{D}}
\frac{(n m_1 + \frac l2)}{Q} u \sin\delta\biggr) \biggr),
\end{align*}
where
\begin{align*}
m_1 &= \frac{\nu_1\nu_4}{(\nu_1\nu_4, \nu_2\nu_3)}, \\
m_2 &= \frac{\nu_2\nu_3}{(\nu_1\nu_4, \nu_2\nu_3)}, \\
Q &=\frac{\nu_2\nu_4}{(\nu_1\nu_4, \nu_2\nu_3)}.
\end{align*}
Define
\begin{gather*}
E_{m_1, m_2} (y, l) = \sum_{n=1}^{+\infty} r(n) r \biggl( \frac{m_1
n +l}{m_2} \biggr) \exp\biggl(-y \biggl(m_1 n + \frac
l2\biggr)\biggr),
\\
E_{m_1, m_2} (y) = E_{m_1, m_2} (y,0),
\\
B = \frac{4\pi \sin\delta}{\sqrt{D}Q}, \qquad B_1 = \frac{
\cos\delta}{\sqrt{D}Q}.
\end{gather*}
Then we obtain
\begin{align*}
J(1, \theta) &\ll \sum_{\nu_1,\nu_2, \nu_3, \nu_4}
\frac{\beta(\nu_1)\beta(\nu_2)\beta(\nu_3)\beta(\nu_4)}{\nu_2\nu_4}
\biggl( \int_{0}^{+\infty} \phi(u) E_{m_1, m_2} (Bu) u^{-\theta}\,du
\\
&\qquad + 2
 \sum_{l\ge1} \int_{0}^{+\infty} \phi(u)
\cos(2\pi  l B_1 u) E_{m_1, m_2} (Bu, l) u^{-\theta}\,du \biggr).
\end{align*}
For $c>1$ (say, $c=2$), the Mellin transform formula yields
\begin{equation*}
E_{m_1, m_2} (y, l) = \frac{1}{2\pi i} \int_{c-i\infty}^{c+i\infty}
D_{m_1, m_2} (s,l) \Gamma(s) y^{-s}\,ds,
\end{equation*}
where
$$
D_{m_1,m_2}(s,l)=\sum_n\frac{r(n)r\bigl(\frac{m_1
n+l}{m_2}\bigr)}{(m_1n+\frac l2)^s}.
$$
Eventually we find
\begin{align*}
J(1, \theta) &\ll  J_1(1,\theta) + J_2 (1, \theta),
\end{align*}
where
\begin{align*}
J_1(1,\theta) &= \sum_{\nu_1,\nu_2, \nu_3, \nu_4}
\frac{\beta(\nu_1)\beta(\nu_2)\beta(\nu_3)\beta(\nu_4)}{\nu_2\nu_4}
\frac{1}{2\pi i} \int_{c-i\infty}^{c+i\infty}
D_{m_1, m_2} (s,0) \Gamma(s) B^{-s} \Phi(s+\theta, 0)\,ds,
\\
J_2 (1, \theta) &= \sum_{\nu_1,\nu_2, \nu_3, \nu_4}
\frac{\beta(\nu_1)\beta(\nu_2)\beta(\nu_3)\beta(\nu_4)}{\nu_2\nu_4}\operatorname{Re} \sum_{l\ge1} \frac{1}{2\pi i}
\int_{c-i\infty}^{c+i\infty} D_{m_1, m_2} (s,l) \Gamma(s) B^{-s}
\Phi(s+\theta, B_1 l)\,ds
\end{align*}
and
\begin{align*}
\Phi(s,y)=\int_0^{+\infty}\phi(u)u^{-s}\cos(2\pi yu) \,du \quad \text{ for }\quad 1 <\operatorname{Re}s < 5
\end{align*}
(since $\phi(+\infty) \ll 1$  and $\phi(u) \asymp u^4$ for $u\asymp 0$).

Let us now estimate  $J_1(x,\theta)$ (the ``diagonal'' term).
First,
we find a meromorphic continuation for $D_{m_1,m_2}(s,0)$ to the
left of the half-plane $\operatorname{Re}s>1$. Since $(m_1,m_2)=1$,
we have
\begin{align*}
D_{m_1,m_2}(s)&=D_{m_1,m_2}(s,0)=\frac1{(m_1
m_2)^s}\sum_{n=1}^{+\infty}\frac{r(nm_2)r(nm_1)}{n^s} \\
&=\frac{1}{(m_1 m_2)^s} \sum_{k \mid (m_1 m_2)^{\infty}}
\frac{r(m_1 k) r(m_2 k)}{k^{s}} \sum_{(n, m_1 m_2) =1}
\frac{r^{2}(n)}{n^{s}}
\\
& = \frac{1}{(m_1 m_2)^s} \sum_{k_1 \mid m_1^{\infty}} \frac{r(m_1 k_1)
r(k_1)}{k_1^{s}} \sum_{k_2 \mid m_2^{\infty}} \frac{r(m_2 k_2)
r(k_2)}{k_2^{s}}
\\
& \qquad\times\sum_{n =1}^{+\infty} \frac{r^{2}(n)}{n^{s}}
\prod_{p\mid (m_1 m_2)^{\infty}} \biggl( 1+ \frac{r^{2}(p)}{p^{s}} +
\frac{r^{2}(p^2)}{p^{2s}}+\cdots\biggr)^{-1}
\\
& = \frac{1}{(m_1 m_2)^s} K(m_1, s) K(m_2, s) D(s),
\end{align*}
where
\begin{gather*}
D(s) = \sum_{n =1}^{+\infty} \frac{r^{2}(n)}{n^{s}},
\\
\begin{align*}
K(m,s) & = \sum_{k \mid m^{\infty}} \frac{r(m k) r(k)}{k^{s}} \biggl(
\sum_{k \mid m^{\infty}} \frac{r^{2}(k)}{k^{s}} \biggr)^{-1} \\
&= \prod_{p| m} \biggl( 1+ \frac{r^{2}(p)}{p^{s}} +
\frac{r^{2}(p^2)}{p^{2s}}+\dotsb\biggr)^{-1}
\\
&\times\prod_{p^{\alpha} \| m} \biggl(
r(p^{\alpha}) + \frac{r(p^{\alpha+1})r(p)}{p^{s}} +
\frac{r(p^{\alpha+2})r(p^2)}{p^{2s}}+\cdots\biggr).
\end{align*}
\end{gather*}
So, $D_{m_1,m_2}(s)$ is a meromorphic function on the
half-plane $\operatorname{Re}s>\frac 12$ due to lemma~\ref{l2} and formula (\ref{eq21}).

Let us continue the function $\Phi(s,0)$ to the left of the half-plane $\Real s >1$.
Integrating once by parts, we get (supposing that $1 <\operatorname{Re}s < 5$)
$$
(s-1)\Phi(s,0) =- (s-1) \left. \phi (u) u^{1-s}\right|_{0}^{+\infty} + \int_0^{+\infty}\phi'(u) u^{1-s}\,du = \int_0^{+\infty}\phi'(u) u^{-s+1}\,du.
$$
Since~$\phi' (u) = -4 u^{-5} (1+\Delta^4) (\Delta^4+u^{-4})^{-2}$, then
\begin{align}\label{eq13.1}
&\phi'(u) \asymp
\begin{cases}  u^3 &\text{if }\ u \to +0,
\\
  u^{-5} &\text{if }\ u \to +\infty,
\end{cases}
& \text{ and also }
&\phi''(u) \asymp
\begin{cases}  u^2 &\text{if }\ u \to +0,
\\
  u^{-6} &\text{if }\ u \to +\infty.
\end{cases}
\end{align}
Therefore,  the function
$$
\Phi^*(s) :=\int_0^{+\infty}\phi'(u) u^{-s+1}\,du
$$
is well-defined in the region $-3< \Real s < 5$ and gives us  a continuation of $(s-1)\Phi(s,0)$ in that region.
Note, that the estimate
$$
\Phi^*(s)\ll 1
$$
holds uniformly if $|\operatorname{Re}s| \le 2$, for example.

Let us move the contour of integration in
\begin{align*}
& \int_{c-i\infty}^{c+i\infty} D_{m_1, m_2} (s) \Gamma(s) B^{-s}
\Phi(s+\theta, 0)\,ds
\\
&\qquad = \int_{c-i\infty}^{c+i\infty} \frac{K(m_1, s) K(m_2, s)
D(s) \Gamma(s) \Phi^{*} (s+\theta) (B m_1 m_2)^{-s}}{s+\theta-1}\,ds
\end{align*}
to the line $\operatorname{Re}s=\frac23$. We then pass through
a simple pole at $s=1-\theta$ and a second order pole at $s=1$.
By virtue of lemma~\ref{l2} and Stirling's
formula for~$\Gamma(s)$, we conclude that
\begin{align*}
&\frac{1}{2\pi i} \int_{c-i\infty}^{c+i\infty} D_{m_1, m_2} (s)
\Gamma(s) B^{-s} \Phi(s+\theta, 0) ds \\
&=\frac{1}{2\pi i} \int\limits_{|s-(1-\theta)| = r} \frac{ K(m_1, s)
K(m_2, s) D(s) \Gamma(s) \Phi^{*} (s+\theta) (B m_1 m_2)^{-s}}{s - (1-\theta)} ds
\\
&\qquad+ \frac{1}{2\pi i} \int\limits_{|s-1| = r} \frac{ K(m_1, s)
K(m_2, s) D(s) \Gamma(s) \Phi^{*} (s+\theta) (B m_1 m_2)^{-s}}{s +\theta- 1} ds + O \biggl(
\frac{(m_1 m_2)^{1/3}}{(B m_1 m_2)^{2/3}}\biggr),
\\
\end{align*}
where $r= \frac{1}{2\log X}$.
Having in mind that $B m_1 m_2 = \dfrac{\nu_1 \nu_3}{q}\dfrac{4\pi \sin\delta}{\sqrt{D}}$ and inserting the last equality into the expression of $J_1$, we get
\begin{align*}
J_1 (1, \theta) &\le  \frac{1}{2\pi i}  \int\limits_{|s-(1-\theta)| = r}
 \frac{D(s) \Gamma(s) \Phi^{*} (s+\theta) }{s - 1+\theta}  \left(\dfrac{4\pi \sin\delta}{\sqrt{D}}\right)^{-s} S(1-s) ds
\\
&\qquad+ \frac{1}{2\pi i} \int\limits_{|s-1| = r} \frac{D(s) \Gamma(s) \Phi^{*} (s+\theta) }{s - 1+\theta}  \left(\dfrac{4\pi \sin\delta}{\sqrt{D}}\right)^{-s} S(1-s) ds \\
&+ O \biggl( \delta^{-2/3} \sum\limits_{\nu_1, \ldots, \nu_4\le X} (\nu_1 \nu_3)^{-1/3} (\nu_2 \nu_4)^{-2/3} \biggr),
\end{align*}
where $S(z)$ is defined in lemma~\ref{l3}.  We shall estimate the two integrals from the last expression taking the maximum of the absolute value of the integrand and, then, trivially integrating  $|dz|$ over the circles. Therefore, using the estimates $\sin\delta \asymp \delta$
and $D(1-z)\ll|z|^{-2}$ (for small $|z|$), we get
\begin{align*}
J_1 (1, \theta) &\ll \frac{1}{\theta^2\delta^{1-\theta+r}} \max\limits_{|z-\theta| = r} |S(z)| +   \frac{1}{\theta r \delta^{1+r}} \max\limits_{|z| = r} |S(z)| + O(\delta^{-2/3} X^2).
\end{align*}
Using the result of lemma~\ref{l3}, we obtain the
relation
\begin{align*}
J_1 (1, \theta) &\ll \frac{X^{2\theta}}{\theta^2 \delta^{1-\theta} \log^2 X } + \frac{\log X}{\theta  \delta \log^2 X} +
\delta^{-2/3} X^2
\\
&\ll\frac{\delta^{-1}}{\theta  \log X} \bigl( 1+
(X^2  \delta)^{\theta} + X^2\delta^{1/3} (\theta \log X) \bigr) \ll\frac{\delta^{-1}}{\theta  \log X},
\end{align*}
since $\delta^{1/3} X^2 \log X\ll1$.


\begin{center}
{\bf \S 7. Auxiliary statements for the  non-diagonal part.}
\end{center}

We shall formulate several main statements.

\begin{lemma}\label{l8}
Let $N\gg1$, $(m_1,m_2)=1$, $(l, m_j)=1$, $m_1^{9}m_2^{11}\le N^2$, $l\le
N^{11/13}$, and
\begin{equation*}
S = \sum_{\substack{m_2 n_2 - m_1 n_1 = l, \\ n_1 \le N-1}} r (n_2) r (n_1).
\end{equation*}
Then for any $\varepsilon>0$ the following formula holds:
\begin{equation*}
S = \sigma (l, m_1, m_2) \frac{\pi^2 h^2 (-D) N}{m_2}  + O_{\varepsilon}\left( N^{\frac{11}{13}+\varepsilon} m_1^{9/13} m_2^{-2/13} \right),
\end{equation*}
where
\begin{equation}\label{eq1001}
\sigma (l, m_1, m_2) = \sum\limits_{q=1}^{+\infty} \varepsilon_q (l)\frac{(q, m_1 m_2)}{q^2  \sqrt{r_1 r_2}},
\end{equation}
$$
r_j =\dfrac{D}{\left( q_j, D\right)}, \quad q_j = \frac{q}{(q, m_j)},
$$
\begin{align*}
\varepsilon_q (l) &= \sum\limits_{\substack{0 < a^{*} < q \\a a^{*} \equiv 1 (\mod q)}} e^{-2\pi i
\frac{al}{q}} \left( \frac{G(\chi_{d_1})}{\sqrt{d_1}} \chi_{d_1} (a m_1') \chi_{d_2} \left( \frac{q_1}{d_1} \right) + \frac{G(\chi_{d_2})}{\sqrt{d_2}} \chi_{d_2} (a m_1') \chi_{d_1} \left( \frac{q_1}{d_2} \right)\right) \times \\
&\times
\left( \frac{\overline{G(\chi_{d_1})}}{\sqrt{d_1}} \chi_{d_1} (a m_2') \chi_{d_2} \left( \frac{q_2}{d_1} \right)+ \frac{\overline{G(\chi_{d_2})}}{\sqrt{d_2}} \chi_{d_2} (a m_2') \chi_{d_1} \left( \frac{q_2}{d_2} \right)\right),
\end{align*}
$m_j' = \frac{m_j}{(m_j, q)}$, and $G(\chi)$ is the quadratic Gauss sum for the character $\chi$ and we suppose that $\chi(\cdot)$ is zero for non-integer values. Also, the following estimate hold:
\begin{equation*}
\sigma (l, m_1, m_2) \ll \tau^2 (m_1 m_2) \tau(l).
\end{equation*}


\end{lemma}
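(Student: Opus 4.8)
The plan is to treat $S$ as a shifted convolution of the twisted divisor function $r(\cdot)$ and to evaluate it by the hyperbola method together with a summation formula for $r$ over arithmetic progressions. First I would open one coefficient, writing $r(n_1)=\sum_{uv=n_1}\chi_{d_1}(u)\chi_{d_2}(v)$ and splitting the range $uv=n_1\le N-1$ at a parameter $U$ (to be optimised), so that, up to the symmetric contribution under $u\leftrightarrow v$, the sum $S$ becomes a sum over $u\le U$ of inner sums $\sum_{v}\chi_{d_2}(v)\,r\bigl((m_1uv+l)/m_2\bigr)$ over those $v$ in a prescribed residue class for which $m_2\mid m_1uv+l$. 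Thus everything reduces to evaluating sums of $r(\cdot)$ over short arithmetic progressions carrying an extra real-character weight; the coprimality hypotheses $(m_1,m_2)=1$ and $(l,m_j)=1$ are precisely what keep these progressions non-degenerate. (Equivalently one may detect the equation $m_2n_2-m_1n_1=l$ directly by the circle method / delta-symbol method; both routes lead to the same answer, and the shape of $\varepsilon_q(l)$ in (\ref{eq1001}) --- with its inner sum $\sum_{a^{*}}e^{-2\pi i al/q}$ over inverses $aa^{*}\equiv1$ modulo $q$ --- is exactly the output of the circle-method bookkeeping.)

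Next I would evaluate the progression sums $\sum_{n\le Y,\ n\equiv a\,(q)}r(n)$, with the real-character weight attached, by means of the functional equation of the additively twisted Dirichlet series $\sum_n r(n)e^{2\pi i na/q}n^{-s}$. Because $r$ is the coefficient sequence of $L_{\chi_{d_1}}(s)L_{\chi_{d_2}}(s)$, feeding the functional equations of $L_{\chi_{d_1}}$ and $L_{\chi_{d_2}}$ through the additive twist introduces the quadratic Gauss sums $G(\chi_{d_1}),G(\chi_{d_2})$ together with the $q$-dependent quantities $q_j=q/(q,m_j)$ and $r_j=D/(q_j,D)$ appearing in the statement. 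Crucially --- and this is the feature that separates the real (genus) Hecke character from the complex one treated in \cite{Rezvyakova_MZ} --- the modular form $f_\psi$ attached to a real character is an Eisenstein series, not a cusp form, so this twisted Dirichlet series has a genuine pole at $s=1$ whose residue is a constant multiple of $L(1,\chi_{d_1})L(1,\chi_{d_2})$. Collecting these polar contributions over all the progressions produced in the first step, together with the elementary estimate $\sum_{n_1\le N}r(n_1)\sim L(1,\chi_{d_1})L(1,\chi_{d_2})\,N$, yields the main term $\sigma(l,m_1,m_2)\,\pi^2h^2(-D)\,N/m_2$: the constant $\pi^2h^2(-D)$ is ultimately traceable to the product of the two special $L$-values via the Dirichlet class-number formula (the remaining arithmetic normalisation being absorbed into the singular series), while $\sigma(l,m_1,m_2)$ assembles as the sum over moduli $q$ of the Gauss-sum products weighted by $e^{-2\pi i al/q}$, which is precisely (\ref{eq1001}).

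The error term comes from the dual, oscillatory part of the summation formula: after the transformation it is a sum over a dual variable of $r(\cdot)$ against coefficients that are complete exponential sums --- Kloosterman sums and Gauss sums, which either evaluate explicitly or are estimated by Weil's bound --- times a smooth transformed integral. Bounding the Kloosterman sums by Weil and the integrals trivially, and then summing over the hyperbola parameter $u$ and the residue classes, produces an error of the shape $N^{\kappa}m_1^{a}m_2^{b}l^{c+\varepsilon}$; choosing $U$ to balance this against the hyperbola tail gives the stated $N^{11/13+\varepsilon}m_1^{9/13}m_2^{-2/13}$, with the hypothesis $l\le N^{11/13}$ absorbing the $l$-dependence into $N^{\varepsilon}$. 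A convenient consistency check during the optimisation is that these exponents are exactly the thresholds at which the error matches the main term $N/m_2$: indeed ``error $\ll$ main term'' reads $N^{-2/13}m_1^{9/13}m_2^{11/13}\ll1$, i.e. $m_1^{9}m_2^{11}\ll N^{2}$, which is precisely the other hypothesis of the lemma. Finally, the bound $\sigma(l,m_1,m_2)\ll\tau^2(m_1m_2)\tau(l)$ follows from (\ref{eq1001}): since $|G(\chi_{d_j})|=\sqrt{d_j}$, each $\varepsilon_q(l)$ is a combination of twisted Ramanujan sums $\sum_{(a,q)=1}\chi(a)e^{-2\pi i al/q}$ whose size is controlled by $(q,l)$ and by square-root cancellation in $q$, so the series over $q$ defining $\sigma$ is absolutely convergent and its Euler product is bounded prime by prime by the claimed divisor factors, the primes dividing $D=d_1d_2$ contributing only bounded local factors.

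The step I expect to be the main obstacle is keeping full uniformity in $l$, $m_1$, $m_2$ all the way through the Voronoi and Weil estimates: tracking the moduli and conductors that occur (they involve $m_1,m_2,d_1,d_2,l$ and their gcd's, as already reflected in the intricate form of $\varepsilon_q(l)$), handling correctly the primes where $q$ is not coprime to $D$, and --- most delicately --- verifying that the assembled main term is literally $\sigma(l,m_1,m_2)\,\pi^2h^2(-D)\,N/m_2$ with the arithmetic factor given by (\ref{eq1001}), rather than merely of the right order of magnitude.
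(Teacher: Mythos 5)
The paper does not give a self-contained proof of this lemma: for the asymptotic formula it cites \cite{Rezvyakova_2017} (which treats $m_1=m_2=1$), explains that the general case requires carrying the parameter labelled $\sigma$ on p.~278 of \cite{IC_2002} through that proof, and points to \cite{Heap} for the case $d_1=1$. The only part argued on the spot is the upper bound $\sigma(l,m_1,m_2)\ll\tau^2(m_1m_2)\tau(l)$, done by splitting $\sigma$ into the four pieces $\sigma_{k,j}$ of Lemma~\ref{l9}, noting that in each piece $\varepsilon_q(l)$ is (up to bounded Gauss-sum factors) a Ramanujan sum twisted by a Dirichlet character, invoking $\varepsilon_q(l)\ll\tau(q)\sqrt q\,(l,q)^{1/2}$ from (110) of \cite{Heap}, and summing the resulting convergent series over $q$.

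Your sketch is in effect a reconstruction of what lies inside the cited additive-problem papers, and the skeleton you give is the right one: detect the equation $m_2n_2-m_1n_1=l$ (you offer both the hyperbola method and the $\delta$-method; the latter is what \cite{IC_2002} and \cite{Rezvyakova_2017} actually use), run the functional equations of $L_{\chi_{d_1}},L_{\chi_{d_2}}$ through the additive twist to produce the Gauss sums and the quantities $q_j,r_j,m_j'$, read off the main term from the pole that exists exactly because $f_\psi$ is an Eisenstein series rather than a cusp form, estimate the dual sum by Weil, optimize, and bound $\sigma$ by absolute convergence over $q$ --- the last step being essentially the paper's own argument recast multiplicatively. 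Your consistency checks (that $m_1^9m_2^{11}\le N^2$ is exactly where the error meets the main term, and that $l\le N^{11/13}$ absorbs the $l$-dependence) are correct and show you see where the hypotheses come from.

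Two caveats. The attribution of $\pi^2h^2(-D)$ to $L(1,\chi_{d_1})L(1,\chi_{d_2})$ ``via the class number formula'' is too quick: the class number formula evaluates $L(1,\chi_D)=\frac{2\pi h(-D)}{w\sqrt D}$, which is not the genus product $L(1,\chi_{d_1})L(1,\chi_{d_2})$ when both $d_j>1$, and the $\sqrt{r_1r_2}$ inside the definition (\ref{eq1001}) already carries conductor data --- so the split between the constant $\pi^2h^2(-D)$ and the singular series $\sigma(l,m_1,m_2)$ is a normalization matter your sketch elides, and tellingly it is precisely the $m_1,m_2$-dependence of this formula that the paper reports needed correcting relative to \cite{Rezvyakova_2017}. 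Second, as you flag yourself, maintaining uniformity in $l,m_1,m_2$ through the Voronoi/Weil stage is where the specific exponents $\tfrac{11}{13},\tfrac{9}{13},\tfrac{2}{13}$ actually get determined, so a full write-up would have to be precise exactly there. As a blind reconstruction of a result the paper itself only cites, this is well targeted.
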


\begin{pf}
The proof of this result is given in \cite{Rezvyakova_2017}, but the statement there is correct when $m_1 = m_2 = 1$. To get the correct statement in the general case one needs to operate along the proof with the precise value of the parameter $\sigma$ from \cite{IC_2002} on p. 278. See \cite{IC_2002}, \cite{Heap} for similar result in certain cases ($m_1 = m_2 = 1$ or $d_1 = 1$ consequently).
To get the estimate for $\sigma (l,m_1,m_2)$, note that it is a sum of four terms introduced in the beginning of the proof of lemma \ref{l9} (corresponding to all $q$ in the sum (\ref{eq1001}) of non-zero terms satisfy one of the four different conditions). In each case $\varepsilon_q(l)$ is estimated from above by the absolute value of Ramanujan sum with non-primitive Dirichlet character (see again the transformations in the proof of lemma \ref{l9}) and, hence, by (110) from \cite{Heap} we get the bound $\varepsilon_q(l) \ll \tau(q)\sqrt{q} (l,q)^{1/2}$. Therefore,
\begin{align*}
\sigma (l,m_1,m_2) &\ll \sum\limits_{q=1}^{+\infty} \frac{\tau (q)(q, m_1 m_2) (l,q)^{1/2}}{q^{3/2}} = \sum\limits_{d| m_1 m_2} \sum\limits_{(q, m_1 m_2) = d} \frac{\tau (q)(q, m_1 m_2) (l,q)^{1/2}}{q^{3/2}} \\
&\le
\sum\limits_{d| m_1 m_2} \frac{\tau (d)(l,d)^{1/2}}{d^{1/2}} \sum\limits_{(q, m_1 m_2) = 1}\frac{\tau (q) (l,q)^{1/2}}{q^{3/2}} \le
\sum\limits_{d| m_1 m_2} \frac{\tau (d)(l,d)^{1/2}}{d^{1/2}}\sum\limits_{d_1| l} \frac{\tau (d)}{d} \sum\limits_{q = 1}^{+\infty} \frac{\tau (q) }{q^{3/2}} \\
&\ll \sum\limits_{d| m_1 m_2} \tau (d)\sum\limits_{d_1| l} 1 \ll
\tau^2 (m_1 m_2) \tau(l).
\end{align*}
\end{pf}

From lemma~\ref{l8} we get
\begin{cor}
\label{c1} Consider the function
$$
D_{m_1,m_2}(s,l)=\sum_{n=1}^{+\infty}\frac{r(n)r
\bigl(\frac{m_1n+l}{m_2}\bigr)}{\bigl(m_1n+\frac l2\bigr)^s}.
$$
Then
the  difference
$$
D_{m_1,m_2}(s,l) - \frac{\pi^2 h^2(-D) \sigma (l, m_1, m_2)}{m_1^s m_2} \zeta (s)
$$
with $\sigma (l, m_1, m_2)$ defined by (\ref{eq1001}) is analytic in the half-plane $\operatorname{Re}s\ge\frac{11}{13}+\varepsilon_0$ and, for $0<\varepsilon_0\le\frac{2}{13}$, in the region  $\frac{11}{13}+\varepsilon_0 \le \operatorname{Re}s\le 2$ the estimate
$$
D_{m_1,m_2}(s,l) - \frac{\pi^2 h^2(-D) \sigma (l, m_1, m_2)}{m_1^s m_2} \zeta (s)\ll_{\varepsilon_0}
\frac{|s|}{m_1^{{11}/{13}+\varepsilon_0}m_2}((m_1^{9/2}
m_2^{11/2})^{2/13-{\varepsilon_0}/2}+l^{2/{11}- {13\varepsilon_0}/{22}})
$$
holds.
\end{cor}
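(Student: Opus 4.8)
\emph{Proof proposal for Corollary~\ref{c1}.}
The plan is to derive the corollary from Lemma~\ref{l8} by partial summation. Write $a_n=r(n)\,r\bigl((m_1n+l)/m_2\bigr)$ (understood to vanish unless $m_2\mid m_1n+l$), so that for $\operatorname{Re}s>1$ one has $D_{m_1,m_2}(s,l)=\sum_{n\ge1}a_n\,(m_1n+l/2)^{-s}$, and set $A(u)=\sum_{n\le u}a_n$. The substitution $n_1=n$, $n_2=(m_1n+l)/m_2$ identifies $A(N-1)$ with the sum $S$ of Lemma~\ref{l8}; hence, writing $c=\pi^2h^2(-D)\,\sigma(l,m_1,m_2)\,m_2^{-1}$, one has $A(u)=cu+R(u)$ where, by Lemma~\ref{l8},
$$
R(u)\ll_{\varepsilon}u^{11/13+\varepsilon}m_1^{9/13}m_2^{-2/13}\qquad\text{for }u\ge U_0:=\max\bigl\{m_1^{9/2}m_2^{11/2},\,l^{13/11}\bigr\},
$$
$U_0$ being the threshold that makes both hypotheses $m_1^9m_2^{11}\le N^2$ and $l\le N^{11/13}$ valid. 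For $1\le u<U_0$ only a trivial bound is available, and I would use $|R(u)|\ll_{\varepsilon}(m_1m_2lu)^{\varepsilon}(1+u/m_2)$, obtained from $|r|\le\tau$, Cauchy--Schwarz and standard bounds for $\tau$-sums (using that the admissible $n$ lie in a single residue class modulo $m_2$, which supplies the factor $1/m_2$), together with the fact that $A$ is supported on $u$ with $m_1u+l/2\gg m_2$ (any admissible $n$ has $m_1n+l\ge m_2$).

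For $\operatorname{Re}s>1$ partial summation gives
$$
D_{m_1,m_2}(s,l)=s\,m_1\int_{1}^{\infty}\frac{A(u)\,du}{(m_1u+l/2)^{s+1}},
$$
the boundary term vanishing because $A(u)\ll u^{1+\varepsilon}$. Inserting $A(u)=cu+R(u)$ splits this into a main integral and an error integral. The main integral is elementary: with $v=m_1u+l/2$,
$$
s\,m_1c\int_{1}^{\infty}\frac{u\,du}{(m_1u+l/2)^{s+1}}=\frac{s\,c\,(m_1+l/2)^{1-s}}{m_1(s-1)}-\frac{c\,l\,(m_1+l/2)^{-s}}{2m_1},
$$
which is meromorphic on $\mathbb C$ with a single pole, simple, at $s=1$, of residue $c/m_1$; but $c\,m_1^{-s}\zeta(s)$ has the same residue there. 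Hence the function of the corollary equals this explicit expression minus $c\,m_1^{-s}\zeta(s)$ (entire), plus the error integral $s\,m_1\int_{1}^{\infty}R(u)(m_1u+l/2)^{-s-1}\,du$; the tail of the latter over $u\ge U_0$ converges absolutely for $\operatorname{Re}s>11/13+\varepsilon$ and the head over $[1,U_0]$ is a finite integral, so for $\varepsilon<\varepsilon_0$ the whole expression is analytic in $\operatorname{Re}s\ge11/13+\varepsilon_0$.

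For the size estimate on $11/13+\varepsilon_0\le\operatorname{Re}s\le2$ I would bound the three pieces separately. The explicit expression minus $\zeta$ is handled using $|(m_1+l/2)^{1-s}|\le(m_1+l/2)^{2/13-\varepsilon_0}\ll m_1^{2/13-\varepsilon_0}+l^{2/13-\varepsilon_0}$, $c\ll_{\varepsilon}(m_1m_2)^{\varepsilon}l^{\varepsilon}m_2^{-1}$, the convexity bound for $\zeta$ off the pole, and the cancellation at $s=1$. The tail of the error integral is $\ll_{\varepsilon_0}|s|\,m_1^{-\operatorname{Re}s+9/13}m_2^{-2/13}\,U_0^{11/13+\varepsilon-\operatorname{Re}s}\ll|s|\,m_1^{-2/13-\varepsilon_0}m_2^{-2/13}\,U_0^{-(\varepsilon_0-\varepsilon)}$; and the head is estimated by inserting $|R(u)|\ll(m_1m_2lu)^{\varepsilon}(1+u/m_2)$ and using $m_1u+l/2\gg m_2$ where $u$ is small and $m_1u+l/2\ge m_1u$ where $u$ is large. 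One then uses $U_0^{2/13}\le(m_1^{9/2}m_2^{11/2})^{2/13}+l^{2/11}$ and $U_0^{-(\varepsilon_0-\varepsilon)}\le\min\bigl\{(m_1^{9/2}m_2^{11/2})^{-(\varepsilon_0-\varepsilon)},\,l^{-13(\varepsilon_0-\varepsilon)/11}\bigr\}$, chooses $\varepsilon=\varepsilon_0/2$, and verifies on exponents that every contribution is $\ll_{\varepsilon_0}\dfrac{|s|}{m_1^{11/13+\varepsilon_0}m_2}\bigl((m_1^{9/2}m_2^{11/2})^{2/13-\varepsilon_0/2}+l^{2/11-13\varepsilon_0/22}\bigr)$; the large gap $11/13-2/13=9/13$ between the exponent of $m_1$ in the target and in the estimates leaves ample room for the $O(\varepsilon_0)$ adjustments, and each piece is absorbed by whichever of the two summands of the target is the larger.

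The main obstacle is the sub-threshold range $1\le u<U_0$, where Lemma~\ref{l8} gives nothing: one must control $R(u)$ and its integral by hand and still land under the stated bound, which is exactly why one needs both the $1/m_2$ saving in the trivial divisor-sum estimate and the extra decay $m_1u+l/2\gg m_2$ on the support of $A$, and why the final bound is split into an $(m_1,m_2)$-term and an $l$-term. Everything else --- the Abel summation and the closed-form evaluation of the main integral whose pole cancels that of $\zeta(s)$ --- is routine.
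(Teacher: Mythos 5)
Your proposal is correct and rests on the same ingredients as the paper: the threshold $U_0=\max(m_1^{9/2}m_2^{11/2},\,l^{13/11})$, Lemma~\ref{l8} to control the counting function above the threshold, and Abel/partial summation. The arrangement is genuinely different, though. The paper first truncates the Dirichlet series at $n\le N:=U_0$, estimates that head $S_1(s)$ directly as a finite sum (using the residue-class restriction and $|r|\le\tau$), and then applies partial summation only to the tail $n>N$ with the main term $c\sum_{n>N}(m_1n+l/2)^{-s}$ subtracted, finishing by comparing that sum to $c\,m_1^{-s}\zeta(s)$ through elementary manipulations with $\sum_{n\le N}(m_1n)^{-s}$ and $\sum_{n>N}[(m_1n)^{-s}-(m_1n+l/2)^{-s}]$. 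You instead apply partial summation to the whole series from $u=1$, split $A(u)=cu+R(u)$, evaluate the $cu$-integral in closed form (noting the pole at $s=1$ cancels against $c\,m_1^{-s}\zeta(s)$), and split the error integral at $U_0$, using Lemma~\ref{l8} for the tail and a trivial divisor-function bound for the head. Your route makes the $\zeta$-comparison cleaner, at the cost of creating a sub-threshold integral over $[1,U_0)$ where Lemma~\ref{l8} is silent. Your handling of that range is sound, but note one small point: the lower bound $m_1u+l/2\gg m_2$ is a property of the support of $A$ (indeed of the summands $m_1n+l/2$ for admissible $n$), while for $u<n_0$ one has $R(u)=-cu$; that piece is harmless because $|c|\ll_{\varepsilon}(m_1m_2l)^{\varepsilon}/m_2$ already carries the $1/m_2$ saving. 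I would also emphasize that the observation $m_1n+l/2\gg\max(m_1,m_2)$ on the support is not optional decoration: without it, the first admissible term $n_0$ can be as small as $1$, and neither $(m_1n)^{-\sigma}$ nor the raw AP-count alone reproduces the stated $1/m_2$; this is exactly the detail your write-up makes explicit, and it is a refinement over the paper's rather terse one-line bound for $S_1$.
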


\begin{proof} Take
$N=\max\bigl(m_1^{9/2}m_2^{11/2},l^{{13}/{11}}\bigr)$. Using the inequality
$\tau(n)\ll_{\varepsilon_0}n^{\varepsilon_0/8}$,  for
$\Real s = \sigma\ge\frac{11}{13}+\varepsilon_0$ we obtain
\begin{align*}
S_1 (s)& := \sum_{n\le N} \frac{r(n) r \bigl( \frac{m_1 n+l}{m_2}
\bigr)}{\bigl(m_1 n + \frac l2\bigr)^{s}} \ll\sum_{\substack{n\le N
\\
m_1 n \equiv-l \ (\operatorname{mod}m_2)}} \frac{\tau(n) \tau(
\frac{m_1 n+l}{m_2})}{\bigl(m_1 n + \frac l2\bigr)^{\sigma}}
\\
& \ll_{\varepsilon_0} N^{3\varepsilon_0/8} \sum_{\substack{n\le N
\\
m_1 n \equiv-l \ (\operatorname{mod}m_2)}} \frac{1}{(m_1
n)^{11/13+\varepsilon_0}} \ll_{\varepsilon_0}
\frac{N^{{2}/{13}-{\varepsilon_0}/{2}}}{m_1^{{11}/{13}+\varepsilon_0}
m_2}.
\end{align*}

By Abel's partial summation formula, for $\operatorname{Re}s>1$ and
$M>N$,
\begin{align*}
&\sum_{N < n \le M} \frac{r(n) r \bigl( \frac{m_1 n+l}{m_2}
\bigr)}{\bigl(m_1 n + \frac l2\bigr)^{s}}  - \frac{\pi^2 h^2 (-D) \sigma (l, m_1, m_2)}{m_2} \sum_{N < n \le M } \frac{1}{\bigl(m_1 n + \frac l2\bigr)^{s}} \\
& = s \int_{N}^{M}
\frac{\mathbb{C} (u) m_1}{\bigl(m_1 u + \frac l2\bigr)^{s+1}}\,du +
\frac{\mathbb{C} (M)}{\bigl(m_1 M + \frac l2\bigr)^{s}},
\end{align*} where
$$
\mathbb C(u)=\sum_{N<n\le u } \biggl( r(n)r\biggl(\frac{m_1 n+l}{m_2}\biggr) - \frac{\pi^2 h^2(-D)\sigma (l, m_1, m_2)}{m_2}\biggr).
$$
Letting $M\to+\infty$, we arrive at the relation
\begin{equation}
\label{eq13} S_2 (s):= \sum_{n > N } \frac{r(n) r \bigl( \frac{m_1
n+l}{m_2} \bigr) - \pi^2 h^2(-D)\sigma (l, m_1, m_2)/ m_2}{\bigl(m_1 n +\frac l2\bigr)^{s}} = s
\int_{N}^{+\infty} \frac{\mathbb{C} (u) m_1}{\bigl(m_1 u + \frac
l2\bigr)^{s+1}}\,du.
\end{equation}
Since $u\ge N$, by lemma ~\ref{l8} we see that $\mathbb
C(u)\ll_\varepsilon u^{11/13+\varepsilon}m_1^{9/13}m_2^{-2/13}$.
Hence the right-hand side of~\eqref{eq13} defines a holomorphic
function in the region $\operatorname{Re}s>\frac{11}{13}$ and,
furthermore, for $\operatorname{Re}s\ge\frac{11}{13}+\varepsilon_0$
it obeys the estimate
\begin{align*}
S_2 (s) &\ll_{\varepsilon_0} |s| \int_{N}^{+\infty} \frac{
m_1^{9/13} m_2^{-2/13} u^{11/13+\varepsilon_0/2} m_1}{(m_1
u)^{11/13+\varepsilon_0+1}}\,du
\\
&\ll_{\varepsilon_0} |s| (m_1 m_2)^{-2/13} m_1^{-\varepsilon_0}
N^{-\varepsilon_0/2} \ll_{\varepsilon_0} \frac{|s|
N^{{2}/{13}-{\varepsilon_0}/{2}}}{m_1^{{11}/{13}+\varepsilon_0}
m_2}.
\end{align*}
Also, for $11/13 + \varepsilon_0 \le \Real s\le 2$, since $0 < \frac{l}{2 m_1} < l \le N^{11/13}$, we have
\begin{align*}
&\sum_{n > N} \frac{1}{\bigl(m_1 n + \frac l2\bigr)^{s}} - \sum_{n > N} \frac{1}{\bigl(m_1 n\bigr)^{s}} =
\frac{1}{m_1^s} \left( \sum_{ n > N+[l/2m_1]} \left( \frac{1}{\bigl(n + \{\frac{l}{2 m_1} \}\bigr)^{s}} -\frac{1}{n^{s}} \right) - \sum_{N < n \le N+[l/2m_1]} \frac{1}{n^{s}} \right)\\
& \ll m_1^{-11/13-\varepsilon_0} \left(  N^{-11/13-\varepsilon_0} N^{11/13}+ \sum_{ n > N+[l/2m_1]} \frac{|s|}{n^{1+11/13+\varepsilon_0}}\right)  \ll |s| m_1^{-11/13-\varepsilon_0}  N^{-\varepsilon_0},
\end{align*}
and
\begin{align*}
\sum_{n \le N} \frac {1}{n^s} \ll N^{2/13-\varepsilon_0}.
\end{align*}
Since, by lemma \ref{l8}, $\sigma (l,m_1,m_2) \ll \tau^2 (m_1 m_2) \tau(l) \ll N^{\varepsilon_0/2}$, we thus come to the statement of the corollary.
\end{proof}

\begin{lemma}\label{l9}
For $(m_1, m_2)=1$ and $\sigma (l, m_1, m_2)$ defined by (\ref{eq1001}),
the function
$$
Z_{m_1, m_2} (s) = \sum\limits_{l=1}^{+\infty} \frac{\sigma (l, m_1, m_2) }{l^s}
$$
has a meromorphic continuation to the whole complex plane with a simple pole at $s=1$ only in case $d_1 =1$ or $d_2=1$ and with a simple pole at $s=0$. Moreover, we can write $Z_{m_1, m_2} (s)$ as the sum of  four terms
$$
Z_{m_1, m_2} (s) = Z_{m_1, m_2}^{(1,1)} (w)+ Z_{m_1, m_2} ^{(2,2)}(w)+ Z_{m_1, m_2}^{(1,2)} (w)+Z_{m_1, m_2}^{(2,1)} (w),
$$
where for $j=1,2$
\begin{align*}
&Z_{m_1, m_2} ^{(j,j)}(w) = \frac{\zeta(s) \zeta(s+1)}{\zeta(2) d_j^s D} \prod\limits_{p | d_j}  \left( 1-p^{s-1}\right)\prod\limits_{p | (D/d_j)} \left( 1 - \frac{1}{p^{ s+1}} \right)
 \prod\limits_{p|D} \left( 1- \frac{1}{p^{2}} \right)^{-1}  K_{j,j} (m_1m_2, s),
\end{align*}
and for $k\ne j$
\begin{align*}
&Z_{m_1, m_2}^{(k,j)} (w) = \frac{\overline{G^2(\chi_{d_j})}}{d_j} D^{-1/2} L_{\chi_D}(s) L_{\chi_D}(s+1) L_{\chi_D}^{-1} (2) K_{1,2} (m_k, m_j, s),
\end{align*}
where $K_{k,j}  (\cdot)$ are defined on page \pageref{p1}.
\end{lemma}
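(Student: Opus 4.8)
The plan is to substitute the explicit formulas for $\sigma(l,m_1,m_2)$ and $\varepsilon_q(l)$ from Lemma~\ref{l8} into the definition of $Z_{m_1,m_2}(s)$, to expand the product of the two bracketed factors in $\varepsilon_q(l)$, and thereby to split $Z_{m_1,m_2}(s)$ into four pieces $Z_{m_1,m_2}^{(j,k)}(s)$, $j,k\in\{1,2\}$, according to which of the two $\chi$-terms is chosen from each bracket. For fixed $(j,k)$ what remains is a triple sum, over $q\ge1$, over the units $a$ modulo $q$, and over $l\ge1$, with summand $e^{-2\pi ial/q}\,l^{-s}$ times characters evaluated at $a$, at $m_1'=m_1/(m_1,q)$, at $m_2'=m_2/(m_2,q)$, at $q_1$, $q_2$, and Gauss-sum constants $G(\chi_{d_j})/\sqrt{d_j}$, $\overline{G(\chi_{d_k})}/\sqrt{d_k}$. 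Working in $\Real s>1$, where absolute convergence is guaranteed by the bound $\sigma(l,m_1,m_2)\ll\tau^2(m_1m_2)\tau(l)$ of Lemma~\ref{l8}, I would interchange summations and first perform the $(a,l)$-sum. Using Gauss-sum separability $\sum_{a\bmod d}\chi(a)e^{2\pi ian/d}=\overline\chi(n)G(\chi)$ for primitive $\chi$ (equivalently, the Hurwitz-zeta expansion of the periodic zeta function $\sum_{l\ge1}e^{-2\pi ial/q}l^{-s}$), the joint $(a,l)$-summation collapses: when $j=k$ the character acting on $a$ is $\chi_{d_j}(a)^2$, i.e.\ the principal character modulo $d_j$, and one obtains a factor $\zeta(s)$ together with local corrections at the primes dividing $d_j$; when $j\ne k$ one has $\chi_{d_k}(a)\chi_{d_j}(a)=\chi_D(a)$, and one obtains $L_{\chi_D}(s)$.

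After the $(a,l)$-summation, what is left is a sum over $q$ of a multiplicative arithmetic function built from $(q,m_1)$, $(q,m_2)$, $\sqrt{(q_1,D)(q_2,D)}$ (recall $r_j=D/(q_j,D)$), $q^{-2}$, and the residual local $L$-factor data; I would evaluate this $q$-sum as an Euler product and split the primes into three classes. For $p\nmid Dm_1m_2$ the local factors reassemble into the Euler factor of $\zeta(s+1)/\zeta(2)$ (resp. $L_{\chi_D}(s+1)/L_{\chi_D}(2)$), producing the global factor $\zeta(s+1)/\zeta(2)$ (resp. $L_{\chi_D}(s+1)/L_{\chi_D}(2)$) up to the finitely many corrections. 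For $p\mid D$, $p\nmid m_1m_2$ --- where $D$ fundamental forces $v_p(D)=1$ for odd $p$ --- the ramification in $(q_j,D)$ contributes an extra $p$ exactly when $p\mid q$; summing the resulting $p$-local series over $v_p(q)\ge0$ yields precisely the factors $\prod_{p\mid d_j}(1-p^{s-1})$, $\prod_{p\mid(D/d_j)}(1-p^{-s-1})$ and $\prod_{p\mid D}(1-p^{-2})^{-1}$ in the diagonal case (and the corresponding factors hidden inside $K_{1,2}$ off the diagonal), the prime $p=2$ being treated by the same bookkeeping via the $2$-adic shape of a fundamental discriminant. For $p\mid m_1m_2$ the dependence on $(q,m_1m_2)$ and on $m_j'$ produces exactly the local factors of $K_{j,j}(m_1m_2,s)$ (resp. $K_{1,2}(m_k,m_j,s)$); here I would appeal to the closed forms of Lemma~\ref{l30} to recognise the match. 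Collecting the Gauss-sum constants via $G^2(\chi_d)=\chi_d(-1)d$ and $|G(\chi_d)|^2=d$ gives the prefactors $\bigl(\zeta(2)d_j^sD\bigr)^{-1}$ and $\overline{G^2(\chi_{d_j})}\,d_j^{-1}D^{-1/2}L_{\chi_D}^{-1}(2)$, establishing the identity $Z_{m_1,m_2}=\sum_{j,k}Z_{m_1,m_2}^{(j,k)}$ first for $\Real s>1$ and then, since the right-hand side is manifestly meromorphic, on all of $\mathbb{C}$ by analytic continuation.

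The pole structure is then read off from the closed forms. Each off-diagonal $Z_{m_1,m_2}^{(k,j)}$ is a product of $L_{\chi_D}(s)$, $L_{\chi_D}(s+1)$ --- both entire, as $\chi_D$ is nonprincipal --- with the entire function $K_{1,2}(m_k,m_j,s)$ and a constant, hence is entire. Each diagonal $Z_{m_1,m_2}^{(j,j)}$ has a simple pole at $s=0$ coming from $\zeta(s+1)$, and a candidate simple pole at $s=1$ coming from $\zeta(s)$; but the factor $\prod_{p\mid d_j}(1-p^{s-1})$ vanishes at $s=1$ as soon as $d_j$ has a prime divisor, so this pole persists only if $d_j=1$. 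Since $D=d_1d_2\ge3$ and $(d_1,d_2)=1$, at most one of $d_1,d_2$ equals $1$; therefore $Z_{m_1,m_2}(s)$ is meromorphic on $\mathbb{C}$ with a simple pole at $s=0$ and with a simple pole at $s=1$ precisely when $d_1=1$ or $d_2=1$. The main obstacle is the middle step: the careful local bookkeeping at the primes dividing $D$, where the Gauss sums, the ramification factors $(q_j,D)$, and the dichotomy $p\mid d_1$ versus $p\mid d_2$ all interact, and where the prime $2$ must be handled separately because of the possible higher power of $2$ in a fundamental discriminant.
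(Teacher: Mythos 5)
Your proposal follows essentially the same route as the paper: you split $Z_{m_1,m_2}$ into four pieces according to which term of each bracket in $\varepsilon_q(l)$ survives (the paper organizes this by partitioning $q$ into classes $Q_{k,j}$, but for any fixed $q$ at most one term of the formal expansion is nonzero, so the two decompositions agree), perform the $(a,l)$-sum via Ramanujan sums and their $\chi_D$-twisted variants (which is the precise form of what you call Gauss-sum separability, since the $a$-sum runs over units mod $q$ rather than mod $d_j$), and then reassemble the $q$-sum as an Euler product that reproduces the $\zeta(s)\zeta(s+1)$ or $L_{\chi_D}(s)L_{\chi_D}(s+1)$ prefactors together with the local $K_{j,j}$ and $K_{1,2}$ factors of Lemma \ref{l30}. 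The local bookkeeping at $p\mid D$ and $p\mid m_1m_2$ that you flag as the main obstacle is exactly where the paper spends its effort (a several-page Euler-product computation, plus the Gauss-sum identity $G(\chi_D)=G(\chi_{d_1})G(\chi_{d_2})\chi_{d_1}(d_2)\chi_{d_2}(d_1)$ to normalize the off-diagonal prefactor), and your reading of the pole structure from the resulting closed forms is correct.
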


\begin{pf}
We follow \cite{Heap} here. Write $\sigma (l, m_1, m_2)$ as the sum of four summands as follows:
$$
\sigma (l, m_1, m_2) = \sum\limits_{k,j=1}^{2} \sigma_{k,j} (l, m_1, m_2), \quad
\sigma_{k,j} (l, m_1, m_2) = \sum\limits_{q \in Q_{k,j}} \varepsilon_{q} (l)\frac{(q, m_1 m_2)}{q^2 \sqrt{r_1 r_2}},
$$
where for the given $d_1, d_2, m_1, m_2$ the four non-intersecting classes $Q_{k,j}$ ($k,j = 1,2$) of positive integers $q$ (that correspond to non-zero terms in  $\sigma (l, m_1, m_2)$)
are defined by the following conditions.
We say that $q \in Q_{k,j}$ if
$$
\left( \frac{q}{(q, m_1)}, d_1 d_2\right) = d_k, \quad \left( \frac{q}{(q, m_2)}, d_1 d_2\right) = d_j.
$$
Consider , first, all $q\in Q_{1,1}$ and write the explicit form for them. The above conditions imply that
$d_1| q$ and, since $(m_1, m_2) = 1$ and $(d_1, d_2)=1$, that $(q, d_2)= 1$. Suppose that a prime $p$ divides $(m_1, d_1)$ and that $p^{\alpha_1} || d_1$ ,
$p^{\beta_1} || m_1$ (notice, that $\alpha_1 = 1$ for all prime $p$ except for at most $p=2$  since $-D = d_1 d_2$ is the fundamental discriminant).
From our conditions we deduce that
$$
\left( \frac{q}{(q, p^{\beta_1})}, p^{\alpha_1}\right) = p^{\alpha_1},
$$
which implies that $p^{\alpha_1+\beta_1} | q$. So, all $q\in Q_{1,1}$ can be represented as
$$
q = d_1 (m_1 m_2, d_1^{\infty}) f_{1,1} f,
$$
where $(f, D)=1$, $f_{1,1} | d_1^{\infty}$.

Similarly, for $q\in Q_{2,2}$ we get the representation
$$
q = d_2 (m_1 m_2, d_2^{\infty}) f_{2,2} f,
$$
where $(f, D)=1$, $f_{2,2} | d_2^{\infty}$.

Let $q\in Q_{2,1}$.
It follows that $D|q$, $d_1 | m_1$, $d_2 | m_2$, $(d_2, m_1) = 1$, $(d_1, m_2) = 1$. Also, if $p^{\alpha_1} || d_1, p^{\beta_1} || m_1/ d_1$,
then our conditions give
$$
\left( \frac{q}{(q, p^{\alpha_1 +\beta_1})},  p^{\alpha_1}\right) = 1, \quad p^{\alpha_1} | q,
$$
and, thus, the following representation in case $d_1 | m_1$, $d_2 | m_2$:
$$
q = D f_{2,1} f,
$$
where $(f, D)=1$, $f_{2,1} | (m_1 m_2/ D, D^{\infty})$.

The same representation for $q \in Q_{1,2}$ is valid when $d_1 | m_2$, $d_2 | m_1$.

All the rest values of $q$ (which do not belong to any of $Q_{k,j}$) correspond to zero terms of $\sigma (l, m_1, m_2)$.

Let us transform now the value
$$
\sigma_{1,1}(l, m_1, m_2) = \sum\limits_{q\in Q_{1,1}} \varepsilon_q (l) \frac{(q, m_1 m_2)}{q^2 \sqrt{r_1 r_2}}.
$$
For $q\in Q_{1,1}$ (i.e. when $\left( q_j, d_1 d_2\right)=d_1$ for $j=1,2$) we may write $q$ in the form $q = d_1 (m_1 m_2, d_1^{\infty}) f_{1,1} f$.  Therefore, we have $r_j = d_2$  ($j=1,2$) and, hence, denoting by $c_q (l)$ the Ramanujan sum
$$
c_q (l) = \sum\limits_{\substack{a (\mod q) \\ (a,q)=1}} e^{-2\pi i l \frac{a}{q}},
$$
we can write
$$
\varepsilon_q (l) = c_q (l) \varepsilon_q,
$$
where (with $m'_j$ defined in lemma\ref{l8})
\begin{align*}
\varepsilon_q &=\chi_{d_1} (m_1' m_2') \chi_{d_2} \left( \frac{q_1}{d_1} \cdot \frac{q_2}{d_1}\right)=\chi_{d_1} \left( \frac{m_1 m_2}{(q, m_1 m_2)}\right) \chi_{d_2} \left( \frac{q^2}{d_1^2 (q, m_1 m_2)}\right) \\
&=\chi_{d_1} \left( \frac{m_1 m_2}{(m_1 m_2, d_1^{\infty}) (m_1 m_2, f)}\right) \chi_{d_2} \left( \frac{(f_{1,1} f)^2 (m_1 m_2, d_1^{\infty})}{(f, m_1 m_2)}\right) \\
&= \chi_{d_1} \left( \frac{m_1 m_2}{(m_1 m_2, d_1^{\infty})}\right) \chi_{d_2} \left( (m_1 m_2, d_1^{\infty}) \right) \chi_{D} \left( (f, m_1 m_2)\right).
\end{align*}
Using the formula
$$
c_q (l) = \sum\limits_{d| (q,l)} d \mu\left(\frac{q}{d}\right),
$$
we see that
$$
\sum\limits_{l=1}^{\infty} \frac{c_q(l)}{l^s} = \zeta(s) \sum\limits_{d|q} \mu(d) \left( \frac{q}{d} \right)^{1-s}.
$$
Therefore,
\begin{align*}
&\sum\limits_{l=1}^{\infty} \frac{\sigma_{1,1} (l, m_1, m_2)}{l^s} = \zeta(s)\sum\limits_{q\in Q_{1,1}} \varepsilon_q \frac{ (q,m_1 m_2)}{q^{s+1} \sqrt{r_1 r_2}} \sum\limits_{d|q} \mu(d) {d}^{s-1} \\
&= \frac{\zeta(s)}{d_1^{s} D (m_1 m_2, d_1^{\infty})^{s}} \chi_{d_1} \left( (m_1 m_2, d_2^{\infty}) \right)  \chi_{d_2} \left( (m_1 m_2, d_1^{\infty}) \right)  \chi_{d_1} \left( \frac{m_1 m_2}{(m_1 m_2, D^{\infty})}\right)  \times \\
&\times\sum\limits_{q_1 | d_1^{\infty}}\frac{1}{q_1^{s+1}} \prod\limits_{p| d_1}  (1-p^{s-1}) \sum\limits_{(q, D)=1} \chi_{D} \left( (q,  m_1 m_2)\right) \frac{ (q,m_1 m_2)}{q^{s+1}}\prod\limits_{p|q} (1- {p}^{s-1}) \\
&= \frac{\zeta(s)}{d_1^s D (m_1 m_2, d_1^{\infty})^{s}} \chi_{d_1} \left( (m_1 m_2, d_2^{\infty}) \right)  \chi_{d_2} \left( (m_1 m_2, d_1^{\infty}) \right)  \chi_{d_1} \left( \frac{m_1 m_2}{(m_1 m_2, D^{\infty})}\right)  \times \\
&\times\prod\limits_{p | d_1} \left( 1-\frac{1}{p^{s+1}} \right)^{-1} \left( 1-p^{s-1}\right)\prod\limits_{(p, D m_1 m_2)=1} \left( 1 + (1- {p}^{s-1}) \left( \frac{1}{p^{s+1}} + \frac{1}{p^{2(s+1)}}+\ldots \right)\right) \times\\
&\times \prod\limits_{\substack{(p, D)=1, \\ p^{\alpha} || m_1 m_2}} \left( 1 + (1- {p}^{s-1}) \left( \frac{\chi_D (p)}{p^{s}} + \frac{\chi_D (p^2)}{p^{2s}}+\ldots+\frac{\chi_D (p^{\alpha})}{p^{\alpha s}} +  \frac{\chi_D (p^{\alpha})}{p^{(\alpha +1) s+1}} \left( 1 - \frac{1}{p^{ s+1}} \right)^{-1}
 \right)\right)\\
&= \frac{\zeta(s)}{d_1^s D (m_1 m_2, d_1^{\infty})^{s}} \chi_{d_1} \left( (m_1 m_2, d_2^{\infty}) \right)  \chi_{d_2} \left( (m_1 m_2, d_1^{\infty}) \right)  \chi_{d_1} \left( \frac{m_1 m_2}{(m_1 m_2, D^{\infty})}\right)   \times\\
&\times \prod\limits_{p | d_1} \left( 1-\frac{1}{p^{s+1}} \right)^{-1} \left( 1-p^{s-1}\right) \prod\limits_{(p, D m_1 m_2)=1} \left( 1-\frac{1}{p^{s+1}} \right)^{-1} \left( 1- \frac{1}{p^{2}} \right)\times \\
&\times\prod\limits_{\substack{(p, D)=1, \\ p^{\alpha} || m_1 m_2}} \left( 1 + (1- {p}^{s-1}) \left( \frac{\chi_D (p)}{p^{s}} \left( 1-\frac{\chi_D (p^{\alpha})}{p^{\alpha s}} \right) \left( 1- \frac{\chi_D (p)}{p^{s}}\right)^{-1} +  \frac{\chi_D (p^{\alpha})}{p^{(\alpha +1) s+1}} \left( 1 - \frac{1}{p^{ s+1}} \right)^{-1}
 \right)\right) \\
&= \frac{\zeta(s) \zeta(s+1) }{\zeta(2) d_1^s D (m_1 m_2, d_1^{\infty})^{s}} \chi_{d_1} \left( (m_1 m_2, d_2^{\infty}) \right)  \chi_{d_2} \left( (m_1 m_2, d_1^{\infty}) \right)  \chi_{d_1} \left( \frac{m_1 m_2}{(m_1 m_2, D^{\infty})}\right)   \times\\
&\times \prod\limits_{p | d_1}  \left( 1-p^{s-1}\right) \prod\limits_{p | d_2} \left( 1 - \frac{1}{p^{ s+1}} \right) \prod\limits_{p|D} \left( 1- \frac{1}{p^{2}} \right)^{-1} \prod\limits_{\substack{(p, D)=1, \\ p^{\alpha} || m_1 m_2}} \left( 1- \frac{1}{p^{2}} \right)^{-1} \left( 1 - \frac{\chi_D (p)}{p^{ s}} \right)^{-1} \times \\
&\times\left( \left( 1 - \frac{\chi_D (p)}{p^{ s}} \right) \left( 1-\frac{1}{p^{s+1}} \right) + \right. \\
&\left.+(1- {p}^{s-1}) \left( \frac{\chi_D (p)}{p^{s}} \left( 1-\frac{\chi_D (p^{\alpha})}{p^{\alpha s}} \right)  \left( 1-\frac{1}{p^{s+1}} \right) + \frac{\chi_D (p^{\alpha})}{p^{(\alpha +1) s+1}}   \left( 1 - \frac{\chi_D (p)}{p^{ s}} \right)\right)\right) \\
&= \frac{\zeta(s) \zeta(s+1)}{\zeta(2) d_1^s D (m_1 m_2, d_1^{\infty})^{s}} \chi_{d_1} \left( (m_1 m_2, d_2^{\infty}) \right)  \chi_{d_2} \left( (m_1 m_2, d_1^{\infty}) \right)  \chi_{d_1} \left( \frac{m_1 m_2}{(m_1 m_2, D^{\infty})}\right)   \times\\
&\times \prod\limits_{p | d_1}  \left( 1-p^{s-1}\right)\prod\limits_{p | d_2} \left( 1 - \frac{1}{p^{ s+1}} \right)\prod\limits_{p|D} \left( 1- \frac{1}{p^{2}} \right)^{-1}  \times \\
&\times\prod\limits_{\substack{(p, D)=1, \\ p^{\alpha} || m_1 m_2}} \left( 1- \frac{1}{p^{2}} \right)^{-1} \left( 1 - \frac{\chi_D (p)}{p^{ s}} \right)^{-1} \left( 1 - \frac{\chi_D (p)}{p} \right) \times \left( 1-\frac{1}{p^{s+1}}  + \frac{\chi_D (p^{\alpha+1})}{p^{\alpha s+1}}  \left( 1-\frac{1}{p^{s-1}} \right) \right) \\
&= \frac{\zeta(s) \zeta(s+1)}{\zeta(2) d_1^s D} \prod\limits_{p | d_1}  \left( 1-p^{s-1}\right)\prod\limits_{p | d_2} \left( 1 - \frac{1}{p^{ s+1}} \right)\prod\limits_{p|D} \left( 1- \frac{1}{p^{2}} \right)^{-1}  K_{1,1} (m_1 m_2, s).
\end{align*}

Similarly, we get
\begin{align*}
&\sum\limits_{l=1}^{\infty} \frac{\sigma_{2,2} (l, m_1, m_2)}{l^s}= \frac{\zeta(s) \zeta(s+1)}{\zeta(2) d_2^s D} \prod\limits_{p | d_1}  \left( 1-p^{s-1}\right)\prod\limits_{p | d_2} \left( 1 - \frac{1}{p^{ s+1}} \right)\prod\limits_{p|D} \left( 1- \frac{1}{p^{2}} \right)^{-1}  K_{2,2} (m_1 m_2, s).
\end{align*}

Now, for $d_j \mid m_j$, consider the quantity $\sigma_{2,1}(l, m_1, m_2)$ corresponding to all the summands in $\sigma (l, m_1, m_2)$
with $q\in Q_{2,1}$ (i.e. when $\left( q_1, d_1 d_2\right)=d_2$, $\left( q_2, d_1 d_2\right)=d_1$). For such $q$ we already get the representation $q = D  f_{2,1} f$, where $f_{2,1} | (m_1 m_2/D, D^{\infty})$, $(f, D)=1$. Hence,  $r_1 r_2 = D$ and
\begin{align*}
\varepsilon_q (l)&= \frac{\overline{G(\chi_{d_1})}}{\sqrt{d_1}}  \frac{G(\chi_{d_2})}{\sqrt{d_2}} \chi_{d_1} \left( \frac{q_1}{d_2} \right) \chi_{d_2} \left( \frac{q_2}{d_1} \right) \chi_{d_2} (m_1')
\chi_{d_1} (m_2')\sum\limits_{\substack{a=1,\\ (a,q)=1}}^{q} e^{-2\pi i l\frac{a}{q}} \chi_{D} (a).
\end{align*}
 First of all, we note that
\begin{align*}
&\chi_{d_2} \left( \frac{m_1}{(q, m_1)}\right) \chi_{d_1} \left( \frac{m_2}{(q, m_2)}\right) \chi_{d_1} \left( \frac{q}{d_2 (q, m_1)}\right) \chi_{d_2} \left( \frac{q}{d_1 (q, m_2)}\right)\\
&= \chi_{d_1} \left( \frac{m_2/d_2}{(f_{2,1}, d_2^{\infty}) (m_2, f)}\right) \chi_{d_2} \left( \frac{m_1/d_1}{(f_{2,1}, d_1^{\infty}) (m_1, f)}\right)\times\\
&\times \chi_{d_1} \left( \frac{f (f_{2,1}, d_2^{\infty})}{(f, m_1)}\right)  \chi_{d_2} \left( \frac{f (f_{2,1}, d_1^{\infty})}{(f, m_2)}\right)\\
&= \chi_{d_1} \left( m_2/ d_2\right) \chi_{d_2} \left( m_1/d_1\right) \chi_D (f/ (m_1 m_2, f)).
\end{align*}
Second, for $q \equiv 0 (\mod D)$ we shall use the formula (see lemma 4 of \cite{Heap})
$$
c_q (l, \chi_D) :=\sum\limits_{\substack{a=1, \\ (a,q)=1}}^{q} \chi_D (a) e^{-2\pi i \frac{al}{q}} = \overline{G(\chi_D)} \sum\limits_{d| (q/D,l)} d \mu\left(\frac{q/D}{d}\right) \chi_D \left(\frac{q/D}{d}\right) \chi_D \left(\frac{l}{d}\right)
$$
to see that
$$
\sum\limits_{l=1}^{\infty} \frac{c_q(l, \chi_D)}{l^s} = L_{\chi_D}(s)  \overline{G(\chi_D)} \sum\limits_{d|q/D} \mu(d) \chi_D (d) \left( \frac{q}{Dd} \right)^{1-s}.
$$
Therefore,
\begin{align*}
&\sum\limits_{l=1}^{\infty} \frac{\sigma_{2,1} (l, m_1, m_2)}{l^s} = \frac{\overline{G(\chi_{d_1})} G(\chi_{d_2})   \overline{G(\chi_D)} L_{\chi_D}(s) D^{s-1}}{\sqrt{D}}\chi_{d_1} \left( m_2/ d_2\right) \chi_{d_2} \left( m_1/d_1\right) \times\\
&\times \sum\limits_{q\in Q_{2,1}} \frac{ (q,m_1 m_2)}{q^{s+1}} \chi_D (f/ (m_1 m_2, f))\sum\limits_{d|q/D} \mu(d) \chi_D (d) {d}^{s-1}.
\end{align*}
We transform further the double sum from the last expression:
\begin{align*}
&\sum\limits_{q\in Q_{2,1}} \frac{ (q,m_1 m_2)}{q^{s+1}} \chi_D (f/ (m_1 m_2, f))\sum\limits_{d|q/D} \mu(d) \chi_D (d) {d}^{s-1}\\
&=  \sum\limits_{q_1 | (m_1 m_2/D, D^{\infty})}\frac{1}{q_1^{s} D^{s}}
\sum\limits_{(q, D)=1} \chi_{D} \left( \frac{q}{(q,  m_1 m_2)}\right) \frac{ (q,m_1 m_2)}{q^{s+1}}\prod\limits_{p|q} (1- \chi_D (p){p}^{s-1}) \\
&= D^{-s}  \sum\limits_{q_1 | (m_1 m_2/D, D^{\infty})}\frac{1}{q_1^{s}}
\prod\limits_{(p, D m_1 m_2)=1} \left( 1 + (1- \chi_D (p) {p}^{s-1}) \left( \frac{\chi_D (p)}{p^{s+1}} + \frac{\chi_D (p^2)}{p^{2(s+1)}}+\ldots \right)\right) \times\\
&\times \prod\limits_{\substack{(p, D)=1, \\ p^{\alpha} || m_1 m_2}} \left( 1 + (1- \chi_D (p){p}^{s-1}) \left( \frac{1}{p^{s}} + \frac{1}{p^{2s}}+\ldots+\frac{1}{p^{\alpha s}} +  \frac{\chi_D (p)}{p^{(\alpha +1) s+1}} \left( 1 - \frac{\chi_D(p)}{p^{ s+1}} \right)^{-1}
 \right)\right)\\
&= D^{-s} \sum\limits_{q_1 | (m_1 m_2/D, D^{\infty})}\frac{1}{q_1^{s}}
\prod\limits_{(p, D m_1 m_2)=1} \left( 1 + (1- \chi_D (p) {p}^{s-1}) \frac{\chi_D (p)}{p^{s+1}} \left( 1 -  \frac{\chi_D (p)}{p^{s+1}}\right)^{-1}\right) \times\\
&\times \prod\limits_{\substack{(p, D)=1, \\ p^{\alpha} || m_1 m_2}} \left( 1 + (1- \chi_D (p){p}^{s-1}) \left( \frac{1}{p^{s}} \left( 1- \frac{1}{p^{\alpha s}}\right) \left(1-\frac{1}{p^{s}} \right)^{-1}+  \frac{\chi_D (p)}{p^{(\alpha +1) s+1}} \left( 1 - \frac{\chi_D(p)}{p^{ s+1}} \right)^{-1}
 \right)\right)\\
&= D^{-s} \sum\limits_{q_1 | (m_1 m_2/D, D^{\infty})}\frac{1}{q_1^{s}}
\prod\limits_{(p, D m_1 m_2)=1} \left(1 -  \frac{\chi_D (p)}{p^{s+1}}\right)^{-1} \left(1 -  \frac{\chi_D^2 (p)}{p^{2}}\right) \times \\
&\times\prod\limits_{\substack{(p, D)=1, \\ p^{\alpha} || m_1 m_2}}  \left( 1 - \frac{1}{p^{s}} \right)^{-1} \left( 1 - \frac{\chi_D (p)}{p^{s+1}} \right)^{-1} \left( 1 - \frac{\chi_D (p)}{p} \right) \times \\
&\times\left( 1-\frac{\chi_D(p)}{p^{s+1}}  + \frac{\chi_D (p)}{p^{\alpha s+1}}  \left( 1-\frac{\chi_D(p)}{p^{s-1}} \right) \right).
\end{align*}
Finally, we get
\begin{align*}
&\sum\limits_{l=1}^{\infty} \frac{\sigma_{2,1} (l, m_1, m_2)}{l^s} = \overline{G(\chi_{d_1})} G(\chi_{d_2})  \overline{G(\chi_D)} D^{-3/2} L_{\chi_D}(s) L_{\chi_D}(s+1) L_{\chi_{D}^2}^{-1} (2)\chi_{d_1} \left( m_2/ d_2\right) \chi_{d_2} \left( m_1/d_1\right) \times\\
&\times \sum\limits_{q | (m_1 m_2/D, D^{\infty})}\frac{1}{q^{s}}  \prod\limits_{\substack{(p, D)=1, \\ p^{\alpha} || m_1 m_2}} \left( 1- \frac{\chi^2_{D} (p)}{p^{2}} \right)^{-1} \left( 1 - \frac{1}{p^{ s}} \right)^{-1} \left( 1 - \frac{\chi_D (p)}{p} \right) \times \\
&\times \left( 1-\frac{\chi_D(p)}{p^{s+1}}  + \frac{\chi_D (p)}{p^{\alpha s+1}}  \left( 1-\frac{\chi_D(p)}{p^{s-1}} \right) \right) \\
&= \overline{G(\chi_{d_1})} G(\chi_{d_2})  \overline{G(\chi_D)} D^{-3/2}  \chi_{d_1} (d_2) \chi_{d_2} (d_1) L_{\chi_D}(s) L_{\chi_D}(s+1) L_{\chi_{D}}^{-1} (2)  K_{1,2} (m_2, m_1, s).
\end{align*}
Using the equality (see  \S 4 of chapter 1 in \cite{Chudakov} or (3.16) in \cite{Iwaniec_Kow})
\begin{align} \label{eq88}
G(\chi_D) =G(\chi_{d_1}) G(\chi_{d_2}) \chi_{d_1} (d_2) \chi_{d_2} (d_1),
\end{align}
for real primitive characters $\chi_{d_1}, \chi_{d_2}$ with the property $\chi_D (\cdot)= \chi_{d_1} (\cdot) \chi_{d_2} (\cdot)$, we finally get
\begin{align*}
&\sum\limits_{l=1}^{\infty} \frac{\sigma_{2,1} (l, m_1, m_2)}{l^s} =  \overline{(G(\chi_{d_1}))^2} d_1^{-1} D^{-1/2}  L_{\chi_D}(s) L_{\chi_D}(s+1) L_{\chi_{D}}^{-1} (2)  K_{1,2} (m_2, m_1, s).
\end{align*}

Similarly, if $d_2| m_1$ and $d_1| m_2$, we have
\begin{align*}
&\sum\limits_{l=1}^{\infty} \frac{\sigma_{1,2} (l, m_1, m_2)}{l^s}=  \overline{(G(\chi_{d_2}))^2}  d_2^{-1} D^{-1/2} L_{\chi_D}(s) L_{\chi_D}(s+1) L_{\chi_{D}}^{-1} (2)  K_{1,2} (m_1, m_2, s).
\end{align*}

\end{pf}

\begin{lemma}
$$
K_{1,1} (m, -s) m^{-s} = K_{2,2} (m, s),
$$
\end{lemma}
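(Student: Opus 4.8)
The plan is to substitute the explicit product formula (\ref{eq211}) of Lemma~\ref{l30} for $K_{1,1}$ and $K_{2,2}$ and to reduce the asserted identity to an elementary Euler-factor computation. First I would record two preliminary facts. Since $d_1d_2=D$ with $(d_1,d_2)=1$, one has $D/d_1=d_2$ and $D/d_2=d_1$; consequently the character prefactor $\chi_{d_l}\bigl((m,(D/d_l)^\infty)\bigr)\chi_{D/d_l}\bigl((m,d_l^\infty)\bigr)$ occurring in (\ref{eq211}) is the \emph{same} for $l=1$ and $l=2$, namely $\chi_{d_1}\bigl((m,d_2^\infty)\bigr)\chi_{d_2}\bigl((m,d_1^\infty)\bigr)$. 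Moreover the primes dividing $m$ split into those dividing $d_1$, those dividing $d_2$, and those coprime to $D$, which gives the coprime factorization $m=(m,d_1^\infty)(m,d_2^\infty)m^\sharp$ with $m^\sharp=\prod_{(p,D)=1,\,p^\alpha\|m}p^\alpha$, and hence $m^{-s}=(m,d_1^\infty)^{-s}(m,d_2^\infty)^{-s}(m^\sharp)^{-s}$.

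Next I would evaluate $K_{1,1}(m,-s)$ from (\ref{eq211}) and multiply by $m^{-s}$. The factor $(m,d_1^\infty)^{s}$ produced by putting $z=-s$ cancels the factor $(m,d_1^\infty)^{-s}$ of $m^{-s}$; what remains is $(m,d_2^\infty)^{-s}$, the common character prefactor above, and, over the primes $p\nmid D$ with $p^\alpha\|m$, the product of $\bigl(1+\tfrac{\chi_D(p)}{p}\bigr)^{-1}\chi_{d_1}(p^\alpha)\,p^{-\alpha s}R_p(-s)$, where $R_p(z)$ denotes the polynomial in $p^{-z}$ in the last line of (\ref{eq211}) attached to $p^\alpha\|m$, i.e. $R_p(z)=\sum_{j=0}^{\alpha}\chi_D(p^j)p^{-jz}-p^{-z-1}\sum_{j=0}^{\alpha-2}\chi_D(p^j)p^{-jz}$. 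Comparing with $K_{2,2}(m,s)$, which equals $(m,d_2^\infty)^{-s}$ times the same character prefactor times $\prod_{(p,D)=1,\,p^\alpha\|m}\bigl(1+\tfrac{\chi_D(p)}{p}\bigr)^{-1}\chi_{d_2}(p^\alpha)R_p(s)$, I see that the outer prefactors and the factors $\bigl(1+\tfrac{\chi_D(p)}{p}\bigr)^{-1}$ match, so the identity of the lemma is equivalent to the local identities $\chi_{d_1}(p)^{\alpha}p^{-\alpha s}R_p(-s)=\chi_{d_2}(p)^{\alpha}R_p(s)$ for each prime $p\nmid D$ with $p^\alpha\|m$.

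To finish I would argue locally. For $p\nmid D$ one has $\chi_{d_1}(p),\chi_{d_2}(p)\in\{\pm1\}$ and $\chi_D(p)=\chi_{d_1}(p)\chi_{d_2}(p)$, so $\chi_{d_2}(p)^{\alpha}=\chi_D(p)^{\alpha}\chi_{d_1}(p)^{\alpha}$ and the local identity collapses to $p^{-\alpha s}R_p(-s)=\chi_D(p)^{\alpha}R_p(s)$. Writing $c=\chi_D(p)\in\{\pm1\}$ and $y=p^{s}$, and using $\chi_D(p^j)=c^{j}$, one has $R_p(-s)=\sum_{j=0}^{\alpha}(cy)^{j}-\tfrac{y}{p}\sum_{j=0}^{\alpha-2}(cy)^{j}$ and $R_p(s)=\sum_{j=0}^{\alpha}(cy^{-1})^{j}-\tfrac{y^{-1}}{p}\sum_{j=0}^{\alpha-2}(cy^{-1})^{j}$; multiplying the former by $y^{-\alpha}$ and reindexing $j\mapsto\alpha-j$ in the first sum and $j\mapsto\alpha-1-j$ in the second, while using $c^{2}=1$ (hence $c^{-k}=c^{k}$) repeatedly, turns it into $c^{\alpha}R_p(s)$ — a one-line check. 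Multiplying the local identities over all $p\nmid D$ with $p^\alpha\|m$ then yields $K_{1,1}(m,-s)m^{-s}=K_{2,2}(m,s)$. I do not expect a genuine obstacle; the only points requiring care are the observation that the two character prefactors coincide for $l=1,2$ and the exponent bookkeeping in the reindexing, both of which rest on $d_1d_2=D$, $(d_1,d_2)=1$ and $\chi_D(p)^2=1$ for $p\nmid D$. (One could equivalently work from the definition on page~\pageref{p1}; the reduction and the final check are essentially identical.)
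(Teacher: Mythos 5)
Your proof is correct and takes essentially the same approach as the paper: both reduce the identity, via multiplicativity of $K_{l,l}(\cdot,z)$, to a local verification at prime powers $p^\alpha\|m$, handling $p\mid D$ through the $(m,d_l^\infty)$ and character prefactors and $p\nmid D$ through an elementary Euler-factor check. The only (cosmetic) difference is the algebra used at a prime $p\nmid D$: the paper works directly from the definition on page~\pageref{p1}, pulling $\chi_D(p)p^{-s}$ out of $(1-\chi_D(p)p^{s})^{-1}$ and matching the remaining rational expressions, whereas you substitute the geometric-sum form (\ref{eq211}) of Lemma~\ref{l30} and verify $p^{-\alpha s}R_p(-s)=\chi_D(p)^{\alpha}R_p(s)$ by reindexing — both one-line checks once set up.
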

\begin{pf}
Since $K_{j,j} (\cdot, z)$ is the multiplicative function, we have to prove the desired equality for $m=p^{\alpha}$ where  $p\mid D$ (p is a prime) or $(p, D)=1$.

In case $p\mid D$  we have to show that
$$
(p^{\alpha}, d_1^{\infty})^{s} p^{-\alpha s} = (p^{\alpha}, d_2^{\infty})^{-s}
$$
which is true since $D = d_1 d_2$ and $(d_1, d_2) = 1$.

In  case $(p, D) = 1$ we have to show that
\begin{equation}\label{eq1111}
\begin{split}
&\chi_{d_1} (p^{\alpha}) p^{-\alpha s} \left( 1 - \chi_D (p) p^{ s} \right)^{-1}  \left( 1-p^{s-1}  + \chi_D (p^{\alpha+1}) p^{\alpha s-1}  \left( 1-p^{s+1} \right) \right)\\
&= \chi_{d_2} (p^{\alpha})
\left( 1 - \frac{\chi_D (p)}{p^s} \right)^{-1}  \left( 1-\frac{1}{p^{s+1}}  + \frac{\chi_D (p^{\alpha+1})}{p^{\alpha s+1}}  \left( 1-\frac{1}{p^{s-1}} \right) \right)
\end{split}
\end{equation}
We transform, for example, the left side (L.S.) of (\ref{eq1111}) as follows.
\begin{equation}\label{eq1112}
\begin{split}
L.S. &= \chi_{d_1} (p^{\alpha}) \chi_{D} (p) p^{-s} \left( 1 - \chi_D (p) p^{-s} \right)^{-1}  \chi_{D} (p^{\alpha+1})p^{-\alpha s} \left( p^{\alpha s-1}  \left(p^{s+1} -1\right) + \chi_D (p^{\alpha+1}) (p^{s-1}-1)  \right)\\
&= \chi_{d_2} (p^{\alpha})
\left( 1 - \frac{\chi_D (p)}{p^s} \right)^{-1}  \left( 1-\frac{1}{p^{s+1}}  + \frac{\chi_D (p^{\alpha+1})}{p^{\alpha s+1}}  \left( 1-\frac{1}{p^{s-1}} \right) \right).
\end{split}
\end{equation}
The proof is completed.

\end{pf}

\begin{cor}
Let
\begin{equation*}
\begin{split}
\tilde{S}_{1,1} (w) = S_{1,1} (w) d_1^{-w/2} d_2^{w/2} \prod\limits_{p\mid d_1} (1-p^{w-1}) \prod\limits_{p\mid d_2} (1-p^{-w-1}),
\end{split}
\end{equation*}
\begin{equation*}
\begin{split}
\tilde{S}_{2,2} (w) = S_{2,2} (w) d_2^{-w/2} d_1^{w/2} \prod\limits_{p\mid d_2} (1-p^{w-1}) \prod\limits_{p\mid d_1} (1-p^{-w-1}).
\end{split}
\end{equation*}
Then
$$
\tilde{S}_{2,2} (-w) = \tilde{S}_{1,1} (w).
$$
\end{cor}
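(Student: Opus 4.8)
The plan is to reduce the claimed functional equation to the bare identity $S_{2,2}(-w)=S_{1,1}(w)$ and then read the corollary off. First I would replace $w$ by $-w$ in the definition of $\tilde{S}_{2,2}$, obtaining
$$
\tilde{S}_{2,2}(-w)=S_{2,2}(-w)\,d_2^{w/2}d_1^{-w/2}\prod_{p\mid d_2}(1-p^{-w-1})\prod_{p\mid d_1}(1-p^{w-1}),
$$
and observe that the four decorating factors here are exactly the four factors $d_1^{-w/2}$, $d_2^{w/2}$, $\prod_{p\mid d_1}(1-p^{w-1})$, $\prod_{p\mid d_2}(1-p^{-w-1})$ appearing in $\tilde{S}_{1,1}(w)$, only listed in a different order. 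Hence everything comes down to showing $S_{2,2}(-w)=S_{1,1}(w)$.

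For this I would start from the definition of $S_{l,l}$ with $l=2$ and $z=-w$,
$$
S_{2,2}(-w)=\sum_{\nu_1,\dots,\nu_4\le X}\frac{\beta(\nu_1)\beta(\nu_2)\beta(\nu_3)\beta(\nu_4)}{\nu_2\nu_4}\left(\frac{q}{\nu_1\nu_3}\right)^{1+w}K_{2,2}\left(\frac{\nu_1\nu_4}{q},-w\right)K_{2,2}\left(\frac{\nu_2\nu_3}{q},-w\right),
$$
with $q=(\nu_1\nu_4,\nu_2\nu_3)$, and apply the lemma just proved. Written with $s=-w$ it states $K_{2,2}(m,-w)=m^{w}K_{1,1}(m,w)$, and this is legitimate here since $\nu_1\nu_4/q$ and $\nu_2\nu_3/q$ are genuine positive integers, $q$ being the greatest common divisor of $\nu_1\nu_4$ and $\nu_2\nu_3$. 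Applying it to both $K_{2,2}$-factors and splitting $(\nu_1\nu_2\nu_3\nu_4)^{w}=(\nu_1\nu_3)^{w}(\nu_2\nu_4)^{w}$, the scalar coefficient of a generic term simplifies:
$$
\frac{1}{\nu_2\nu_4}\cdot\frac{q^{1+w}}{(\nu_1\nu_3)^{1+w}}\cdot\frac{(\nu_1\nu_3)^{w}(\nu_2\nu_4)^{w}}{q^{2w}}=\frac{1}{\nu_1\nu_3}\left(\frac{q}{\nu_2\nu_4}\right)^{1-w},
$$
while its $K$-part is $K_{1,1}(\nu_1\nu_4/q,w)K_{1,1}(\nu_2\nu_3/q,w)$.

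Finally I would recognize the sum so obtained as $S_{1,1}(w)$ via the change of summation variables $(\nu_1,\nu_2,\nu_3,\nu_4)\mapsto(\nu_2,\nu_1,\nu_4,\nu_3)$, which is a bijection of $\{\nu_j\le X\}$: it fixes the product $\beta(\nu_1)\beta(\nu_2)\beta(\nu_3)\beta(\nu_4)$, fixes $q=(\nu_1\nu_4,\nu_2\nu_3)$ (the gcd being symmetric under $\nu_1\nu_4\leftrightarrow\nu_2\nu_3$), interchanges the arguments $\nu_1\nu_4/q$ and $\nu_2\nu_3/q$ of the two $K_{1,1}$-factors (leaving their product unchanged), and carries $\tfrac{1}{\nu_1\nu_3}(q/\nu_2\nu_4)^{1-w}$ to $\tfrac{1}{\nu_2\nu_4}(q/\nu_1\nu_3)^{1-w}$; comparing with the definition of $S_{1,1}(w)$ term by term gives $S_{2,2}(-w)=S_{1,1}(w)$, hence $\tilde{S}_{2,2}(-w)=\tilde{S}_{1,1}(w)$. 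I do not expect any genuine obstacle here: the entire content is the preceding lemma together with the elementary symmetry $\nu_1\nu_4\leftrightarrow\nu_2\nu_3$ built into the definition of $S_{l,l}$; the only things requiring care are the bookkeeping of the exponents of $q$, $\nu_1\nu_3$ and $\nu_2\nu_4$ under the substitution $K_{2,2}(m,-w)=m^{w}K_{1,1}(m,w)$, and checking once that the $d_j$-factors attached to $S_{2,2}(-w)$ and to $S_{1,1}(w)$ in the two $\tilde{S}$'s do coincide.
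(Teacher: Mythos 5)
Your proof is correct and follows the same route as the paper: apply the preceding lemma in the form $K_{2,2}(m,-w)=m^{w}K_{1,1}(m,w)$ to the definition of $S_{2,2}(-w)$, track the resulting powers of $q$, $\nu_1\nu_3$, $\nu_2\nu_4$, and relabel the summation variables. One useful point you add: the change of variables you choose, $(\nu_1,\nu_2,\nu_3,\nu_4)\mapsto(\nu_2,\nu_1,\nu_4,\nu_3)$, is the one that actually works, since it sends $\nu_1\nu_3\mapsto\nu_2\nu_4$, $\nu_2\nu_4\mapsto\nu_1\nu_3$, fixes $q$, and interchanges the two $K_{1,1}$-arguments. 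The paper instead states the relabelling $\nu_1\leftrightarrow\nu_3$, $\nu_2\leftrightarrow\nu_4$, which fixes both $\nu_1\nu_3$ and $\nu_2\nu_4$ and only swaps the $K$-arguments, so by itself it does not transform the summand into that of $S_{1,1}(w)$; your version (or the equivalent $\nu_1\leftrightarrow\nu_4$, $\nu_2\leftrightarrow\nu_3$) quietly repairs what appears to be a misprint.
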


\begin{pf}
The statement can be proved just by substituting the result of the previous lemma to the formulae for $S_{j,j} (w)$ of lemma \ref{l5} and making the substitutions $\nu_1 \leftrightarrow \nu_3$, $\nu_2 \leftrightarrow  \nu_4$.
\end{pf}

\begin{center}
{\bf \S 8. Estimation of the non-diagonal part.}
\end{center}

We shall estimate in this section
\begin{equation*}
\begin{split}
J_2(1, \theta) &= \sum\limits_{\nu_1,\nu_2, \nu_3,
\nu_4}
\frac{\beta(\nu_1)\beta(\nu_2)\beta(\nu_3) \beta(\nu_4)}{\nu_2\nu_4}
\times \\
& \times \left(  \Real \sum\limits_{l\ge 1} \frac{1}{2\pi i}
\int\limits_{2-i\infty}^{2+i\infty} D_{m_1, m_2} (s,l) \Gamma(s)
B^{-s} \Phi (s+\theta, B_1 l)ds \right),
\end{split}
\end{equation*}
where
\begin{equation*}
\begin{split}
&D_{m_1, m_2} (s,l) = \sum\limits_{n} \frac{r(n) r\left(
\frac{m_1 n+l}{m_2}\right)}{(m_1 n+l/2)^s}, \\
&\Phi(s,y) =\int\limits_{0}^{+\infty}\phi(u) u^{-s} \cos(2\pi yu)
du, \\
&B = \dfrac{4\pi \sin\delta}{\sqrt{D}Q}, \quad  B_1 = \dfrac{
\cos\delta}{\sqrt{D}Q}, \\
&q = (\nu_1 \nu_4, \nu_2 \nu_3), \quad Q= \frac{\nu_2 \nu_4}{q},
\quad m_1 = \frac{\nu_1 \nu_4}{q} , \quad m_2 = \frac{\nu_2
\nu_3}{q}.
\end{split}
\end{equation*}

Let us move the path of integration in $J_2(1, \theta)$ to the line $\Real s = 11/13+\varepsilon_0$ (which is admissible since $\Gamma(s)$ decreases exponentially when $|s| \to +\infty$, and other terms increase only polynomially for large $|s|$), passing a simple pole at $s=1$ (see corollary 1), which gives us the main contribution.
Let us estimate the integral over $\Real s = 11/13+\varepsilon_0$.
For  $y>0$ and $1< \operatorname{Re}s < 3$, using (\ref{eq13.1}) and integrating by parts twice, we have
imply
\begin{align*}
\Phi(s,y)= -\int\limits_{0}^{+\infty} \left( \frac{\phi^{''}(u)}{u^s}-2s \frac{\phi^{'}(u)}{u^{s+1}} +s (s+1) \frac{\phi (u)}{u^{s+2}}\right) \frac{\cos(2\pi  yu)}{(2\pi  y)^2}du.
\end{align*}
Therefore, we obtain an analytic continuation of $\Phi(s,y)$ to the region $-1 < \Real s <3$, and, in addition,  for
$-1/2 \le \Real s \le 2$ we get the estimate $\Phi(s,y) = O\left( \dfrac{|s|^2+1}{y^{2}} \right)$.
Hence, the integral over the line $\Real s = 11/13+\varepsilon_0$ can be estimated by means of corollary 1 as follows:
\begin{equation*}
\begin{split}
& \sum\limits_{\nu_1, \ldots, \nu_4
\le X} \frac{1}{\nu_2 \nu_4} \sum\limits_{l\ge 1} \frac{1}{B^{11/13}
B_1^2 l^2} \left( (m_1 m_2)^{-2/13} + l^{2/11} m_1^{-11/13}
m_2^{-1}\right) \\
&\ll \delta^{-11/13} \sum\limits_{\nu_1, \ldots,
\nu_4 \le X} \frac{Q^{2+11/13}}{\nu_2 \nu_4} (m_1 m_2)^{-2/13} \ll
\delta^{-11/13} \sum\limits_{\nu_1, \ldots, \nu_4
\le
X} \frac{Q^{1+11/13}}{q (m_1 m_2)^{2/13}}  \\
&\ll \delta^{-11/13} \sum\limits_{\nu_1, \ldots,
\nu_4 \le X} \frac{Q^{1+11/13}}{(\nu_1 \nu_2 \nu_3 \nu_4)^{2/13}}
\ll \delta^{-11/13} (X^{2})^{1+\frac{11}{13}}
(X^{\frac{11}{13}})^4 \\
&\ll \delta^{-11/13} X^{92/13} \ll
\frac{\delta^{-1}}{ \log^2 X},
\end{split}
\end{equation*}
since $X \le \delta^{-1/50}$.

Now we shall estimate the residue $R$ at $s=1$. For any $v \ge 0$ we have
$\Phi (1+\theta,v)  \ll \Phi (1+\theta,0)  \ll \theta^{-1}$ ($0 < \theta \le 1$) and for $v \ge 1$ that $\Phi (1+\theta, v) \ll v^{-2}$. Therefore, the integral
$$
M_{1+\theta} (w) = \int\limits_{0}^{\infty} \Phi (1+\theta,v) v^{w-1} dv
$$
is absolutely convergent when $0 < \Real w < 2$. Consequently, the theory of Mellin transforms yields
$$
\Phi(1+\theta, y) = \frac{1}{2\pi i} \int\limits_{3/2-i\infty}^{3/2+i\infty} M_{1+\theta} (w) y^{-w} dw.
$$

We then have the following expression for $R$ by means of corollary~1 with the notation of lemma~\ref{l8} and lemma~\ref{l9} :
\begin{equation*}
\begin{split}
R &= \Real  \sum\limits_{\nu_1,\nu_2, \nu_3,
\nu_4}
\frac{\beta(\nu_1)\beta(\nu_2)\beta(\nu_3)\beta(\nu_4)}{\nu_2\nu_4}
B^{-1}\sum\limits_{l\ge 1} \Phi (1+\theta, B_1 l)\mathop{\text { Res }}_{s=1}
 D_{m_1, m_2} (s,l)  \\
&= C \delta^{-1} \Real \sum\limits_{\nu_1,\nu_2, \nu_3,
\nu_4}
\frac{\beta(\nu_1)\beta(\nu_2)\beta(\nu_3)\beta(\nu_4)}{\nu_2\nu_4} \frac{Q}{m_1m_2} \sum\limits_{l\ge 1}
\sigma (l, m_1, m_2) \frac{1}{2\pi i} \int\limits_{3/2-i\infty}^{3/2+i\infty} M_{1+\theta} (w) (l B_1)^{-w} dw \\
&=  C\delta^{-1} \Real \frac{1}{2\pi i} \int\limits_{3/2-i\infty}^{3/2+i\infty} M_{1+\theta} (w) \left(\frac{ \cos \delta}{\sqrt{D}}\right)^{-w} \sum\limits_{\nu_1,\nu_2, \nu_3,
\nu_4}
\frac{\beta(\nu_1)\beta(\nu_2)\beta(\nu_3)\beta(\nu_4)}{\nu_2\nu_4} \frac{Q^{1+w}}{m_1m_2} Z_{m_1, m_2} (w) dw \\
&=  C\delta^{-1} \Real \frac{1}{2\pi i} \int\limits_{3/2-i\infty}^{3/2+i\infty} M_{1+\theta} (w) \left(\frac{ \cos \delta}{\sqrt{D}}\right)^{-w} \sum\limits_{\nu_1,\nu_2, \nu_3,
\nu_4}
\frac{\beta(\nu_1)\beta(\nu_2)\beta(\nu_3)\beta(\nu_4)}{\nu_1\nu_3} \left(\frac{q}{\nu_2\nu_4}\right)^{1-w} Z_{m_1, m_2} (w) dw
\end{split}
\end{equation*}
(where $C$ is some absolute constant). According to the partition (see lemma~\ref{l9})
$$
Z_{m_1, m_2} (w) = Z_{m_1, m_2}^{(1,1)} (w)+ Z_{m_1, m_2} ^{(2,2)}(w)+ Z_{m_1, m_2}^{(1,2)} (w)+Z_{m_1, m_2}^{(2,1)} (w),
$$
the residue $R$ can be therefore
written as the sum of four terms:
$$
R = R^{(1,1)} + R^{(2,2)} + R^{(1,2)} + R^{(2,1)}.
$$
Now we proceed with estimation of $R^{(1,2)} + R^{(2,1)}$.  By definition of $Z_{m_1, m_2}^{(l,j)} (w)$ from lemma~\ref{l9} (for $l\ne j$), the following formula holds with some real constant $C$:
\begin{equation*}
\begin{split}
R^{(l,j)}
&=  C\delta^{-1} \Real \frac{1}{2\pi i} \int\limits_{3/2-i\infty}^{3/2+i\infty} M_{1+\theta} (w) \left(\frac{ \cos \delta}{\sqrt{D}}\right)^{-w} \overline{(G(\chi_{d_j}))^2} d_j^{-1} L_{\chi_{D}} (w) L_{\chi_{D}} (1+w) S_{l,j} (w) dw,
\end{split}
\end{equation*}
where
\begin{equation*}
\begin{split}
S_{l,j} (w) = \sum\limits_{\nu_1,\nu_2, \nu_3,
\nu_4}
\frac{\beta(\nu_1)\beta(\nu_2)\beta(\nu_3)\beta(\nu_4)}{\nu_1\nu_3} \left(\frac{q}{\nu_2\nu_4}\right)^{1-w} K_{1,2} (m_l, m_j, w)
\end{split}
\end{equation*}
Since $S_{1,2} (w) = S_{2,1} (w)$ as can be seen by the change of variables $\nu_1 \leftrightarrow \nu_3$, $\nu_2 \leftrightarrow  \nu_4$, and since
$$
\frac{\overline{G^2(\chi_{d_1})}}{d_1} + \frac{\overline{G^2(\chi_{d_2})}}{d_2} = 0
$$
(which can be seen by the equality (\ref{eq88}) and the properties $G(\chi_{d_j}) = \sqrt{\pm d_j}$, $G(\chi_{D}) = i\sqrt{D}$) we get, therefore, that
$$
R^{(1,2)} + R^{(2,1)} = 0.
$$

We are left to get the estimate
\begin{equation*}
\begin{split}
R^{(1,1)}  + R^{(2,2)}\ll \frac{T}{(\theta \log T)^{1/3}}.
\end{split}
\end{equation*}
By definition of $Z_{m_1, m_2}^{(j,j)} (w)$  ($j=1,2$) of lemma \ref{l9}, the following formula holds with some real constant $C$ (which does not depend on $j$):
\begin{equation*}
\begin{split}
R^{(j,j)}
&=  C\delta^{-1} \Real \frac{1}{2\pi i} \int\limits_{3/2-i\infty}^{3/2+i\infty} M_{1+\theta} (w) \left(\frac{ \cos \delta}{\sqrt{D}}\right)^{-w}  \zeta (w) \zeta (1+w) S_{j,j} (w) d_j^{-w} \times \\
&\times\prod\limits_{p\mid d_j} (1-p^{w-1}) \prod\limits_{p\mid (D/d_j)} (1-p^{-w-1})dw,
\end{split}
\end{equation*}
where $S_{j,j} (w)$ is defined in lemma \ref{l4}.
From the theory of Mellin transforms we know that
\begin{equation*}
\begin{split}
&\int\limits_{0}^{+\infty} \cos (cv) v^{w-1} dv = c^{-w} \Gamma (w) \cos (\pi w/2),
c>0, \\
&\int\limits_{0}^{+\infty} (c+v)^{-1} v^{w-1} dv = c^{w-1} \Gamma (w) \Gamma (1-w) = \pi c^{w-1}/\sin (\pi w), \quad 0 < \Real w <1.
\end{split}
\end{equation*}
Also, for two given functions $f_1 , f_2$ denote
$$
F_j (w) = \int\limits_{0}^{+\infty} f_j (v)  v^{w-1} dv,
$$
$$
f_{\alpha} (v) = \int\limits_{0}^{+\infty} u^{-\alpha} f_1(vu) f_2(u) du.
$$
The theory of Mellin transforms yields:
$$
\int\limits_{0}^{+\infty} f_{\alpha} (v)  v^{w-1} dv = F_1 (w) F_2 (1-w-\alpha),
$$
$$
\int\limits_{0}^{+\infty} f_j(v^{-l})  v^{w-1} dv = l^{-1} F_j  (-w/l).
$$
Hence, setting $f_1(v) = \cos (2\pi v)$, $f_2(v) = \dfrac{1+ \Delta^4 }{\Delta^4  + v^{-4}}$, $\alpha = 1+\theta$ in the notation of $f(v)$ above, we obtain
\begin{equation}\label{eq888}
\begin{split}
M_{1+\theta} (w) &= \frac14 (\Delta^4 +1) (\Delta^4)^{\frac{w+\theta}{4}-1} (2\pi)^{-w} \Gamma (w) \cos (\pi w/2)  \pi \sin^{-1} (\pi (w+\theta)/4) \\
&=\frac14 (1+\Delta^{-4}) \Delta^{w+\theta} (2\pi)^{-w} \Gamma (w) \cos (\pi w/2)  \pi \sin^{-1} (\pi (w+\theta)/4).
\end{split}
\end{equation}
We can write the functional equation for the Riemann zeta-function in the form
$$
2^{1-s} \pi^{-s} \zeta(s) \Gamma(s) \cos(\pi s/2) =  \zeta(1-s).
$$
Hence,
\begin{equation*}
\begin{split}
R^{(j,j)}
&=  C\delta^{-1}  (1+\Delta^{-4}) \Delta^{\theta}\Real \frac{1}{2\pi i} \int\limits_{3/2-i\infty}^{3/2+i\infty} 
\zeta (1-w) \zeta (1+w) \sin^{-1} (\pi (w+\theta)/4) \times \\
&\times S_{j,j} (w) d_j^{-w/2} (D/d_j)^{w/2} \prod\limits_{p\mid d_j} (1-p^{w-1}) \prod\limits_{p\mid (D/d_j)} (1-p^{-w-1})dw\\
&= C\delta^{-1}  (1+\Delta^{-4}) \Delta^{\theta}\Real \frac{1}{2\pi i} \int\limits_{3/2-i\infty}^{3/2+i\infty} 
\zeta (1-w) \zeta (1+w) \sin^{-1} (\pi (w+\theta)/4) \times \\
&\times \tilde{S}_{j,j} (w) dw
\end{split}
\end{equation*}
for some absolute constant $C$ (which does not depend on $j$), where $\tilde{S}_{j,j} (w)$ is defined in the corollary 2.
We shall move the path of integration from the line $\Real s = 3/2$ to the contour $\Gamma$ which consists of the two rays $\Real w =0, |\Image w| \ge \log^{-1} X$ and the semicircle $|w| = \log^{-1} X, \Real w > 0$.

We shall show that  the contribution to the sum $R^{(1,1)} + R^{(2,2)}$
from the integrals over the two rays with $|\Image w| \ge 1$ is of desired order (we denote this contribution from $R^{(1,1)}$ and  $R^{(2,2)}$ by $R_1$). To obtain this we shall use
the functional equation between $\tilde{S}_{1,1} (w)$ and $\tilde{S}_{2,2} (w)$ from the corollary 2 and
the property $\sin (-s) = - \sin s$.
In the above formula for $R^{(2,2)}$ let us make the change of variables $w\to -w$ in the integral    and use the functional equation from the corollary 2.  We thus obtain
\begin{equation*}
\begin{split}
R_1 &=  C (\theta, \Delta) \delta^{-1} \frac{1}{2\pi} \int\limits_{1}^{+\infty}  \zeta (1-it) \zeta (1+it) (\tilde{S}_{1,1} (it) + \tilde{S}_{1,1} (-it)) \times \\
&\times \left( \frac{1}{\sin (\pi (it+\theta)/4)}  + \frac{1}{\sin (\pi (-it+\theta)/4)}  \right) dt,
\end{split}
\end{equation*}
where $C(\theta, \Delta)$ is a constant depending on $\theta$ and $\Delta$, and which is bounded for $0 \le \theta \le 1,  0 <\delta \le 1/2$.

We shall transform the sum of inverse sinuses in $R_1$ as follows
\begin{equation*}
\begin{split}
\left( \frac{1}{\sin (\pi (it+\theta)/4)}  + \frac{1}{\sin (\pi (-it+\theta)/4)}  \right)  = \frac{4\sin \frac{\pi\theta}{4} (e^{\pi t/4} + e^{-\pi t/4})}{e^{\pi t/2} + e^{-\pi t/2} -2\cos \frac{\pi\theta}{2}}
\end{split}
\end{equation*}
which is $O(\theta e^{-\frac{\pi t}{4}})$ for $t\ge 1$. Now, the estimate of $S_{1,1}$ from lemma \ref{l4} gives
\begin{equation*}
\begin{split}
R_1 \ll \delta^{-1} \theta \int\limits_{1}^{+\infty} \frac{\log^6 (t+3)}{e^{\pi t/4 }} dt \ll \delta^{-1} \theta.
\end{split}
\end{equation*}

For $w$ on the part of the contour $\Gamma$ with the property $|\Image w| \le 1$ we shall use the bound
\begin{equation}\label{eq511}
\begin{split}
&\left( \frac{1}{\sin (\pi (w+\theta)/4)}  + \frac{1}{\sin (\pi (-w+\theta)/4)}  \right)  = \frac{4\sin \frac{\pi\theta}{4} (e^{i\pi w/4} + e^{-i\pi w/4})}{2 \cos\frac{\pi w}{2} -2\cos \frac{\pi\theta}{2}} \\
&\ll \frac{\theta}{|w|^2 + \theta^2}
\end{split}
\end{equation}
due to the inequality $|\Real w| < \theta/2$
and
\begin{equation}\label{eq512}
\zeta(1+w) \ll \frac{|w|+1}{|w|}\log (|\Image w|+3).
\end{equation}
The contribution from $R^{(1,1)}$ and $R^{(2,2)}$ which comes from the integrals over the part of the contour $\Gamma$ with the property $|\Image w| \le \theta/\sqrt{\theta \log X}$ we denote by $R_2$ and estimate it from above by means of the estimate from lemma~\ref{l4} as
\begin{align*}
R_2 \ll \delta^{-1} \left( \theta^{-1} \log^{-1} X + \int\limits_{\log^{-1} X}^{\theta/{\sqrt{\theta \log X}}} \theta^{-1}  dt  \right) \ll \delta^{-1} (\theta \log X)^{-1/2}.
\end{align*}

We denote by $R_3$ the contribution from $R^{(1,1)}$ and $R^{(2,2)}$ which comes from the integrals over the part of the contour $\Gamma$ with the property $\sqrt{\theta \log X}  \theta \le |\Image w| \le 1$.
We then have by (\ref{eq511}), (\ref{eq512}) and lemma 8
\begin{align*}
R_3 \ll \delta^{-1} \int\limits_{ \theta \sqrt{\theta \log X}}^{1} \theta t^{-2}  dt \ll \delta^{-1} (\theta \log X)^{-1/2}.
\end{align*}

By $R_0$ we denote the main contribution into $R^{(1,1)}$ and $R^{(2,2)}$ which comes from the integrals over the leftover part of the contour $\Gamma$ satisfying $\theta/\sqrt{A} \le |\Image w| \le  \theta \sqrt{A}$, where $A = \theta \log X$.
We then have by corollary~2
\begin{align*}
R_0 &= C (\theta, \Delta) \delta^{-1} \frac{1}{\pi} \int\limits_{\theta/{\sqrt{A}}}^{ \theta \sqrt{A}}  \zeta (1-it) \zeta (1+it) (\tilde{S}_{1,1} (it) + \tilde{S}_{1,1} (-it)) \times \\
&\times \left( \frac{1}{\sin (\pi (it+\theta)/4)}  + \frac{1}{\sin (\pi (-it+\theta)/4)}  \right) dt \\
&= C (\theta, \Delta) \delta^{-1} \frac{1}{\pi} \Real \int\limits_{\theta/{\sqrt{A}}}^{\theta \sqrt{A}}  \zeta (1-it) \zeta (1+it) S_{1,1} (it)  d_1^{-it/2} d_2^{it/2} \prod\limits_{p\mid d_1} (1-p^{it-1}) \prod\limits_{p\mid d_2} (1-p^{-it-1})\times \\
&\times \left( \frac{1}{\sin (\pi (it+\theta)/4)}  + \frac{1}{\sin (\pi (-it+\theta)/4)}  \right) dt
\end{align*}
with some bounded constant $C (\theta, \Delta)$.
First, we shall estimate the part of $R_0$ coming from the error term of (\ref{eq510}).
By means of (\ref{eq511}), (\ref{eq512}) this part can be estimated from above up to some absolute constant  as
\begin{align*}
&\delta^{-1} \int\limits_{\theta/{\sqrt{A}}}^{\theta \sqrt{A}}  \frac{\theta}{\theta^2 + t^2}  \cdot
\frac{t^{3/2} \log^{-1/2} X}{t^2}dt \\
&\ll
\delta^{-1}
\left(
\int\limits_{\theta/{\sqrt{A}}}^{\theta}  \theta^{-1} (t\log X)^{-1/2}  dt + \int\limits_{\theta}^{\theta \sqrt{A}}  \theta t^{-2}  (t \log X)^{-1/2})  dt \right) \\
&\ll  \delta^{-1} (\theta \log  X)^{-1/2} \ll \delta^{-1} A^{-1/2} .
\end{align*}

Now, from (\ref{eq509}) and (\ref{eq510}) we get, that the main term of $R_0$ is the sum of four terms of the following type:
\begin{align}\label{eq514}
&\frac{\delta^{-1} C_1(\theta, \Delta) }{\log^4 X}
\sum_{d \le X^2} d \sum_{m|d}  \frac{\mu(m)}{m}
\sum_{\substack{\delta_1 \delta_4
\equiv0 \ (\operatorname{mod}d) \\
\delta_j | d^{\infty}}} \frac{\alpha(\delta_1)
\alpha(\delta_4)}{\delta_1 \delta_4}
\times \sum\limits_{(n,d)=1} \frac{1}{n^{2}} \sum_{\substack{k_1, k_4 | n^{\infty}, \\ (k_1, k_4)=1}} \frac{\alpha(n k_1) \alpha(n k_4)}{k_1 k_4} \times\nonumber \\
&\times\sum\limits_{(r, nd)=1}  \frac{\mu(r)}{r^{2}} \sum_{l_j | r^{\infty}} \frac{\alpha(r l_1) \alpha(r l_4)}{l_1
l_4}
\left( \log \left(\max\left( 1, X^{a}/(\delta_1 k_1 l_1 n r) \right) \right)
\log \left(\max\left( 1, X^{b}/(\delta_4 k_4 l_4 n r)\right) \right)\right)^{1/2} \times \nonumber \\
&\times\sum_{\substack{\delta'_1 \delta'_4
\equiv0 \ (\operatorname{mod}d) \\
\delta'_j | d^{\infty}}} \frac{\alpha(\delta'_1)
\alpha(\delta'_4)}{\delta'_1 \delta'_4}
\times \sum\limits_{(n',d)=1} \frac{1}{{n'}^{2}} \sum_{\substack{k'_1, k'_4 | {n'}^{\infty}, \\ (k'_1, k'_4)=1}} \frac{\alpha(n' k'_1) \alpha(n' k'_4)}{k'_1 k'_4} \sum\limits_{(r', n'd)=1}  \frac{\mu(r')}{{r'}^{2}} \nonumber \\
&\times  \sum_{l'_j | {r'}^{\infty}} \frac{\alpha(r' l'_1) \alpha(r' l'_4)}{l'_1
l'_4}
\left( \log \left(\max\left( 1, X^{a'}/(\delta'_1 k'_1 l'_1 n' r') \right) \right)
\log \left(\max\left( 1, X^{b'}/(\delta'_4 k'_4 l'_4 n' r')\right) \right)\right)^{1/2} \times\nonumber \\
&\times\Real \int\limits_{\theta/{\sqrt{A}}}^{\theta \sqrt{A}}  \biggl( \frac{d\sqrt{d_1}}{\sqrt{d_2}m \delta_1 \delta'_1 n n' r r' k_1 k'_1 l_1 l'_1} \biggr)^{-it}   F(t) dt,
\end{align}
where each of $a, a',b, b'$ can take one of the two values, namely,  $1/2$ or $1$,
\begin{align*}
&f(t) := f(t, \theta, d, m, \delta_1, \delta_4, \delta'_1, \delta'_4, n, n', r, r', k_1, k_4, k'_1 k'_4, l_1, l_4, l'_1, l'_4)\\ &=  F_0 (t) F_1 (t) \ldots F_{11} (t)
\end{align*}
 and
\begin{align*}
F_0 (t) &= \prod\limits_{p\mid d_1} (1-p^{it-1}) \prod\limits_{p\mid d_2} (1-p^{-it-1}), \\
F_1 (t) &= t^2 \zeta (1-it) \zeta (1+it), \\
F_2 (t) &= H_{it,1}^{(j,j)} (1)^4, \\
F_3 (t) &= \left( \frac{1}{\sin (\pi (it+\theta)/4)}  + \frac{1}{\sin (\pi (-it+\theta)/4)}  \right), \\
F_4 (t) &= (R^{(j,j)}_{it, n d r})^2, \\
F_5 (t) &= (R^{(j,j)}_{it, n' d r'})^2, \\
F_6 (t) &= K_{j,j}\biggl(\frac{\delta_1 \delta_4 m}{d}, it\biggr), \\
F_7 (t) &=  K_{j,j}(n^2 k_1 k_4, it), \\
F_8 (t) &=  K_{j,j}(r l_1, it) K_{j,j}( r l_4, it), \\
F_9 (t) &=  K_{j,j}\biggl(\frac{\delta'_1 \delta'_4 m}{d}, it\biggr), \\
F_{10} (t) &= K_{j,j}({n'}^2 k'_1 k'_4, it), \\
F_{11} (t) &= K_{j,j}(r' l'_1, it) K_{j,j}( r' l'_4, it).
\end{align*}
We state (and show further) that  for $0 < t <1$
\begin{align*}
F_0 (t) &\ll 1, &\; \dfrac{dF_0 (t)}{dt}  &\ll 1, \\
F_1 (t) &\ll 1, &\; \dfrac{dF_1 (t)}{dt}  &\ll 1, \\
F_2 (t) &\ll 1, &\; \dfrac{dF_2 (t)}{dt}  &\ll 1, \\
F_3 (t) &\ll \dfrac{\theta}{\theta^2 + t^2} \ll \theta^{-1}, &\; \dfrac{dF_3 (t)}{dt}  &\ll \dfrac{\theta t}{\theta^4 + t^4}, \\
F_4 (t) &\ll G_{ndr} \ll_{\varepsilon}  (nr)^{\varepsilon} G_d, &\; \dfrac{dF_4 (t)}{dt}  &\ll  G_{ndr} \log\log (ndr+2) \ll_{\varepsilon}  (nr)^{\varepsilon} G_d \log\log (d+2), \\
F_5 (t) &\ll G_{n'dr'} \ll_{\varepsilon}  (n' r')^{\varepsilon} G_d, &\; \dfrac{dF_5 (t)}{dt}  &\ll G_{n'dr'} \log\log (n'dr'+2) \ll_{\varepsilon} (n'r')^{\varepsilon} G_d \log\log (d+2), \\
F_6 (t) &\ll \tau^2 \biggl(\frac{\delta_1 \delta_4}{d}\biggr) \tau^2 (m)  \ll_{\varepsilon}  \left(\frac{\delta_1 \delta_4 m}{d}\right)^{\varepsilon}, &\; \dfrac{dF_6 (t)}{dt}  &\ll \tau^2 \biggl(\frac{\delta_1 \delta_4}{d}\biggr) \tau^2 (m) \log \frac{\delta_1 \delta_4 m}{d} \ll_{\varepsilon}  \left(\frac{\delta_1 \delta_4 m}{d}\right)^{\varepsilon}, \\
F_7 (t) &\ll  \tau^2 (n^2) \tau^2  (k_1) \tau^2  (k_4)\ll_{\varepsilon} (n k_1 k_4)^{\varepsilon},  &\; \dfrac{dF_7 (t)}{dt} &\ll \tau^2 (n^2) \tau^2  (k_1) \tau^2  (k_4) \log (n^2 k_1 k_4) \ll_{\varepsilon} (n k_1 k_4)^{\varepsilon},\\
F_8 (t) &\ll  \tau^4 (r) \tau^2 (l_1) \tau^2 (l_4) \ll_{\varepsilon} (r l_1 l_4)^{\varepsilon}, &\; \dfrac{dF_8 (t)}{dt} &\ll \tau^4 (r) \tau^2 (l_1) \tau^2 (l_4) \log (r l_1) \log (r l_4) \ll_{\varepsilon} (r l_1 l_4)^{\varepsilon}
\end{align*}
where $\varepsilon>0$ is as small as we wish (also similar estimates hold true for other $F_j$ ($j=9, 10, 11$) and their first derivatives). Indeed, the estimate for $F_0$ is evident, since $d_1, d_2$ are absolute constants. The estimates for $F_1$ hold true since $z^2 \zeta (1-z) \zeta (1+z)$ is an analytic function for $|z| \le 1$. The estimate for $F_2$ follows from (\ref{eq1003}). Also, we can write by (\ref{eq1003}) the equality
$$
H_{it,1}^{(j,j)} (1) = \prod\limits_{(p, D)= 1}  \left(1+ \frac{B_j (p, it)}{p^2}\right) \prod\limits_{p\mid D} \left(1+ \frac{B_j (p, it)}{p}\right),
$$\
where, for every $p$ and $0<t<1$,
\begin{align*}
B_j (p, it) \ll 1, \\
\frac{d}{dt} B_j (p, it) \ll \log p.
\end{align*}
Thus, the estimate for $\frac{d}{dz} F_2 (t)$ is clear. For $F_4, F_5$ the above estimates follow from (\ref{eq1004}), lemma~\ref{l6} and the bound
$$
\sum\limits_{p\mid N} \frac{\log p}{p} \ll \log\log (N+2).
$$
The estimates for $F_j$, $j= 6, \ldots 11$ are consequences of lemma \ref{l6} and the property $\tau(xy) \le\tau (x) \tau (y)$ (for any $x, y \in \bbN$). We are left to estimate $\dfrac{dF_3 (t)}{dt}$, which we manage to do by Taylor series expansion for $\sin z$ and $\cos z$:
\begin{align*}
\dfrac{dF_3 (t)}{dt} &= -i \frac{\pi}{4}\frac{\cos (\pi (it+\theta)/4)}{\sin^2 (\pi (it+\theta)/4)}  + i \frac{ \pi}{4}\frac{\cos (\pi (-it+\theta)/4)}{\sin^2 (\pi (-it+\theta)/4)} \\
&\ll \frac{|(it+\theta)^2 - (it-\theta)^2| + O(|it+\theta|^4)}{|it+\theta|^2\cdot |it-\theta|^2} \ll \dfrac{\theta t}{\theta^4 + t^4}
\end{align*}
for $0 < \theta \le 1$, $0 \le t \le 1$. Or, we can get just the same result using the series expansion at $z=0$ for meromorphic in the region $|z| \le 2$ function.

Now, if
\begin{align}\label{eq515}
\left| \log \frac{d\sqrt{d_1}}{\sqrt{d_2} m \delta_1 \delta'_1 n n' r r' k_1 k'_1 l_1 l'_1} \right| \ge \log (d+2)^{A^{-2/3}},
\end{align}
we shall take the integral in (\ref{eq514}) once by parts and get
\begin{align*}
&\int\limits_{\theta/{\sqrt{A}}}^{\theta \sqrt{A}} \biggl( \frac{d \sqrt{d_1}}{\sqrt{d_2} m \delta_1 \delta'_1 n n' r r' k_1 k'_1 l_1 l'_1} \biggr)^{-it}   F(t) dt = \biggl( -i \log \frac{d \sqrt{d_1}}{\sqrt{d_2} m \delta_1 \delta'_1 n n' r r' k_1 k'_1 l_1 l'_1}\biggr)^{-1} \times \\
&\times \biggl(  \left.\biggl( \frac{d \sqrt{d_1}}{\sqrt{d_2} m \delta_1 \delta'_1 n n' r r' k_1 k'_1 l_1 l'_1} \biggr)^{-it}   F(t) \right|_{\theta/{\sqrt{A}}}^{\theta \sqrt{A}} -
 \int\limits_{\theta/{\sqrt{A}}}^{\theta \sqrt{A}}  \biggl( \frac{d \sqrt{d_1}}{\sqrt{d_2} m \delta_1 \delta'_1 n n' r r' k_1 k'_1 l_1 l'_1} \biggr)^{-it}   dF(t)  \biggr) \\
&\ll (\log (d+2)^{A^{-2/3}})^{-1} \biggl( \max\limits_{t \in [0, 1]} |F(t)| + \int\limits_{\theta/{\sqrt{A}}}^{\theta \sqrt{A}}  \left| dF(t)\right|  \biggr).
\end{align*}
Therefore, the contribution into (\ref{eq514}) of the terms satisfying (\ref{eq515}) can be estimated from above by means of the above estimates for $F_j$ and their derivatives as
\begin{align}\label{eq516}
&\frac{\delta^{-1} C_1(\theta, \Delta) }{\log^2 X}
\sum_{d \le X^2} \frac{A^{2/3}d^{1-2\varepsilon} G^2_d}{\log (d+2)} \sum_{m|d}  \frac{\mu^2(m)}{m^{1-2\varepsilon}}  \left( \theta^{-1} +\int\limits_{\theta/{\sqrt{A}}}^{1}  \frac{\theta t}{\theta^4 + t^4} dt  + \log\log (d+2) \int\limits_{\theta/{\sqrt{A}}}^{1}  \frac{\theta}{\theta^2 + t^2} dt\right) \times \\
&\times \left( \sum_{\substack{\delta_1 \delta_4
\equiv0 \ (\operatorname{mod}d) \\
\delta_j | d^{\infty}}} \frac{|\alpha(\delta_1)
\alpha(\delta_4)|}{(\delta_1 \delta_4)^{1-\varepsilon}}
\times \sum\limits_{(n,d)=1} \frac{1}{n^{2-2\varepsilon}} \sum_{\substack{k_1, k_4 | n^{\infty}, \\ (k_1, k_4)=1}} \frac{1}{(k_1 k_4)^{1-\varepsilon}}
\sum\limits_{(r, nd)=1}  \frac{1}{r^{2-2\varepsilon}} \sum_{l_j | r^{\infty}} \frac{1}{(l_1
l_4)^{1-\varepsilon}}  \right)^2 \nonumber,
\end{align}
The sum over $n, k_1, k_4$ (as well as over $r, l_1, l_4$) is estimated from above by the following value:
\begin{align*}
\sum\limits_{n=1}^{+\infty} \frac{1}{n^{2-2\varepsilon}} \left(\sum_{k | n^{\infty}} \frac{1}{k^{1-\varepsilon}}\right)^2 \ll \prod_{p} \left( 1 + \frac{1}{p^{2(1-\varepsilon)}} \left(1-\frac{1}{p^{1-\varepsilon}} \right)^{-2}+ \frac{1}{p^{4(1- \varepsilon)}} \left(1-\frac{1}{p^{1-\varepsilon}} \right)^{-2} + \ldots \right) \ll 1.
\end{align*}
We rewrite the sum over $\delta_1, \delta_4$ as follows:
\begin{align*}
&\sum_{\substack{\delta_1 \delta_4
\equiv0 \ (\operatorname{mod}d) \\
\delta_j | d^{\infty}}} \frac{|\alpha(\delta_1)
\alpha(\delta_4)|}{(\delta_1 \delta_4)^{1-\varepsilon}} = d^{-1+\varepsilon} \sum\limits_{n\mid d^{\infty}} n^{-1+\varepsilon} \sum_{\delta_1 \delta_4 = nd}  |\alpha(\delta_1)
\alpha(\delta_4)| = d^{-1+\varepsilon} \sum\limits_{n\mid d^{\infty}} n^{-1+\varepsilon}  b(nd).
\end{align*}
Also
\begin{align*}
&\int\limits_{\theta/{\sqrt{A}}}^{1}  \frac{\theta}{\theta^2 + t^2} dt \ll 1,\\
&\int\limits_{\theta/{\sqrt{A}}}^{1}  \frac{\theta t}{\theta^4 + t^4} dt  \ll \theta^{-1}.
\end{align*}
Therefore, (\ref{eq516})
is not of the greater order than
\begin{align}\label{eq58}
&\frac{\delta^{-1}}{\log^2 X}
\sum_{d \le X^2} \frac{A^{2/3} G_d^2}{d\log (d+2)} \sum_{m|d}  \frac{\mu^2 (m)}{m^{1-2\varepsilon}}  \left( \theta^{-1}  + \log\log (d+2) \right) \left( \sum\limits_{n\mid d^{\infty}} n^{-1+\varepsilon} b(nd)
 \right)^2.
\end{align}
Since (as in proof of lemma~\ref{l4})
\begin{align*}
&\sum_{d \le X^2} \frac{G_d^2}{d} \sum_{m|d}  \frac{\mu^2(m)}{m^{1-2\varepsilon}}
 \left( \sum\limits_{n\mid d^{\infty}} n^{-1+\varepsilon} b(nd)
 \right)^2 \ll \sum_{d \le X^2} \frac{1}{d}  \prod_{p^{\beta} || d} \left( 1 + \frac{1}{p^{1-2\varepsilon}}\right)  \left( B(p^{\beta}) + \frac{B(p^{\beta+1}) C}{p^{1-\varepsilon}}\right)^2 \left( 1 +\frac{1}{p^{3/4}} \right)^4\\
&\ll \sum_{d \le X^2} \frac{r^2 (d)}{d} \ll \log^2 X,
\end{align*}
then by Abel's partial summation formula we get the estimate
\begin{align*}
&\sum_{d \le X^2} \frac{G_d^2}{d \log (d+2)} \sum_{m|d}  \frac{\mu^2(m)}{m^{1-2\varepsilon}}
 \left( \sum\limits_{n\mid d^{\infty}} n^{-1+\varepsilon} b(nd)
 \right)^2 \ll \log X.
\end{align*}
Finally, the contribution of terms satisfying  (\ref{eq515})  into (\ref{eq514}), estimated from above
by the expression (\ref{eq58}), is less than
\begin{align*}
&\frac{\delta^{-1}}{\log^2 X} A^{2/3} \log X
\left( \theta^{-1} + \log\log X \right) \ll \frac{\delta^{-1} A^{2/3} }{\theta \log X} \ll \delta^{-1} A^{-1/3} .
\end{align*}

Now we are left to evaluate the contribution into (\ref{eq514}) from the terms satisfying
\begin{align}\label{eq517}
\left| \log \frac{d \sqrt{d_1}}{\sqrt{d_2} m \delta_1 \delta'_1 n n' r r' k_1 k'_1 l_1 l'_1} \right| <\log (d+2)^{A^{-2/3}}.
\end{align}
Denote this condition by $\bf{(C)}$ and by $\bf{1}_{(C)}$ its characteristic function.
We shall estimate the integral in (\ref{eq514}) trivially (by a constant) in this case and
get the following contribution from above into (\ref{eq514}) of the terms satisfying (\ref{eq517}):
\begin{align}\label{eq518}
&\frac{\delta^{-1} C_1(\theta, \Delta) }{\log^2 X}
\sum_{d \le X^2} d G_d^2 \sum_{m|d}  \frac{\mu^2 (m)}{m}
\sum_{\substack{\delta_1 \delta_4
\equiv0 \ (\operatorname{mod}d) \\
\delta_j | d^{\infty}}} \frac{|\alpha(\delta_1)
\alpha(\delta_4)|}{\delta_1 \delta_4}
\tau^2 \biggl(\frac{\delta_1 \delta_4}{d}\biggr) \tau^2 (m) \times \nonumber\\
&\times \sum\limits_{(n,d)=1} \frac{G_n}{n^{2}} \sum_{\substack{k_1, k_4 | n^{\infty}, \\ (k_1, k_4)=1}} \frac{1}{k_1 k_4} \tau^2 (n^2) \tau^2  (k_1) \tau^2  (k_4)
\sum\limits_{(r, nd)=1}  \frac{G_r}{r^{2}}  \sum_{l_j | r^{\infty}} \frac{1}{l_1
l_4}  \tau^4 (r) \tau^2 (l_1) \tau^2 (l_4) \times \nonumber\\
&\times\sum_{\substack{\delta'_1 \delta'_4
\equiv0 \ (\operatorname{mod}d) \\
\delta'_j | d^{\infty}}} \frac{|\alpha(\delta'_1)
\alpha(\delta'_4)|}{\delta'_1 \delta'_4} \tau^2 \biggl(\frac{\delta'_1 \delta'_4}{d}\biggr) \tau^2 (m)
\sum\limits_{(n',d)=1} \frac{G_{n'}}{{n'}^{2}} \sum_{\substack{k'_1, k'_4 | {n'}^{\infty}, \\ (k'_1, k'_4)=1}} \frac{1}{k'_1 k'_4} \tau^2 (n'^2) \tau^2  (k'_1) \tau^2  (k'_4) \times\nonumber \\
&\times \sum\limits_{(r', n'd)=1}  \frac{G_{r'} }{{r'}^{2}} \sum_{l'_j | {r'}^{\infty}} \frac{1}{l'_1
l'_4}  \tau^4 (r') \tau^2 (l'_1) \tau^2 (l'_4) \bf{1}_{(C)}.
\end{align}
If variables satisfy (\ref{eq517}), then either
\begin{align}\label{eq64}
\max (m, n, n', r, r',  k_1,  k'_1, l_1, l'_1) \ge d^{1/20}
\end{align}
 or
\begin{equation}\label{eq519}
\begin{cases}
\max (m, n, n', r, r',  k_1,  k'_1, l_1, l'_1) \le d^{1/20}, \\
\frac{d (d+2)^{-A^{-2/3}} \sqrt{d_1}}{\sqrt{d_2}m n n' r r' k_1 k'_1 l_1 l'_1} \le \delta_1 \delta'_1 \le
\frac{d (d+2)^{A^{-2/3}}\sqrt{d_1}}{\sqrt{d_2} m n n' r r' k_1 k'_1 l_1 l'_1}.
\end{cases}
\end{equation}
We shall, first, estimate from above the contribution into (\ref{eq518}) of the terms staifying (\ref{eq64}).
Now suppose, for example, that $k_1 \ge d^{1/20}$. Then the following inequalities hold:
\begin{align*}
&\sum_{\substack{k_1 | n^{\infty}, \\ k_1 \ge d^{1/20}}} \frac{ \tau^2  (k_1) }{k_1} \le d^{-1/40} \sum_{k_1 | n^{\infty}} \frac{ \tau^2  (k_1) }{\sqrt{k_1}} = d^{-1/40} \prod\limits_{p|n} \left( 1 + \frac{\tau^2(p)}{\sqrt{p}} + \frac{\tau^2(p^2)}{p}  +  \frac{\tau^2(p^3)}{\sqrt{p^3}}  +\ldots \right) \\
&\ll d^{-1/40} \prod\limits_{p|n} \left( 1 +  \frac{\tau^2(p)}{\sqrt{p}} + \frac{\tau^2(p^2)}{p} \right)
\ll d^{-1/40} \prod\limits_{p|n} \left( 1 + \frac{1}{p^{1/4}}\right) \le d^{-1/40}  \sum_{k | n} \frac{ 1 }{k^{1/4}}
\end{align*}
and, hence,
\begin{align*}
&\sum\limits_{(n,d)=1} \frac{G_n}{n^{2}} \sum_{\substack{k_1, k_4 | n^{\infty}, \\ (k_1, k_4)=1, \\ k_1 \ge d^{1/20}}} \frac{1}{k_1 k_4} \tau^2 (n^2) \tau^2  (k_1) \tau^2  (k_4)  \le d^{-1/40} \sum\limits_{(n,d)=1} \frac{\tau^4 (n^2)}{n^{2}} \prod\limits_{p|n} \left( 1 + \frac{1}{p}\right)^4 \sum_{k | n} \frac{ 1 }{k^{1/4}} \\
&\ll  d^{-1/40} \sum_{k = 1}^{+\infty} \frac{ 1 }{k^{9/4}} \sum\limits_{n=1}^{+\infty}  \frac{\tau^4 (n^2 k^2) \tau^4 (n k)}{n^{2}} \ll d^{-1/40} \sum_{k = 1}^{+\infty} \frac{ \tau^4  (k)\tau^4 (k^2) }{k^{9/4}} \sum\limits_{n=1}^{+\infty} \frac{\tau^4 (n^2) \tau^4 (n)}{n^{2}} \ll d^{-1/40}.
\end{align*}
So, the subsum in (\ref{eq518}) satisfying the condition $k_1 \ge d^{1/20}$ totally amounts (up to some constant) $\delta^{-1} \log^{-2} X$ from above.
Almost the same calculations can be performed to show that all the terms satisfying (\ref{eq64}) contribute the same.

The last sum we are left to consider is the subsum of (\ref{eq518}) with summation parameters satisfying (\ref{eq519}).This part can be estimated from above by (using $G_n \ll  \tau^2 (n)$)
 \begin{align}\label{eq520}
&\sum\limits_{n=1}^{\infty}\frac{ \tau^2 (n)  \tau^2 (n^2)}{n^{2}} \sum_{k_1, k_4 | n^{\infty}} \frac{\tau^2  (k_1) \tau^2  (k_4)}{k_1 k_4}
\sum\limits_{r=1}^{\infty}  \frac{\tau^6 (r)}{r^{2}}  \sum_{l_j | r^{\infty}} \frac{\tau^2 (l_1) \tau^2 (l_4)}{l_1
l_4}    \times \nonumber\\
&\sum\limits_{n'=1}^{\infty}\frac{ \tau^2 (n')  \tau^2 (n'^2)}{n'^{2}} \sum_{k'_1, k'_4 | n'^{\infty}} \frac{\tau^2  (k'_1) \tau^2  (k'_4)}{k'_1 k'_4}
\sum\limits_{r'=1}^{\infty}  \frac{\tau^6 (r')}{r'^{2}}  \sum_{l'_j | r'^{\infty}} \frac{\tau^2 (l'_1) \tau^2 (l'_4)}{l'_1
l'_4}    \times \nonumber\\
&\frac{\delta^{-1} C_1(\theta) }{\log^2 X}
\sum_{d \le X^2} d G_d^2 \sum_{m|d}  \frac{\mu^2 (m) \tau^4 (m)}{m}
\sum_{\substack{\delta_1 \delta_4
\equiv0 \ (\operatorname{mod}d) \\
\delta_j | d^{\infty}, \delta_j \le X}} \frac{|\alpha(\delta_1)
\alpha(\delta_4)|}{\delta_1 \delta_4}
\tau^2 \biggl(\frac{\delta_1 \delta_4}{d}\biggr)  \times \nonumber\\
&\times\sum_{\substack{\delta'_1 \delta'_4
\equiv0 \ (\operatorname{mod}d) \\
\delta'_j | d^{\infty}, \delta'_j \le X, \\ \frac{d (d+2)^{- A^{-2/3}} \sqrt{d_1}}{\sqrt{d_2}\delta_1 m n n' r r' k_1 k'_1 l_1 l'_1} \le  \delta'_1 \le
\frac{d (d+2)^{A^{-2/3}}\sqrt{d_1}}{\sqrt{d_2}\delta_1 m n n' r r' k_1 k'_1 l_1 l'_1}}}
\frac{|\alpha(\delta'_1)
\alpha(\delta'_4)|}{\delta'_1 \delta'_4} \tau^2 \biggl(\frac{\delta'_1 \delta'_4}{d}\biggr).
\end{align}
For fixed $n, n', r, r', k_1, k'_1, l_1, l'_1$ define $N =  \sqrt{d_2} n n' r r' k_1 k'_1 l_1 l'_1/\sqrt{d_1}$ and notice that since all the summands in (\ref{eq520}) are positive, we can relax the limits of summation in the last sum by the following
\begin{align*}
(3X)^{-2 A^{-2/3}} \le \frac{m N\delta_1\delta'_1}{d} \le (3X)^{2A^{-2/3}}
\end{align*}
(recalling that $d +2 \le X^2+2 \le (3 X)^2$). Therefore, if we estimate from above the sum
\begin{align}\label{eq521}
W (X, N) &= \sum_{d \le X^2} d G_d^2 \sum_{m|d}  \frac{\mu^2 (m) \tau^4 (m)}{m}
\sum_{\substack{\delta_1 \delta_4
\equiv0 \ (\operatorname{mod}d) \\
\delta_j | d^{\infty}, \; \delta_j \le X}} \frac{|\alpha(\delta_1)
\alpha(\delta_4)|}{\delta_1 \delta_4}
\tau^2 \biggl(\frac{\delta_1 \delta_4}{d}\biggr)  \times \nonumber\\
&\sum_{\substack{\delta'_1 \delta'_4
\equiv0 \ (\operatorname{mod}d) \\
\delta'_j | d^{\infty}, \; \delta'_j \le X, \\ (3X)^{-2A^{-2/3}} \le \frac{m \delta_1\delta'_1 N}{d} \le (3X)^{2A^{-2/3}}}}
\frac{|\alpha(\delta'_1)
\alpha(\delta'_4)|}{\delta'_1 \delta'_4} \tau^2 \biggl(\frac{\delta'_1 \delta'_4}{d}\biggr)
\end{align}
by $A^{-1/3} \log^2 X$ for all $N$,
then the expression in (\ref{eq520}) is bounded from above by $\delta^{-1} A^{-1/3}$ which completes the proof of the estimate $R_0 \ll \delta^{-1} A^{-1/3}$.
Let us proceed with estimation of $W (X, N)$. Transform its summation as follows:
\begin{align*}
W (X, N) &=  \sum\limits_{\delta_1, \delta'_1 \le X} |\alpha(\delta_1)
\alpha(\delta'_1)| \sum_{m\le X^2}  \frac{\mu^2 (m) \tau^4 (m)}{m}  \sum_{\substack{d \le X^2, m \mid d\\ (3X)^{-2A^{-2/3}} \le \frac{d}{\delta_1\delta'_1 m N} \le (3X)^{2A^{-2/3}}, \\ \delta_1, \delta'_1  \mid d^{\infty}}}  \frac{G_d^2}{d} \times \\
&\times \sum\limits_{\substack{ n, n' \mid d^{\infty}, \\ nd /\delta_1 \le X, n'd /\delta'_1 \le X, \\ n d \equiv 0 (\mod \delta_1), \\ n' d \equiv 0 (\mod \delta'_1)}} \frac{|\alpha(nd /\delta_1)
\alpha(n'd /\delta'_1)|}{n n'} \tau^2 (n) \tau^2 (n')\\
&\le \sum\limits_{\delta_1, \delta'_1 \le X} |\alpha(\delta_1)
\alpha(\delta'_1)|
\sum_{m\le X^2}  \frac{\mu^2 (m) \tau^4 (m)}{m^2}
\sum_{\substack{d \le X^2/m, \\ (3X)^{-2 A^{-2/3} } \le \frac{d}{\delta_1\delta'_1 N} \le (3X)^{2 A^{-2/3}}, \\ \delta_1, \delta'_1  \mid (md)^{\infty}}}  \frac{G_{md}^2}{d} \times \\
&\times \sum\limits_{\substack{n, n' \mid (dm)^{\infty}, \\ nmd/\delta_1  \le X, n'md/\delta'_1 \le X, \\  n md \equiv 0 (\mod \delta_1), \\ n' md \equiv 0 (\mod \delta'_1)}} \frac{|\alpha(nmd /\delta_1)
\alpha(n'md /\delta'_1)|}{n n'} \tau^2 (n) \tau^2 (n').
\end{align*}
For fixed $\delta_1, \delta'_1$, in the last expression we divide the inner sum over $m, d,\ldots$ into $\tau(\delta_1) \tau(\delta'_1 )$ subsums according to the condition
\begin{align*}
&(md, \delta_1) = \delta, \\
&(md, \delta'_1) = \delta',
\end{align*}
where $\delta\mid \delta_1$ and $\delta'\mid \delta'_1$.
We then have
\begin{align*}
&md= \frac{\delta \delta'}{(\delta, \delta')} k, \\
&\left( \frac{md}{\delta}, \frac{\delta_1}{\delta} \right) = 1, \left( \frac{md}{\delta'}, \frac{\delta'_1}{\delta'} \right) = 1, \\
&\frac{\delta_1}{\delta} \mid \delta^{\infty}, \frac{\delta'_1}{\delta'} \mid \delta'^{\infty}.
\end{align*}
Since $n md \equiv 0 (\mod \delta_1)$ and $n \mid (dm)^{\infty}$, we have $n  = \frac{\delta_1}{\delta}  l$ (where  $l \mid (\delta \delta' k)^{\infty}$) and
$$
n md  = \delta \frac{md}{\delta}  \frac{\delta_1}{\delta}  l = \delta_1 \frac{md}{\delta}  l = \delta_1 \frac{\delta'}{(\delta, \delta')} k l.
$$
 And similarly, since $n' md \equiv 0 (\mod \delta'_1)$, we have $n'  = \frac{\delta'_1}{\delta'}  l'$ (where  $l' \mid (\delta \delta' k)^{\infty}$) and
 $$
n' md  = \delta \frac{md}{\delta'}  \frac{\delta'_1}{\delta'}  l' = \delta'_1 \frac{md}{\delta'}  l' = \delta'_1 \frac{\delta}{(\delta, \delta')} k l'.
$$
Therefore,
$$
m^2 d n n' = (mnd)(mn' d)/d =  \frac{\delta_1 \delta'  k  l}{(\delta, \delta')} \frac{\delta'_1  \delta  k l' }{(\delta, \delta')}\frac{m (\delta, \delta')}{k \delta \delta'} = \frac{\delta_1 \delta'_1  k  l l' m}{(\delta, \delta')}
$$
and
\begin{align*}
W (X, N) &\le  \sum\limits_{\delta_1, \delta'_1 \le X} \frac{|\alpha(\delta_1)
\alpha(\delta'_1)|}{\delta_1 \delta'_1}
\sum\limits_{\substack{\delta \mid \delta_1, \delta' \mid  \delta'_1, \\ \frac{\delta_1}{\delta} \mid \delta^{\infty}, \frac{\delta'_1}{\delta'} \mid \delta'^{\infty}}} (\delta, \delta')
\sum\limits_{\substack{k\le X^2 \frac{(\delta, \delta')}{\delta \delta'}, \\ (3X)^{-2A^{-2/3}} \le \frac{k \delta \delta'}{\delta_1\delta'_1 (\delta, \delta') N} \le (3X)^{2A^{-2/3}}, \\ \left(\frac{\delta'}{(\delta, \delta')} k, \frac{\delta_1}{\delta}\right) = 1, \left(\frac{\delta}{(\delta, \delta')} k, \frac{\delta'_1}{\delta'}\right) = 1
}} \frac{G_{\frac{\delta \delta'}{(\delta, \delta')} k}^2}{k} \times \\
&\times \sum\limits_{\substack{m \mid k \frac{\delta \delta'}{(\delta, \delta')} }} \frac{\mu^2 (m) \tau^4 (m)}{m}
\sum\limits_{\substack{l\le X \frac{(\delta, \delta')}{k \delta'}, \\ l' \le X \frac{(\delta, \delta')}{k \delta},\\ l, l' \mid (\delta \delta' k)^{\infty}}} \frac{|\alpha(\delta k l' /(\delta, \delta'))
\alpha(\delta' k l /(\delta, \delta'))|}{l l'} \\
&\le  \sum\limits_{\delta, \delta' \le X} \frac{(\delta, \delta')}{\delta \delta'}
\sum\limits_{\substack{\rho_1 \le X/\delta, \rho'_1 \le X/\delta', \\ \rho_1 \mid \delta^{\infty}, \rho'_1  \mid \delta'^{\infty}}} \frac{|\alpha(\delta \rho_1)
\alpha(\delta'\rho'_1)|}{\rho_1 \rho'_1}
\prod\limits_{p\mid \frac{\delta \delta'}{(\delta, \delta')}}
\left( 1+ \frac{\tau^4 (p)}{p} \right)  \times \\
&\times
\sum\limits_{\substack{k\le X^2 \frac{(\delta, \delta')}{\delta \delta'},\\ \left(\frac{\delta'}{(\delta, \delta')} k, \rho_1\right) = 1, \left(\frac{\delta}{(\delta, \delta')} k, \rho'_1\right) = 1, \\
 (3X)^{-2A^{-2/3}} \le \frac{k}{\rho_1\rho'_1 (\delta, \delta') N} \le (3X)^{2A^{-2/3}}
}} \frac{G_{\frac{\delta \delta'}{(\delta, \delta')} k}^2}{k}
\sum\limits_{\substack{m \mid k, \\ (m, \delta\delta') = 1}}
\frac{\mu^2 (m) \tau^4 (m)}{m} \times \\
&\times
 \sum\limits_{\substack{l\le X \frac{(\delta, \delta')}{k \delta'}, \\ l' \le X \frac{(\delta, \delta')}{k  \delta},\\ l, l' \mid (\delta \delta'  k)^{\infty}}} \frac{|\alpha(\delta k l' /(\delta, \delta'))
\alpha(\delta' k l /(\delta, \delta'))|}{l l'}.
\end{align*}
We shall use once again the M\"obius summation formula
$$
f(q) = \sum\limits_{s \mid q} \sum\limits_{t \mid s} \mu(t) f(s/t),
$$
with $q = (\delta, \delta')$ and the inequality
$$
\sum\limits_{m \mid k}
\frac{\mu^2 (m) \tau^4 (m)}{m} = \prod\limits_{p\mid k}
\left( 1+ \frac{\tau^4 (p)}{p} \right)  \le G_{k}^{8}
$$
to get
\begin{align}\label{eq523}
W (X, N) &\le  \sum\limits_{\delta, \delta' \le X} \frac{(G_{\delta \delta'})^{10}}{\delta \delta'} \sum\limits_{s\mid (\delta, \delta')}
\sum\limits_{t\mid s} \mu(t) \frac{s}{t}
\sum\limits_{\substack{\rho_1 \le X/\delta, \rho'_1 \le X/\delta', \\ \rho_1 \mid \delta^{\infty}, \rho'_1  \mid \delta'^{\infty}}} \frac{|\alpha(\delta \rho_1)
\alpha(\delta'\rho'_1)|}{\rho_1 \rho'_1}  \times \nonumber \\
&\times \sum\limits_{\substack{k\le X^2 \frac{s}{t \delta \delta'},\\ \left(\frac{\delta' t}{s} k, \rho_1\right) = 1, \left(\frac{\delta t}{s}  k, \rho'_1\right) = 1, \\
 (3X)^{-2A^{-2/3}} \le \frac{k t}{\rho_1\rho'_1 s N} \le (3X)(3^{2A^{-2/3}}
}} \frac{G_{k}^{10}}{k}
\sum\limits_{\substack{l\le X \frac{s}{k t \delta'}, \\ l' \le X \frac{s}{k t  \delta},\\ l, l' \mid (\delta \delta' k)^{\infty}}} \frac{|\alpha(\delta  k l' t /s)
\alpha(\delta'  k l t/s)|}{l l'} \nonumber\\
&\le \sum\limits_{s\le X} \sum\limits_{t\mid s} \mu^2 (t) \frac{s}{t} \sum\limits_{\substack{j, j' \le X/s, \\ j, j' \mid s^{\infty}}} \frac{(G_{s})^{10}}{s^2 j j'}
\sum\limits_{\substack{h \le X/(s j), h' \le X/(s j'), \\ (h h', s) = 1}} \frac{(G_{h h'})^{10}}{h h'} \times \nonumber\\
&\times \sum\limits_{\substack{\rho \le X/sj, \rho' \le X/sj', \\ \rho \mid s^{\infty}, \rho'  \mid s^{\infty}}} \frac{|\alpha(sj \rho)
\alpha(s j' \rho')|}{\rho \rho'}
\sum\limits_{\substack{\rho_1 \le X/(sjh \rho), \rho'_1 \le X/(sj'h' \rho'), \\ \rho_1 \mid h^{\infty}, \rho'_1  \mid h'^{\infty}}} \frac{|\alpha(h \rho_1)
\alpha(h'\rho'_1)|}{\rho_1 \rho'_1} \times \nonumber\\
&\times
\sum\limits_{\substack{k\le X^2 \frac{1}{t sjj' hh'},\\ \left( j' t h' k, \rho\rho_1\right) = 1, \left(j t h k, \rho'\rho'_1\right) = 1, \\
 (3X)^{-2A^{-2/3}} \le \frac{k t}{\rho_1\rho'_1 \rho \rho' s N} \le (3X)^{2A^{-2/3}}
}}
\frac{G_{ k }^{10}}{k}  \sum\limits_{\substack{l\le X \frac{1}{k t j h}, \\ l' \le X \frac{1}{k t j' h'},\\ l, l' \mid ( s hh'  k)^{\infty}}} \frac{|\alpha(j t h  k l')
\alpha(j' t h'  k l)|}{l l'}.
\end{align}
Similarly, we represent the sum over $k$ (as well as over $l, l'$) in (\ref{eq523}) as the sum over $k_s, k_{h h'}, k$, where $k_s \mid s^{\infty}, k_{h h'} \mid (h h')^{\infty}, (k, sh h')=1$. So, we get that the sums over $k, l, l'$ in  (\ref{eq523}) equal to
\begin{align*}
&\sum\limits_{\substack{k_s \mid s^{\infty}, \\ k_s\le X^2 \frac{1}{t sjj' hh'},\\ \left( j' t k_s, \rho\right) = 1, \left(j t  k_s, \rho'\right) = 1
}}
\frac{G_{ k_s }^{10}}{k_s}
\sum\limits_{\substack{k_{h h'} \mid (h h')^{\infty}, \\ k_{h h'} \le X^2 \frac{1}{t sjj' k_s hh' },\\ \left( h' k_{h h'}, \rho_1\right) = 1, \left(h k_{h h'}, \rho'_1\right) = 1
}}
\frac{G_{ k_{hh'} }^{10}}{k_{hh'}}
\sum\limits_{\substack{k\le X^2 \frac{1}{t sjj' hh' k_s k_{h h'}}, \\(k, shh') = 1, \\
 (3X)^{-2A^{-2/3}} \le \frac{k k_s k_{h h'} t}{\rho_1\rho'_1 \rho \rho' s N} \le (3X)^{2A^{-2/3}}
}}
\frac{G_{ k }^{10}}{k} \times \\
&\times
\sum\limits_{\substack{
l_s, l'_s \mid s ^{\infty}
}}
\sum\limits_{\substack{
l_{hh'}, l'_{hh'} \mid (hh')^{\infty}
}}
\sum\limits_{\substack{l\le X \frac{1}{l_s l_{hh'} k k_s k_{hh'} t j h}, \\ l' \le X \frac{1}{l'_s l'_{hh'} k k_s k_{hh'} t j' h'},\\ l, l' \mid k^{\infty}}}
\frac{|\alpha(j t  k_s l'_s)  \alpha(j' t  k_s  l_s) \alpha(h  k_{hh'} l'_{hh'}) \alpha(h'  k_{hh'} l_{hh'}) \alpha(k l') \alpha(k l)|}{l_s l'_s l_{hh'} l'_{hh'} l l'}.
\end{align*}
Thus,
\begin{align}\label{eq70}
&W (X, N) \le  \sum\limits_{s\le X} \frac{G_{s}^{10}}{s} \sum\limits_{t\mid s} \frac{\mu^2 (t)}{t} \sum\limits_{\substack{j, j' \mid s^{\infty}}} \frac{1}{j j'} \sum\limits_{\substack{\rho \mid s^{\infty}, \rho'  \mid s^{\infty}}} \frac{|\alpha(sj \rho)
\alpha(s j' \rho')|}{\rho \rho'} \sum\limits_{\substack{k_s \mid s^{\infty}
}}
\frac{G_{ k_s }^{10}}{k_s}  \sum\limits_{\substack{
l_s, l'_s \mid s ^{\infty}
}} \frac{|\alpha(j t  k_s l'_s)  \alpha(j' t  k_s  l_s) }{l_s l'_s} \times  \nonumber \\
&\sum\limits_{\substack{h \le X, h' \le X}} \frac{(G_{h h'})^{10}}{h h'}
\sum\limits_{\substack{ \rho_1 \mid h^{\infty}, \rho'_1  \mid h'^{\infty}}} \frac{|\alpha(h \rho_1)
\alpha(h'\rho'_1)|}{\rho_1 \rho'_1}
\sum\limits_{\substack{k_{h h'} \mid (h h')^{\infty}
}}
\frac{G_{ k_{hh'} }^{10}}{k_{hh'}}
\sum\limits_{\substack{
l_{hh'}, l'_{hh'} \mid (hh')^{\infty}
}}
\frac{|\alpha(h  k_{hh'} l'_{hh'}) \alpha(h'  k_{hh'} l_{hh'})|}{l_{hh'} l'_{hh'}} \times \nonumber\\
&\times
\sum\limits_{\substack{k\le X^2,\\ (3X)^{-2A^{-2/3}} \le \frac{k k_s k_{h h'} t}{\rho_1\rho'_1 \rho \rho' s N} \le (3X)^{2A^{-2/3}}
}}
\frac{G_{ k }^{10}}{k}
\sum\limits_{\substack{l, l' \mid k^{\infty}}} \frac{|\alpha(k l')
\alpha(jk l)|}{l l'} .
\end{align}

Let us estimate from above the sum over $k, l, l'$,
using the inequality
$$
G_{k}^{10} \ll \prod\limits_{p\mid k} \left( 1 + \frac{1}{\sqrt{p}}\right)
$$
and the notation $N_1 =  \max (1, \frac{(3X)^{-2A^{-2/3}} \rho_1\rho'_1 \rho \rho' s N}{k_s k_{hh'} t})$, $N_2 =  \min (X^2, \frac{(3X)^{2A^{-2/3}} \rho_1\rho'_1 \rho \rho' s N}{k_s k_{hh'} t})$ for the variables from (\ref{eq70}):
\begin{align}
&\sum\limits_{\substack{N_1 < k \le N_2
}}
\frac{G_{ k }^{10}}{k}
\sum\limits_{\substack{l, l' \mid k^{\infty}}}
\frac{|\alpha(k l') \alpha(k l)|}{l l'} \ll \sum\limits_{\substack{N_1 < k \le N_2
}}
\frac{1}{k} \prod\limits_{p^{\beta}\mid\mid k}\left( 1 + \frac{1}{\sqrt{p}}\right)  \left( |\alpha(p^{\beta})| + \frac{|\alpha(p^{\beta+1})|}{p} +\ldots\right)^2 \nonumber\\
&\ll   \sum\limits_{\substack{N_1 < k \le N_2
}}
\frac{1}{k} \prod\limits_{p^{\beta}\mid\mid k}\left( 1 + \frac{1}{\sqrt{p}}\right)  \left( |\alpha(p^{\beta})|^2 + \frac{5}{p}\right), \label{eq71}
\end{align}
where we appealed to $|\alpha(p^{\beta})| \le 1$ and $\frac{1}{p} + \frac{1}{p^2} +\ldots = \frac{1}{p (1 - 1/p)} \le \frac{2}{p}$.
In order to estimate the last quantity we write the following chain of formal equalities
\begin{align*}
&\sum\limits_{k=1}^{+\infty}\frac{1}{k^s} \prod\limits_{p^{\beta}\mid\mid k}\left( 1 + \frac{1}{\sqrt{p}}\right)  \left( |\alpha(p^{\beta})|^2 + \frac{5}{p}\right) \\
&= \prod\limits_{p}\left( 1 + \frac{1}{p^s} \left(1 + \frac{1}{\sqrt{p}}\right)  \left( |\alpha(p)|^2 + \frac{5}{p}\right) + \frac{1}{p^{2s}} \left(1 + \frac{1}{\sqrt{p}}\right)  \left( |\alpha(p^2)|^2 + \frac{5}{p}\right) +\ldots \right) \\
&= \sum\limits_{k=1}^{+\infty}\frac{1}{k^s}  \left( \sum\limits_{k_1k_2 = k} b_1 (k_1) b_2 (k_2)\right),
\end{align*}
where
\begin{align*}
&\sum\limits_{n=1}^{+\infty}  \frac{b_1 (n)}{n^s} = \prod\limits_{p} \left( 1 + \frac{|\alpha(p)|^2}{p^s}\right), \\
&\sum\limits_{n=1}^{+\infty} \frac{b_2 (n)}{n^s} = \prod\limits_{p}
\left( 1 + \frac{1}{p^s} \frac{\frac{5}{p} + \frac{5}{p^{3/2}} +\frac{|\alpha(p)|^2}{p^{1/2}}}{1 + \frac{|\alpha(p)|^2}{p^s}} + \frac{1}{p^{2s}} \frac{\left(1 + \frac{1}{\sqrt{p}}\right)  \left( |\alpha(p^2)|^2 + \frac{5}{p}\right)}{1 + \frac{|\alpha(p)|^2}{p^s}} +\ldots \right).
\end{align*}
Therefore, the sum in (\ref{eq71})
is less (up to some absolute constant) than
\begin{align*}
\sum\limits_{\substack{N_1 < k_1 \cdot k_2 \le N_2
}} \frac{b_1 (k_1) b_2 (k_2)}{k_1 k_2} \le \sum\limits_{\substack{k_2 \le N_2
}} \frac{b_2 (k_2)}{k_2}  \sum\limits_{\substack{N_1/k_2 < k_1\le N_2/k_2
}}  \frac{b_1 (k_1)}{k_1} \ll \max\limits_{k_2 \le N_2}  \sum\limits_{\substack{N_1/k_2 < k_1\le N_2/k_2
}}  \frac{b_1 (k_1)}{k_1},
\end{align*}
since
$$
\sum\limits_{\substack{n=1}}^{+\infty} \frac{b_2 (n)}{n} = \prod\limits_{p} \left( 1 + O\left( \frac{1}{p^{3/2}}\right) \right) \ll 1.
$$
So, let $K_1, K_2$ satisfy the following constraints
\begin{align}
1 \le K_1 < K_2 \le X^2, \nonumber\\
\frac{K_2}{K_1}  \le (3X)^{\frac{4}{A^{2/3}}}. \label{eq525}
\end{align}
It is convenient for us to get an upper bound for the sum $\sum\limits_{\substack{K_1 < k \le K_2}} \frac{b_1 (k_1)}{k_1}$ for every $K_1, K_2$ satisfying (\ref{eq525}) that  will depend only on $X$ and $A$. Let us provide such a bound using the Abel's partial summation formula.
Since we already have the bound
\begin{align} \label{eq526}
\bbC (u) = \sum\limits_{k \le u} b_1 (k)  \ll \frac{u}{\sqrt{\log u}}
\end{align}
then
\begin{align*}
&\sum\limits_{\substack{K_1 < k \le K_2}} \frac{b_1 (k)}{k}  \le
\frac{\bbC (K_2)}{K_2} + \int\limits_{K_1}^{K_2} \frac{\bbC (u)}{u^2} du \ll
1 + \int\limits_{K_1}^{K_2} \frac{1}{u \sqrt{\log u}} du \ll 1 +  (\sqrt{\log K_2} - \sqrt{\log K_1}) \\
&\ll
1 +  \frac{\log K_2 - \log K_1}{\sqrt{\log K_2} + \sqrt{\log K_1}} \ll 1 + \min\left( \sqrt{\log K_2}, \frac{\log X}{A^{2/3}\sqrt{\log K_2}}\right) \ll A^{-1/3} \sqrt{\log X},
\end{align*}
due to (\ref{eq525}). So, the sum in (\ref{eq71}) (which is the sum over $k, l, l'$ in (\ref{eq70}))
is less than $ A^{-1/3} \sqrt{\log X}$ (up to some absolute constant).

Similarly, we get that the sum over $s, t, j, j', \rho, \rho', k_s, l_s, l'_s$ in (\ref{eq70}) can be finally estimated by
\begin{align*}
\sum\limits_{s\le X} \frac{b_1(s)}{s} \ll  \int\limits_{1}^{X} \frac{du}{u \sqrt{\log u}} \ll \sqrt{\log X}.
\end{align*}
The sum over $h, h', \rho_1, \rho'_1, k_{h h'}, l_{hh'}, l'_{hh'}$ in (\ref{eq70})
by means of
\begin{align*}
&\sum\limits_{l\mid h} \frac{1}{l} \ll \prod_{p\mid h} \left( 1 +  \frac{1}{p} +  \frac{1}{p^2} +\ldots\right) \ll \prod_{p\mid h} \left( 1 +  \frac{1}{p}\right),\\
&\sum\limits_{k\mid h} \frac{1}{k} \prod_{p\mid k} \left( 1 +  \frac{1}{p^{3/4}}\right)^j \ll \prod_{p\mid h} \left( 1 +  \frac{1}{p}\left( 1 +  \frac{1}{p^{3/4}}\right)^j +  \frac{1}{p^2} \left( 1 +  \frac{1}{p^{3/4}}\right)^j+\ldots\right) \ll_{j} \prod_{p\mid h} \left( 1 +  \frac{1}{p}\right)
\end{align*}
can be estimated
by the expression:
\begin{align*}
&\sum\limits_{\substack{h \le X, h' \le X}} \frac{(G_{h h'})^{10}}{h h'}
\sum\limits_{\substack{ \rho_1 \mid h^{\infty}, \rho'_1  \mid h'^{\infty}}} \frac{|\alpha(h \rho_1)
\alpha(h'\rho'_1)|}{\rho_1 \rho'_1}
\sum\limits_{\substack{k_{h h'} \mid (h h')^{\infty}
}}
\frac{G_{ k_{hh'} }^{10}}{k_{hh'}}
\sum\limits_{\substack{
l_{hh'}, l'_{hh'} \mid (hh')^{\infty}
}}
\frac{1}{l_{hh'} l'_{hh'}} \\
&\ll \sum\limits_{\substack{h \le X, h' \le X}} \frac{(G_{h} G_{h'})^{10}}{h h'}
\sum\limits_{\substack{ \rho_1 \mid h^{\infty}, \rho'_1  \mid h'^{\infty}}} \frac{|\alpha(h \rho_1)
\alpha(h'\rho'_1)|}{\rho_1 \rho'_1}
\prod\limits_{p\mid h h'} \left( 1 + \frac{1}{p}\right)^{3} \\
&\le \left( \sum\limits_{\substack{h \le X}} \frac{G_{h}^{10}}{h}
\sum\limits_{\substack{ \rho_1 \mid h^{\infty}}} \frac{|\alpha(h \rho_1)|}{\rho_1}
\prod\limits_{p\mid h} \left( 1 + \frac{1}{p}\right)^{3} \right)^2
\ll \left( \sum\limits_{\substack{h \le X}} \frac{1}{h}
\prod\limits_{p\mid h} \left( 1 + \frac{1}{\sqrt{p}}\right)
\sum\limits_{\substack{ \rho_1 \mid h^{\infty}}} \frac{|\alpha(h \rho_1)|}{\rho_1}
 \right)^2 \\
&\ll \left( \sum\limits_{\substack{h \le X}} \frac{1}{h}
\prod\limits_{p^{\beta}\mid h} \left( 1 + \frac{1}{\sqrt{p}}\right)
\left( |\alpha(p^{\beta})| + \frac{|\alpha(p^{\beta+1})|}{p}  + \ldots \right)
 \right)^2 \ll \left( \sum\limits_{\substack{h \le X}} \frac{\mu^2 (h)|\alpha(h)|}{h} \right)^2,
\end{align*}
where the last bound is obtained similar to the estimation of (\ref{eq71}). By standard technique,
since
\begin{equation*}
\sum\limits_{p\le P} |\alpha(p)| \sim \frac{P}{2}
\end{equation*}
(or since the coefficients of the series, given by the Euler product $\prod\limits_{p} \left( 1+ \frac{|\alpha(p)|}{p^s}\right)$ are equal to the coefficients of the series, given by the Euler product $\prod\limits_{(p, D)=1} \left( 1+ \frac{|\alpha(p)|^2}{p^s}\right) \prod\limits_{p \mid D} \left( 1+ \frac{2|\alpha(p)|^2}{p^s}\right)$) we get that
\begin{equation*}
\left( \sum\limits_{\substack{h \le X}} \frac{\mu^2 (h)|\alpha(h)|}{h} \right)^2 \ll \log X
\end{equation*}
and the sum over $h, h', \rho_1, \rho'_1, k_{h h'}, l_{hh'}, l'_{hh'}$ in (\ref{eq70}) is bounded from above by $\log X$. Finally,
$$
W(X, N) \ll A^{-1/3} \log^2 X.
$$
Therefore, (\ref{eq520}) (as well as (\ref{eq518}), (\ref{eq514})) and, finally, $R^{(j,j)}$ is estimated from above by $\delta^{-1} A^{-1/3}$.

{\bf Acknowledgements.}

I express my deepest gratitude to all the members of the Department of Number Theory of Steklov Mathematical Institute of Russian Academy of Sciences and D.O.~Orlov  for their encouragement.

\end{document}